\numberwithin{equation}{section}
\theoremstyle{plain}
\newcommand{\bowl}{ \text{Bowl}^{n+1-k}}
\newcommand{\gra }{ \eta }
\newcommand{\cE}{\mathcal{E}}
\newcommand{\bY}{{\bar Y}}
\newcommand{\bry}{{\bf y}}
  \newcommand{\hD}{\mathfrak{D}}
\newcommand{\vp}{\varphi}
\newcommand{\mm}{\, e^\mu  d\omega    dv}
\newcommand{\collar}{\mathcal{K}}  
\newcommand{\pd}{\partial}
\newcommand{\dist}{{\mathrm{dist}}}
\newcommand{\eps}{\varepsilon}
\newcommand{\be}{\begin{equation}}
\newcommand{\bee}{\begin{equation*}}
\newcommand{\bea}{\begin{equation}\begin{aligned}}
\newcommand{\eea}{\end{aligned}\end{equation}}
\newcommand{\ee}{\end{equation}}
\newcommand{\eee}{\end{equation*}}
\newcommand{\bsp}{\begin{split}}
\newcommand{\esp}{\end{split}}
\newcommand{\R}{{\mathbb R}}
\newtheorem{theorem}{Theorem}[section]
\newtheorem{proposition}[theorem]{Proposition}
\newtheorem{lemma}[theorem]{Lemma}
\newtheorem{corollary}[theorem]{Corollary}
\newtheorem{claim}[theorem]{Claim}
\theoremstyle{remark}
\newtheorem{remark}[theorem]{Remark}
\theoremstyle{definition}
\newtheorem{definition}[theorem]{Definition}
\newcommand{\cC}{\mathcal{C}}
\renewcommand{\cD}{\mathcal{D}}
\newcommand{\cT}{\mathcal{T}}
\renewcommand{\cH}{\mathcal{H}}
\newcommand{\fp}{\mathfrak{p}}
\subjclass{53E10, 35K55, 35A21}
\keywords{ancient oval, singularity analysis of mean curvature flow, spectral uniqueness, reflection symmetry, spectral stability.}
\begin{document}

\title{Rigidity of ancient ovals in higher dimensional mean curvature flow}

\author[Beomjun Choi,  Wenkui Du, Jingze Zhu]{Beomjun Choi,  Wenkui Du, Jingze Zhu}

\begin{abstract}
In this paper, we consider the classification of compact ancient noncollapsed mean curvature flows of hypersurfaces in arbitrary dimensions. More precisely, we study $k$-ovals in $\mathbb{R}^{n+1}$, defined as ancient noncollapsed solutions whose tangent flow at $-\infty$ is given by $\mathbb{R}^k \times S^{n-k}(\sqrt{2(n-k)|t|})$ for some $k \in \{1,\dots,n-1\}$, and whose fine cylindrical matrix has full rank. A  significant advance achieved recently by Choi and Haslhofer suggests that the shrinking $n$-sphere and $k$-ovals together account for all compact ancient noncollapsed solutions in $\mathbb{R}^{n+1}$. We prove that $k$-ovals are $\mathbb{Z}^{k}_2 \times \mathrm{O}(n+1-k)$-symmetric and are uniquely determined by $(k-1)$-dimensional spectral ratio parameters. This result is sharp in view of the $(k-1)$-parameter family of $\mathbb{Z}^{k}_2 \times \mathrm{O}(n+1-k)$-symmetric ancient ovals constructed by Du and Haslhofer, as well as the conjecture of Angenent, Daskalopoulos and Sesum concerning the moduli space of ancient solutions. We also establish a new spectral stability theorem, which suggests the local $(k-1)$-rectifiability of the moduli space of $k$-ovals modulo space-time rigid motion and parabolic rescaling. In contrast to the case of $2$-ovals in $\mathbb{R}^4$, resolved by Choi, Daskalopoulos, Du, Haslhofer and Sesum, the general case for arbitrary $k$ and $n$ presents new challenges beyond increased algebraic complexity. In particular, the quadratic concavity estimates in the collar region and the absence of a global parametrization with regularity information pose major obstacles. To address these difficulties, we introduce a novel test tensor that produces essential gradient terms for the tensor maximum principle, and we derive a local Lipschitz continuity result by parameterizing $k$-ovals with nearly matching spectral ratio parameters.

\end{abstract}
\vspace{-10pt}
\maketitle

\thispagestyle{empty}
\vspace{-20pt}
\tableofcontents
\vspace{-20pt}
\section{Introduction}
Ancient ovals are compact non-selfsimilar ancient noncollapsed mean curvature flows of hypersurfaces in Euclidean space.  We recall that a mean curvature flow is called ancient if it is defined for all sufficiently negative time, and is called noncollapsed if it is mean convex and there is an $\alpha>0$ such that every point $p\in M_t$ admits interior and exterior  balls of radius at least $\alpha/H(p)$ c.f. \cite{ShengWang,Andrews_noncollapsing,HaslhoferKleiner_meanconvex, Brendle_inscribed, HK_inscribed}. For $1\le k\le n-1$, the existence of ancient oval, which has $\mathrm{O}(k)\times\mathrm{O}(n+1-k)$ symmetry and has tangent flow $\mathbb{R}^{k}\times S^{n-k}(\sqrt{2(n-k)|t|})$ at time $-\infty$\footnote{This is not an assumption, but rather a consequence. By \cite[Theorem 1.14]{HaslhoferKleiner_meanconvex} every  ancient noncollapsed flow has unique cylindrical tangent flow at $-\infty$. c.f. \cite{brendle2024local} showed convex ancient flow with cylindrical tangent flow at $-\infty$ must be noncollapsed.},  has been obtained by White \cite{White_nature} and Hershkovits-Haslhofer  \cite{HaslhoferHershkovits_ancient}. Later, Haslhofer and the second author \cite{DH_ovals}  constructed $(k-1)$-parameter family of $\mathbb{Z}_2^{k}\times\mathrm{O}(n+1-k)$-symmetric ancient ovals in $\mathbb{R}^{n+1}$, whose tangent flow at $-\infty$  is $\mathbb{R}^{k}\times S^{n-k}(\sqrt{2(n-k)|t|})$ and whose geometric width ratio parameters in coordinate directions of $\mathbb{R}^k$ are prescribed. 

Ancient ovals are related to the fine structure of singularities such as the accumulation of neckpinch singularities and the finiteness of singular times \cite{CM_arrival, CHH_ovals,sun2022generic, sun2025passing}. They arise as potential singularity models in the canonical neighborhood theorem, which is crucial for surgery of mean curvature flows c.f. \cite{HuiskenSinestrari_surgery, BH_surgery, HaslhoferKleiner_surgery}\footnote{We mention there are studies on compact ancient collapsed flows, known as pancakes, \cite{BLT1, BLT2}. Formally, they correspond to the case $k=n$. Collapsed solutions may appear as singularity models for immersed mean curvature flow as in \cite{BLT1, angenent1991formation}.}.

In recent studies, ancient ovals have gained significant attention due to their connection to conjectures regarding the selfsimilarity of limit flows (Ilmanen \cite[Conjecture 6]{Ilmanen_problems} and White \cite[Conjecture 1 and Conjecture 3]{White_nature}) and the fact that they are the only non-selfsimilar ancient solutions that possibly arise as limit flows of closed mean-convex solutions in $\mathbb{R}^3$ and $\mathbb{R}^4$ (see  \cite{BC1,ADS2,CH-classification-r4} and further references therein). We also note that ancient ovals and their classification also appear in the resolution of mean convex neighborhood conjecture \cite{Ilmanen_problems, CHH, CHHW} and the classification of noncollapsed translators in $\mathbb{R}^4$ \cite{CHH_translator}. The work \cite{CHH_translator}, in particular, shows that the cross sections of the noncollpased translators in $\mathbb{R}^4$ exhibit the same asymptotics as that of the ancient ovals in $\mathbb{R}^3$, suggesting a grater postulate that compact ancient noncollapsed flows are building blocks of noncompact ancient noncollapsed flows in general dimensions.

For $1 \leq k \leq n-1$, among ancient noncollapsed flows in general dimensions $\mathbb{R}^{n+1}$ with the tangent flow $\mathbb{R}^{k}\times S^{n-k}(\sqrt{2(n-k)|t|})$ at $-\infty$, \emph{$k$-ovals} are those ancient noncollapsed flows exhibiting inward quadratic bending toward each axis of the asymptotic cylinder\footnote{The concept of $k$-ovals was introduced by the second and third authors \cite[Definition 1.7] {DZ_spectral_quantization}.  See \cite{DH_hearing_shape, CDDHS} where $2$-ovals, known as bubble-sheet ovals, were studied.} (see precise description in Definition \ref{def-koval}).
Note that $k$-ovals are compact (thus they are ancient ovals) and $\mathrm{O}(n+1-k)$ symmetric \cite[Theorem 1.8]{DZ_spectral_quantization}. Experts in the field believe that   the class of $k$-ovals is not simply  a subclass of ancient ovals. Indeed, a series of recent breakthroughs have confirmed that these two classes are identical for $2$d and $3$d ancient ovals, leading to the complete classification modulo space-time rigid motion and parabolic dilation, summarized below:

\textbf{Classification of ancient ovals in $\mathbb{R}^3$ and $\mathbb{R}^4$} \cite{CH-classification-r4, ADS2, CDDHS}.

{\setlength{\leftmargini}{1.5em}
\begin{itemize}
    \item {(Choi-Haslhofer)} Every ancient oval in $\mathbb{R}^4$ is either a $1$-oval  or a $2$-oval. 
    \item {(Angenent-Daskalopoulos-Sesum)} 
The only ancient oval in $\mathbb{R}^3$ is the $\mathbb{Z}_2\times \mathrm{O}(2)$-symmetric oval from \cite{White_nature, HaslhoferHershkovits_ancient}, which is indeed a $1$-oval, and the only $1$-oval in $\mathbb{R}^4$ is the $\mathbb{Z}_2\times \mathrm{O}(3)$-symmetric oval from \cite{White_nature, HaslhoferHershkovits_ancient}.
    \item {(Choi-Daskalopoulos-Du-Haslhofer-Sesum)} Every $2$-oval in $\mathbb{R}^4$ is either the unique $\mathrm{O}(2)\times \mathrm{O}(2)$-symmetric oval from \cite{White_nature, HaslhoferHershkovits_ancient} or a member of the $1$-parameter family of $\mathbb{Z}_2^2\times \mathrm{O}(2)$-symmetric ovals from \cite{DH_ovals}.
\end{itemize}
}
 In higher dimensions, it is analogously expected that (i) $k$-ovals and the shrinking sphere account for all compact ancient noncollapsed mean curvature flows, and (ii) the class of $k$-ovals coincides with the previously discussed $(k-1)$-parameter family of ancient ovals constructed in \cite{DH_ovals}. In \cite{DH_ovals}, Haslhofer and the second author also obtained the uniqueness of $\mathrm{SO}(k)\times \mathrm{SO}(n+1-k)$ symmetric ancient oval, which is a $k$-oval. However, characterizing the rigidity of general $k$-ovals remains widely open except for $1$-ovals \cite{ADS2} and $2$-ovals in $\mathbb{R}^4$ \cite{CDDHS}. The main difficulty comes from the symmetry breaking phenomenon illustrated in the construction \cite{DH_ovals}: the solutions have only $\mathbb{Z}_2^{k}\times\mathrm{O}(n+1-k)$ symmetry. Thus, the crucial challenge is twofold: first, to identify $ k-1 $ quantities that capture the distinct geometric features among the solutions, and second, to establish a rigidity theorem proving that these quantities uniquely characterize each $ k $-oval.

Motivated by these challenges, our  first main result Theorem~\ref{thm:uniqueness_eccentricity_intro} (spectral uniqueness) and its Corollary~\!\ref{reflection symmetry} (symmetry of $k$-ovals) prove that, after suitable rigid motion in space-time, $k$-ovals are $\mathbb{Z}_2^{k}\times\mathrm{O}(n+1-k)$-symmetric and they are characterized via the spectral projection of the solution onto the neutral eigenspace of the linearized operator, which has dimension $k$.  Taking into account the additional identification due to parabolic dilation, this shows that $ k $-oval is characterized by the spectral ratio parameters  among these $ k $ components, which is $(k-1)$-dimensional (see \eqref{spectral ratio parameters}). We make this assertion more precise in the other main Theorem~\ref{spectral_stability_intro} (spectral stability), where we prove that properly space-time recentered and parabolically dilated $k$-ovals can  be locally parametrized by the $(k-1)$-dimensional spectral ratio parameters. Moreover, this local parametrization is Lipschitz, suggesting a $(k-1)$-rectifiability of the moduli space of $k$-ovals modulo symmetry. As a final remark, note that a similar spectral uniqueness theorem will also be an essential component in the classification of noncollapsed translators, whose cross sections exhibit asymptotic behavior analogous to that of ancient ovals defined in one lower spatial dimension (e.g., see the spectral uniqueness theorem for translators in $\mathbb{R}^4$ in \cite[Theorem 1.6]{CHH_translator}). The techniques developed here for $k$-ovals are expected to be employed in the classification of noncompact ancient noncollapsed mean curvature flows in general dimensions and in ancient $\kappa$-noncollapsed Ricci flows\footnote{In a parallel story of classification of noncompact ancient noncollapsed mean curvature flows, by works in \cite{BC1, BC2, CHH_translator, DH_hearing_shape, CH-classification-r4}, $\mathbb{R}^{j}\times S^{2-j}$ for $j=1, 2$ and $2d$-round bowl are the only noncompact ancient noncollapsed  flows in $\mathbb{R}^3$; $\mathbb{R}^{j}\times S^{3-j}$ for $j=1, 2, 3$,  $\mathbb{R}\times 2d$-round bowl translator,  $\mathbb{R}\times 2d$-round oval, and noncollapsed  $3d$-oval-bowl translators from \cite{HIMW} are the only noncompact ancient noncollapsed flows in $\mathbb{R}^4$. For a parallel classification story in Ricci flow,  the readers can refer \cite{brendle2020ancient, ABDS, Haslhofer_4dRicci, Lai}.}.

\subsection{Spectral uniqueness and stability}
Let us discuss the precise formulation and details of the main theorems. For $1\le k\le n-1$, let $\mathcal{M}=\{M_t\}$ be an ancient noncollapsed flow having the tangent flow $\mathbb{R}^k\times S^{n-k}(\sqrt{2(n-k)|t|})$ at $-\infty$. This means 
\begin{equation}\label{bubble-sheet_tangent_intro}
\lim_{\tau\to -\infty}\bar{M}_\tau=\lim_{\tau\to -\infty}e^{\frac{\tau}{2}} M_{-e^{-\tau}}=\mathbb{R}^{k}\times S^{n-k}(\sqrt{2(n-k)}).
\end{equation} 
We express the renormalized flow $\bar M_\tau$ by the profile function $ v(\mathbf{y}, \vartheta,\tau) $ defined on the domains $(\mathbf{y},\vartheta) \in \Gamma_\tau$ that exhaust the cylinder $\mathbb{R}^{k}\times S^{n-k}$ via
\begin{equation}\label{profile v def}
    \bar{M}_\tau =\{(\mathbf{y},v(\mathbf{y},\vartheta,\tau)\vartheta )\in \mathbb{R}^k \times \mathbb{R}^{n+1-k}\,:\, (\mathbf{y},\vartheta)\in \Gamma_\tau  \}.
\end{equation}
The asymptotics \eqref{bubble-sheet_tangent_intro} reads to the smooth local convergence of $v(\mathbf{y},\vartheta,\tau)$ to $\sqrt{2(n-k)}$ as $\tau$ approaches $-\infty$. The $k$-oval is defined in terms of the next-order asymptotics which describes $v(\tau)$ up to an error of order $o(|\tau|^{-1})$. 

\begin{definition}[$k$-oval, c.f.{\cite[Definition 1.7]{DZ_spectral_quantization}}]\label{def-koval} An ancient noncollapsed flow $\mathcal{M}$ in $\mathbb{R}^{n+1}$ is a $k$-oval  if it has the tangent flow $\mathbb{R}^{k}\times S^{n-k}(\sqrt{2(n-k)|t|})$ at $-\infty$ and the renormalized profile $v(\mathbf{y},\vartheta,\tau)$ in \eqref{profile v def} has the asymptotics 
\[\lim_{\tau\to -\infty} \bigg \Vert |\tau|(v(\mathbf{y},\vartheta,\tau) -\sqrt{2(n-k)})+ \frac{\sqrt{2(n-k)}}{4}(|\mathbf{y}|^2-2k)\bigg \Vert_{C^m(\{|\mathbf{y}|<R\})} =0,\] for all $m\in \mathbb{N}$ and $0<R<+\infty$\footnote{According to \cite[Theorem 1.2]{DZ_spectral_quantization},  ancient asymptotically-$\mathbb{R}^k\times {S}^{n-k}$-type ancient flow exhibits the asymptotics $v(\mathbf{y},\vartheta,\tau)-\sqrt{2(n-k)}=[\mathbf{y}^\top Q\mathbf{y} -2\mathrm{tr}(Q)]|\tau|^{-1}+o(|\tau|^{-1})$, for some symmetric matrix $Q$ having eigenvalues $-{\sqrt{2(n-k)}}/{4}$ or $0$. Definition \ref{def-koval} is the case when this fine cylindrical matrix $Q$ has full rank $k$.}. 
%
%
\end{definition}
As a consequence of \cite[Theorem 1.8] {DZ_spectral_quantization}, every $ k$-oval indeed has $ \mathrm{O}(n+1-k) $-symmetry in the last $n+1-k$ coordinates. Therefore, we will write the profile function of $k$-oval by $v(\mathbf{y},\tau)$.

Next, we consider the  $k$-dimensional Ornstein-Ulenbeck operator 
\begin{equation}\label{OU_operator}
\mathcal{L}=\Delta_{\bf{y}}-\frac{1}{2}{\bf{y}}\cdot  \nabla_{\bf{y}}+1,
\end{equation}
which appears in the linearization of the evolution equation of $v(\mathbf{y},\tau)$ in \eqref{eqn-v}. The operator $\mathcal{L}$   is a self-adjoint  on the Gaussian weighted $L^2$-space $\mathcal{H}$ and it admits a spectral decomposition 
\begin{equation}
\mathcal{H}=L^2\big(\mathbb{R}^k,e^{-\frac{|{\bf{y}}|^2}{4}}d{\bf{y}})= \mathcal{H}_+\oplus \mathcal{H}_0\oplus \mathcal{H}_-,
\end{equation}
where the positive eigenspace and neutral eigenspace are explicitly given by
\begin{equation}\begin{aligned}\label{H0+}
 &   \mathcal{H}_+=\textrm{span}\big\{1, y_1,\dots, y_k\}, \\ 
&\mathcal{H}_0=\textrm{span}\big\{y_1^2-2, \dots, y_k^2-2, y_{i}y_{j}, 1\leq i<j\leq k\big\}.\end{aligned}
\end{equation}
Let $\mathfrak{p}_{0}$ and $\mathfrak{p}_{\pm}$ be the orthogonal projections onto $\mathcal{H}_0$ and $\mathcal{H}_\pm$, respectively.

Using a smooth cut-off function $\chi_C:[0,\infty)\to [0,1]$, we consider truncated profile function $v_{\cC}=v\chi_{\cC}(v)$ supported on the cylindrical region $\big\{ v \ge \theta  \big\} $ (see \eqref{eq-cutoffC} for conditions of $\chi_\mathcal{C}$.) The following definition is a quantitative measure to the sharp asymptotics  in Definition \ref{def-koval}. To achieve main results, it is crucial that the estimates and theorems are not established for individual solutions but uniformly across the entire $\kappa$-quadratic family defined below.

\begin{definition}[$\kappa$-quadraticity at $\tau_0$]\label{k_tau00_intro}
A $k$-oval in $\mathbb{R}^{n+1}$ 
is called \emph{$\kappa$-quadratic at time $\tau_{0}$}\footnote{Compared with \cite[Definition 1.8]{CDDHS}, the positive mode orthogonality condition 
$\mathfrak{p}_{+}(v_{\cC}(\cdot,\tau_{0})-\sqrt{2(n-k)})=0$ is removed from the definition (as in \cite[Definition 1.4]{CHH_translator}). Instead, new conditions on positive mode are imposed in Theorem~\ref{thm:uniqueness_eccentricity_intro} (spectral uniqueness) and Theorem~\ref{spectral_stability_intro} (spectral stability).}, where $\kappa>0$ and $\tau_0<0$, if its truncated profile $v_{\cC}$ satisfies the following non-degenerate quadraticity condition and 
\begin{equation}\label{condition1intro}
    \left\| v_{\cC}(\mathbf{y}, \tau_{0})-\sqrt{2(n-k)}+\frac{\sqrt{2(n-k)}}{4|\tau_{0}|}(|\mathbf{y}|^2-2k)\right\|_{\mathcal{H}}\leq \frac{\kappa}{|\tau_{0}|}
\end{equation}
and graphical radius condition
\begin{equation}\label{condition3}
        \sup_{\tau\in [2\tau_{0}, \tau_{0}]} |\tau|^{\frac{1}{50}} \|v(\cdot ,\tau)-\sqrt{2(n-k)}\|_{C^{4}(B(0, 2|\tau|^{{1}/{100}}))}\leq 1.
\end{equation}
\end{definition}
The nonuniform asymptotics in Lemma \ref{lem-quadraticity} shows that  every $k$-oval $\mathcal{M}$ becomes $\kappa$-quadratic at all sufficiently negative time $\tau_0\le \tau_*=\tau_*(\mathcal{M},\kappa)$ for all $\kappa>0$. Namely, $k$-oval class is exhausted by families of $\kappa$-quadratic solutions at a sequence of time points going to $-\infty$.  With this preparation, we state the spectral uniqueness theorem.
\begin{theorem}[spectral uniqueness]\label{thm:uniqueness_eccentricity_intro} There exist $\kappa>0$ and $\tau_{\ast}>-\infty$ with the following significance:
If $\mathcal{M}^1$ and $\mathcal{M}^2$ are $k$-ovals in $\mathbb{R}^{n+1}$ that are $\kappa$-quadratic at time $\tau_0$, where $\tau_0 \leq \tau_{\ast}$, and if their trancated cylindrical profile functions $v^1_{\cC}$ and $v^2_{\cC}$ satisfy the following matching mode conditions
\begin{equation}
    \mathfrak{p}_{+}\big(v^1_{\cC}(\cdot , \tau_{0})\big)= \mathfrak{p}_{+}\big(v^2_{\cC}(\cdot , \tau_{0})\big)\quad\textrm{and}\quad  
\fp_{0}\big(v^1_{\cC}(\cdot , \tau_{0})\big)=\fp_{0}(v^2_{\cC}\big(\cdot , \tau_{0})\big),
\end{equation}
then two solutions are identical, namely  $\mathcal{M}^1=\mathcal{M}^2$.
\end{theorem}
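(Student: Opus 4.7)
The plan is to study the difference $w := v^1 - v^2$ on the common cylindrical region, run a Merle-Zaag spectral dichotomy on its truncation $w_{\cC}$ to force it to vanish there, and then extend this cylindrical equality through the collar into the cap in order to conclude $\cM^1 = \cM^2$. At the linearized level, the truncated difference satisfies a parabolic equation of the form $\partial_\tau w_{\cC} = \mathcal{L} w_{\cC} + \cE$, where the error $\cE$ is quadratically small in $v^i - \sqrt{2(n-k)}$ and is therefore controlled by the $\kappa$-quadraticity of both solutions. Decomposing $w_{\cC}(\tau) = w_+(\tau) + w_0(\tau) + w_-(\tau)$ via the spectral decomposition $\cH = \cH_+ \oplus \cH_0 \oplus \cH_-$ from \eqref{H0+}, the matching mode conditions at $\tau_0$ yield $w_+(\tau_0) = 0$ and $w_0(\tau_0) = 0$.

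Next, I would derive the standard system of differential inequalities for the three mode norms $(\|w_+\|_{\cH}, \|w_0\|_{\cH}, \|w_-\|_{\cH})$ and apply a Merle-Zaag trichotomy: as $\tau \to -\infty$, one of the three modes must dominate. Because $\cM^1$ and $\cM^2$ share the same tangent flow at $-\infty$, the difference $w$ decays to zero there; combined with the vanishing of $w_+$ and $w_0$ at $\tau_0$, this rules out dominance of the positive and neutral modes. In the remaining negative-mode-dominated case, a coercivity/energy estimate analogous to the one used for $2$-ovals in \cite{CDDHS}, adapted to the $k(k+1)/2$-dimensional neutral eigenspace $\cH_0$, should force $w_{\cC} \equiv 0$ on the cylindrical region for all $\tau \leq \tau_0$.

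To upgrade cylindrical agreement to global agreement $\cM^1 = \cM^2$, the equality must be extended through the collar into the cap where $v^i$ ceases to be a small perturbation of $\sqrt{2(n-k)}$. This is where the paper's two main technical novelties enter. First, a novel test tensor whose evolution under MCF produces the essential positive gradient terms, so that the tensor maximum principle yields a rigidity/pinching estimate in the collar, replacing the scalar quadratic concavity estimate used in \cite{CDDHS} that is unavailable for general $k$. Second, a local Lipschitz parametrization of $k$-ovals with nearly matching spectral-ratio parameters, furnishing a regular family through which the cap regions of the two solutions can be compared in the absence of a global parametrization. Once agreement in the collar and cap is established, the strong maximum principle for noncollapsed MCF propagates the identification to all space-time, giving $\cM^1 = \cM^2$.

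I expect the main obstacle to lie in the collar rigidity step. In the $2$-oval case one could exploit a global parametrization of the collar by a single angular coordinate and a scalar concavity argument, neither of which is available for $k \geq 3$ where the collar is genuinely $k$-dimensional. The design of the novel test tensor — so that its MCF evolution produces gradient terms of the correct sign for the tensor maximum principle to apply — is the genuinely new ingredient, and I expect the bulk of the work to lie in its algebraic construction and the careful tracking of error terms in its evolution inequality, especially given the reduced symmetry of generic $k$-ovals (only $\mathbb{Z}_2^{k} \times \mathrm{O}(n+1-k)$ rather than $\mathrm{O}(k) \times \mathrm{O}(n+1-k)$).
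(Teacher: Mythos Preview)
Your overall architecture has the right flavor, but two of the ingredients you name are misplaced, and the Merle--Zaag step as you describe it would not close.

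First, the local Lipschitz parametrization is \emph{not} used in the proof of spectral uniqueness. In the paper it is the content of a separate result (Theorem~\ref{spectral_stability_intro}, spectral stability), proved \emph{after} and \emph{using} spectral uniqueness. The tip region is handled instead by a parallel energy estimate: one passes to the inverse profile function $Y(v,\omega,\tau)$, forms $W=Y_1-Y_2$ and its truncation $W_\cT$, and proves a weighted $L^2$ energy inequality (Proposition~\ref{prop-tip}) with a weight $\mu$ interpolating between the Gaussian weight and a bowl-soliton weight. The almost Gaussian collar estimate (Theorem~\ref{prop-great}), which is what the novel test tensor actually delivers via quadratic almost concavity, enters precisely to verify the derivative bounds \eqref{eqn-imp} on $\mu$ needed for this energy inequality and the weighted Poincar\'e inequality (Corollary~\ref{prop-Poincare}). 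So the tensor's role is not ``collar rigidity'' comparing the two solutions directly, but rather making the tip-region weight behave.

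Second, the argument is not sequential (``cylindrical agreement first, then extend''). The cylindrical estimate (Proposition~\ref{prop-cyl-est}) and the tip estimate (Proposition~\ref{prop-tip}) are proved independently and then glued via norm equivalence on the transition annulus $\{\theta\le v\le 2\theta\}$ to give the coercivity estimate $\|w_\cC-\mathfrak{p}_0 w_\cC\|_{\hD,\infty}+\|W_\cT\|_{2,\infty}\le\eps\|\mathfrak{p}_0 w_\cC\|_{\hD,\infty}$ (Theorem~\ref{prop-cor-main}).

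Third, the paper does not run a Merle--Zaag trichotomy on $w_\cC$, and your version has the logic inverted. Merle--Zaag concerns asymptotic dominance as $\tau\to-\infty$; vanishing of $w_+(\tau_0)$ and $w_0(\tau_0)$ at one finite time does not rule out neutral dominance at $-\infty$, and ``negative mode dominates'' does not by itself give $w_\cC\equiv 0$. What the paper does is use coercivity to show that the neutral mode \emph{controls everything}, then derive an ODE $\dot a_{ij}=2|\tau|^{-1}a_{ij}+F_{ij}$ for the neutral coefficients with $\int_{\tau-1}^{\tau}|F_{ij}|\le\eps|\tau|^{-1}\bar{A}(\tau_0)$, integrate from the initial condition $a_{ij}(\tau_0)=0$ supplied by the hypothesis, and obtain $\bar{A}(\tau_0)\le C\eps\,\bar{A}(\tau_0)$, forcing $\bar{A}(\tau_0)=0$. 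Coercivity then gives $w_\cC\equiv 0$ and $W_\cT\equiv 0$ simultaneously, hence $\mathcal{M}^1=\mathcal{M}^2$, with no further maximum-principle step needed.
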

The theorem requires  matching  in both positive and neutral eigenspaces. However, by applying a suitable translation in space-time, one can force the positive component to vanish at sufficiently negative times (see Proposition \ref{prop_orthogonality} and Remark \ref{remark_orthogonality}). This means that matching in the positive part can always be achieved via isometries in the ambient space. Furthermore, by comparing every $k$-oval $\mathcal{M}$ with its reflections across appropriately chosen coordinate hyperplanes, i.e., allowing additional rotations, we deduce that $k$-ovals possess additional reflection symmetries beyond the $\mathrm{O}(n+1-k)$ symmetry from \cite[Theorem 1.8]{DZ_spectral_quantization}.
\begin{corollary}[symmetry of $k$-ovals]\label{reflection symmetry}
 Every $k$-oval exhibits $\mathbb{Z}_2^k \times \mathrm{O}(n+1-k)$ symmetry after an appropriate rigid motion.
\end{corollary}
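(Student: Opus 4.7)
The plan is to combine the spectral uniqueness theorem with the invariance of the spectral decomposition under Euclidean isometries. Given a $k$-oval $\mathcal{M}$, first apply a space-time translation so that, by Proposition~\ref{prop_orthogonality} and Remark~\ref{remark_orthogonality}, the positive mode vanishes: $\fp_+\!\left(v_\cC(\cdot,\tau_0) - \sqrt{2(n-k)}\right) = 0$ at every sufficiently negative $\tau_0$. Next, apply a rotation $R$ acting on the first $k$ coordinates $\mathbf{y} \in \mathbb{R}^k$ so as to diagonalize the symmetric coefficient matrix in the neutral mode: with respect to the basis $\{y_i^2-2,\, y_i y_j\}$ of $\mathcal{H}_0$ in \eqref{H0+}, the projection $\fp_0(v_\cC - \sqrt{2(n-k)})$ corresponds to a symmetric bilinear form on $\mathbb{R}^k$, so a suitable $R \in \mathrm{SO}(k)$ eliminates all cross terms $y_i y_j$. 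Since $R$ preserves each of $\mathcal{H}_+, \mathcal{H}_0, \mathcal{H}_-$ and acts trivially on the last $n+1-k$ coordinates, the vanishing of the positive mode and the $\mathrm{O}(n+1-k)$ symmetry from \cite[Theorem 1.8]{DZ_spectral_quantization} are both preserved.

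For each $i \in \{1,\dots,k\}$, let $\sigma_i$ denote the reflection of $\mathbb{R}^{n+1}$ through the hyperplane $\{y_i = 0\}$ and put $\mathcal{M}^i := \sigma_i(\mathcal{M})$. As $\sigma_i$ is a Euclidean isometry, $\mathcal{M}^i$ is again a $k$-oval that is $\kappa$-quadratic at time $\tau_0$ with the same $\kappa$, and its renormalized profile is $v^i(\mathbf{y},\tau) = v(\sigma_i \mathbf{y},\tau)$. Among the basis elements of $\mathcal{H}_+ \oplus \mathcal{H}_0$, the map $\sigma_i$ negates only $y_i$ and $y_i y_j$ (for $j \neq i$), while fixing $1$, $y_j$ for $j \neq i$, $y_l^2 - 2$ for every $l$, and $y_j y_l$ with $i \notin \{j,l\}$. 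By the normalizations above, the coefficients of $v_\cC - \sqrt{2(n-k)}$ along each sign-changing basis element are zero, so
\[
\fp_+\!\left(v^i_\cC(\cdot,\tau_0)\right) = \fp_+\!\left(v_\cC(\cdot,\tau_0)\right) \quad \text{and} \quad \fp_0\!\left(v^i_\cC(\cdot,\tau_0)\right) = \fp_0\!\left(v_\cC(\cdot,\tau_0)\right).
\]

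Since $\tau_0$ may be chosen arbitrarily negative by Lemma~\ref{lem-quadraticity}, the spectral uniqueness theorem (Theorem~\ref{thm:uniqueness_eccentricity_intro}) applies and forces $\mathcal{M}^i = \mathcal{M}$, so $\mathcal{M}$ is invariant under each $\sigma_i$. The reflections $\sigma_1,\dots,\sigma_k$ generate $\mathbb{Z}_2^k \subset \mathrm{O}(k)$ and commute with the $\mathrm{O}(n+1-k)$ action on the last $n+1-k$ coordinates, yielding the claimed $\mathbb{Z}_2^k \times \mathrm{O}(n+1-k)$ symmetry after the initial rigid motion. The principal subtlety is to check that the diagonalizing rotation does not disturb the vanishing of the positive mode and that the reflections preserve the remaining matching condition; both rest on the invariance of the decomposition $\mathcal{H}_+ \oplus \mathcal{H}_0 \oplus \mathcal{H}_-$ under orthogonal transformations of $\mathbb{R}^k$ and on the diagonal form of $\fp_0(v_\cC - \sqrt{2(n-k)})$ achieved in the first step.
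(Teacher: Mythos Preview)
Your proposal is correct and follows essentially the same approach as the paper: normalize via Proposition~\ref{prop_orthogonality} so that the positive mode vanishes and the neutral mode has no cross terms, then observe that each coordinate reflection $\sigma_i$ preserves both the $\kappa$-quadraticity and the full spectral data in $\mathcal{H}_+\oplus\mathcal{H}_0$, and conclude $\mathcal{M}=\sigma_i(\mathcal{M})$ by Theorem~\ref{thm:uniqueness_eccentricity_intro}. One minor phrasing issue: the normalization from Proposition~\ref{prop_orthogonality} is achieved at a single chosen $\tau_0$, not ``at every sufficiently negative $\tau_0$''; but since your argument only uses one $\tau_0$ this does not affect correctness.
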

This, in particular, rules out the possibility of pathological $k$-ovals that appear to rotate within the $\mathbb{R}^k$ cylindrical factor as $\tau \to -\infty$. See \cite{DH_no_rotation} for the corresponding result in $\mathbb{R}^4$.

Theorem \ref{thm:uniqueness_eccentricity_intro} and Corollary \ref{reflection symmetry}  extend the rigidity results for $\textrm{SO}(k)\times\textrm{SO}(n+1-k)$ symmetric ancient oval in \cite[Theorem 1.4]{DH_ovals} and  $2$-ovals in $\mathbb{R}^4$ \cite{CDDHS}  to ancient ovals with all types of asymptotic cylinders in general dimensions. Moreover, the result is remarkable for following reasons: 
{\setlength{\leftmargini}{1em}
\begin{itemize}
 \item It establishes strong backward uniqueness based on  neutral eigenspace for ancient mean curvature flows. It is a purely nonlinear phenomenon that the dynamics within the kernel of linearized operator determines the behavior. This has sharp contrast with linear parabolic equations, whose ancient solutions are characterized by positive eigenspace. 

 \item It is a local-to-global result: matching on the truncated profiles at one time slice is sufficient to ensure global rigidity both in space and time. Additional subtlety lies in the fact that two different $k$-ovals even share the identical asymptotics as described in \cite{ADS1, DZ_spectral_quantization}.

 \item In view of the reflection symmetry in Corollary \ref{reflection symmetry}, there are $k$ eigenfunctions, namely $y_i^2-2$ for $i=1,\ldots,k$, in the neutral eigenspace such that $\langle v_{\cC}(\cdot, \tau_{0}),  y^2_{i}-2 \rangle_\cH$ , for $i=1,\ldots,k$, characterize $k$-ovals. Our results Theorem \ref{thm:uniqueness_eccentricity_intro} and Corollary \ref{reflection symmetry}  are sharp because  the $\mathbb{Z}_2^{k}\times\mathrm{O}(n+1-k)$ symmetry and the number of eigenfunctions in Theorem \ref{thm:uniqueness_eccentricity_intro} are optimal in light of the $(k-1)$-parameter family of $\mathbb{Z}_2^{k}\times\mathrm{O}(n+1-k)$-symmetric  ancient ovals constructed in \cite[Section 4]{DH_ovals}. Difference between $k$ and $k-1$ comes from the parabolic dilation. Note a parabolic dilation scales the magnitude of quadratic bending  at sufficiently negative time $\tau_0$
\[\Big\langle v_{\mathcal{C}}(\cdot, \tau_{0}),|\mathbf{y}|^2-2k \Big\rangle_{\mathcal{H}}=\sum_{j=1}^k\Big\langle v_{\mathcal{C}}(\cdot, \tau_{0}),y_j^2-2 \Big\rangle_{\mathcal{H}} \] 
(c.f. Remark \ref{remark_orthogonality}),
while none of two ancient ovals constructed in \cite{DH_ovals} are the same after parabolic dilations.  In particular, we can say the $k$-ovals are uniquely determined by the following $(k-1)$-dimensional  spectral ratio parameters at  a  sufficiently negative time $\tau_0$
\begin{equation}\label{spectral ratio parameters}
    \frac{\langle v_{\cC}(\cdot, \tau_{0}), y^2_{j}-2 \rangle_\cH}{\langle v_{\cC}(\cdot, \tau_{0}), |{\bf{y}}|^2-2k \rangle_\cH}\quad j=1,\dots, k,
\end{equation}
 and they are interpreted as  analytic version of the $(k-1)$-dimensional geometric width ratio parameters discussed in  \cite{DH_ovals}. 
\item The corollary on symmetry together with \cite[Theorem 1.10]{DZ_spectral_quantization} provides strong evidence to the expectation that every ancient oval has $\mathbb{Z}^k_2\times \mathrm{O}(n+1-k)$ symmetry for some $k=1, \dots, n-1$. This symmetry also supports the natural symmetry assumption in the conjecture of Angenent-Daskalopoulos-Sesum \cite{ADS_dynamics}  that  the space of all convex ancient solutions with a point symmetry and Huisken's Gaussian area lying in $(0, 2)$ is homeomorphic to an $(n-1)$-dimensional simplex.\footnote{In this conjecture, cylinders $\mathbb{R}^{k}\times S^{n-k}$ for $k=0,\dots, n-1$  are critical points of Huisken's Gaussian  area. They are regarded as the vertices of the above $(n-1)$-dimensional moduli space simplex. The $k$-ovals are the connecting orbits between these critical points.
}
\end{itemize}
}
In order to parametrize $k$-ovals with regularity information, we introduce a novel spectral stability theorem. To this end, given $\kappa>0$ and $\tau_0>-\infty$, we let $\bar{\mathcal{A}}_{\kappa}(\tau_0)$ be the set of all $k$-ovals which are $\kappa$-quadratic at time $\tau_0$, and the corresponding truncated profile function  $v_{\cC}$  satisfies following orthogonality conditions\footnote{This is always possible due to  Lemma \ref{lem-quadraticity} and Proposition \ref{prop_orthogonality} (orthogonality).}
\begin{equation}\label{condition_positive}
    \mathfrak{p}_{+}\big(v_{\cC}(\cdot,  \tau_{0})-\sqrt{2(n-k)}\big)=0,
\end{equation}
    \begin{equation}\label{OC11}
      \Big\langle v_{\cC}(\cdot, \tau_0), y_{i}y_{j}\Big\rangle_\cH=0,\quad 1\leq i<j\leq k, 
    \end{equation}
    \begin{equation}\label{OC21}
         \Big\langle v_{\cC}(\cdot, \tau_0) +\frac{\sqrt{2(n-k)}(|\mathbf{y}|^2-2k)}{4|\tau_0|},|\mathbf{y}|^2-2k\Big\rangle_\cH=0.
    \end{equation}
Then  we introduce  spectral ratio map  $\mathcal{E}(\tau_0)$ for $k$-oval $\mathcal{M}\in \bar{\mathcal{A}}_{\kappa}(\tau_0)$,   which is given by
\begin{equation}\label{spectral map def0}
\mathcal{E}(\tau_0)(\mathcal{M})=  \left(\frac{\langle v_{\cC}(\cdot, \tau_{0}), y^2_{1}-2 \rangle_\cH}{\langle v_{\cC}(\cdot, \tau_{0}), |{\bf{y}}|^2-2k \rangle_\cH}, \dots,\frac{\langle v_{\cC}(\cdot, \tau_{0}),  y^2_{k}-2 \rangle_\cH}{\langle v_{\cC}(\cdot, \tau_{0}), |{\bf{y}}|^2-2k \rangle_\cH}\,\right),
\end{equation}
and when the $\tau_0$ is clearly fixed, we also use the abbreviated notion
\begin{equation}\label{spectral map def1}
    \mathcal{E}(\mathcal{M})=\mathcal{E}(\tau_0)(\mathcal{M}).
\end{equation}
Then our spectral stability theorem is stated as following.
\begin{theorem}[spectral stability]\label{spectral_stability_intro}
    There exist constants $\kappa>0$, $\tau_{*}>-\infty$, $C<\infty$  with the following significance: for $\tau_0\le \tau_*$, suppose $\mathcal{M}^1, \mathcal{M}^2\in \bar{\mathcal{A}}_{\kappa}(\tau_0)$, namely they are $k$-ovals $\kappa$-quadratic at $\tau_0$ and satisfy orthogonality conditions \eqref{condition_positive}--\eqref{OC21}.
  For each fixed $0 < l < +\infty$, there is a constant $\delta(\tau_0, l)>0$ such that  if  the  spectral closeness condition
    \begin{equation}\label{spectral closedness condition}
        |\mathcal{E}(\tau_0)(\mathcal{M}^1)-\mathcal{E}(\tau_0)(\mathcal{M}^2)|\leq \delta(\tau_0, l)
    \end{equation} holds at $\tau_0$, then the renormalized flow $\bar {M}^1_\tau$ is a normal graph over the other renormalized flow $\bar{M}^2_\tau$ for $\tau \in [\tau_0 - l, \tau_0]$. Moreover, for each $m\in \mathbb{N}$, the graph function $\zeta(\tau)$ satisfies the following estimates 
    \begin{align}\label{bilip stability}
         c\sup_{\tau\in [\tau_0-l, \tau_0]}\|\zeta(\tau)\|_{C^{m}}\leq |\mathcal{E}(\tau_0)(\mathcal{M}^1)-\mathcal{E}(\tau_0)(\mathcal{M}^2)|\leq C\|\zeta(\tau_0)\|_{C^{m}},
    \end{align}
    holds, where  $c=c(\tau_0, l, m)>0$ is a small constant.
\end{theorem}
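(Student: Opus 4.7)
The strategy is to pass to the normal graph function $\zeta$ of $\bar M^1_\tau$ over $\bar M^2_\tau$ and to perform a quantitative spectral decomposition, adapting the approach behind Theorem~\ref{thm:uniqueness_eccentricity_intro} to a sharp bi-Lipschitz form. Since both flows are $\kappa$-quadratic at $\tau_0$, condition \eqref{condition3} guarantees that each $\bar M^i_\tau$ is a smooth normal graph over $\mathbb{R}^k\times S^{n-k}$ on $\{|\mathbf{y}|<2|\tau|^{1/100}\}$ throughout $[2\tau_0,\tau_0]\supset[\tau_0-l,\tau_0]$; choosing $\delta(\tau_0,l)$ small then yields, by forward parabolic continuation of graphicality, that $\bar M^1_\tau$ is itself a normal graph over $\bar M^2_\tau$ with small $\zeta$ on the whole interval. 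The right-hand inequality in \eqref{bilip stability} is then elementary: the orthogonality condition \eqref{OC21} forces $\langle v^1_{\cC}, |\mathbf{y}|^2-2k\rangle_{\cH} = \langle v^2_{\cC}, |\mathbf{y}|^2-2k\rangle_{\cH}$ at $\tau_0$, so the denominators in \eqref{spectral map def0} agree for $\cM^1$ and $\cM^2$, and each component of $\cE(\cM^1)-\cE(\cM^2)$ reduces to a fixed $\tau_0$-dependent multiple of $\langle v^1_{\cC}-v^2_{\cC}, y_j^2-2\rangle_{\cH}$, which is linearly controlled by $\|\zeta(\tau_0)\|_{C^0}\le \|\zeta(\tau_0)\|_{C^m}$.

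For the left-hand inequality I would derive the evolution of $\zeta$ on the cylindrical region in the form $\partial_\tau \zeta=\mathcal{L}\zeta+E$, where $E$ is quadratic in $(\zeta,\nabla\zeta,\nabla^2\zeta)$ and vanishes as $v^i\to\sqrt{2(n-k)}$. Setting $x_\pm(\tau)=\|\fp_\pm\zeta_{\cC}(\tau)\|_{\cH}$ and $x_0(\tau)=\|\fp_0\zeta_{\cC}(\tau)\|_{\cH}$, the standard system of spectral ODE inequalities, combined with the ancientness of $\zeta$ (both $\cM^i$ converge to the same cylinder as $\tau\to-\infty$, whence $x_\pm, x_0\to 0$), yields a Merle--Zaag type dominance $x_+(\tau)+x_-(\tau)\le o(1)\,x_0(\tau)$ for all sufficiently negative $\tau$. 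At the base time $\tau_0$, the orthogonality conditions \eqref{condition_positive}--\eqref{OC21} give $\fp_+\zeta_{\cC}(\tau_0)=0$ and restrict $\fp_0\zeta_{\cC}(\tau_0)$ to $\mathrm{span}\{y_j^2-2\}$ with its trace along $|\mathbf{y}|^2-2k$ fixed, so $x_0(\tau_0)$ is, up to the explicit $\tau_0$-factor in \eqref{spectral map def0}, exactly the Euclidean norm of $\cE(\cM^1)-\cE(\cM^2)$. A Gr\"onwall estimate for the ODE of $x_0$ on the bounded interval $[\tau_0-l,\tau_0]$ then gives $x_0(\tau)\le C(\tau_0,l)\,x_0(\tau_0)$; combined with the dichotomy, this controls $\|\zeta_{\cC}(\tau)\|_{\cH}$ on $[\tau_0-l,\tau_0]$ by $C(\tau_0,l)\,|\cE(\cM^1)-\cE(\cM^2)|$, and standard parabolic interior estimates upgrade the $\cH$-norm bound to a $C^m$-bound, producing the left inequality.

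The main obstacle is closing the Merle--Zaag dichotomy in the presence of the nonlinear error $E$ and handling the tip and collar regions, where the cylindrical spectral picture no longer applies. Unlike Theorem~\ref{thm:uniqueness_eccentricity_intro}, whose conclusion is only qualitative vanishing, the sharp factor $C(\tau_0,l,m)$ in \eqref{bilip stability} requires quadratic control of $E$ by $\|\zeta\|$ itself and a propagation of the cylindrical $\cH$-bound across the collar into the tip. This is exactly where the novel collar test tensor and the local Lipschitz parametrization of $k$-ovals announced in the introduction enter: the test tensor supplies the gradient terms needed in the tensor maximum principle to extend pointwise control through the collar region, while the Lipschitz parametrization handles the tip by aligning two $k$-ovals with nearly matching spectral ratios in a $C^m$-controlled fashion. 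Tight tracking of the $\tau_0$- and $l$-dependence through the Gr\"onwall constants, together with the Gaussian decay outside large balls, then gives the asserted bi-Lipschitz estimate.
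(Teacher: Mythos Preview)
Your overall shape is recognizable, but there are two substantive gaps relative to the paper's argument.

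First, you propose to run the spectral analysis directly on the normal graph function $\zeta$ over $\bar M^2_\tau$. The paper does not do this: the evolution of $\zeta$ over a curved, time-dependent base has no clean relation to the flat Ornstein--Uhlenbeck operator $\mathcal{L}$, so the spectral splitting $x_\pm,x_0$ you write down is not the right object. Instead the paper works throughout with $w_\cC=v^1_\cC-v^2_\cC$ (difference of truncated profiles over the fixed cylinder) and $W_\cT=\chi_\cT(Y_1-Y_2)$ in the tip, for which Propositions~\ref{proposition-evolution-wC} and~\ref{lemma-ev-W-appendix} give explicit equations. The coercivity estimate Theorem~\ref{prop-cor-main}, already proved for spectral uniqueness, then yields $\|w_\cC-\fp_0 w_\cC\|_{\hD,\infty}+\|W_\cT\|_{2,\infty}\le \eps\|\fp_0 w_\cC\|_{\hD,\infty}$; this replaces your Merle--Zaag step and, crucially, is uniform for all $\tau\le\tau_0$ rather than just asymptotic. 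The conversion to $\zeta$ happens only at the very end, via a separate implicit-function-theorem lemma (Lemma~\ref{graphical_estimates claim}) that treats the cylindrical region $\{v_2\ge 39\theta/40\}$ and the tip region $\{v_2\le 99\theta/100\}$ by hand.

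Second, your Gr\"onwall step on $[\tau_0-l,\tau_0]$ does not close: you need $x_0(\tau)$ controlled by $x_0(\tau_0)$ for \emph{all} $\tau\le\tau_0$, not just on a bounded window, because the coercivity estimate involves the supremum norm $\|\cdot\|_{\hD,\infty}$. The paper handles this by integrating the neutral-mode ODE $\tfrac{d}{d\tau}(\tau^2 a_{ij})=\tau^2 F_{ij}$ from $\tau$ to $\tau_0$ and using the bound $\int_{\tau-1}^\tau|F_{ij}|\le \tfrac{\eps}{|\tau|}\bar A(\tau_0)$ from Claim~\ref{claim411}; this yields $|a_{ij}(\tau)|\le |a_{ij}(\tau_0)|+\eps\bar A(\tau_0)$ uniformly in $\tau\le\tau_0$, hence $\bar A(\tau_0)\le C\|\fp_0 w_\cC(\tau_0)\|_\cH$ after absorbing. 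Parabolic interior estimates on $w_\cC$ and $W_\cT$ (not on $\zeta$) then upgrade to $C^m$, and only afterwards is the graphical lemma invoked. Your graphicality argument via \eqref{condition3} alone is also insufficient, since that condition covers only $\{|\mathbf{y}|<2|\tau|^{1/100}\}$ and says nothing about the tip; the paper's Lemma~\ref{graphical_estimates claim} is precisely where the smallness condition $|\cE(\cM^1)-\cE(\cM^2)|\le\delta(\tau_0,l)$ is spent.
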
 
Theorem \ref{spectral_stability_intro} (spectral stability)  is a quantitative version of Theorem \ref{thm:uniqueness_eccentricity_intro} (spectral uniqueness), which allows us to parametrize families of $k$-ovals with almost matching spectral ratio parameters in a Lipschitz continuous way. It also suggests that   the  moduli space of $k$-ovals has local Lipschitz-regularity and $(k-1)$-rectifiability (see \cite[Section 3.2.14]{federer2014geometric} or  \cite[Chapter 7]{Rectifiability} for definition of rectifiability).

%


\subsection{Major new challenges}\label{challenge} 
In this subsection, we outline several significant new challenges distinct from previous works. To introduce them, we define
the \emph{collar region}
\begin{equation}
\label{eq-collar-region0}
\collar = \bigl\{  L/\sqrt{|\tau|} \le v \le 2  \theta \bigr\},
\end{equation}
which interpolates asymptotic behavior of $k$-ovals in the \emph{cylindrical region}
\begin{equation}\label{cylindrical_ region}
\mathcal{C}=\big\{ v \ge \theta  \big\}    
\end{equation}
 and the \emph{soliton region} \begin{equation}\label{soliton_ region}
     \mathcal{S}=\bigl\{v \le L/\sqrt{|\tau|} \bigr\}
 .\end{equation}
In addition, the \emph{tip region} refers
\begin{equation}\label{eq-tip-region0}
\mathcal{T}= \big\{v \le 2   \theta \big\}=\mathcal{K}\cup\mathcal{S}.
\end{equation}
Throughout the paper,  $\theta>0$ is a fixed small enough constant and $0<L<+\infty$ is a fixed large enough constant to be determined later.

First, the almost Gaussian collar Theorem~\ref{prop-great} represents the most challenging aspect of our analysis. This estimate in the collar region bridges the gap on asymptotics between the {cylindrical region} in \eqref{cylindrical_ region} and the {soliton region} in \eqref{soliton_ region}--regions where the solutions exhibit markedly different asymptotic behaviors (see  asymptotics in intermediate and tip regions in Theorem \ref{strong_uniform0}). In effect, this allows us to combine energy estimates from both regions to establish the spectral uniqueness theorem. The Gaussian collar estimate arises from a concavity property of $v^2({\bf y}, \tau)$ along the radial direction in $\mathbb{R}^k$, a property we refer to as \emph{quadratic almost concavity}.  In the following detailed discussion about quadratic almost concavity, $g$ denotes the metric on the manifold induced from the embedding $M_t$ in $\mathbb{R}^{n+1}$, the Hessian of function $Q$ is computed with respect to the Riemannian connection of $g$, and $\partial_{\vartheta}$ is any vector field on $M_t$ along $S^{n-k}$ direction. Previously, this quadratic almost concavity was established for $2$-ovals in $\mathbb{R}^4$ through a argument in the proof of tensor maximum principle  of Hamilton \cite{Ham_pco} on the intrinsic manifold.

 \begin{theorem}[{quadratic almost concavity, \cite[Theorem 3.9]{CDDHS}}]
      \label{prop-concavity_introstated}
There exist constants $\kappa>0$ and $\tau_*>-\infty$ with the following significance. If  $\mathcal{M}$ is a $2$-oval  (bubble-sheet oval)  in $\mathbb{R}^4$ which is $\kappa$-quadratic at time $\tau_0 \le \tau_*$, then for all $t\le -e^{-\tau_0}$ we have
\begin{equation}\label{concavity_coincidence}
\bar{A}(X, X):=\nabla^2 Q(X,X) -\gamma g(X,X)\leq 0
\end{equation}
for every tangent vector $X$ of $M$ with $X\perp \partial_\vartheta$. Here, $Q=V^2$ where $V$ is the profile function of the unrenormalized flow $M_t$ (viewed as an intrinsic function in the manifold) and $\gamma= \Big (\frac{\sqrt{|t|}}{\sqrt{Q\log |t|}}\Big )^{3}$.
\end{theorem}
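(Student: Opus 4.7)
The plan is to apply Hamilton's tensor maximum principle to the symmetric $(0,2)$-tensor
\[
\bar A(X,Y) \;=\; \nabla^2 Q(X,Y) \;-\; \gamma(t)\, g(X,Y),
\]
restricted to the subbundle of tangent vectors with $X \perp \partial_\vartheta$. The argument proceeds in two stages: first, verify strict negativity $\bar A < 0$ at the initial time $t_0 = -e^{-\tau_0}$, using the $\kappa$-quadratic asymptotics in each of the cylindrical, collar, and tip regions; second, propagate this forward by checking that at any first touching pair $(p,X)$ with $X \perp \partial_\vartheta$, the reaction-diffusion inequality $(\partial_t - \Delta_g)\bar A(X,X) \le 0$ holds. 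Since $M_t$ is compact, the parabolic maximum principle then precludes $\bar A(X,X)$ from crossing zero at any later time.

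For the initial stage, $\kappa$-quadraticity gives in the cylindrical region $Q = V^2 \simeq 2(n-k)|t_0| - (n-k)(|\mathbf y|^2 - 2k)$, so $\nabla^2 Q \le -c\, g$ along the $\mathbb R^k$-directions while $\gamma(t_0) = O((\log|t_0|)^{-3/2})$ is negligible. In the tip region, comparison with the (rescaled) translating bowl profile produces a quantitatively concave radial Hessian of $Q$, with concavity dominating $\gamma\, g$ thanks to the factor $(Q\log|t|)^{3/2}$ in $\gamma^{-1}$. The collar region $\{L/\sqrt{|\tau|} \le v \le 2\theta\}$ is handled by interpolation between these two asymptotic regimes, using the known matching estimates in Theorem \ref{strong_uniform0}.

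For the propagation stage, derive the Simons-type reaction-diffusion equation satisfied by $\nabla^2 Q$ from the mean curvature flow equation for $\vec F$. At a first touching triple $(t_1,p,X)$ extend $X$ to a local vector field with $(\nabla X)_p = 0$; spatial maximality gives $\nabla_\cdot \bar A(X,X)_p = 0$ and $\Delta \bar A(X,X)_p \ge 0$. Substituting into the evolution equation and using the orthogonality $X \perp \partial_\vartheta$, which eliminates the non-concave contribution coming from the spherical factor $S^{n-k}$ on which the hypersurface is genuinely convex, one reaches
\[
(\partial_t - \Delta)\bar A(X,X) \;\le\; -\gamma'(t)\, g(X,X) \;+\; \mathrm{Error},
\]
where the error is built from $\nabla V$, $\nabla^2 V$ and the second fundamental form $A$. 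The specific scaling $\gamma = (\sqrt{|t|}/\sqrt{Q\log|t|})^3$ is the critical rate at which $-\gamma'\, g$ dominates the residual error uniformly in all three regions.

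The main obstacle is the collar interface, where neither the pure cylindrical asymptotics nor the pure bowl-soliton comparison applies and where $\nabla^2 Q$ and $\gamma g$ are of comparable size. One must invoke the matching collar asymptotics to quantify the Simons reaction and verify that $-\gamma'\, g$ still dominates; this is precisely what forces the exponent $3$ and the logarithmic factor in the definition of $\gamma$. The restriction $X \perp \partial_\vartheta$ is essential throughout, since along $\partial_\vartheta$ the hypersurface is convex and $\nabla^2 Q$ is positive there; dropping the restriction would make the conclusion false.
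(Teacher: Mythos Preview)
Your outline has the time orientation backwards, and this is a genuine gap. The conclusion is $\bar A(X,X)\le 0$ for all $t\le -e^{-\tau_0}$, i.e.\ for times \emph{earlier} than $t_0=-e^{-\tau_0}$. Verifying the inequality at $t_0$ and ``propagating forward'' would give information only for $t\ge t_0$, which is not what is asserted, and the parabolic maximum principle does not run backward. The actual scheme (carried out in \cite{CDDHS}, and in Section~\ref{sec-apriori} here for the modified tensor) goes the other way: one works with the perturbation $\varepsilon=\gamma+\delta$ for fixed $\delta>0$, shows that $\nabla^2Q-\varepsilon g<0$ on $(\partial_\vartheta)^\perp$ for all sufficiently negative $t\le T_\delta\ll t_0$ (there the renormalized flow is so close to the cylinder that the derivatives of $V$ are $o(1)$ while $\varepsilon\ge\delta>0$), then propagates \emph{forward} to $t_0$, and finally lets $\delta\downarrow 0$. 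Your region-by-region verification at $t_0$ is neither needed nor sufficient; the tip and center regions are handled directly for all $t\le t_0$ (Proposition~\ref{lemma-soliton-region}), and the reaction-term argument is only run in the complementary region.

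There is a second gap in the extension step. You cannot take $(\nabla X)_p=0$: the maximum principle is being applied on the subbundle $(\partial_\vartheta)^\perp$, and a parallel extension will generally rotate $X$ out of it. One needs the tailored extension of Claim~\ref{claim_extension}, which keeps $X\perp\partial_\vartheta$ but has $\nabla_{\partial_\vartheta}X=(\tfrac12 Q^{-1}\nabla_X Q)\,\partial_\vartheta$; this produces explicit extra terms in $(\partial_t-\Delta)\,\bar A(X,X)$ that are computed from the block-diagonal structure of $\bar A$ in $(x,\vartheta)$-coordinates. Finally, the reaction term is not handled by a crude $-\gamma'\,g+\text{Error}$ balance (note also that $\gamma$ depends on $Q$, not just on $t$). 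The mechanism is that the null eigenvector condition yields algebraic identities for $h(X,X)$ and $h(X,\nabla V)$ in terms of $\nabla Q$ (Lemma~\ref{lemma_null_eigen}), and these drive the dominant negative contribution in Proposition~\ref{prop-reactionterm}; the specific choice of $\gamma$ enters through $N_3$, but the heart of the estimate is $N_1$.
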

However, for general $k$-ovals in $\mathbb{R}^{n+1}$, the quadratic almost concavity estimate for $Q = V^2$ fails unless the dimension of the sphere factor in the tangent flow $\mathbb{R}^k\times S^{n-k}(\sqrt{2(n-k)|t|})$ at $-\infty$ satisfies $n-k = 1$. Following the proof in \cite[Section 3]{CDDHS}, let $X_p $ be a null-vector of $\bar A$ satisfying $X_p\perp \partial_\vartheta $ which happens to exist for the first time. In a neighborhood of $p$, extend $X_p$ to a vector field $X\perp \partial_{\vartheta} $, e.g., as in Claim \ref{claim_extension} (extension),  and consider the function $A(X,X)$. (Hamilton's original argument does not have such a step, but here this is required as the maximum principle is applied to a subbundle of tangent bundle that is perpendicular to ${S}^{n-k}$.) A direct computation shows that the evolution equation for $\bar{A}(X, X) = \nabla^2 Q(X, X) - \gamma\, g(X, X)$
actually takes the form
\begin{equation}\label{difficulty}
    (\partial_t-\Delta+\nabla_Z)(\bar{A}(X, X))\leq({\frac{2(n-k)-3}{4}}+\zeta+\gamma)\frac{|\nabla_X Q|^2}{Q^2}(\frac{|\nabla Q|^2}{Q}-2\gamma),
\end{equation}
where $\zeta > 0$ denotes an arbitrarily small constant  and $Z$ is a smooth vector field (see \eqref{eq-defZ} for its precise definition). Notably, although $(\frac{|\nabla Q|^2}{Q}-2\gamma)\ge 0$ and $\gamma+\zeta$ is small, the reaction term on the right-hand side does not have a preferred negative sign for $n-k\ge 2$, thereby precluding the application of the maximum principle. Consequently, it becomes necessary to construct a novel, finely tuned test tensor for the maximum principle argument, which is essential for deriving the Gaussian collar estimate in Theorem~\ref{prop-great}.

Another major challenge arises from additional terms that complicate the energy estimates for higher-dimensional $k$-ovals when $k>2$. It is convenient to work with the polar coordinates \begin{equation}\label{v_radial_representation}
v(y,\omega,\tau):=v(\mathbf{y},\tau),
\end{equation}
where $\mathbf{y}=y\,\omega$, $y=|\mathbf{y}|\ge0$ and $\omega \in S^{k-1}$. To work in the tip region, let us also  define the inverse profile function $Y(v, \omega, \tau)$  of $v$  by the equation.
\begin{equation}\label{inverse profile}
    Y(v(y, \omega, \tau), \omega, \tau)=y.
\end{equation}
Then, extra terms emerging from non-radial directions in $\mathbb{R}^k$ appear in the evolution equation of $Y$
\begin{align}
    Y_{\tau}
    \!&\!=\frac{(Y^2 + |\nabla_{S^{k-1}}Y|^2)\nabla^2_{vv}Y - 2 \nabla^2 Y ( \nabla_{S^{k-1}}Y, \nabla_{\mathbb{R}}Y ) + (1 +|\nabla_\mathbb{R} Y|^2)\Delta_{S^{k-1}}Y}{Y^2\, (1 + |\nabla_\mathbb{R} Y|^2)+|\nabla_{S^{k-1}}Y|^2} \nonumber\\
    &+\Biggl(\frac{n-k}{v}-\frac{v}{2}\Biggr)\nabla_v Y -\frac{|\nabla_{S^{k-1}}Y|^2}{Y\Bigl[Y^2(1+|\nabla_\mathbb{R} Y|^2)+|\nabla_{S^{k-1}}Y|^2\Bigr]}+\frac{Y}{2}-\frac{k-1}{Y} \nonumber\\
    &+ \frac{1}{Y^2}\frac{\Delta_{S^{k-1}}Y\,|\nabla_{S^{k-1}}Y|^2-\nabla^2 Y( \nabla_{S^{k-1}}Y, \nabla_{S^{k-1}}Y)}{Y^{2}(1+|\nabla_\mathbb{R} Y|^2)+|\nabla_{S^{k-1}}Y|^2}.
\end{align}
Here, $|\cdot|$ and $\nabla$ denote the norm and the connection induced by the cylindrical metric $dv^2+g_{S^{k-1}}$, $\nabla_{S^{k-1}}V$ and $\nabla_{\mathbb{R}}V$ represent the projections of $\nabla V$ onto the spherical and $\mathbb{R}$ components, respectively, and $\Delta_{S^{k-1}}$ is the Laplacian on the sphere $S^{k-1}$. These additional terms (for example, the new terms in the last line were unseen in \cite[(4.67)]{CDDHS} for the $2$-oval case in $\mathbb{R}^4$.) must be handled with exceptional care.

The final obstacle concerns the Lipschitz parametrization required for proving Theorem~\ref{spectral_stability_intro} (spectral stability). The difficulty arises from the fact that we only have almost matching spectral ratio parameters $\mathcal{E}(\mathcal{M}^1)$ and $\mathcal{E}(\mathcal{M}^2)$ at a single time $\tau_0$, rather than a global continuous parametrization via the inverse spectral ratio map $\mathcal{E}^{-1}$, where the spectral ratio map $\mathcal{E}$ is defined in \eqref{spectral map def0} and \eqref{spectral map def1}.

\subsection{Outline of the proof}
\label{sec-outline}
According to main steps, we organize our argument in the next four sections as follows:
{\setlength{\leftmargini}{1.5em}
\begin{itemize}
\item uniform sharp asymptotics,
\item quadratic almost concavity,
\item spectral uniqueness theorem,
\item spectral stability theorem.
\end{itemize}
}

In Section~\ref{Uniform sharp asymptotics}, we establish uniform asymptotics for $\kappa$-quadratic $k$-ovals in three distinct regions, corresponding to the \emph{parabolic region} $\mathcal{P} = \{{\bf y} : |{\bf y}| \leq \varepsilon^{-1}\}$, the \emph{cylindrical region} (intermediate region) $\mathcal{C} = \{ v \ge \theta \}$, and the \emph{soliton region} (piece of tip region) $\mathcal{S} = \{ v \le L/\sqrt{|\tau|} \}$.
Note also that we often represent functions in the polar coordinates. i.e. $ v(y,\omega,\tau)=v(\mathbf{y},\tau)$ as in \eqref{v_radial_representation} for $y=|{\bf y}|\ge0$ and  $\omega \in S^{k-1}$.
\begin{theorem}[uniform sharp asymptotics c.f. {\cite[Section 2]{CDDHS}, \cite[Section 5]{DZ_spectral_quantization}}]\label{strong_uniform0} For every $\varepsilon>0$, there exists $\kappa>0$ and $\tau_{*}>-\infty$ such that if a $k$-oval $\mathcal{M}=\{M_t\}$ in $\mathbb{R}^{n+1}$ is $\kappa$-quadratic at time $\tau_{0}\leq \tau_{*}$, then{\setlength{\leftmargini}{1.5em} 
\begin{itemize}
\item Parabolic region:    for every $\tau\leq \tau_{0}$ we have:
     \begin{equation}
    \sup_{|{\bf{y}}|\leq \eps^{-1}}\left| v({\bf{y}}, \tau)-\sqrt{2(n-k)}+\frac{\sqrt{2(n-k)}}{4}\frac{|{\bf{y}}|^2-2k}{|\tau|} \right|\leq \frac{\varepsilon}{|\tau|} ;
    \end{equation}

\item Intermediate region:  for every $\omega\in  S^{k-1}$ and  $\tau\leq \tau_{0}$ we have:
  \begin{equation}
       \sup_{z\leq \sqrt{2}-\eps}\left|\bar{v}(z,\omega,\tau)-\sqrt{(n-k)(2-z^2)}\right|\leq \varepsilon,
\end{equation}
where $\bar{v}(z,\omega,\tau) = v(|\tau|^{\frac{1}{2}}z, \omega, \tau)$;

\item Tip region:  for every $\omega \in S^{k-1}$ and every $s \leq -e^{- \tau_{0}}$, we have that the rescaled flow  $\widetilde{M}^{\omega,s}_t$ obtained by centering $\mathcal{M}$ to tip point and parabolically dilating a factor $\sqrt{|s|^{-1}\log|s|}$ (see \eqref{widetildeM})
is $\eps$-close in $C^{\lfloor 1/\eps\rfloor}$ in $B_{\eps^{-1}}(0) \times (-\eps^{-2},\eps^{-2})$ to $N_t\times\mathbb{R}^{k-1}$, where $N_t$ is the unique rotationally symmetric translating bowl soliton $\bowl$ in $\mathbb{R}^{n-k+2}$ with tip $0\in N_0$ that translates in negative $\bar{\omega} = (\omega, 0)$ direction with speed $1/\sqrt{2}$.
\end{itemize}}
\end{theorem}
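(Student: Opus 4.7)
The proof proceeds scale by scale, propagating the $\kappa$-quadratic initial data at $\tau_0$ both backward in time and spatially outward, uniformly across the class $\bar{\mathcal{A}}_{\kappa}(\tau_0)$, following the blueprint of \cite{CDDHS, DZ_spectral_quantization}. For the parabolic region, set $w(\mathbf{y},\tau)=v_{\cC}(\mathbf{y},\tau)-\sqrt{2(n-k)}$ and decompose $w=w_{+}+w_{0}+w_{-}$ via the spectral projections associated with the Ornstein--Uhlenbeck operator $\mathcal{L}$. The renormalized equation \eqref{eqn-v} linearizes to $\partial_\tau w=\mathcal{L}w+\mathcal{N}(w)$ with a quadratic nonlinearity, and testing with the three projectors yields an ODE system for the triple of $\mathcal{H}$-norms $(\|w_+\|_{\mathcal{H}}^2,\|w_0\|_{\mathcal{H}}^2,\|w_-\|_{\mathcal{H}}^2)$. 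Applying the Merle--Zaag ODE lemma with the quantitative bound \eqref{condition1intro} as input at $\tau_0$ forces the neutral mode to dominate, so that
\begin{equation}
w(\cdot,\tau)=-\frac{\sqrt{2(n-k)}}{4|\tau|}(|\mathbf{y}|^2-2k)+o(|\tau|^{-1})\quad\text{in }\mathcal{H}
\end{equation}
uniformly for all $\tau\le\tau_0$. The graphical radius condition \eqref{condition3} together with interior parabolic regularity then upgrades this $\mathcal{H}$-estimate to the required uniform $C^0$ control on $\{|\mathbf{y}|\le\eps^{-1}\}$.

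For the intermediate region, I would pass to the similarity coordinate $z=\mathbf{y}/\sqrt{|\tau|}$ and the rescaled profile $\bar v(z,\omega,\tau)=v(|\tau|^{\frac{1}{2}}z,\omega,\tau)$; under this rescaling $\sqrt{(n-k)(2-|z|^2)}$ is the stationary profile of the limiting equation. The plan is to propagate the parabolic-region output outward by sweeping with one-sided barriers of the shape $\sqrt{(n-k)(2-|z|^2)}\pm\eta$, corrected by small endpoint terms, and verifying via the comparison principle that $\bar v$ remains trapped all the way out to $|z|\le\sqrt{2}-\eps$. The inner matching at small $|z|$ is provided by the parabolic step, while the outer matching near the soliton scale is supplied by uniform noncollapsedness and mean-convexity, which prevent $\bar v$ from escaping on the incorrect side.

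For the tip region, fix $\omega\in S^{k-1}$ and $s\le -e^{-\tau_0}$; the rescaled flow $\widetilde M^{\omega,s}_t$ is a smooth, ancient, noncollapsed, convex, $\mathrm{O}(n+1-k)$-symmetric flow, whose intermediate asymptotics forces any subsequential limit as $s\to-\infty$ to split off $k-1$ flat directions and reduce to a noncollapsed rotationally symmetric convex translator in $\mathbb{R}^{n+1-k}$. By the uniqueness of $\bowl$ with translation speed $1/\sqrt{2}$ (the speed being pinned down by the asymptotic slope inherited from the intermediate profile), every such limit equals $N_t\times\mathbb{R}^{k-1}$; a standard compactness--contradiction argument run over sequences in $\bar{\mathcal{A}}_{\kappa}(\tau_0)$ then upgrades this identification to the quantitative $\eps$-closeness on $B_{\eps^{-1}}(0)\times(-\eps^{-2},\eps^{-2})$. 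The most delicate point will be the uniform propagation across the intermediate region: one must construct barriers whose matching errors on both the inner (parabolic) and outer (soliton) interfaces are controlled by a modulus $\eps(\kappa)\to 0$ as $\kappa\to 0$, without presupposing any pointwise profile at the soliton-scale boundary, so that the three scales fit together class-uniformly rather than solution-by-solution.
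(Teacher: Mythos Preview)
Your overall three-scale strategy and the tip-region argument (compactness/contradiction, splitting off $\mathbb{R}^{k-1}$, bowl uniqueness) are correct and match the paper. There is, however, a genuine gap in the parabolic step.

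The Merle--Zaag lemma applied to $(\|w_+\|_{\mathcal{H}}^2,\|w_0\|_{\mathcal{H}}^2,\|w_-\|_{\mathcal{H}}^2)$ only forces neutral-mode dominance; it does \emph{not} pin down the form of $w_0$. For $k\ge 2$ the neutral space $\mathcal{H}_0$ has dimension $\tfrac{k(k+1)}{2}$, and $\kappa$-quadraticity at the single time $\tau_0$ only says the symmetric coefficient matrix $A(\tau_0)$ is close to $-\tfrac{\sqrt{2(n-k)}}{4|\tau_0|}\mathrm{Id}_k$. To get the stated parabolic asymptotics for \emph{all} $\tau\le\tau_0$ you must propagate this backward through the nonlinear dynamics inside $\mathcal{H}_0$, which is governed by the matrix Riccati equation $\dot A=-\beta_{n,k}^{-1}A^2+E$ with $|E|=o(|A|^2)$. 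The paper handles this (Theorem~\ref{point_strong}) by passing to the elementary symmetric polynomials $S_j=\sigma_j(A)$ of the eigenvalues, deriving the closed system $\dot S_j=-\beta_{n,k}^{-1}(S_1S_j-(j+1)S_{j+1})+E_j$, and showing after a further change of variables that this system has a linearly stable equilibrium at the origin; this upgrades $\kappa$-quadraticity at $\tau_0$ to \emph{strong} $\kappa$-quadraticity from $\tau_0$ (Definition~\ref{strong}), after which the parabolic bound follows from parabolic regularity and Agmon's inequality. Without this step the eigenvalues of $A(\tau)$ could a priori drift within $\mathcal{H}_0$ for $\tau<\tau_0$, and your displayed asymptotic does not follow.

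On the intermediate region: the paper does not construct a two-sided PDE barrier of the shape $\sqrt{(n-k)(2-z^2)}\pm\eta$. The lower bound does come from barriers (cited from \cite{DZ_spectral_quantization}), but the upper bound exploits convexity of $v$ in $y$ directly: the quantity $w^\omega=v^2-2(n-k)$ satisfies the first-order inequality $w^\omega_\tau\le w^\omega-\tfrac12 y\,w^\omega_y$, which is integrated along characteristics emanating from the parabolic region. This needs only inner matching, so the ``outer matching near the soliton scale'' issue you flag at the end does not arise.
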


We begin by establishing uniform sharp asymptotics under the stronger assumption of strong $\kappa$-quadraticity (see Definition~\ref{strong}). We then upgrade this by showing the estimate holds under a weaker assumption that the ancient solutions are merely $\kappa$-quadratic at a single time. This includes a quantitative Merle–Zaag type argument, as in \cite[Section~3]{CHH_translator} and \cite[Section~2]{CDDHS}. However, in our setting the ODE dynamics among spectral components is considerably more intricate (cf. \cite{DH_hearing_shape, DZ_spectral_quantization}).

In Section \ref{Quadratic_almost_concavity_section},
we derive the important piece of asymptotic information in the
\emph{collar region} $\collar = \bigl\{  L/\sqrt{|\tau|} \le v \le 2  \theta \bigr\}$ as defined in \eqref{eq-collar-region0}.
\begin{theorem}[almost Gaussian collar\footnote{It is referred to as almost Gaussian collar since \eqref{almost gauss collar derivative} implies $Y(v, \omega, \tau)\sim Ce^{-\frac{v^2}{n-k}}$ in the collar region. c.f. \cite[(4.64)]{CDDHS} and \cite[Corollary 5.9]{CHH_translator}.}] \label{prop-great}
For every $\varepsilon > 0$, there exist constants $\kappa>0$, $\tau_*>-\infty$, $L<\infty$, and $\theta >0$ with the following significance. If  $\mathcal{M}$ is a $k$-oval which is $\kappa$-quadratic at time $\tau_0 \le \tau_*$, then for all $\tau \le \tau_0$ and for the profile function $v$ in \eqref{v_radial_representation} and its inverse profile function $Y$ in \eqref{inverse profile}, the following radial derivative estimates
\begin{equation}\label{gaussian_collar_estimate}
\left|\nabla(v^2)\cdot {\bf y} + 4(n-k)\right| < \varepsilon
\end{equation}
and equivalently the almost Gaussian collar estimates
\begin{equation}\label{almost gauss collar derivative}
    \left|1+\frac{vY}{2(n-k)Y_{v}}\right| < \varepsilon
\end{equation}
hold in the collar region $\mathcal{K}= \bigl\{  L/\sqrt{|\tau|} \le v \le 2  \theta \bigr\}$.
 \end{theorem}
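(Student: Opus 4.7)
The overall strategy is to first establish a quantitative quadratic almost concavity estimate for $Q=V^2$ in directions perpendicular to $\partial_\vartheta$, and then to integrate this concavity along radial rays in order to interpolate between the pinned boundary values of $\nabla(v^2)\cdot{\bf y}$ at the two ends of the collar supplied by Theorem~\ref{strong_uniform0}.

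For the concavity step, the direct analogue of the $2$-oval argument from \cite{CDDHS} breaks down because the reaction term in \eqref{difficulty} for $\bar A(X,X)=\nabla^2 Q(X,X)-\gamma\,g(X,X)$ carries a positive coefficient $(2(n-k)-3)/4$ as soon as $n-k\geq 2$. I would instead apply the tensor maximum principle to a corrected tensor of the form
\begin{equation*}
    \tilde A(X,Y) = \nabla^2 Q(X,Y) - \gamma\, g(X,Y) + \beta(Q)\,\nabla_X Q\,\nabla_Y Q,
\end{equation*}
with $\beta$ a small, carefully tuned scalar function of $Q$ (and possibly of $|\nabla Q|^2$). At a first null-vector $X_p$ of $\tilde A$, the vanishing condition yields an algebraic identity expressing $\nabla^2 Q(X_p,X_p)$ in terms of $(\nabla_{X_p}Q)^2$; combining this with the first-order condition $\nabla\tilde A(X,X)|_p=0$ injects into the evolution of $\tilde A(X,X)$ an additional contribution proportional to $|\nabla_X Q|^2/Q^2\cdot(|\nabla Q|^2/Q-2\gamma)$, which for an appropriate choice of $\beta$ dominates the bad term in \eqref{difficulty} for every $n-k\ge 1$. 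Extending $X_p$ to a local section of $(\partial_\vartheta)^\perp$ as in Claim~\ref{claim_extension}, Hamilton's tensor maximum principle on this subbundle then propagates $\tilde A \le 0$ forward from $\tau=\tau_0$, and in particular gives $\nabla^2 Q(X,X)\le \gamma\, g(X,X)-\beta(Q)(\nabla_X Q)^2$ on any $X\perp\partial_\vartheta$.

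For the interpolation step, Theorem~\ref{strong_uniform0} pins down the radial derivative at both ends of $\mathcal{K}$: on the cylindrical side, $\bar v(z)^2\approx (n-k)(2-z^2)$ gives $\nabla(v^2)\cdot{\bf y}=-2(n-k)z^2=-4(n-k)+O(\theta^2)$ at $\{v=2\theta\}$, while on the tip side the neck asymptotics of the bowl soliton $\bowl$ produce the same limit $-4(n-k)$ as $L\to\infty$. The quadratic almost concavity from Step~1, specialized to the radial direction $X=\partial_r$ which is perpendicular to $\partial_\vartheta$, implies that $r\mapsto r\,\partial_r(v^2)(r,\omega,\tau)+4(n-k)$ is almost monotone along each radial ray in $\mathcal{K}$ with error $O(\gamma)+O(\beta)$. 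Pinching this quantity between the two almost equal boundary values then gives $|\nabla(v^2)\cdot{\bf y}+4(n-k)|<\varepsilon$ throughout $\mathcal{K}$, provided $\theta$ is chosen small, $L$ large, and $\tau_*$ sufficiently negative.

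The principal obstacle is the explicit construction and tuning of the correction $\beta(Q)$ in Step~1: it must be selected so that the null-vector identity for $\tilde A$ algebraically absorbs the bad $(2(n-k)-3)/4$ term for all $n-k\ge 1$, while the extension of $X_p$ off of $p$ does not generate uncontrolled cross-terms from derivatives transverse to $X_p$ or from the $\mathrm{O}(n+1-k)$-direction. A secondary technical point is to verify compatibility of the $\tilde A\le 0$ sign with the initial data at $\tau=\tau_0$ and with the cylindrical/tip asymptotic behavior; this is expected to follow from $\kappa$-quadraticity at $\tau_0$ together with the uniform asymptotics of Theorem~\ref{strong_uniform0}. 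Once these ingredients are assembled, the interpolation in Step~2 is essentially a one-dimensional comparison along radial rays.
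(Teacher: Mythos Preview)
Your two-step strategy matches the paper's approach: first prove a quadratic almost concavity via a gradient-corrected test tensor and Hamilton's tensor maximum principle on the subbundle $(\partial_\vartheta)^\perp$, then integrate the resulting radial inequality between boundary values supplied by Theorem~\ref{strong_uniform0} at $v=2\theta$ and $v=L/\sqrt{|\tau|}$. Two points of comparison. First, the paper's correction is slightly richer than a single $\beta(Q)\,\nabla Q\otimes\nabla Q$ term: the tensor actually used is
\[
A=\nabla^2 Q-\gamma g-\frac{|\nabla Q|^2}{4Q}\Bigl(\frac{\nabla Q\otimes\nabla Q}{2Q}-\gamma g\Bigr),
\]
so in addition to the rank-one piece (your $\beta=-|\nabla Q|^2/8Q^2$, which indeed depends on $|\nabla Q|^2$) there is a second correction $+|\nabla V|^2\gamma\,g$ proportional to the metric; the extra reaction terms this produces are what ultimately absorb the $(2(n-k)-3)/4$ coefficient uniformly in $n-k$. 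Second, in Step~2 the paper does not argue via monotonicity of $r\mapsto r\,\partial_r(v^2)$ directly but changes variables to regard $v_y$ as a function of $v$: the concavity applied to $X=\partial_y$ becomes $\frac{d}{dv}(v^2v_y^2)\le 20|\tau|^{-3/2}v^{-2}$, which is integrated from $L/\sqrt{|\tau|}$ to $2\theta$ and then multiplied by $y^2\sim 2|\tau|$. Your sketch of the mechanism in Step~1 is also slightly off: the gain does not come from the first-order condition $\nabla\tilde A(X,X)=0$ but from the new terms the correction contributes directly to the reaction term in the evolution equation of $A$, combined with the algebraic null-eigenvector identities.
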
 

The almost Gaussian collar theorem follows from a quadratic almost concavity estimate and is crucial in establishing derivative estimates required for Theorem \ref{thm:uniqueness_eccentricity_intro}. To state and proof the quadratic almost concavity, we view unrescaled profile function $V(x, t)=\sqrt{|t|}v({\bf y}, \tau)$, for ${\bf y}=|\tau|^{-1}x, t=-e^{-\tau}$, as an intrinsic function on a manifold $S^n$ via the pull-back through the embedding of evolving hypersurface (see Section \ref{sec_intrinsic}) and consider $Q=V^2$. To overcome the new challenge that was unseen in the quadratic almost concavity estimate in the case $n-k=1$ as discussed in Section \ref{challenge}, we use the following test tensor in Theorem \ref{prop-concavity0} by adding two novel gradient type terms to the previously employed tensor $\bar{A}=\nabla^2 Q-\gamma g$ in \cite{CDDHS}. Consequently the following novel version of almost concavity estimates below
holds.
\begin{theorem}[quadratic almost concavity]\label{prop-concavity0}
There exist constants $\kappa>0$ and $\tau_*>-\infty$ with the following significance. If  $\mathcal{M}$ is a $k$-oval which is $\kappa$-quadratic at time $\tau_0 \le \tau_*$, then for all $\tau\leq\tau_0$ we have
\begin{equation}\label{almost_quadratic concavity_estimates_intro}
\left[{\nabla^2} Q- \gamma g-\frac{|\nabla Q|^2}{4Q}\left(\frac{\nabla Q\otimes\nabla Q}{2Q}-\gamma g\right)\right]\!\!(X, X)\leq 0
\end{equation}
for every vector field $X$ perpendicular to all  vector fields along $S^{n-k}$, where
\begin{equation}
    \gamma=\Bigg (\frac{\sqrt{|t|}}{\sqrt{Q\log |t|}}\Bigg )^{3}.
\end{equation}
\end{theorem}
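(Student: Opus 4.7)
The plan is to apply a tensor maximum principle to the test tensor
\[
T := \nabla^2 Q - \gamma g - \frac{|\nabla Q|^2}{4Q}\left(\frac{\nabla Q \otimes \nabla Q}{2Q} - \gamma g\right),
\]
following the broad strategy of \cite[Section 3]{CDDHS} but exploiting the novel gradient-type corrections to overcome the obstruction in \eqref{difficulty} when $n-k\geq 2$. First I would verify the inequality $T(X,X)\leq 0$ for $X\perp \partial_\vartheta$ holds at the initial time $t_0 = -e^{-\tau_0}$ and asymptotically as $t\to -\infty$. In the parabolic region this follows from the explicit quadratic expansion of $Q=V^2$ provided by Theorem \ref{strong_uniform0}; in the tip region it reduces, via the bowl soliton asymptotics, to concavity of $Q$ in directions orthogonal to $\nabla V$, since the correction term $-\frac{|\nabla Q|^2}{4Q}\cdot\frac{\nabla Q\otimes \nabla Q}{2Q}$ removes precisely the Hessian contribution along $\nabla V$; the collar/intermediate region is handled by interpolation using the smallness of $\gamma$.

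I would then argue by contradiction. Suppose $(p,t_1)$ with $t_1\in(-\infty, t_0)$ is a first space-time point at which some null vector $X_p\in T_pM$ satisfying $X_p\perp \partial_\vartheta$ realizes $T(X_p,X_p)=0$. Following the bundle-restricted construction used in \cite[Section 3]{CDDHS} (cf.\ Claim \ref{claim_extension}), extend $X_p$ to a smooth vector field $X$ in a space-time neighborhood of $p$ with $X\perp \partial_\vartheta$ and $\nabla X|_p=0$, chosen so that no spurious derivative terms are introduced in the evolution of $T(X,X)$. By the parabolic Hopf lemma, at $(p,t_1)$ we must have
\[
(\partial_t - \Delta + \nabla_Z)\,T(X,X)\,\big|_{(p,t_1)}\geq 0,
\]
with $Z$ the vector field from \eqref{eq-defZ}, and the goal is to establish the strict opposite inequality.

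The core computation is the evolution equation of $T(X,X)$. The piece $\nabla^2 Q - \gamma g$ contributes the problematic reaction
\[
\bigl(\tfrac{2(n-k)-3}{4}+\zeta+\gamma\bigr)\frac{(\nabla_X Q)^2}{Q^2}\!\left(\frac{|\nabla Q|^2}{Q}-2\gamma\right)
\]
recorded in \eqref{difficulty}, which has no preferred sign when $n-k\geq 2$. Applying $(\partial_t-\Delta+\nabla_Z)$ to the correction $-\frac{|\nabla Q|^2}{4Q}\bigl(\frac{\nabla Q\otimes\nabla Q}{2Q}-\gamma g\bigr)(X,X)$ produces two favorable contributions: differentiating the prefactor $\frac{|\nabla Q|^2}{4Q}$ via a Bochner-type identity for $|\nabla Q|^2$ under MCF yields a strictly negative term of order $\frac{|\nabla Q|^4}{Q^3}\frac{(\nabla_X Q)^2}{Q}$, while differentiating $\frac{(\nabla_X Q)^2}{2Q}$ generates a leading-order term with coefficient $-\tfrac{2(n-k)-3}{4}$ that exactly cancels the bad dimensional factor. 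Together with the pointwise identity forced by $T(X_p,X_p)=0$,
\[
\nabla^2 Q(X_p,X_p) = \gamma|X_p|^2 + \frac{|\nabla Q|^2}{4Q}\!\left(\frac{(\nabla_{X_p}Q)^2}{2Q}-\gamma|X_p|^2\right),
\]
and with $\frac{|\nabla Q|^2}{Q}-2\gamma\geq 0$ guaranteed by $\kappa$-quadraticity and Theorem \ref{strong_uniform0}, I expect the total reaction at $p$ to be strictly negative, yielding the contradiction.

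The main obstacle is this evolution computation: unlike the $n-k=1$ setting treated in \cite{CDDHS}, where $\tfrac{2(n-k)-3}{4}\leq 0$ already supplies the sign, here the cancellation between the derivatives of the two correction factors and the dimensional constant must be exact at leading order, with every lower-order error term shown to be controlled by $\gamma$, by $\frac{|\nabla Q|^2}{Q}-2\gamma$, or by the null relation above. A secondary subtlety is preserving the constraint $X\perp\partial_\vartheta$ through the extension so that only reaction terms living in the subbundle orthogonal to $\partial_\vartheta$ enter the inequality, which requires checking that the restricted Hamilton-type maximum principle from \cite[Section 3]{CDDHS} still applies once the correction terms are added.
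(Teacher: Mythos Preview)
Your outline correctly identifies the tensor maximum principle architecture and the role of the correction terms, but there are two concrete gaps that would prevent the argument from closing.

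First, the extension you propose is impossible: you cannot have both $X\perp\partial_\vartheta$ in a neighborhood and $\nabla X|_p=0$. Differentiating $\langle X,\partial_{\vartheta^\alpha}\rangle=0$ in the $\partial_{\vartheta^\alpha}$ direction forces $\nabla_{\partial_{\vartheta^\alpha}}X$ to have a nonzero $\partial_{\vartheta^\alpha}$-component determined by the Christoffel symbols of the warped metric. The paper's extension (Claim \ref{claim_extension}) makes this explicit: $\nabla_{\partial_{\vartheta^\alpha}}X=(\tfrac12 Q^{-1}\nabla_X Q)\partial_{\vartheta^\alpha}$. Consequently, when you expand $(\partial_t-\Delta+\nabla_Z)\,A(X,X)$ you pick up the \emph{positive} term
\[
2\sum_i A(\nabla_{e_i}X,\nabla_{e_i}X)=\tfrac{n-k}{4}\,Q^{-2}|\nabla_X Q|^2\bigl(Q^{-1}|\nabla Q|^2-2\eps\bigr)+\tfrac{n-k}{8}\,\eps\,Q^{-3}|\nabla_X Q|^2|\nabla Q|^2,
\]
which works against you. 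This is not a spurious term to be avoided by a clever extension; it is forced by the constraint, and the reaction estimate must be strong enough to absorb it.

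Second, your description of the cancellation mechanism is off. The correction terms do not produce a coefficient $-\tfrac{2(n-k)-3}{4}$ that exactly cancels the bad factor in \eqref{difficulty}. Rather, the full reaction $N=N_1+N_2+N_3$ (Proposition \ref{prop-reactionterm}, decomposed via Proposition \ref{evolution of new A main}) is estimated at a null eigenvector to give
\[
N(X,X)\leq -\tfrac{n-k+2}{4}\,Q^{-2}|\nabla_X Q|^2\bigl(Q^{-1}|\nabla Q|^2-2\eps\bigr)-\tfrac{1}{8}\,Q^{-3}|\nabla_X Q|^2|\nabla Q|^2,
\]
and it is this $-\tfrac{n-k+2}{4}$ that beats the $+\tfrac{n-k}{4}$ coming from the extension term above, leaving a net strictly negative contribution. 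Getting these coefficients requires the null-eigenvector identities of Lemma \ref{lemma_null_eigen} (in particular $2\eps|X|^2\leq Q^{-1}|\nabla_X Q|^2$ and the formulas for $h(X,X)$, $h^2(X,\nabla V)$), not a Bochner identity for $|\nabla Q|^2$.

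Two smaller points: the verification ``at $t_0$'' is not needed and is in fact the conclusion; what the paper does is work with the $\delta$-perturbed tensor $A$ with $\eps=\gamma+\delta$, verify the \emph{strict} inequality for all $t\leq T_\delta$ via the cylindrical derivative estimates, and propagate forward. And the reaction estimate only holds away from the excluded regions $\{V\leq L\sqrt{|t|/\log|t|}\}$ and $\{V\geq\sqrt{2|t|}-L\sqrt{|t|}/\log|t|\}$; in those regions the sign is checked directly (Proposition \ref{lemma-soliton-region}), which your ``interpolation'' gloss does not cover.
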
      
\begin{remark}
At first glance, the new tensor appears significantly more complicated than the one in Theorem~\ref{prop-concavity_introstated} (\cite[Theorem 3.9]{CDDHS}). However, the final two gradient-type terms are motivated by \eqref{difficulty} and by the computations in the proof of almost Gaussian collar estimate in Theorem~\ref{prop-great} (compare \eqref{quadratic concavity ode inequality} with (3.106) in \cite{CDDHS}). This adjusted tensor resolves the main obstacle in applying the argument of proving the tensor maximum principle discussed in Section~\ref{challenge}. 
\end{remark}

In Section~\ref{spectral_uniqueness}, we prove  our main result Theorem~\ref{thm:uniqueness_eccentricity_intro} (spectral uniqueness) and Corollary \ref{reflection symmetry} (symmetry of $k$-ovals). We follow the approach in \cite{CDDHS}, to derive and combine energy estimates in distinct regions as explained in Section \ref{challenge}. To this end, we carefully handle the algebraic complexity introduced by new non-radial terms in the evolution equations of the inverse profile function $Y$. Moreover, we establish energy estimates in both the cylindrical region $\mathcal{C}$ defined in \eqref{cylindrical_ region} and the tip region $\mathcal{T}$ defined in \eqref{eq-tip-region0} by employing suitably adapted weighted norms.

For these energy estimates, besides the Gaussian $L^2$-norm $\| \,\, \|_{\mathcal{H}}$, we employ the Gaussian $H^1$-norm
\begin{equation}
\|f\|_{\hD} :=\left(\int_{\R^{k}} \big( f(\bry)^2 +|Df(\bry)|^2 \big) \, e^{-|\bry|^2/4}d\bry\right)^{1/2}
\end{equation}
and its dual norm $\| \,\, \|_{\hD^\ast}$. Moreover, for time-dependent functions this induces the parabolic norms 
\begin{equation}
\|f \|_{\mathcal{X},\infty}:=\sup_{\tau\leq \tau_0 }\left( \int_{\tau-1}^\tau \| f(\cdot,\sigma)\|^2_{\mathcal{X}} \, d\sigma \right)^{1/2},
\end{equation}
where $\mathcal{X}=\mathcal{H},\hD$ or $\hD^\ast$. Furthermore, in the tip region we work with the norm
\begin{equation}
\| F\|_{2,\infty}:= \sup_{\tau\leq \tau_0} \frac{1}{|\tau|^{1/4}} \left( \int_{\tau-1}^\tau \int_0^{2\theta}\int_{S^{k-1}} F^2 e^{\mu}\, d\omega\, dv\, d\sigma \right)^{1/2},
\end{equation}
where $\mu=\mu(v,\omega,\sigma)$ is a weight function interpolating between the Gaussian weight in the cylindrical region and $\mathbb{R}^{k-1}\times\bowl$ related weight in the tip region (see \eqref{eqn-weight1} for precise definition).  Theorem \ref{prop-great} (almost Gaussian collar) will be employed to ensure that our weight function $\mu$ has the desired derivative estimates in tip region for the energy estimates (see \eqref{eqn-imp} for more details).

For Proposition \ref{prop-cyl-est} (energy estimate in cylindrical region), we consider the difference of two truncated profile functions 
\begin{equation}\label{wcutc}
w_{\cC} := v_{\cC}^1 - v_{\cC}^2=v_1\chi_\cC(v_1)-v_2\chi_\cC(v_2),
\end{equation}
and for Proposition \ref{prop-tip}(energy estimate in tip region) we consider the truncated difference of inverse profile functions
\begin{equation}\label{WcutT}
W_{\mathcal{T}}:=\chi_{\mathcal{T}} W=\chi_{\mathcal{T}} (Y_1-Y_2),
\end{equation}
where $Y_i(\cdot,\omega,\tau)$,  for $i=1, 2$,  is defined as the inverse function of $v_i(\cdot,\omega,\tau)$ as in \eqref{inverse profile} and  $\chi_{\mathcal{T}}$ is a cut-off function that localizes in the tip region.  In the tip region, we need more attention to analyze the new terms appearing in evolution equation of $W$ in Proposition \ref{lemma-ev-W-appendix} to settle the second difficulty mentioned in Section \ref{challenge}.

Combining two energy estimates, we prove that 
\begin{theorem}[coercivity energy estimate, c.f. {\cite[Proposition 4.15]{CDDHS}}]
For every $\varepsilon > 0$ there exist $\kappa > 0$ small enough, $\theta > 0$  small enough in definitions of cylindrical region $\mathcal{C}$ and tip region $ \mathcal{T}$ and $\tau_* > -\infty$ negative enough 
such that if $\mathcal{M}^{1}$ and $\mathcal{M}^{2}$ are $k$-ovals which  are $\kappa$-quadratic at  time $\tau_0\leq \tau_{*}$ and satisfy  $\mathfrak{p}_{+}\big(v^1_{\cC}(\cdot , \tau_{0})\big)= \mathfrak{p}_{+}\big(v^2_{\cC}(\cdot , \tau_{0})\big)$, then we have the following coercive energy estimates
\be\label{eq-coercive1}
\|w_\cC-\mathfrak{p}_0 w_\cC \|_{\hD,\infty}+\|W_\cT\|_{2,\infty} \leq  \eps \|\mathfrak{p}_0 w_\cC \|_{\hD,\infty}\, .
\ee
\end{theorem}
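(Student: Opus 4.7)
The plan is to combine the cylindrical and tip energy estimates into a closed system via a Merle--Zaag type dichotomy, and then use the positive-mode matching hypothesis to kill the unstable part.

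First I would decompose $w_{\cC} = w_+ + w_0 + w_-$ according to $\mathcal{H}=\mathcal{H}_+\oplus\mathcal{H}_0\oplus\mathcal{H}_-$, setting $a_\bullet(\tau) := \|w_\bullet(\tau)\|_{\hD}^2$. Applying Proposition \ref{prop-cyl-est} to the difference $w_{\cC}=v_{\cC}^1-v_{\cC}^2$ and projecting the linearized evolution equation onto each eigenspace, I would obtain ODE inequalities of the form $a_+'\leq -\mu_+ a_++\Phi$, $|a_0'|\leq \Phi$, $a_-'\geq \mu_- a_-- \Phi$ with spectral gaps $\mu_\pm\geq 1/2$, where the coupling term $\Phi$ is schematically controlled by $\|w_{\cC}\|_{\hD}^2$ plus interface contributions from the collar $\cK$. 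Integrating yields a parabolic bound of the form
\begin{equation}
    \|w_{\cC}-w_0\|_{\hD,\infty}^{2} \leq C\,\|w_0\|_{\hD,\infty}^{2} + C\,\|W_{\cT}\|_{2,\infty}^{2} + o(1)\,\|w_{\cC}\|_{\hD,\infty}^{2}.
\end{equation}
Simultaneously, Proposition \ref{prop-tip} applied to $W_{\cT}=\chi_{\cT}(Y_1-Y_2)$ supplies
\begin{equation}
    \|W_{\cT}\|_{2,\infty}^{2} \leq C\,\|\chi_{\cK}w_{\cC}\|_{\hD,\infty}^{2} + o(1)\,\|w_{\cC}\|_{\hD,\infty}^{2},
\end{equation}
the right-hand side being driven by the interface where the cutoffs $\chi_{\cC}$ and $\chi_{\cT}$ transition.

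The crucial input closing the loop is Theorem \ref{prop-great} (almost Gaussian collar). The bound $|1+vY/(2(n-k)Y_v)|<\varepsilon$ ensures that throughout $\cK$ the tip-region weight $\mu$ from \eqref{eqn-weight1} matches the Gaussian weight to leading order, so its derivatives enjoy the decay needed to convert $\hD^\ast$ error pairings arising in the tip estimate into genuinely small quantities. Consequently, any $\hD$-bounded function supported in the collar contributes at most $o(1)\,\|w_{\cC}\|_{\hD,\infty}$, making the coupling between the two energy estimates contractive. At this stage I would invoke the hypothesis $\mathfrak{p}_+ w_{\cC}(\tau_0)=0$: the backward-in-time growth dictated by the positive spectral gap forces $a_+(\tau)$ to decay fast enough that $\|w_+\|_{\hD,\infty}=o(\|w_0\|_{\hD,\infty})$, while the forward contraction of the stable mode gives $\|w_-\|_{\hD,\infty}=o(\|w_0\|_{\hD,\infty})$ up to tip-interface errors. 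Substituting back, absorbing the $o(1)\|w_{\cC}\|_{\hD,\infty}$ terms, and taking $\kappa$, $\theta$ small and $|\tau_\ast|$ large yields the claimed coercivity.

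The principal obstacle will be handling the extra non-radial contributions in the evolution of $Y$, in particular the terms involving $\Delta_{S^{k-1}}Y$ and $\nabla^2 Y(\nabla_{S^{k-1}}Y,\nabla_{S^{k-1}}Y)$ that are absent in the $k=2$, $n=3$ case treated in \cite{CDDHS}, together with the boundary commutators produced when $\chi_{\cT}$ hits the linearized operator inside $\cK$. Both sources of error live precisely in the collar, so the argument succeeds only because Theorem \ref{prop-great} gives pointwise and derivative control of $Y$ on $\cK$ uniform across the whole $\kappa$-quadratic family, and Theorem \ref{strong_uniform0} provides the sharp soliton asymptotics needed to estimate the new spherical derivatives of $Y$. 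Any weakening of these two inputs would break the Merle--Zaag dichotomy and prevent the closure of the system.
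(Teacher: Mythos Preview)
Your proposal has the right architecture---combine Proposition \ref{prop-cyl-est} and Proposition \ref{prop-tip} and close the loop---but you misidentify the coupling mechanism and overcomplicate the argument.

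The paper's proof does not revisit any Merle--Zaag ODE dichotomy at this stage; that analysis is already packaged into Proposition \ref{prop-cyl-est} (via \cite[Lemma 6.7]{ADS2}, which consumes the hypothesis $\mathfrak{p}_+ w_\cC(\tau_0)=0$). Likewise, Proposition \ref{prop-tip} as stated bounds $\|W_\cT\|_{2,\infty}$ by $\eps\|W\,1_{\{\theta\le v\le 2\theta\}}\|_{2,\infty}$, not by any norm of $w_\cC$; and Proposition \ref{prop-cyl-est} leaves a residual term $\|w\,1_{\{\theta/2\le v_1\le\theta\}}\|_{\mathcal{H},\infty}$. The actual bridge between the two is a \emph{norm equivalence in the transition region} $\{\theta\le v\le 2\theta\}$: since $|\partial_y v_i|\sim_\theta |\tau|^{-1/2}$ there, the mean value theorem and change of variables give
\[
C^{-1}\|W\,1_{\{\theta\le v\le 2\theta\}}\|_{2,\infty}\le \|w\,1_{\{\theta\le v_1\le 2\theta\}}\|_{\mathcal{H},\infty}\le C\|W\,1_{\{\theta\le v\le 2\theta\}}\|_{2,\infty}.
\]
With this in hand, Proposition \ref{prop-tip} yields $\|W_\cT\|_{2,\infty}\le C\eps\|w_\cC\|_{\hD,\infty}$, the residual in Proposition \ref{prop-cyl-est} becomes $\le C\|W_\cT\|_{2,\infty}$, and the triangle inequality $\|w_\cC\|_{\hD,\infty}\le\|w_\cC-\mathfrak{p}_0 w_\cC\|_{\hD,\infty}+\|\mathfrak{p}_0 w_\cC\|_{\hD,\infty}$ closes everything.

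You also misplace Theorem \ref{prop-great}. The almost Gaussian collar is not invoked in the coercivity proof itself; it is used upstream, inside the proof of the weight estimates \eqref{eqn-imp} that feed into Proposition \ref{prop-tip}. At the level of Theorem \ref{prop-cor-main}, both energy propositions are used as black boxes, and the only new ingredient is the elementary transition-region norm comparison.
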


Finally, by applying the estimate \eqref{eq-coercive1} and referring the norm equivalence in the transition region $\{\theta\leq v\leq 2\theta\}$, we deduce that $w_{\cC}=0$ and $W_{\mathcal{T}}=0$, hence $\mathcal{M}^1=\mathcal{M}^2$, which proves Theorem~\ref{thm:uniqueness_eccentricity_intro} (spectral uniqueness). Moreover, Corollary~\ref{reflection symmetry} (symmetry of $k$-ovals) follows by applying suitable space-time shift, rotation, and parabolic dilation via Proposition~\ref{prop_orthogonality}, and then invoking Theorem~\ref{thm:uniqueness_eccentricity_intro}.

\bigskip

In Section \ref{stability section}, we  prove Theorem \ref{spectral_stability_intro} (spectral stability) for the spectral ratio map at time $\tau_0$ 
\begin{equation}
   \mathcal{E}(\mathcal{M})=\mathcal{E}(\tau_0)(\mathcal{M})=\left(\frac{\langle v^{\mathcal{M}}_{\cC}(\tau_{0}), y^2_{1}-2 \rangle_\cH}{\langle v^{\mathcal{M}}_{\cC}(\tau_{0}), |{\bf{y}}|^2-2k \rangle_\cH}, \dots,\frac{\langle v^{\mathcal{M}}_{\cC}(\tau_{0}),  y^2_{k}-2 \rangle_\cH}{\langle v^{\mathcal{M}}_{\cC}(\tau_{0}), |{\bf{y}}|^2-2k \rangle_\cH}\,\right).
\end{equation}
The proof is based on showing the locally bi-Lipschitz continuity of spectral ratio map $\mathcal{E}$ for  $k$-ovals via quantifying the proof of Theorem \ref{thm:uniqueness_eccentricity_intro} (spectral uniqueness). Namely, we prove that if $\mathcal{M}^1, \mathcal{M}^2$ are $k$-ovals that are $\kappa$-quadratic at sufficiently negative time $\tau_0$ for small enough $\kappa>0$ and that satisfy orthogonality conditions \eqref{condition_positive}, \eqref{OC11} and \eqref{OC21}, then $w_\cC$ and $W_\cT$ defined in \eqref{wcutc} and \eqref{WcutT} satisfy
\begin{align}
        \|w_\cC\|_{\hD,\infty} +\| W_\cT \|_{2,\infty} \leq C\|\mathfrak{p}_0 w_{\mathcal{C}}(\tau_0)\|_{\mathcal{H}}.
    \end{align}
To finish the proof of Theorem \ref{spectral_stability_intro} (spectral stability), we also carefully apply an implicit function argument  under the condition \eqref{spectral closedness condition}. Then we use the derivative estimates and parabolic regularity theory to show that for $\tau_0$ negative enough, we have that the renormalized flow $\bar{M}^1_{\tau}$ can be written as a graph over $\bar{M}^2_{\tau}$
    of a function $\zeta({\tau})$ satisfying
    \begin{align}\label{part2}
        \sup_{\tau \in [\tau_0-l, \tau_0]}\|\zeta(\tau)\|_{C^{m}}\leq C(\tau_0, l, m)\|\mathfrak{p}_0 w_{\mathcal{C}}(\tau_0)\|_{\mathcal{H}   },
    \end{align}
where $0<C(\tau_0, l, m)<+\infty$ is a constant depending on $\tau_0$, $l$ and $m$. Then  by definition of spectral ratio map $\mathcal{E}(\tau_0)(\mathcal{M}^{i})$ at $\tau_0$,  Theorem \ref{spectral_stability_intro} (spectral stability) follows from letting $c(\tau_0, l, m)=C(\tau_0, l, m)^{-1}>0$ and  that 
 \begin{align}
       |\mathfrak{p}_0 w_{\mathcal{C}}(\tau_0)\|_{\mathcal{H}} \leq |\mathcal{E}(\tau_0)(\mathcal{M}^1)-\mathcal{E}(\tau_0)(\mathcal{M}^2)| \quad \text{for}\,\, \tau_0\,\,\text{negative enough}
    \end{align}
due to \eqref{OC21}.
On the other hand, 
\begin{equation}
    |\mathcal{E}(\tau_0)(\mathcal{M}^1)\!-\!\mathcal{E}(\tau_0)(\mathcal{M}^2)|\!\leq \!C\|\zeta(\tau_0)\|_{C^{m}}
\end{equation}
follows from the definitions of spectral ratio map $\mathcal{E}(\tau_0)(\mathcal{M}^{i})$ at $\tau_0$ and truncated profile functions $v^i_{\mathcal{C}}$ for $i=1, 2$, along with the  $\tau_0$ being very negative and orthogonality condition \eqref{OC21}.

\textbf{Acknowledgments.}
The first author has been supported by the National Research Foundation of Korea (NRF) grants funded by the Korean government (MSIT)  NRF-2022R1C1C1013511, RS-2023-00219980 and by Samsung Science \& Technology Foundation grant SSTF-BA2302-02.
The second author has been supported by the postdoctoral positions of University of Toronto and  MIT. The third author  has been supported by the postdoctoral position of  MIT. The authors appreciate the communication with Professor Panagiota Daskalopoulos, Professor Robert Haslhofer and Professor Natasa Sesum on the current paper.

\section{Uniform sharp asymptotics}\label{Uniform sharp asymptotics}

In this section, we establish uniform sharp asymptotics for our $k$-ovals. Our scheme of proof, similarly to the one for translators from \cite[Section 3]{CHH_translator}, is to first derive uniform sharp asymptotics under a stronger a priori  assumption, called strong $\kappa$-quadraticity, and then to use quantitative Merle-Zaag type arguments to show that  $\kappa$-quadracity at one time implies strong $\kappa$-quadraticity.
Throughout this section $\mathcal{M}=\{M_t\}$ denotes a $k$-oval in $\mathbb{R}^{n+1}$, where by \cite[Theorem 1.8]{DZ_spectral_quantization} we can always assume that we have $\textrm{SO}(n+1-k)$-symmetry in the $x_{k+1}...x_{n+1}$-plane centered at  the origin. Since the tangent flow at $-\infty$ is given by \eqref{bubble-sheet_tangent_intro}, for $\tau \to -\infty$ the renormalized flow
\begin{equation}
\bar M_\tau = e^{\frac{\tau}{2}}  M_{-e^{-\tau}}
\end{equation}
converges smoothly on compact subsets to the static cylinder
\begin{equation}
\Gamma:=\mathbb{R}^k\times S^{n-k}(\sqrt{2(n-k)}).
\end{equation}
We denote points in $\mathbb{R}^k$ by
\begin{equation}
{\bf y} = (y_1,y_2,...,y_k)=y \omega, \quad \mathrm{ where } \quad y=|{\bf y}|  \text{ and } \omega \in S^{k-1}(1)\subset \mathbb{R}^{k} .
\end{equation}
Let $\bar{\Omega}_\tau$ be the set of points ${\bf y}\in\mathbb{R}^k$ such that $({\bf y},r\vartheta)\in \bar{M}_\tau$ for some $r\geq 0$, and define $u({\bf y},\tau)$, where ${\bf y}\in\bar{\Omega}_\tau$, by
\begin{equation}
({\bf y},(\sqrt{2(n-k)}+u({\bf y},\tau))\vartheta) \in \bar M_\tau\, .
\end{equation}
Note that the above cylindrical graphical function $u$ and the profile function $v$ given in \eqref{profile v def} are related by
\begin{equation}\label{rel_v_u}
v({\bf y},\tau)=\sqrt{2(n-k)}+u({\bf y},\tau).
\end{equation}
In addition, it is also convenient to often represent profile functions in the polar coordinates. i.e., 
\begin{equation}
    v(y,\omega,\tau)=v(\mathbf{y},\tau), \text{\, for \, }y\ge0 \text{\, and \,} \omega \in S^{k-1}.
\end{equation}
Since $\bar{M}_\tau$ evolves by renormalized mean curvature flow by \cite{DH_hearing_shape}[Proposition A.1], $u$ satisfies
\begin{align}\label{equation_u} 
     u_\tau
    = \left(\delta_{ij}-\frac{u_{y_i}u_{y_j}}{1+|Du|^2}\right) u_{y_iy_j}-\frac{1}{2}   y_i u_{y_i}
    + \frac{ \sqrt{2(n-k)}+u}2 -\frac{n-k}{\sqrt{2(n-k)}+u}\, ,
\end{align}
where the summation convention is used over all indices $1\leq i, j \leq k$.  
Furthermore, fixing a smooth cut-off function with $\chi(s)=1$ for $s\leq 1$ and $\chi(s)=0$ for $s\geq 2$, we often consider the truncated graphical function
\begin{equation}
\hat{u}({\bf y},  \tau)=u({\bf y},  \tau)\chi\left(\frac{|{\bf y}|}{\rho(\tau)}\right).
\end{equation}
We say $\rho(\tau)$ is an {\em admissible graphical radius} for $\tau\le \tau_0$ if \begin{equation}
\label{univ_fns}
\lim_{\tau \to -\infty} \rho(\tau)=\infty,  \quad\text{and}\quad   -\rho(\tau) \leq \rho'(\tau) \leq 0,  
\end{equation}and
\begin{equation}\label{small_graph_admissible}
\|u(\cdot,\tau)\|_{C^4(\Gamma \cap B_{2\rho(\tau)}(0))} \leq  \rho(\tau)^{-2},
\end{equation}for $\tau\le \tau_0$.
We recall that our Gaussian inner product is given by the formula
\begin{equation}
\langle f,g\rangle_{\mathcal{H}}=\int_{\mathbb{R}^{k}} f({\bf y})g({\bf y})e^{-\frac{|{\bf y}|^2}{4}}\, d{\bf y}\, ,
\end{equation}
and that there is the well-known weighted Poincar\'e inequality
\begin{equation}\label{easy_Poincare}
\big\|  (1+|{\bf{y}}|) f \big\|_{\mathcal{H}} \leq C \big( \| f \|_{\mathcal{H}}+\| Df \|_{\mathcal{H}}\big).
\end{equation}
Indeed, by approximation it is enough to check this for smooth compactly supported functions $f$, and for such functions this follows by computing
\begin{align}
\int_{\mathbb{R}^{k}} \left(\tfrac12 |{\bf{y}}|^2f^2 -2f^2\right)\, e^{-\frac{|{\bf y}|^2}{4}}\, d{\bf y}&=\int_{\mathbb{R}^{k}} D(f^2)\cdot {\bf{y}}\, e^{-\frac{|{\bf y}|^2}{4}}\, d{\bf y}\nonumber\\
&\leq \int_{\mathbb{R}^{k}} \left(\tfrac14 |{\bf{y}}|^2 f^2 +4|Df|^2 \right)\, e^{-\frac{|{\bf y}|^2}{4}}\, d{\bf y}.
\end{align}
We also recall and summarize the regions we discussed in the introduction for later use:  \emph{cylindrical region}
\begin{equation}
\label{eq-cyl-region}
\cC= \big\{ v \ge \theta  \big\},
\end{equation}
and the \emph{tip region}
\begin{equation}
\label{eq-tip-region}
\mathcal{T}= \big\{v \le 2   \theta \big\},
\end{equation}
where we further subdivide the latter into  the \emph{collar region}
\begin{equation}
\label{eq-collar-region}
\collar = \bigl\{  L/\sqrt{|\tau|} \le v \le 2  \theta \bigr\},
\end{equation}
and the \emph{soliton region}
\begin{equation}
\label{eq-soliton-region}
\mathcal{S} = \bigl\{v \le L/\sqrt{|\tau|} \bigr\}.
\end{equation}
\subsection{\texorpdfstring{Uniform sharp asymptotics assuming strong $\kappa$-quadraticity}{Uniform sharp asymptotics assuming strong kappa-quadraticity}}\label{sec2.1}

In this subsection, we establish uniform sharp asymptotics under the following a priori assumption:
\begin{definition}[{strong $\kappa$-quadraticity, c.f. \cite[Definition 3.7]{CHH_translator}} ]\label{strong}
We say that an ancient noncollapsed flow $\mathcal{M}$ in $\mathbb{R}^{n+1}$ (with coordinates and tangent flow $-\infty$ as above) is \emph{strongly $\kappa$-quadratic from time $\tau_{0}$},  if 
\begin{enumerate}[(i)]
\item $\rho(\tau)=|\tau|^{1/10}$ is an admissible graphical radius for $\tau\leq \tau_{0}$, and
\item  the truncated graphical function $\hat{u}(\cdot,  \tau)=u(\cdot,  \tau)\chi\left(\frac{|\cdot|}{\rho(\tau)}\right)$ satisfies 
\begin{equation}
    \left\| \hat{u}({\bf{y}},  \tau)+\frac{\sqrt{2(n-k)}}{4|\tau|}{(|{\bf{y}}|^2-2k)} \right\|_{\mathcal{H}}\leq \frac{\kappa}{|\tau|}\quad \text{for}\,\,\tau\leq \tau_{0}.
\end{equation}
\end{enumerate}
\end{definition}

It is of independent interest to verify that a given solution becomes strongly $\kappa$-quadratic from a sufficiently negative time. This is established in the following lemma, and we will use it in later sections.
\begin{lemma}\label{lem-quadraticity}
For a given $k$-oval $\mathcal{M}$ in $\mathbb{R}^{n+1}$ and $\kappa>0$, there exists $\tau_*(\mathcal{M},\kappa)>\infty$ such that $\mathcal{M}$ is strongly $\kappa$-quadratic from time $\tau_*$ and $\kappa$-quadratic at time $\tau_0$ for all $\tau_0\in (-\infty,\tau_*]$.

\begin{proof}We first prove the strong $\kappa$-quadraticify assertion of the lemma. By \cite[Proposition 2.8]{DZ_spectral_quantization}, there exists universal constant $\gamma>0$ such that $\rho_0(\tau):=|\tau|^{\gamma}$ becomes an admissible radius function of $\mathcal{M}$ for for $\tau\le \tau_0(\mathcal{M})$. Let us we define $    \rho_1(\tau):=\beta(\tau)^{-\frac{1}{5}}$, where
\begin{equation}\label{def_beta}
\beta(\tau):=\sup_{\sigma\leq \tau}\left(\int_{\mathbb{R}^k}{u}^2({\bf y},\sigma)\chi^2\left(\frac{|{\bf y}|}{\rho_{0}(\sigma)}\right)e^{-\frac{|{\bf y}|^2}{4}}d{\bf y}\right)^{1/2}.
\end{equation}
Note that $\rho_1(\tau)$ is also an admissible graphical radius for sufficiently negative times by \cite[Proposition 2.6]{DZ_spectral_quantization}.

By \cite[Theorem 3.3]{DZ_spectral_quantization} (sharp asymptotics in $\mathcal{H}$-norm),  \begin{equation}\label{eq-thm33DZ} \left\| {u}({\bf{y}},  \tau)\chi \Big (\frac{|\mathbf{y}|}{\rho_0(\tau)}\Big )+\frac{\sqrt{2(n-k)}}{4|\tau|}{(|{\bf{y}}|^2-2k)} \right\|_{\mathcal{H}}=o(|\tau|^{-1}).\end{equation}
Note this implies that  $\beta(\tau)\le C|\tau|^{-1}$ for sufficiently negative times and thus $\rho(\tau)=|\tau|^{1/10}\le \rho _1(\tau)$ is an admissible graphical radius. Next, in view of the Gaussian weight in $\mathcal{H}$,  \eqref{eq-thm33DZ} implies the condition (ii) in 
Definition \ref{strong}. This finishes the assertion on the strong quadraticity.

Observe that the two truncation functions employed in the definitions of $\kappa$-quadraticity and strong $\kappa$-quadraticity are different. Nevertheless, by the Gaussian weight in $\mathcal{H}$-norm, there is $\kappa'=\kappa'(\kappa)$ and $\tau'=\tau'(\kappa)>-\infty $ such that if $\mathcal{M}$ is strongly $\kappa'$-quadratic from time $\tau_* \le \tau'$, then it is $\kappa$-quadratic at all times $\tau_0\le \tau_*$.  
\end{proof}

\end{lemma}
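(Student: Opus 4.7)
The plan is to derive both parts of the lemma as direct consequences of the sharp cylindrical asymptotics established in \cite{DZ_spectral_quantization}, combined with a bootstrapping of graphical radii and a careful comparison of the two different truncation schemes appearing in the definitions of $\kappa$-quadraticity and strong $\kappa$-quadraticity.

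First, I would establish the existence of an admissible graphical radius $\rho(\tau)=|\tau|^{1/10}$ for $\tau\le\tau_*$. By the prior graphical estimates from \cite[Proposition 2.8]{DZ_spectral_quantization}, some polynomial radius $\rho_0(\tau)=|\tau|^{\gamma}$ is admissible for $\tau$ sufficiently negative. Using $\rho_0$ as an initial truncation, define
\[
\beta(\tau):=\sup_{\sigma\le\tau}\bigl\|\,u(\cdot,\sigma)\chi(|\cdot|/\rho_0(\sigma))\bigr\|_{\mathcal{H}}.
\]
The sharp asymptotics in $\mathcal{H}$-norm from \cite[Theorem 3.3]{DZ_spectral_quantization}, which apply to any $k$-oval in the sense of Definition~\ref{def-koval}, give $\beta(\tau)=o(|\tau|^{-1})$. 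A standard bootstrapping argument (cf.\ \cite[Proposition 2.6]{DZ_spectral_quantization}) then upgrades the admissible radius to $\rho_1(\tau):=\beta(\tau)^{-1/5}\gg|\tau|^{1/10}$, so $|\tau|^{1/10}$ satisfies both the smallness condition \eqref{small_graph_admissible} and the structural conditions \eqref{univ_fns}.

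Second, for condition (ii) of Definition~\ref{strong}, I would invoke the same sharp $\mathcal{H}$-norm asymptotics directly:
\[
\left\|u(\mathbf{y},\tau)\chi\!\left(\tfrac{|\mathbf{y}|}{\rho_0(\tau)}\right)+\tfrac{\sqrt{2(n-k)}}{4|\tau|}(|\mathbf{y}|^2-2k)\right\|_{\mathcal{H}}=o(|\tau|^{-1}).
\]
Since $\rho_0(\tau),\,|\tau|^{1/10}\to\infty$, the Gaussian weight ensures that replacing the cutoff at $\rho_0$ by a cutoff at $|\tau|^{1/10}$ changes this $\mathcal{H}$-norm by at most an exponentially small quantity, which is absorbed into the $o(|\tau|^{-1})$ term. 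Thus the quadratic closeness holds with constant $\kappa$ for all $\tau\le\tau_*$ once $\tau_*=\tau_*(\mathcal{M},\kappa)$ is taken negative enough, proving strong $\kappa$-quadraticity from time $\tau_*$.

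For the second assertion, I note that $\kappa$-quadraticity at $\tau_0$ uses the level-set truncation $v_{\mathcal{C}}=v\,\chi_{\mathcal{C}}(v)$ localized at $\{v\ge\theta\}$, whereas strong $\kappa$-quadraticity uses a spatial truncation at $|\mathbf{y}|\le\rho(\tau)$. The graphical radius condition \eqref{condition3} in the definition of $\kappa$-quadraticity is an immediate consequence of the admissibility of $|\tau|^{1/10}$ (after possibly enlarging $|\tau_*|$), and the quadratic closeness \eqref{condition1intro} follows from the strong quadraticity with a slightly smaller constant $\kappa'=\kappa'(\kappa)$: both truncations restrict to a region of diameter growing at most like $\sqrt{|\tau|}$ in the relevant part of $\mathbf{y}$-space, and outside this region the Gaussian weight renders the discrepancy between the two truncations exponentially small. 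The main subtlety, and the only nontrivial point beyond quoting \cite{DZ_spectral_quantization}, is verifying that the set $\{v<\theta\}$ corresponds to $|\mathbf{y}|\gtrsim\sqrt{|\tau|}$ (so that the Gaussian weight kills the tail), which follows from the uniform quadratic profile $v\approx\sqrt{2(n-k)}\bigl(1-\tfrac{|\mathbf{y}|^2-2k}{4|\tau|}\bigr)$ guaranteed by the $\mathcal{H}$-asymptotics together with the graphical bound. Choosing $\tau_*$ sufficiently negative relative to $\kappa$ then yields both conclusions.
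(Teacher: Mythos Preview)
Your proposal is correct and follows essentially the same route as the paper: cite \cite[Proposition 2.8]{DZ_spectral_quantization} for an initial polynomial radius, bootstrap via $\beta(\tau)$ and \cite[Proposition 2.6]{DZ_spectral_quantization}, invoke \cite[Theorem 3.3]{DZ_spectral_quantization} for the sharp $\mathcal{H}$-asymptotics, and finally use the Gaussian weight to pass between the two truncation schemes. One small slip: $\beta(\tau)$ is the norm of $\hat u$ itself (which contains the quadratic term of order $|\tau|^{-1}$), so you only get $\beta(\tau)=O(|\tau|^{-1})$ rather than $o(|\tau|^{-1})$; this is exactly what the paper writes and is already enough to conclude $\rho_1(\tau)\ge c|\tau|^{1/5}\gg|\tau|^{1/10}$.
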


Let $\lambda(s) = \sqrt{|s|^{-1}\log|s|}$ and fix  spherical $\omega\in S^{k-1}(1)\subset \mathbb{R}^k$, let
\begin{equation}\label{R polar}
\tilde{p}_s^\omega=(R(\omega,s)\omega,0^{n+1-k}),
\end{equation}
where the radius in direction $\omega$ can be expressed as
\begin{equation}
R(\omega,s)=|s|^{1/2}\sup \{ y\geq 0 : v(y,\omega,-\log(-s)) > 0\},
\end{equation}
and we consider the flow
\begin{equation} \label{widetildeM}
\widetilde{M}^{\omega,s}_t:= \lambda(s)\cdot (M_{s+\lambda(s)^{-2}t} - \tilde{p}_s^\omega).
\end{equation}

We will now upgrade the sharp asymptotics from \cite[Theorem 1.8]{DZ_spectral_quantization} for every single $k$-oval, which was used to derive $\textrm{O}(n+1-k)$-symmetry, to uniform sharp asymptotics for families of strongly  $\kappa$-quadratic solutions.

\begin{proposition}[c.f.{\cite[Section 2]{CDDHS}, \cite[Theorem 1.8]{DZ_spectral_quantization}}]\label{strong_sharp_asymptotics} 
For every $\varepsilon>0$, there exists $\kappa>0$ and $\tau_{*}>-\infty$ such that if a $k$-oval $\mathcal{M}=\{M_t\}$ in $\mathbb{R}^{n+1}$ is strongly $\kappa$-quadratic from time $\tau_{0}\leq \tau_{*}$, then {\setlength{\leftmargini}{1.5em}
\begin{itemize}
\item Parabolic region:  for every $\tau\leq \tau_{0}$ we have:
     \begin{equation}
    \sup_{|{\bf{y}}|\leq \eps^{-1}}\left| u({\bf{y}}, \tau)+\frac{\sqrt{2(n-k)}}{4}\frac{|{\bf{y}}|^2-2k}{|\tau|} \right|\leq \frac{\varepsilon}{|\tau|} ;
    \end{equation}

\item Intermediate region:    for every $\omega\in  S^{k-1}$ and time $\tau\leq \tau_{0}$ we have:
  \begin{equation}
       \sup_{z\leq \sqrt{2}-\eps}\left|\bar{v}(z,\omega,\tau)-\sqrt{(n-k)(2-z^2)}\right|\leq \varepsilon,
\end{equation}
where $\bar{v}(z,\omega,\tau) = v(|\tau|^{\frac{1}{2}}z, \omega, \tau)$;

\item Tip region:    for every $\omega \in S^{k-1}$ and  $s \leq -e^{- \tau_{0}}$, we have that the flow $\widetilde{M}^{\omega,s}_t$ 
is $\eps$-close in $C^{\lfloor 1/\eps\rfloor}$ in $B_{\eps^{-1}}(0) \times (-\eps^{-2},\eps^{-2})$ to $N_t\times\mathbb{R}^{k-1}$, where $N_t$ is the unique rotationally symmetric translating $\bowl$ in $\mathbb{R}^{n-k+2}$ with tip $0\in N_0$ that translates in negative $\bar{\omega} = (\omega, 0)$ direction with speed $1/\sqrt{2}$.
\end{itemize}}
\end{proposition}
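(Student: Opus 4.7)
The plan is to argue by contradiction and compactness, handling the three regions in sequence so that the output of one is used as input for the next. Suppose the proposition fails for some $\varepsilon_0>0$; then there exist sequences $\kappa_j\to 0$, $\tau_{0,j}\to -\infty$, and $k$-ovals $\mathcal{M}^j$ strongly $\kappa_j$-quadratic from time $\tau_{0,j}$ such that one of the three estimates is violated for each $j$. Passing to a subsequence, we may assume it is always the same region that fails, with violation detected at some $\tau_j\le \tau_{0,j}$ and some $\omega_j\in S^{k-1}$.

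\textbf{Parabolic region.} The truncated graphical function $\hat{u}^j$ satisfies \eqref{equation_u} on $\{|{\bf y}|\le \rho(\tau)\}$ with an additional cut-off commutator supported near the graphical radius $\rho(\tau)=|\tau|^{1/10}$. Hypothesis (ii) of Definition \ref{strong} gives $\bigl\|\hat{u}^j(\cdot,\tau)+\tfrac{\sqrt{2(n-k)}}{4|\tau|}(|{\bf y}|^2-2k)\bigr\|_{\mathcal{H}}\le \kappa_j/|\tau|$, while the admissibility condition \eqref{small_graph_admissible} provides a pointwise $C^4$-bound. Linearizing \eqref{equation_u} around $u\equiv 0$ yields the Ornstein--Uhlenbeck operator $\mathcal{L}$ from \eqref{OU_operator} plus a quadratic remainder, so standard Gaussian-weighted parabolic Schauder estimates, together with the Poincar\'e inequality \eqref{easy_Poincare}, upgrade the $\mathcal{H}$-bound to a $C^m_{\mathrm{loc}}$-bound on any fixed ball $\{|{\bf y}|\le \varepsilon^{-1}\}$. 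Sending $j\to\infty$ contradicts the assumed violation.

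\textbf{Intermediate region.} I would pass to a subsequential smooth limit of the rescaled profiles $\bar{v}^j(z,\omega,\tau)=v^j(|\tau|^{1/2}z,\omega,\tau)$ on compact subsets of $\{0\le z<\sqrt{2}\}\times S^{k-1}$, which exists by uniform noncollapsing and the parabolic estimate just established. The parabolic scaling together with the renormalized MCF equation forces the limit $\bar v^\infty(z,\omega)$ to satisfy a degenerate elliptic ODE whose unique admissible solution matching the expansion $\bar v^\infty(z)=\sqrt{2(n-k)}-\tfrac{\sqrt{2(n-k)}}{4}z^2+o(z^2)$ as $z\to 0^+$, obtained by matching with the parabolic region, is $\bar v^\infty(z)=\sqrt{(n-k)(2-z^2)}$. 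Uniform convergence up to $z=\sqrt{2}-\varepsilon$ follows from standard interior estimates for concave noncollapsed profiles, contradicting the assumed violation at $(\tau_j,\omega_j)$.

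\textbf{Tip region.} The rescalings $\widetilde{M}^{\omega_j,s_j}_t$ from \eqref{widetildeM} are uniformly noncollapsed ancient flows. The intermediate region estimate identifies their tangent flow at $-\infty$ with the cylinder $\mathbb{R}^{k-1}\times S^{n-k+1}$ scaled at the correct rate dictated by $\lambda(s)=\sqrt{|s|^{-1}\log|s|}$. Passing to a smooth subsequential limit via global curvature estimates, the choice of shift $\tilde{p}_{s_j}^{\omega_j}$ in \eqref{R polar} and the scaling $\lambda$ guarantee that the limit is an ancient noncollapsed translator in the direction $-\bar\omega$ whose cross-section is an $(n-k+1)$-dimensional cylinder. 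The uniqueness theorem for rotationally symmetric noncollapsed translators (Brendle--Choi, Haslhofer--Kleiner) pins the limit down to $N_t\times\mathbb{R}^{k-1}$ with speed $1/\sqrt{2}$, contradicting the violation.

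\textbf{Main obstacle.} The hardest step is the intermediate region. The parabolic estimate only gives information on the bounded set $|{\bf y}|\le \varepsilon^{-1}$, so one must propagate smooth convergence all the way to $z=\sqrt{2}-\varepsilon$, uniformly over $\omega\in S^{k-1}$ simultaneously. This requires barrier or monotonicity arguments to rule out, in the limit, non-axisymmetric self-similar profiles as well as shrinkers that do not match the parabolic expansion at $z\to 0^+$; the uniqueness of the ODE solution $\sqrt{(n-k)(2-z^2)}$ with prescribed Taylor expansion at $z=0$ is the technical heart of the argument, and the $\omega$-independence of the limit must be extracted from the $\mathrm{O}(n+1-k)$-symmetry in the last coordinates combined with the uniformity of the parabolic bound.
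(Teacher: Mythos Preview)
Your overall three-region structure and the parabolic region argument are in line with the paper (the paper uses Agmon's inequality directly rather than phrasing it as contradiction, but the content is the same). However, the intermediate and tip region arguments have genuine gaps.

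\textbf{Intermediate region.} You correctly flag the propagation from $|\mathbf{y}|\le \varepsilon^{-1}$ out to $z=\sqrt{2}-\varepsilon$ as the main obstacle, but your proposed resolution---pass to a limit and identify it via an unspecified ODE---does not close. You need a priori control on $\bar v^j$ for $z$ bounded away from zero before you can take a limit there, and the ``degenerate elliptic ODE'' is never written down, nor is its uniqueness with prescribed quadratic jet at $z=0$ established. The paper avoids this entirely: it proves the lower bound by an explicit barrier argument (citing \cite[Theorem 5.1]{DZ_spectral_quantization}), and for the upper bound it uses convexity to derive the first-order differential inequality
\[
w^\omega_\tau \le w^\omega - \tfrac{1}{2}y\,w^\omega_y,\qquad w^\omega=v^2-2(n-k),
\]
which is then integrated along characteristics starting from the parabolic-region data. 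This gives a direct quantitative bound for each fixed $\omega$ and $\tau$, with no compactness needed.

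\textbf{Tip region.} Your compactness-plus-classification sketch misses the key subtlety that the recentering point $\tilde p_s^\omega$ in \eqref{R polar} is defined via the radial direction $\omega$, not the outward normal at the tip; these can a priori differ by an uncontrolled angle, and the limit under your rescaling need not be a translator at all before this is addressed. The paper first proves the analogous statement with centering at the point of maximal $\langle \cdot,(\Phi,0)\rangle$ (Claim \ref{Phi tip}), where the blowup limit is genuinely a translator and can be classified via Hamilton's Harnack and \cite[Theorem 1.4]{DH_blowdown}; only afterwards does it show, using the intermediate-region asymptotics and a convex radial-graph gradient bound, that $d_{S^{k-1}}(\omega,\Phi)$ is small, transferring the conclusion to the $\omega$-centered flow. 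Your invocation of Brendle--Choi and Haslhofer--Kleiner also does not directly apply in the needed generality for $\mathbb{R}^{k-1}\times\mathrm{Bowl}^{n+1-k}$ splittings.
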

\begin{proof}
We follow closely to the proof in \cite{CDDHS, DZ_spectral_quantization}, and for the readers' convenience, we include the proof here.

In the parabolic region as discussed in as in \cite[Propostion 2.2]{CDDHS}, using  the strong $\kappa$-quadraticity from time $\tau_0$ assumption, property of graphical radius $\rho$
\eqref{small_graph_admissible} and  the parabolic regularity estimates of equation   \eqref{equation_u}  for
\begin{align}
    \mathcal{D}({\bf{y}}, \tau) =  \hat{u}({\bf{y}}, \tau)+\frac{\sqrt{2(n-k)}}{4}\frac{|{\bf{y}}|^2-2k}{|\tau|},
\end{align}
 we can find constants $C=C(\varepsilon)<\infty$ and $\tau_{*}(\varepsilon)>-\infty$  such that 
\begin{equation}\label{w32_est}
    \|\mathcal{D}\|_{\mathcal{H}}\leq \frac{\kappa}{|\tau|}\quad \big\| {\mathcal{D}}(\cdot,\tau)\big\|_{W^{3,2}(B(0,\eps^{-1}))}\leq \frac{C}{|\tau|}
\end{equation}
holds for all $\tau\leq \tau_{0}$, provided $\tau_{0}\leq \tau_{*}(\varepsilon)$. Applying Agmon's inequality with  \eqref{w32_est}, we conclude that for all $\tau\leq\tau_0\leq\tau_\ast(\eps)$ we have
\begin{equation}
  \sup_{|{\bf y}|\leq \eps^{-1}}  |{\mathcal{D}}({\bf y},\tau)| \leq \frac{\eps}{|\tau|},
\end{equation}
provided $\kappa=\kappa(\eps)>0$ is sufficiently small. Remembering support of cut-off function $\chi(\cdot/\rho(\tau))$, we obtain asymptotics in parabolic region.

In the intermediate region, arguing as in \cite[Propostion 2.3]{CDDHS}, the sharp asympotics in the parabolic region can be promoted to the sharp lower bound in the intermediate region by the barrier argument in \cite[Theorem 5.1]{DZ_spectral_quantization}:
\begin{align}
    \inf\limits_{z\leq \sqrt{2}-\varepsilon} \left(\bar{v}(z,\omega,\tau) - \sqrt{(n-k)(2-z^2)}\right) \geq -\varepsilon.
\end{align}
Next, the evolution equation (\ref{equation_u}) together with the convexity gives
\begin{align}
    v_{\tau} \leq -\frac{n-k}{v} + \frac{1}{2}(v-yv_y).
\end{align}
Hence, given any $\omega \in S^{k-1}(1)$, the function
\begin{align}
    w^{\omega}(y,\tau) := v(y,\omega,\tau)^2 - 2(n-k)
\end{align}
satisfies 
\begin{align}\label{ODE_w}
    w_{\tau}^{\omega} \leq w^{\omega} - \frac{1}{2}y w_y^{\omega}.
\end{align}
On the other hand, by the uniform asymptotics in the parabolic region, given any $A<\infty$, there are $\kappa_{*}(A)>0$ and $\tau_{*}(A)>-\infty$, such that if the $k$-oval is strongly $\kappa$-quadratic from time $\tau_{0}\leq \tau_{*}$, where $0<\kappa\leq \kappa_{*}$, then 
\begin{equation}
     w^\omega(y, \tau)\leq |\tau|^{-1}(n-k)({2k-y^2})+A^{-1}|\tau|^{-1}
\end{equation}
holds for all $y\leq A$.
Then we apply the characteristic method  for \eqref{ODE_w} as in \cite[ Proposition 2.3]{CDDHS} and \cite[Theorem 5.1]{DZ_spectral_quantization}   to conclude the upper bound:
\begin{align}
    \sup\limits_{z\leq \sqrt{2}-\varepsilon} \left(\bar{v}(z,\omega,\tau) - \sqrt{(n-k)(2-z^2)}\right) \leq \varepsilon
\end{align}
holds for all $\omega\in S^{k-1}$ and all $\tau\leq\tau_0\leq \tau_\ast$, provided $\kappa>0$  is sufficiently small and $\tau_\ast$ is sufficiently negative.
This finishes the proof of the second part of the proposition.

Finally we discuss the tip region as in \cite[Proposition 2.4]{CDDHS} \cite[Theorem 5.1]{DZ_spectral_quantization}. Let us fix a normal direction $\Phi\in S^{k-1}(1)$, one can find $p_s$ that maximize $\langle p,(\Phi, 0)\rangle$ among all $p \in M_s$. We define the rescaled flow: $$\widehat{M}^{\Phi,s}_t = \lambda(s)(M_{s+\lambda^{-2}(s)}-p_s).$$
Then we have the tip region  uniform sharp asymptotics in terms of normal direction: 
\begin{claim}[c.f. {\cite[Proposition 2.4]{CDDHS} \cite[Theorem 5.1]{DZ_spectral_quantization}}]\label{Phi tip}
the flow $\widehat{M}^{\Phi,s}_t$ 
is $\eps$-close in $C^{\lfloor 1/\eps\rfloor}$ in $B_{\eps^{-1}}(0) \times (-\eps^{-2},\eps^{-2})$ to $N_t\times\mathbb{R}^{k-1}$, where $N_t$ is the round $\bowl$ in $\mathbb{R}^{n-k+2}$ with tip $0\in N_0$ that translates in negative $(\Phi, 0)$ direction with speed $1/\sqrt{2}$. 
\end{claim}
\begin{proof}[Proof of Claim \ref{Phi tip}]
    As in \cite[Proposition 2.4]{CDDHS}, the claim follows by a blowup contradiction argument and applying  intermediate region  asymptotics, convexity and 
Hamilton's Harnack inequality  \cite{Hamilton_Harnack} and inductively using \cite[Theorem 1.4]{DH_blowdown} as in the proof of \cite[Theorem 5.1]{DZ_spectral_quantization}.
\end{proof}

Now, we need to translate this Claim \ref{Phi tip} to $\omega$-spherical coordinate  version tip asymptotics to complete the proof of  the third part of the proposition. We  observe the following derivative bound for convex radial graph hypersurface in the $\mathbb{R}^k$: If $r=r(\omega)$ represents a closed convex radial graph in $\mathbb{R}^k$ and
\begin{equation}	
	\max_{\omega\in S^{k-1}} |r(\omega)-1| \leq \delta,
\end{equation}	 
then as the discussion in \cite[Corollary 2.5]{CDDHS} we have
\begin{equation}	
	\max_{\omega \in S^{k-1} }|\nabla r(\omega)| \leq \eps( \delta ),
\end{equation}	
where $\eps(\delta )   \to 0$ as $\delta \to 0$.

Now, in our setting thanks to asymptotics in the intermediate region and convexity, given any $\delta>0$, by choosing $\kappa>0$ small enough and $\tau_\ast$ negative enough, we can arrange that the radius $R$ in \eqref{R polar} satisfies
\begin{equation}
\left|\frac{R(\omega,s)}{\sqrt{2|s|\log|s|}}-1 \right|\leq \delta.
\end{equation}
This yields
\begin{equation}
\sup_{\omega \in S^{k-1}}|\nabla R (\omega,s)|\leq \eps(\delta) \sqrt{2 |s| \log| s|},
\end{equation}
and consequently $d_{S^{k-1}}(\omega, \Phi) < \varepsilon_2(\delta)$, where $\varepsilon_2 \rightarrow 0 $ as $\delta \rightarrow 0$.
In other words, $\omega$ and $\Phi$ differ by arbitrarily small amount. Hence, this and the uniform tip asymptotics in terms of normal direction conclude  the proof of the third part of the proposition on the tip region  uniform sharp asymptotics in terms of $\omega$-spherical coordinate.
\end{proof}

\subsection{\texorpdfstring{From $\kappa$-quadraticity to strong $\kappa$-quadraticity}{From kappa-quadraticity to strong kappa-quadraticity}}\label{sec2.2}

In this subsection, we will use a quantitative Merle-Zaag type argument to upgrade $\kappa$-quadraticity (see Definition \ref{k_tau00_intro}) to strong $\kappa$-quadraticity (see Definition \ref{strong}).  In our setting we have to analyze a more complicated system of spectral ODEs, c.f. \cite{DH_hearing_shape}. We aim to prove the following theorem in this subsection.
\begin{theorem}[strong $\kappa$-quadraticity]\label{point_strong}
    For every $\kappa>0$, there exist $\kappa'>0$ and $\tau_{*}>-\infty$, such that if a $k$-oval in $\mathbb{R}^{n+1}$ is $ \kappa'$-quadratic at some time $\tau_{0}\leq \tau_{*}$, then it is strongly $\kappa$-quadratic from time $\tau_{0}$.
\end{theorem}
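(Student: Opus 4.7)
The plan is a quantitative Merle--Zaag analysis of the spectral components of $\hat{u}$, starting from the pointwise hypothesis at $\tau_0$ and propagating backward to all $\tau\le\tau_0$. Decompose $\hat{u}(\cdot,\tau)=\hat{u}_+ +\hat{u}_0+\hat{u}_-$ via the orthogonal projections $\mathfrak{p}_+,\mathfrak{p}_0,\mathfrak{p}_-$, and define
\begin{align*}
U_\pm(\tau):=\|\hat{u}_\pm(\cdot,\tau)\|_{\mathcal{H}},\qquad E_0(\tau):=\Big\|\hat{u}_0(\cdot,\tau)+\tfrac{\sqrt{2(n-k)}}{4|\tau|}(|\mathbf{y}|^2-2k)\Big\|_{\mathcal{H}}.
\end{align*}
Since $|\mathbf{y}|^2-2k=\sum_{i=1}^{k}(y_i^2-2)\in\mathcal{H}_0$, the $\kappa'$-quadraticity hypothesis \eqref{condition1intro} reduces by orthogonality of the projections to $U_+(\tau_0)+E_0(\tau_0)+U_-(\tau_0)\le C\kappa'/|\tau_0|$. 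The goal is to prove, for all $\tau\le\tau_0$, the same bound with $\kappa$ in place of $C\kappa'$, together with the admissible radius $\rho(\tau)=|\tau|^{1/10}$.

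I would first upgrade the graphical radius from $|\tau|^{1/100}$ in \eqref{condition3} to $|\tau|^{1/10}$ on $(-\infty,\tau_0]$. This is done by combining the barrier for $w=v^2-2(n-k)$ à la \cite[Theorem 5.1]{DZ_spectral_quantization} with backward parabolic regularity applied to \eqref{equation_u}, as in the bootstrap of \cite[Propositions 2.6 and 2.8]{DZ_spectral_quantization}; it uses only $\kappa'$-quadraticity and noncollapsing, and it guarantees that the cutoff error in all subsequent $\mathcal{H}$-norm computations decays like $e^{-c\rho(\tau)^2}=e^{-c|\tau|^{1/5}}$. Next, project \eqref{equation_u} against the Hermite-type eigenfunctions of $\mathcal{L}$ from \eqref{H0+} and isolate the quadratic part of the nonlinearity. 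With $\Sigma(\tau):=U_+(\tau)+E_0(\tau)+U_-(\tau)+|\tau|^{-1}$, a direct computation yields the Merle--Zaag ODE system
\begin{align*}
\tfrac{d}{d\tau} U_+ &\ge \tfrac{1}{2} U_+ - C\Sigma^2 - Ce^{-c|\tau|^{1/5}},\\
\tfrac{d}{d\tau} U_- &\le -\tfrac{1}{2} U_- + C\Sigma^2 + Ce^{-c|\tau|^{1/5}},\\
\bigl|\tfrac{d}{d\tau} E_0\bigr| &\le C\Sigma^2 + C|\tau|^{-2} + Ce^{-c|\tau|^{1/5}},
\end{align*}
where the extra $|\tau|^{-2}$ term on the third line is the residue produced by $\partial_\tau |\tau|^{-1}$ applied to the subtracted ansatz.

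With the ODE system in place, run the Merle--Zaag bootstrap backward from $\tau_0$: since the flow is ancient, the alternative $U_-\gg E_0+U_+$ is ruled out by the usual backward blowup argument, and the quantitative form of this dichotomy (cf.\ \cite[Section 3]{CHH_translator}) with initial data $\Sigma(\tau_0)\le C\kappa'/|\tau_0|$ produces
\begin{align*}
U_+(\tau)+U_-(\tau)\le \eta(\kappa')\bigl(E_0(\tau)+|\tau|^{-1}\bigr)\qquad\text{for all }\tau\le\tau_0,
\end{align*}
with $\eta(\kappa')\to 0$ as $\kappa'\to 0$. Substituting this back closes the neutral-mode equation into the Riccati-type inequality $|\tfrac{d}{d\tau} E_0|\le CE_0^2+C|\tau|^{-2}$. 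Integrating backward from $\tau_0$ with $E_0(\tau_0)\le C\kappa'/|\tau_0|$ and matching to the asymptotic $E_0(\tau)=o(|\tau|^{-1})$ coming from \eqref{eq-thm33DZ} of \cite[Theorem 3.3]{DZ_spectral_quantization} (which holds for every individual $k$-oval) forces $E_0(\tau)\le \kappa/|\tau|$ throughout $(-\infty,\tau_0]$, once $\kappa'=\kappa'(\kappa)$ is small and $\tau_*=\tau_*(\kappa)$ is negative enough. Assembling the bounds on $U_\pm$ and $E_0$ yields the full strong $\kappa$-quadraticity.

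The main obstacle is the quantitative Merle--Zaag closure in this higher-dimensional neutral setting. Unlike the $1$-oval case where the $\mathcal{H}_0$ dynamics collapses to a scalar Riccati equation, here the coupled dynamics on the $\tfrac{k(k+1)}{2}$-dimensional $\mathcal{H}_0$-eigenspace is a matrix Riccati whose linearization at the putative attractor $Q_\tau=-\tfrac{\sqrt{2(n-k)}}{4|\tau|}I_k$ is degenerate in off-diagonal directions spanned by $\{y_iy_j\}_{i<j}$. Making the bootstrap uniform across the family of $\kappa'$-quadratic $k$-ovals therefore relies essentially on the full-rank condition on the fine cylindrical matrix $Q$ built into Definition \ref{def-koval}, and requires careful tracking of the $\langle \hat{u}_0,y_iy_j\rangle$-coefficients so that off-diagonal growth does not destabilize the diagonal $|\tau|^{-1}$ asymptotics.
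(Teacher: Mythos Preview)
Your general architecture—quantitative Merle--Zaag for $U_\pm$ followed by a neutral-mode bootstrap—matches the paper's, but your treatment of the neutral mode has a real gap, and the paper resolves it by a genuinely different mechanism.

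The scalar inequality $|\tfrac{d}{d\tau}E_0|\le CE_0^2+C|\tau|^{-2}$ is not what the neutral dynamics actually gives, and as written it cannot close the bootstrap at level $\kappa/|\tau|$: integrating it backward only yields $E_0=O(|\tau|^{-1})$ with a constant independent of $\kappa'$. If you carry out the computation correctly, writing the neutral block as a symmetric matrix $A=(\alpha_{ij})$ and $B:=A-A_*$ with $A_*=-\tfrac{\sqrt{2(n-k)}}{4|\tau|}I$, the matrix Riccati from Proposition~\ref{odes-1} linearizes to $\dot B=\tfrac{2}{|\tau|}B-\beta_{n,k}^{-1}B^2+E$; the point is that this linear term has coefficient $+2/|\tau|$ in \emph{every} direction (because $A_*$ is scalar), so your worry about ``degeneracy in off-diagonal directions'' is unfounded—the linearization is isotropic. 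The quantity you actually need to track is $|\tau|\,|B|$, whose backward evolution (in $s=|\tau|$) satisfies a genuinely contractive inequality $\tfrac{d}{ds}(|\tau||B|)\le -\tfrac{1}{2s}(|\tau||B|)+Cs^{-1-\gamma/2}$ under the bootstrap; this does close, but only once the linear term is kept.

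The paper takes a different route that sidesteps the norm computation entirely: it passes from the matrix $A$ to the elementary symmetric polynomials $S_j=\sigma_j(A)$ of its eigenvalues, shows via Claim~\ref{claim Sequation} that these satisfy a closed triangular system $\dot S_j=-\beta_{n,k}^{-1}(S_1S_j-(j+1)S_{j+1})+E_j$, and then rescales to $\xi_j=(C_k^j)^{-1}\beta_{n,k}^{-j}\tau^jS_j-1$ with $\tau=-e^{-\sigma}$. The resulting $k$-dimensional ODE $\tfrac{d\xi}{d\sigma}=B\xi+N(\sigma,\xi)$ has linearization eigenvalues $-1,\ldots,-k$ at $\xi=0$, so a standard Lyapunov stability argument (using that the $k$-oval property pins $\xi\to 0$ at $\sigma=-\infty$) keeps $\xi$ small on all of $(-\infty,\sigma_0]$ once $\xi(\sigma_0)$ is small. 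This symmetric-polynomial reduction is the paper's key device for handling the multi-dimensional neutral space; your proposal identifies the difficulty but does not supply this (or an equivalent) resolution.
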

To this end, we need the following  initial graphical radius estimate and quantitative Merle-Zaag type estimate.

\begin{lemma}[initial graphical radius, c.f {\cite[Lemma 2.6]{CDDHS}}]\label{poly_graph}
There exists some universal number $\gamma>0$ with the following significance. For every $\kappa>0$ sufficiently small, there exists a constant $\tau_{\ast}>-\infty$,  such that if a $k$-oval in $\mathbb{R}^{n+1}$ is $\kappa$-quadratic at time $\tau_0\leq \tau_{\ast}$, then  $\rho(\tau)=|\tau|^\gamma$ is an admissible graphical radius function for  $\tau \leq \tau_0$, namely \eqref{univ_fns} and \eqref{small_graph_admissible} hold for  $\tau \leq \tau_0$.
\end{lemma}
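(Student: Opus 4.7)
The proof plan is a continuity/maximality argument: starting from the graphical radius condition \eqref{condition3} of $\kappa$-quadraticity, which gives control on balls of radius $|\tau|^{1/100}$ for $\tau\in[2\tau_0,\tau_0]$, propagate the control backwards in time while enlarging the radius to $|\tau|^\gamma$ for a universal small $\gamma$. Take for instance $\gamma = 1/200$, so the differential condition $-\rho(\tau)\le \rho'(\tau)\le 0$ reduces to $\gamma |\tau|^{\gamma-1}\le |\tau|^\gamma$, which holds whenever $|\tau_0|$ is large.

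The main task is to establish the $C^4$ smallness $\|u(\cdot,\tau)\|_{C^4(\Gamma\cap B_{2|\tau|^\gamma}(0))}\le |\tau|^{-2\gamma}$ for all $\tau\le \tau_0$. I would define
\begin{equation}
\tau_1 := \inf\Big\{\sigma\le \tau_0 \,:\, \|u(\cdot,\tau)\|_{C^4(\Gamma\cap B_{2|\tau|^\gamma}(0))}\le |\tau|^{-2\gamma} \text{ for all } \tau\in[\sigma,\tau_0]\Big\},
\end{equation}
which is well-defined since \eqref{condition3} forces $\tau_1\le 2\tau_0$ once $|\tau_0|$ is large. The goal is to show $\tau_1=-\infty$, arguing by contradiction: if $\tau_1>-\infty$, then by continuity we would have equality in the $C^4$ bound at $\tau=\tau_1$, and it suffices to produce a strict improvement there.

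The strict improvement comes in three steps. First, the $\mathcal{H}$-norm bound \eqref{condition1intro} from $\kappa$-quadraticity at $\tau_0$ must be propagated backward: writing $u = v-\sqrt{2(n-k)}$, the evolution \eqref{equation_u} linearizes to $\partial_\tau u = \mathcal{L}u + N(u,Du,D^2u)$ with quadratically small nonlinearity. A Merle--Zaag type estimate applied on the truncated graphical function $\hat u$ (using the graphical radius from \eqref{condition3} as a cutoff) controls $\|\hat u(\cdot,\tau)\|_{\mathcal{H}}\le C/|\tau|$ for all $\tau\le \tau_0$, provided $\kappa$ is small; this is essentially the spectral ODE argument that will be developed more carefully in Section~\ref{sec2.2} but already yields Gaussian smallness on fixed-radius balls. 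Second, the tangent flow hypothesis \eqref{bubble-sheet_tangent_intro} guarantees smooth convergence $v(\cdot,\tau)\to\sqrt{2(n-k)}$ on compact sets, so the Gaussian smallness combined with Agmon's inequality gives pointwise smallness $\|u(\cdot,\tau)\|_{L^\infty(B_{|\tau|^\gamma}(0))}\le \eta(\tau)$ with $\eta(\tau)\to 0$. Third, I would invoke Ecker--Huisken style interior estimates (or parabolic Schauder theory applied to \eqref{equation_u}) to upgrade this to a $C^4$ bound on the slightly smaller ball $B_{2|\tau|^\gamma}(0)$; the key is that once the solution is a small $C^0$ graph, the equation is uniformly parabolic and all higher derivatives are controlled. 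Iterating this scheme improves the trivial exponent $1/100$ from \eqref{condition3} up to $\gamma$ with room to spare, contradicting equality at $\tau_1$.

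The main obstacle will be the interplay between the radius enlargement and the strength of the decay: one must carefully choose $\gamma$ small enough that the Gaussian weight $e^{-|\mathbf{y}|^2/4}$ still controls pointwise behavior at distance $|\tau|^\gamma$, but large enough that $|\tau|^\gamma$-radius balls are useful for the later applications (in particular, for running the Merle--Zaag analysis in Subsection~\ref{sec2.2}). A second subtlety is that the nonlinearity $N$ in the evolution of $u$ contains terms like $u_{y_i}u_{y_j}u_{y_iy_j}/(1+|Du|^2)$ and polynomially singular terms $(n-k)/(\sqrt{2(n-k)}+u)$, which only produce small contributions after the $C^1$ smallness is established; hence the iteration must be arranged so that the $C^0$ improvement from the tangent flow convergence is secured before the derivative estimates are bootstrapped, avoiding any circular dependence.
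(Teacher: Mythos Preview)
The paper does not prove this lemma; it cites \cite[Lemma 2.6]{CDDHS} directly. The standard argument there (and in \cite[Proposition 2.8]{DZ_spectral_quantization} for the non-uniform version) is a contradiction/compactness argument: assuming failure, one extracts a sequence of $k$-ovals $\kappa$-quadratic at times $\tau_0^j\to -\infty$ with first backward failure times $\sigma_j<2\tau_0^j$, shifts time so that $\sigma_j$ is normalized, passes to a limit ancient noncollapsed flow, and derives a contradiction from the qualitative single-solution result.

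Your continuity argument has a genuine circularity. In your first step you propose to propagate the $\mathcal{H}$-norm bound backward via ``a Merle--Zaag type estimate \ldots this is essentially the spectral ODE argument that will be developed more carefully in Section~\ref{sec2.2}.'' But Lemma~\ref{quant_MZ} and Proposition~\ref{odes-1} in Section~\ref{sec2.2} take Lemma~\ref{poly_graph} as a hypothesis: the error terms in the spectral ODEs are controlled \emph{because} $\rho(\tau)=|\tau|^\gamma$ is already known to be admissible for all $\tau\le \tau_0$. You cannot invoke that machinery to prove the very lemma it rests on. Moreover, even setting aside the circularity, the Merle--Zaag mechanism does not furnish the $C/|\tau|$ decay on a \emph{finite} interval $[\tau_1,\tau_0]$: the conclusion $U_+ + U_- \le o(1)\,U_0$ and the resulting $1/|\tau|$ rate come from integrating the mode ODEs from $-\infty$, using that the unstable modes cannot dominate in the ancient limit. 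On $[\tau_1,\tau_0]$ with $\tau_1>-\infty$ you have no such leverage, and the unstable modes could in principle be large near $\tau_1$. Your step~2 (Agmon) and step~3 (interior parabolic estimates) are fine once the input is in hand, but the input is exactly what is missing. The fix is to abandon the Merle--Zaag input here and instead either (i) run the compactness/contradiction argument sketched above, or (ii) use the ADS-type barriers directly to control $v$ pointwise on the relevant scale, which is what underlies \cite[Proposition 2.8]{DZ_spectral_quantization}.
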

 \begin{lemma}[{quantitative Merle-Zaag type estimate. {c.f. \cite[Lemma 2.7]{CDDHS}}}]\label{quant_MZ}
For every $\kappa>0$ sufficiently small, there exists a constant $\tau_{\ast}>-\infty$,  such that if a bubble-sheet oval in $\mathbb{R}^4$ is $\kappa$-quadratic at time $\tau_0\leq \tau_{\ast}$, then for $\tau \leq\tau_0$ we have the estimate
\begin{equation}\label{zero_dom_mz}
U_{+}(\tau)+U_-(\tau) \leq \frac{C'}{|\tau|^\gamma}U_{0}(\tau),
\end{equation}
where $C'<\infty$ is a numerical constant.
\end{lemma}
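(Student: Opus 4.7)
My plan is to derive coupled spectral differential inequalities for the Gaussian $L^2$-norms $U_\pm(\tau) := \|\mathfrak{p}_\pm \hat u(\cdot,\tau)\|_{\mathcal{H}}$ and $U_0(\tau) := \|\mathfrak{p}_0 \hat u(\cdot,\tau)\|_{\mathcal{H}}$ of the projections of the truncated graphical function, and then run a Merle--Zaag continuity argument propagating the smallness of $U_+ + U_-$ backward in time from $\tau_0$.

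\textbf{Setup.} By Lemma \ref{poly_graph}, $\rho(\tau) = |\tau|^{\gamma}$ is an admissible graphical radius for $\tau \leq \tau_0$, so in particular $\|u(\cdot,\tau)\|_{C^4(B_{2\rho(\tau)})} \leq |\tau|^{-2\gamma}$. Form $\hat u(\mathbf{y},\tau) = u(\mathbf{y},\tau)\, \chi(|\mathbf{y}|/\rho(\tau))$ and decompose $\hat u = \mathfrak{p}_+ \hat u + \mathfrak{p}_0 \hat u + \mathfrak{p}_- \hat u$ according to the spectral decomposition of $\mathcal{L}$ in \eqref{OU_operator}--\eqref{H0+}.

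\textbf{Evolution and ODIs.} Plugging $\hat u$ into \eqref{equation_u} and commuting with the cutoff yields
\begin{equation}
\hat u_\tau = \mathcal{L} \hat u + N_{\mathrm{nl}}(\tau) + E_{\mathrm{cut}}(\tau),
\end{equation}
where $N_{\mathrm{nl}}$ is at least quadratic in $(u, Du, D^2 u)$ on $\{|\mathbf{y}| \leq 2\rho(\tau)\}$ and $E_{\mathrm{cut}}$ is supported on the annulus $\{\rho(\tau) \leq |\mathbf{y}| \leq 2\rho(\tau)\}$. Using the $C^4$ bound above, $\|N_{\mathrm{nl}}\|_{\mathcal{H}} \leq C|\tau|^{-2\gamma}(U_+ + U_0 + U_-)$, while the Gaussian weight forces $\|E_{\mathrm{cut}}\|_{\mathcal{H}} \leq e^{-c|\tau|^{2\gamma}}$, which is negligible relative to any polynomial. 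Projecting and using that the spectrum of $\mathcal{L}$ on $\mathcal{H}_+$ is $\{1, \tfrac12\}$ and on $\mathcal{H}_-$ is $\subset (-\infty, -\tfrac12]$, one obtains the ODE system (after absorbing the exponentially small terms)
\begin{align}
\tfrac{d}{d\tau} U_+^2 &\geq U_+^2 - C|\tau|^{-2\gamma}(U_+^2 + U_0^2 + U_-^2),\\
\bigl|\tfrac{d}{d\tau} U_0^2\bigr| &\leq C|\tau|^{-2\gamma}(U_+^2 + U_0^2 + U_-^2),\\
\tfrac{d}{d\tau} U_-^2 &\leq -U_-^2 + C|\tau|^{-2\gamma}(U_+^2 + U_0^2 + U_-^2),
\end{align}
valid for $\tau \leq \tau_0$.

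\textbf{Initial data and propagation.} The $\kappa$-quadraticity hypothesis \eqref{condition1intro}, together with $|\mathbf{y}|^2 - 2k \in \mathcal{H}_0$, immediately gives $U_\pm(\tau_0) \leq \kappa/|\tau_0|$ and $U_0(\tau_0) \geq (c_0 - \kappa)/|\tau_0|$ for a numerical $c_0 > 0$, so $U_+(\tau_0) + U_-(\tau_0) \leq C\kappa\, U_0(\tau_0)$. Set $R(\tau) := (U_+^2(\tau) + U_-^2(\tau))/U_0^2(\tau)$; the ODI for $U_0^2$ shows $U_0$ varies slowly (doubles over unit $\tau$-windows only by a factor $1 + O(|\tau|^{-2\gamma}(1+R))$). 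A continuous-induction argument then runs as follows: if $R(\tau_1) > C'|\tau_1|^{-2\gamma}$ for some first $\tau_1 < \tau_0$, the unstable-stable splitting in the $U_\pm$ inequalities forces $U_+^2 + U_-^2$ to grow at rate $\geq 1$ when compared against the slow-varying $U_0^2$ between $\tau_1$ and $\tau_0$, contradicting $R(\tau_0) \leq C^2 \kappa^2$ once $\kappa$ is small enough and $\tau_0$ negative enough. Adjusting the exponent yields the claimed bound $U_+ + U_- \leq C' |\tau|^{-\gamma} U_0$.

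\textbf{Main obstacle.} The delicate part is establishing the ODI system with genuinely small error $O(|\tau|^{-2\gamma})$: one must simultaneously extract the dissipative/amplifying action of $\mathcal{L}$ on each eigenspace, control the quadratic nonlinearity by the graphical-radius $C^4$ bound (which in turn rests on Lemma \ref{poly_graph}), and verify that the cutoff commutator is super-polynomially small thanks to the Gaussian weight—these are tied together in a nontrivial way because the nonlinearity involves $D^2 u$, whose weighted norm must be bootstrapped from the $\mathcal{H}$-norm via the elliptic regularity embedded in \eqref{easy_Poincare} together with the $C^4$ graphical bound. Once this is in place, the quantitative Merle--Zaag propagation is a direct consequence of the spectral gap of $\mathcal{L}$ and requires no further geometric input.
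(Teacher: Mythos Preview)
The paper does not give its own proof here; the lemma is stated with a ``c.f.'' to \cite[Lemma 2.7]{CDDHS}. Your overall scheme---derive the spectral ODIs from \eqref{equation_u} with nonlinear error $O(|\tau|^{-2\gamma})$ via the admissible radius from Lemma~\ref{poly_graph}, then propagate---is the standard route and is correct in outline.

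The genuine gap is in your backward propagation for the stable mode $U_-$. Your contradiction argument asserts that if $R(\tau_1)$ first hits the threshold at some $\tau_1<\tau_0$, then ``$U_+^2+U_-^2$ grows at rate $\geq 1$'' on $[\tau_1,\tau_0]$, forcing $R(\tau_0)$ to be large. This works for $U_+$ (the forward-unstable mode), but the inequality $\tfrac{d}{d\tau}U_-^2\le -U_-^2+C|\tau|^{-2\gamma}(\cdots)$ forces \emph{decay} of $U_-^2$ as $\tau$ increases, so a large $U_-(\tau_1)$ is perfectly consistent with tiny $U_-(\tau_0)$. You also only possess one-sided ODIs (there is no lower bound on $\tfrac{d}{d\tau}U_-^2$ since $\mathcal L|_{\mathcal H_-}$ is unbounded below), so you cannot sign $R'(\tau_1)$ and run a first-derivative test either. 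The single-time hypothesis at $\tau_0$ genuinely cannot control $U_-$ at earlier times: in the toy model $U_+\equiv 0$, $U_0\equiv 1$, $(U_-^2)'=-U_-^2$, one has $U_-^2(\tau)=U_-^2(\tau_0)e^{\tau_0-\tau}$, which blows up going backward regardless of how small $U_-(\tau_0)$ is.

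What is missing is the ancient structure of the $k$-oval. Since $\hat u\to 0$ as $\tau\to-\infty$, the classical Merle--Zaag dichotomy rules out plus-dominance and yields the qualitative estimate $U_++U_-=o(U_0)$ for all sufficiently negative $\tau$. With neutral dominance in hand one integrates the $U_-$-inequality from $-\infty$ to $\tau$ (the boundary term $e^{\sigma}U_-^2(\sigma)\to 0$ vanishes) and the $U_+$-inequality forward from $\tau$ to $\tau_0$; in both cases the exponential kernel localizes the integral near $\sigma=\tau$ and one reads off $U_\pm^2(\tau)\lesssim |\tau|^{-2\gamma}U_0^2(\tau)$. Your sketch omits precisely this appeal to the behavior at $-\infty$, and without it the argument for $U_-$ cannot close.
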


Now, we work with the truncated graphical function
\begin{equation}
\hat{u}(\cdot,  \tau)=u(\cdot,  \tau)\chi\left(\frac{|\cdot|}{|\tau|^\gamma}\right),
\end{equation} 
and we consider the following expansion in $\mathcal{H}$-norm sense,
\begin{equation}\label{expansion_123}
 \hat{u} = \sum_{m=1}^{k}\alpha_{mm}(y^2_{m}-2)+\sum_{1\leq i<j\leq k} \alpha_{ij}(2y_{i}y_{j})+\hat{w}\, .
\end{equation}
Let 
\begin{equation}\label{psi_defi}
    \psi_{mm}=y^2_{m}-2\quad \psi_{ij}=2y_{i}y_{j},
\end{equation}
and $A=(\alpha_{ij})$ be the symmetric $k\times k$ spectral coefficients matrix with
\begin{equation}\label{def_exp_coeffs}
  \alpha_{ij} = \|\psi_{ij}\|_{\mathcal{H}}^{-2} \langle  \psi_{ij},\hat{u}\rangle_{\mathcal{H}}.
\end{equation}
Then, by \cite[Prop 3.1 and Lem 3.4]{DZ_spectral_quantization}, we have the following proposition about the spectral coefficients.
\begin{proposition}[spectral ODE system, {\cite[Prop 3.1 and Lem 3.4]{DZ_spectral_quantization}}]\label{odes-1}
There exists a universal constant $\eta > 0$ such that the spectral coefficients matrix $A(\tau)$ satisfies the following  ODE system: 
\begin{equation}\label{odes0}
 \dot A=- \beta_{n, k}^{-1} A^2+E,
\end{equation}
where 
\begin{equation}
    \beta_{n, k}=\frac{\sqrt{2(n-k)}}{4}
\end{equation}
and
\begin{equation}
    |E|\leq C|\tau|^{-\frac{\gamma}{2}}|A|^2 + Ce^{-\eta|\tau|^{\gamma}/10}, \quad |A|=O(|\tau|^{-1})
\end{equation}
holds with $\gamma > 0$ from Lemma \ref{poly_graph}.
\end{proposition}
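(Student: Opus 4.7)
The plan is to differentiate $\alpha_{ij}(\tau) = \|\psi_{ij}\|_{\mathcal{H}}^{-2}\langle \hat{u}(\cdot,\tau),\psi_{ij}\rangle_{\mathcal{H}}$ in time and track the leading quadratic contribution. First I would derive the evolution equation for the truncated function. Starting from \eqref{equation_u} and applying the cutoff $\chi(|\cdot|/|\tau|^{\gamma})$, one obtains
\begin{equation}
\partial_\tau\hat{u} \;=\; \mathcal{L}\hat{u} \;+\; N(u)\,\chi\!\left(\tfrac{|\cdot|}{|\tau|^{\gamma}}\right) \;+\; R_\chi,
\end{equation}
where $\mathcal{L}$ is the Ornstein--Uhlenbeck operator \eqref{OU_operator}, and $N(u)=-\tfrac{u^2}{2\sqrt{2(n-k)}}+O(u^3)+O(|Du|^2|D^2u|)$ collects the Taylor expansion of $\tfrac{\sqrt{2(n-k)}+u}{2}-\tfrac{n-k}{\sqrt{2(n-k)}+u}$ together with the quasilinear second-order terms. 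The remainder $R_\chi$ comes from commutators with the cutoff; since $\|u\|_{C^4(B_{2|\tau|^\gamma})}\le|\tau|^{-2\gamma}$ by Lemma~\ref{poly_graph} and the Gaussian weight is present on the annulus $\{|\mathbf{y}|\sim|\tau|^{\gamma}\}$, we get $\|R_\chi\|_{\mathcal{H}}\le C e^{-\eta|\tau|^{\gamma}/10}$ for some universal $\eta>0$.

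Next, I would project against $\psi_{ij}$. Self-adjointness of $\mathcal{L}$ on $\mathcal{H}$ together with $\psi_{ij}\in\ker\mathcal{L}$ kills the linear term, giving
\begin{equation}
\dot\alpha_{ij}\,\|\psi_{ij}\|_{\mathcal{H}}^{2} \;=\; -\frac{1}{2\sqrt{2(n-k)}}\,\langle u^2,\psi_{ij}\rangle_{\mathcal{H}} \;+\; \mathcal{E}_{ij},
\end{equation}
where $\mathcal{E}_{ij}$ collects cubic, quasilinear and cutoff contributions. Writing $\hat{u}=\phi_A+\hat{w}$ with $\phi_A(\mathbf{y}):=\mathbf{y}^{\!\top}\! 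A\mathbf{y}-2\,\mathrm{tr}(A)$ and $\hat{w}\in\mathcal{H}_+\oplus\mathcal{H}_-$, a direct Wick/Isserlis computation under the Gaussian measure with covariance $2I$ shows the algebraic identity $\mathfrak{p}_0(\phi_A^2)=\phi_{8A^{2}}$. (One can verify this first in the scalar case $k=1$, where $\langle (y^2-2)^2,y^2-2\rangle_{\mathcal{H}}/\|y^2-2\|_{\mathcal{H}}^2=8$, and then extend by $\mathrm{O}(k)$-equivariance.) Consequently the leading contribution is $-\tfrac{1}{2\sqrt{2(n-k)}}\cdot 8(A^{2})_{ij}=-\beta_{n,k}^{-1}(A^{2})_{ij}$, which is precisely the claimed main term.

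It remains to control $E=\mathcal{E}/\|\psi_{ij}\|^2$. The cross contributions $\langle 2\phi_A\hat{w}+\hat{w}^2,\psi_{ij}\rangle$ are bounded by Cauchy--Schwarz in terms of $\|\hat{w}\|_{\mathcal{H}}$, which by Lemma~\ref{quant_MZ} (quantitative Merle--Zaag) satisfies $\|\hat{w}\|_{\mathcal{H}}\lesssim |\tau|^{-\gamma}|A|$; after accounting for the polynomial growth of $\phi_A$ on the graphical ball, this yields the bound $C|\tau|^{-\gamma/2}|A|^2$. The cubic and quasilinear contributions scale as $|A|^3=O(|\tau|^{-1})|A|^2$ and are absorbed in the same term. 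The cutoff remainder contributes the exponentially small summand $Ce^{-\eta|\tau|^{\gamma}/10}$. Finally, $|A|=O(|\tau|^{-1})$ follows immediately from the $\kappa$-quadraticity inequality \eqref{condition1intro}, since each $|\alpha_{ij}|\le \|\psi_{ij}\|_{\mathcal{H}}^{-1}\|v_{\mathcal{C}}-\sqrt{2(n-k)}\|_{\mathcal{H}}\lesssim|\tau|^{-1}$.

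The main obstacle is the algebraic step $\mathfrak{p}_0(\phi_A^2)=\phi_{8A^{2}}$: because the neutral basis $\{y_m^2-2,\,y_iy_j\}_{i<j}$ mixes diagonal Hermite components with off-diagonal monomials, careful bookkeeping is required to confirm that all trace corrections assemble into the matrix product $A^{2}$ with the exact scalar factor $8$; any residual $\mathrm{tr}(A^2)\,I$ term would destroy the precise coefficient $\beta_{n,k}^{-1}$. Secondary technical difficulties are ensuring the Merle--Zaag type bound of Lemma~\ref{quant_MZ} really applies at the time when we project (so $\hat{w}$ is genuinely smaller than $|A|$) and that the graphical radius $|\tau|^{\gamma}$ is large enough for $R_\chi$ to fit into the stated exponential error.
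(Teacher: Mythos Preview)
The paper does not prove this proposition; it is quoted verbatim from \cite[Prop.~3.1 and Lem.~3.4]{DZ_spectral_quantization} and immediately used as a black box in the proof of Theorem~\ref{point_strong}. Your sketch is precisely the standard derivation (project the truncated evolution onto $\mathcal{H}_0$, extract the quadratic self-interaction via the Gaussian identity $\mathfrak{p}_0(\phi_A^2)=\phi_{8A^2}$, and control the rest through Lemma~\ref{quant_MZ} and the Gaussian tail on the cutoff annulus), which is what the cited reference does. The algebraic step you flag as the main obstacle is indeed the crux, and your factor $8$ is correct; combined with the Taylor coefficient $-\tfrac{1}{2\sqrt{2(n-k)}}$ this produces exactly $-\beta_{n,k}^{-1}A^2$.

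There is one genuine gap. Your justification of $|A|=O(|\tau|^{-1})$ appeals to \eqref{condition1intro}, but that inequality holds only at the single time $\tau_0$; it says nothing about $\tau<\tau_0$. The bound $|A(\tau)|=O(|\tau|^{-1})$ for all $\tau\le\tau_0$ is the content of \cite[Lem.~3.4]{DZ_spectral_quantization} and does not follow from $\kappa$-quadraticity at one time alone --- it requires either analyzing the matrix Riccati ODE you just derived (e.g.\ via its trace, which satisfies a scalar Riccati inequality up to the error $E$) or invoking the sharp $\mathcal{H}$-asymptotics for $k$-ovals \cite[Thm.~3.3]{DZ_spectral_quantization} (as in the proof of Lemma~\ref{lem-quadraticity} in this paper). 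A minor related slip: from Lemma~\ref{quant_MZ} one gets $\|\hat w\|_{\mathcal{H}}^2=U_++U_-\le C|\tau|^{-\gamma}U_0\sim C|\tau|^{-\gamma}|A|^2$, hence $\|\hat w\|_{\mathcal{H}}\lesssim|\tau|^{-\gamma/2}|A|$, not $|\tau|^{-\gamma}|A|$ as you wrote; your final error bound $C|\tau|^{-\gamma/2}|A|^2$ is nonetheless correct.
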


With the above ingredients, we can now prove the main result in this subsection:

\begin{proof}[Proof of Theorem \ref{point_strong}]
    Let $\lambda_{1}, \dots, \lambda_{k}$ be the eigenvalues of $A$ and let $\lambda_{k+1}=0$, and $S_{j}=\sigma_{j}(A)$ be the symmetric polynomial of these eigenvalues and $ \lambda_{k+1}$, which depends on $A$ smoothly  and where $j=1,\dots,k, k+1$ and $S_{k+1}=0$. We have the following claim
\begin{claim}\label{claim Sequation}
For $1\leq j\leq k$  
\begin{equation}\label{Sequation}
\frac{d}{d\tau}S_{j}=-\beta^{-1}_{n, k}(S_{1}S_{j}-(j+1)S_{j+1})+E_{j},
\end{equation}
where $|E_{j}|\leq C|\tau|^{-\gamma/2}|A|^{j+1}=O(|\tau|^{-j-1-\gamma/2})$.
\end{claim}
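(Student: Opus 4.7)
\textbf{Proof proposal for Claim \ref{claim Sequation}.}
My strategy is to differentiate $S_j(A)$ directly using the matrix ODE from Proposition \ref{odes-1}, treating $S_j=\sigma_j(A)$ as a smooth polynomial of degree $j$ in the entries of the symmetric matrix $A$. By the chain rule,
\[
\dot S_j = D_A S_j[\dot A] = -\beta_{n,k}^{-1}\, D_A S_j[A^2] + D_A S_j[E].
\]
Thus the task splits into two pieces: (i) identifying the main term $D_A S_j[A^2]$ with $S_1 S_j-(j+1)S_{j+1}$, and (ii) controlling the error $E_j:=D_A S_j[E]$.

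For step (i), I will exploit that $A^2$ commutes with the symmetric matrix $A$ and so is simultaneously diagonalized. Writing $A=Q\,\mathrm{diag}(\lambda_1,\dots,\lambda_k)\,Q^\top$, a first-order expansion gives
\[
D_A S_j[A^2] = \sum_{i=1}^{k}\lambda_i^2\,\sigma_{j-1}\bigl(\lambda_1,\dots,\widehat{\lambda_i},\dots,\lambda_k\bigr).
\]
I then establish the combinatorial identity
\[
\sum_{i=1}^{k}\lambda_i^2\,\sigma_{j-1}(\lambda_1,\dots,\widehat{\lambda_i},\dots,\lambda_k) = S_1 S_j - (j+1)\,S_{j+1},
\]
by regrouping: the left-hand side equals $\sum_{|I|=j}\bigl(\sum_{i\in I}\lambda_i\bigr)\prod_{\ell\in I}\lambda_\ell$, and writing $\sum_{i\in I}\lambda_i = S_1-\sum_{\ell\notin I}\lambda_\ell$ reassembles the second piece into a sum over $(j+1)$-subsets with each counted $j+1$ times. (A Newton-identity check at $j=1,2$ confirms the formula: e.g.\ for $j=1$ it gives $\mathrm{tr}(A^2) = S_1^2-2S_2$.) No issue arises from eigenvalue collisions since $S_j$ is a polynomial in $A$, so the identity extends by continuity from the diagonalizable generic case.

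For step (ii), the polynomial $S_j$ has degree $j$ in the entries of $A$, hence $\|D_A S_j\| \le C_{j,k}\,|A|^{j-1}$. Combined with the bound $|E|\le C|\tau|^{-\gamma/2}|A|^2 + Ce^{-\eta|\tau|^\gamma/10}$ from Proposition \ref{odes-1} and $|A|=O(|\tau|^{-1})$, this yields
\[
|E_j| \le C\,|A|^{j-1}|E| \le C|\tau|^{-\gamma/2}|A|^{j+1} + C|A|^{j-1} e^{-\eta|\tau|^\gamma/10}.
\]
The exponentially small term is absorbed by $C|\tau|^{-j-1-\gamma/2}$ once $\tau\le\tau_\ast$ is sufficiently negative, giving the required estimate $|E_j|\le C|\tau|^{-\gamma/2}|A|^{j+1}=O(|\tau|^{-j-1-\gamma/2})$.

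The only real content is the combinatorial identity in (i); once it is in hand, the remainder is a routine application of the chain rule combined with the already-established bounds on $E$ and $|A|$. Consequently, I do not expect any serious obstacle beyond the careful bookkeeping in that identity.
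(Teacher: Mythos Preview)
Your proof is correct and essentially the same as the paper's: both compute $D_A S_j[A^2]$ in terms of the eigenvalues, reduce to the identity $\sum_{i}\lambda_i^2\,\sigma_{j-1}(\lambda_1,\dots,\widehat{\lambda_i},\dots,\lambda_k)=S_1S_j-(j+1)S_{j+1}$, and bound the error via $\lvert D_A S_j\rvert\le C\lvert A\rvert^{j-1}$. The only cosmetic difference is that the paper arrives at the eigenvalue expression by differentiating a determinant representation of $\sigma_j$ column-by-column (citing \cite{magic}), whereas you use the simultaneous diagonalizability of $A$ and $A^2$ directly---a slightly more self-contained but equivalent packaging.
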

\begin{proof}[Proof of Claim]
Let $I_{j}=\{(i_{1}, \dots i_{k}): \sum_{m=1}^{k}i_{m}=j,\, i_{m}\in\{0, 1\}\}$. We also recall that $\lambda_{1}, \dots, \lambda_{k}$ be the eigenvalues of $A$ and we set $\lambda_{k+1}=0$. By the formulas in \cite{magic}, we can write the symmetric polynomial of eigenvalues 
\begin{equation}
\begin{split}
    S_{j}&=\sum_{(i_{1}, \dots i_{k})\in I_{j}} \lambda^{i_{1}}_{1}\lambda^{i_{2}}_{2}\cdots\lambda^{i_{m}}_{m}\cdots\lambda^{i_{k}}_{k}\\
    &=\sum_{(i_{1}, \dots i_{k})\in I_{j}}\det(A^{i_{1}}_{1},A^{i_{2}}_{2},\dots,A^{i_{m}}_{m},\dots,A^{i_{k}}_{k}),
\end{split}
\end{equation}
where $A^{i_m}$  is the $i_{m}$th power of matrix $A$ and $A^{i_{m}}_{m}$ is the $m$th column of the matrix $A^{i_{m}}$. Notice that the eigenvalues of $A^{i_{m}}$ are $\lambda^{i_{m}}_{1}, \lambda^{i_{m}}_{2}, \dots, \lambda^{i_{m}}_{k}$.\\
Then, by taking derivative on determinant and using Proposition \ref{odes-1} we have
\begin{align}\label{dsj}
    &\quad(-\beta_{n,k})\frac{d}{d\tau}S_{j}\\
    &=(-\beta_{n,k})\sum_{m=1}^{k}\sum_{(i_{1}, \dots i_{k})\in I_{j}}\det(A^{i_{1}}_{1},A^{i_{2}}_{2},\dots,\delta^{i_{m}}_{1}(\frac{d}{d\tau}A)_{m},\dots,A^{i_{k}}_{k}) \\\nonumber
     &=\sum_{m=1}^{k}\sum_{(i_{1}, \dots i_{k})\in I_{j}}\det(A^{i_{1}}_{1},A^{i_{2}}_{2},\dots,\delta^{i_{m}}_{1}(A^2+O(\tau^{-2-\frac{\gamma}{2}}) )_{m},\dots,A^{i_{k}}_{k})\\\nonumber
    &=\sum_{m=1}^{k}\sum_{(i_{1}, \dots \hat{i}_{m} \dots,i_{k})\in I_{j-1}}\lambda^{i_{1}}_{1}\lambda^{i_{2}}_{2}\cdots\hat{\lambda}^{i_{m}}_{m}\lambda^{2}_{m}\cdots\lambda^{i_{k}}_{k}+O(\tau^{-j-1-\frac{\gamma}{2}}).\\\nonumber
\end{align}
On the other hand, by  counting the repeating terms in the right hand side of equation below, we have
\begin{equation}
    (j+1)S_{j+1}=\sum_{m=1}^{k}\sum_{(i_{1}, \dots {i}_{m}=1 \dots,i_{k})\in I_{j+1}}\lambda^{i_{1}}_{1}\lambda^{i_{2}}_{2}\cdots\hat{\lambda}^{i_{m}}_{m}\lambda_{m}\cdots\lambda^{i_{k}}_{k}.
\end{equation}
Using this we have
\begin{align}\label{s1sjsj+1}
    S_{1}S_{j}-(j+1)S_{j+1}
   &=\sum_{m=1}^{k}\sum_{(i_{1}, \dots \hat{i}_{m} \dots,i_{k})\in I_{j-1}}\lambda^{i_{1}}_{1}\lambda^{i_{2}}_{2}\cdots\hat{\lambda}^{i_{m}}_{m}\lambda^{2}_{m}\cdots\lambda^{i_{k}}_{k}.
\end{align}
By comparing the above two equations \eqref{dsj} \eqref{s1sjsj+1} and using proposition \ref{odes-1},
we complete the proof of claim \ref{Sequation}.
\end{proof}
Then,  we derive the evolution equation of  $\xi=(\xi_{1},\dots, \xi_{k})$ which is defined below in terms of normalization of $(S_{1},\dots, S_{j})$:  \begin{equation}.
    \xi_{j}=\frac{1}{C_{k}^{j}}\beta^{-j}_{n,k}\tau^{j}S_{j}-1\quad \tau=-e^{-\sigma}\quad 1\leq j\leq k.
\end{equation}
Then, by direct computation we have
\begin{equation}\label{ODE}
     \frac{d}{d\sigma}\xi=B\xi+N(\sigma, \xi(\sigma))=B\xi-k\xi_{1}I_{k}\xi+O(e^{\frac{\gamma}{2}\sigma}),
\end{equation}
where the constant matrix $B$ is given by
\begin{equation}\label{B}
   \left(\begin{array}{ccccccccc}
        -(2k-1) & (k-1) & 0 &\dots &0& 0& 0&\dots &0\\
         -k & -(k-2) & (k-2)&\dots &0& 0& 0&\dots &0\\
         \dots & \dots &  \dots&\dots &\dots &\dots &\dots &\dots &0\\
         -k & 0 & \dots &  \dots&-(k-j) & (k-j)& 0 &\dots &0\\
          \dots & \dots &  \dots&\dots &\dots &\dots &\dots &\dots &0\\
         -k & 0 & 0  & \dots &0&\dots&0& -1 & 1\\
         -k & 0 & 0 & \dots &0 &\dots &0 &0 & 0\\
    \end{array}
    \right)\!,
\end{equation}
and $B$ has eigenvalues $-1, -2, \dots, -k$.  For this ODE system, since the error term is sum of quadratic term and exponential decay term, we can follow the proof of the usual Lyapunov linear stability theorem  in \cite[Thm 3.4.1]{ODE-book} to show that in a local neighborhood $B(0, \varepsilon)$ of $\xi=0$, where $\varepsilon>0$ is a small fixed constant, 
$\xi=0$ is a stable critical point of the above ODE system and any flow line of the ODE system with initial data in $B(0, \varepsilon)$ will converge to $0$ as $\sigma\to -\infty$.

Now, by the definition of $\kappa'$-quadratic at time $\tau_{0}\leq \tau_{*}$, if we choose $\kappa'>0$ small enough and $\tau_{*}$ negative enough after doing variable change, we can make $\xi(\sigma_{0})\in B(0, \varepsilon)$. Hence,   $\xi(\sigma)\in B(0, \varepsilon)$ for all $\sigma\leq \sigma_{0}$. Translating back to our original variables and choosing $\kappa'>0$ small enough and $\tau_{*}$ negative enough, this shows that both eigenvalues of the matrix $A$
are $\frac{C\eps}{|\tau|}$-close to $\frac{\sqrt{2(n-k)}}{4|\tau|}$. Hence, choosing $\eps=\eps(\kappa)$ sufficiently small, we conclude that
\begin{align}\label{u_strong_kappa}
     \left\| \hat{u}({\bf{y}},  \tau)+\frac{\sqrt{2(n-k)}(|{\bf{y}}|^2-2k)}{4|\tau|}  \right\|_{\mathcal{H}}\leq \frac{\kappa/2}{|\tau|}
\end{align}
holds for all $\tau\leq \tau_{0}$.

Finally, having established \eqref{u_strong_kappa}, we can consider the quantity
\begin{equation}
    \beta(\tau):=\sup_{\tau'\leq \tau}\left(\int_{\mathbb{R}^{k}}\hat{u}({\bf{y}},  \tau')^2e^{-\frac{|{\bf{y}}|^2}{4}}dy\right)^{1/2},
\end{equation}
and argue similarly as in \cite[Section 2.3]{DZ_spectral_quantization} (or see \cite[Section 2.2]{DH_hearing_shape} and the proof of Theorem \ref{lem-quadraticity}) to upgrade the initial graphical radius $\rho(\tau)=|\tau|^\gamma$ to the improved graphical radius $\rho(\tau)=|\tau|^{1/10}$ for $\tau\leq \tau_{0}$. Observing also that with this new  graphical radius \eqref{u_strong_kappa} still holds with $\kappa/2$ replaced by $\kappa$, this concludes the proof of the theorem.

\end{proof}

As a corollary of the proof we also obtain:

\begin{corollary}[full rank]\label{cor_full_rank}
For every $\kappa>0$ small enough, there exists $\tau_\ast>-\infty$ with the following significance. Let $\mathcal{M}$ be an ancient noncollapsed flow in $\mathbb{R}^{n+1}$, whose tangent flow at $-\infty$ is given by \eqref{bubble-sheet_tangent_intro}, and suppose that $\mathcal{M}$ is $\mathrm{SO}(n+1-k)$-symmetric in the $x_{k+1} \dots x_{n+1}$-plane centered at the origin. If $\mathcal{M}$ is $\kappa$-quadratic  at some time $\tau_0\leq\tau_\ast$ (defined literally the same as in Definition \ref{k_tau00_intro}), then its fine cylindrical matrix $Q$ satisfies $\mathrm{rk}(Q)=k$.
\end{corollary}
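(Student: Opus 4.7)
\textbf{Proof proposal for Corollary \ref{cor_full_rank}.} My plan is to observe that the argument used to establish Theorem \ref{point_strong} actually yields the corollary with no extra work, because the $k$-oval hypothesis in that theorem is nowhere invoked as an \emph{input} to the proof. Specifically, the three ingredients assembled in Section \ref{sec2.2}, namely the polynomial initial graphical radius (Lemma \ref{poly_graph}), the spectral ODE system for the spectral matrix $A(\tau)$ (Proposition \ref{odes-1}), and the Lyapunov stability analysis of the normalized ODE \eqref{ODE} at its sink $\xi=0$, each require only that $\mathcal{M}$ be ancient noncollapsed, possess the cylindrical tangent flow \eqref{bubble-sheet_tangent_intro}, carry the $\mathrm{SO}(n+1-k)$ symmetry, and satisfy the single-time $\kappa$-quadraticity bound of Definition \ref{k_tau00_intro}. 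The sharp asymptotics built into the definition of $k$-oval are not used in these derivations; they only enter to guarantee that \emph{some} given solution becomes $\kappa$-quadratic at all sufficiently negative times via Lemma \ref{lem-quadraticity}.

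Granting this, the proof of Theorem \ref{point_strong} applies verbatim to any flow satisfying the hypotheses of Corollary \ref{cor_full_rank}. By choosing $\kappa$ small enough and $\tau_\ast$ negative enough, the $\kappa$-quadraticity at $\tau_0$ places $\xi(\sigma_0)$ inside the basin of attraction of $\xi=0$ for the ODE \eqref{ODE}, and Lyapunov stability forces $\xi(\sigma)\to 0$ as $\sigma\to-\infty$. Undoing the normalization, every eigenvalue of $|\tau|A(\tau)$ converges to $-\tfrac{\sqrt{2(n-k)}}{4}$ as $\tau\to-\infty$, so $A(\tau)$ has full rank $k$ for all $\tau\le \tau_0$ (after possibly enlarging $|\tau_\ast|$).

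To convert this into a statement about the fine cylindrical matrix $Q$, I compare the spectral expansion \eqref{expansion_123} with the fine cylindrical expansion from \cite[Theorem 1.2]{DZ_spectral_quantization} (cited in the footnote of Definition \ref{def-koval}), which asserts
\begin{equation}
v(\mathbf{y},\vartheta,\tau)-\sqrt{2(n-k)} = [\mathbf{y}^\top Q\mathbf{y}-2\,\mathrm{tr}(Q)]\,|\tau|^{-1} + o(|\tau|^{-1})
\end{equation}
with $Q$ symmetric and having eigenvalues in $\{-\tfrac{\sqrt{2(n-k)}}{4},0\}$. Matching the $\mathcal{H}_0$ spectral components in both expansions identifies $Q$ as the limit of $|\tau|A(\tau)$ as $\tau\to-\infty$. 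Combined with the previous paragraph, this gives $Q=-\tfrac{\sqrt{2(n-k)}}{4}I_k$, which has rank $k$.

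The one place that requires a moment of care is confirming that Lemma \ref{poly_graph} and Proposition \ref{odes-1} really are hypothesis-agnostic in the sense above: both derivations rest on parabolic regularity for \eqref{equation_u} plus a bootstrap from smooth convergence on compact sets (which is supplied by the tangent flow hypothesis) and the $\mathcal{H}$-smallness of $\hat u-\tfrac{\sqrt{2(n-k)}}{4|\tau|}(2k-|\mathbf{y}|^2)$ at the reference time. Neither step consumes the all-order sharp asymptotics from Definition \ref{def-koval}, so the reduction to the proof of Theorem \ref{point_strong} is legitimate, and the corollary follows.
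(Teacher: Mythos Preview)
Your proposal is correct and takes essentially the same approach as the paper: the paper's proof consists of the single sentence ``this follows by inspecting the above proof of Theorem \ref{point_strong},'' and you have carried out precisely that inspection, verifying that the ingredients (Lemma \ref{poly_graph}, Proposition \ref{odes-1}, and the Lyapunov stability argument for \eqref{ODE}) do not consume the $k$-oval hypothesis and then reading off $\mathrm{rk}(Q)=k$ from the resulting eigenvalue asymptotics of $|\tau|A(\tau)$.
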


\begin{proof}
Indeed, this follows by inspecting the above proof of Theorem \ref{point_strong} (strong $\kappa$-quadraticity).
\end{proof}

Together with the results from the previous subsection, we  get:

\begin{proof}[{Proof of Theorem \ref{strong_uniform0} (uniform sharp asymptotics)}]
This now follows from Proposition
\ref{strong_sharp_asymptotics}, which establishes the uniform asymptotics under the a priori of strong $\kappa$-quadraticity, together with Theorem \ref{point_strong} (strong $\kappa$-quadraticity), which justifies this a priori assumption.
\end{proof}

To conclude this section, we note that as a consequence of Theorem \ref{strong_uniform0} (uniform sharp asymptotics) we obtain the following standard cylindrical estimates, which will be used frequently throughout the paper:

\begin{corollary}[cylindrical estimate]
\label{lemma-cylindrical}
 For every $\varepsilon > 0$, there exist $\kappa > 0$  and $\tau_* > -\infty$
such that if $\mathcal{M}$ is $\kappa$-quadratic at time $\tau_0 \le \tau_*$, then for all $\tau\leq \tau_0$ we have
\begin{equation}\label{eqn-cyl0}  
\sup_{\{v(\cdot,\tau)\geq L/\sqrt{|\tau|}\}}\max_{1\leq m+\ell \leq 10} \left| v^{m+\ell-1} y ^ {-m} \nabla^m_{S^{k-1}} \partial ^\ell_y v\right | < \varepsilon.
\end{equation}
\end{corollary}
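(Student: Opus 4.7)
The quantity $v^{m+\ell-1} y^{-m} \nabla_{S^{k-1}}^{m} \partial_y^{\ell} v$ is scale-invariant under the natural rescaling by the local cylinder radius $v$, so the statement is equivalent to smallness of $C^{10}$-type norms of $v$ after appropriate rescaling. The plan is to cover $\{v(\cdot,\tau)\geq L/\sqrt{|\tau|}\}$ by three overlapping subregions determined by Theorem \ref{strong_uniform0} (uniform sharp asymptotics) and argue separately on each using standard interior parabolic Schauder estimates.

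\textbf{Parabolic region.} On $\{|\mathbf{y}| \leq A(\varepsilon)\}$, the first bullet of Theorem \ref{strong_uniform0} gives $|u| \leq \varepsilon'/|\tau|$ in $C^0$ for any chosen $\varepsilon' > 0$, after taking $\kappa$ small and $\tau_\ast$ negative enough. Applying interior parabolic regularity to the quasilinear equation \eqref{equation_u} on slightly smaller spacetime cylinders upgrades this to $\|u\|_{C^{10}(B_{A(\varepsilon)/2})} \leq C\varepsilon'/|\tau|$. In Cartesian coordinates on $\mathbb{R}^k$, the operator $y^{-m}\nabla_{S^{k-1}}^{m}\partial_y^{\ell}$ is bounded by a constant times the $(m+\ell)$-th total derivative, and $v \approx \sqrt{2(n-k)}$, so the desired bound follows.

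\textbf{Cylindrical part of the intermediate region.} On $\{ |\mathbf{y}|\geq A(\varepsilon), \; v \geq \theta\}$, we center at a reference point $\mathbf{y}_0$. The second bullet of Theorem \ref{strong_uniform0} together with convexity gives that $v$ lies in $[\theta/2, 2\sqrt{2(n-k)}]$ on a unit (in $\mathbf{y}$) parabolic neighborhood. Rewriting \eqref{equation_u} on this neighborhood, the coefficients and drift are uniformly controlled, so interior parabolic Schauder estimates give $C^{10}$ bounds for $v$ depending only on the $C^0$-closeness furnished by Theorem \ref{strong_uniform0}. Again, the operator $y^{-m}\nabla_{S^{k-1}}^{m}\partial_y^{\ell}$ corresponds to a bounded combination of derivatives in the Cartesian $\mathbf{y}$-variables because $|\mathbf{y}| \geq A(\varepsilon)$, so we conclude smallness of the target quantity.

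\textbf{Collar region.} On $\{ L/\sqrt{|\tau|} \leq v \leq 2\theta\}$, we invoke the third bullet of Theorem \ref{strong_uniform0}: for $(\mathbf{y}_0,\tau_1)$ in this region with $\omega_0 = \mathbf{y}_0/|\mathbf{y}_0|$ and $s = -e^{-\tau_1}$, the rescaled flow $\widetilde{M}^{\omega_0,s}_t$ is $\varepsilon'$-close in $C^{\lfloor 1/\varepsilon'\rfloor}$ to $N_t \times \mathbb{R}^{k-1}$ in a large spacetime ball. The rotationally symmetric translating bowl $N_t \subset \mathbb{R}^{n-k+2}$ has profile function satisfying $v^{\ell-1}\partial_y^\ell v$ bounded by a constant depending only on $n-k$; moreover, the $\mathbb{R}^{k-1}$-invariant factor in the limit forces all $\omega$-derivatives on the limit to vanish, so under the rescaling and translation between the $\widetilde M^{\omega_0,s}_t$ coordinates and the $(y,\omega,\tau)$-coordinates one obtains that $\nabla^m_{S^{k-1}} v$ is small and carries the factor $y^{-m}$ from the conversion of the Cartesian derivatives on the bowl cross-section to spherical derivatives at radius $y \sim |\mathbf{y}_0|$.

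\textbf{Main obstacle.} The main technical point is the collar region bookkeeping: one must carefully relate the parabolic rescaling $\lambda(s) = \sqrt{|s|^{-1}\log|s|}$ and the spatial translation $\tilde{p}^{\omega_0}_s$ used to define $\widetilde{M}^{\omega_0,s}_t$ to the $(y,\omega,\tau)$-parametrization of $v$, verifying that the factor $v^{m+\ell-1} y^{-m}$ exactly absorbs the Jacobian of this change of coordinates together with the vanishing of $\omega$-derivatives coming from the $\mathbb{R}^{k-1}$ translation symmetry of the limit. Once these scalings match, the $C^{\lfloor 1/\varepsilon'\rfloor}$-closeness from Theorem \ref{strong_uniform0} converts directly into the claimed $\varepsilon$-smallness for $1 \leq m+\ell \leq 10$ upon choosing $\varepsilon' = \varepsilon'(\varepsilon)$ small.
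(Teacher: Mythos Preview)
Your approach differs from the paper's. The paper first establishes the $m+\ell=1$ cases directly via convexity: concavity of $y\mapsto v$ together with the bowl asymptotics at the inner boundary $v=L/\sqrt{|\tau|}$ gives $|\partial_y v|\le\eps_1$ throughout $\{v\ge L/\sqrt{|\tau|}\}$, and the convex-radial-graph derivative bound applied to the level sets $\{v=\text{const}\}\subset\mathbb{R}^k$ (the same argument as in the tip part of Proposition~\ref{strong_sharp_asymptotics}) gives $|y^{-1}\nabla_{S^{k-1}} v|\le\eps_1$. The higher-order cases then follow from a blowup contradiction argument that only consumes these first-order bounds and noncollapsing.

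Your collar-region step has a genuine gap. The third bullet of Theorem~\ref{strong_uniform0} gives $C^{\lfloor 1/\varepsilon'\rfloor}$-closeness to $N_t\times\mathbb{R}^{k-1}$ only on $B_{(\varepsilon')^{-1}}(0)$ in the \emph{tip-rescaled} coordinates, where the scale is $\lambda(s)=\sqrt{|s|^{-1}\log|s|}$. Converting back, this ball covers profile values $v\lesssim(\varepsilon')^{-1}/\sqrt{|\tau|}$---the soliton region, not the collar $\{L/\sqrt{|\tau|}\le v\le 2\theta\}$. Indeed at $v=2\theta$ the distance to the tip in tip-rescaled coordinates is of order $\theta^2|\tau|\to\infty$, so no fixed ball $B_{(\varepsilon')^{-1}}(0)$ reaches it. The collar is precisely the region where neither the intermediate nor the tip model from Theorem~\ref{strong_uniform0} applies; the paper's blowup argument handles it because it needs only first-order smallness (supplied by convexity everywhere), not $C^0$-closeness to a known solution.

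A secondary issue: in your intermediate-region paragraph, Schauder yields \emph{bounds}, not smallness of the target combinations. Since $\sqrt{(n-k)(2-z^2)}$ does not solve \eqref{equation_u}, subtracting it and applying Schauder leaves an $O(1)$ source. One can repair this (e.g.\ compare the unrescaled $V$ with a shrinking cylinder of matching radius on a parabolic ball of scale $V_0$), but making that rigorous again requires $|DV|$ small on the ball---which is the first-order estimate the paper obtains from convexity at the outset.
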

\begin{proof}
By convexity and Theorem \ref{strong_uniform0} (uniform sharp asymptotics), for every $\eps_1>0$ there exist $L_1<\infty$, such that for $\tau\leq\tau_0$ sufficiently negative we have
\begin{equation}\label{first_der_cyl_st}
\sup_{\{v(\cdot,\tau)\geq L_1/\sqrt{|\tau|}\}} |\partial_y v|+ \sup_{v(\cdot,\tau)\geq \eps_1}|y^{-1}\nabla_\omega v| \leq \eps_1,
\end{equation}
Then for given $v$ and $\tau$, we work with the convex hypersurface in $\mathbb{R}^{k}$ represented by $r(\omega):= {(2|\tau|)}^{-1/2} Y(v,\omega,\tau)$ where $Y$ inverse profile function  defined by  
\begin{equation}
    Y(v(y, \omega, \tau), \omega, \tau)=y,
\end{equation}
and argue similarly as in the proof of tip asymptotics in Proposition \ref{strong_sharp_asymptotics} to conclude
\begin{equation}\label{eqn-Yphi0}
\sup _{v\leq 2\theta}\sup_{\omega\in S^{k-1} } \, \big | \nabla_{\omega}Y (v,\omega,\tau)\big |<\varepsilon_1 \, \sqrt{|\tau|},
\end{equation}
provided  $\theta >0$ is chosen sufficiently small and $\tau \leq \tau_0$ is sufficiently negative. Hence, both suprema in \eqref{first_der_cyl_st} can be taken over $\{v(\cdot,\tau)\geq L_1/\sqrt{|\tau|}\}$. 

The higher order derivative estimates follows from a blowup contradiction argument and first order derivative estimates above as in the proof of \cite[Corollary 2.11]{CDDHS}.
\end{proof}

\section{Collar asymptotics and quadratic almost concavity}\label{Quadratic_almost_concavity_section}

The goal of this section is to prove the quadratic almost concavity estimate and its corollary. Throughout this section, it will be most convenient to work with the original flow $M_t$. Thanks to the tangent flow property \eqref{bubble-sheet_tangent_intro} and the $\mathrm{SO}(n+1-k)$-symmetry we can then parametrize our $k$-ovals via
\begin{equation}
(x^1,\ldots,x^k,\vartheta)\mapsto (x^1,\ldots,x^k,V(x^1,\ldots,x^k,t)\vartheta).
\end{equation}
Here $\vartheta \in{S}^{n-k}$ and $x=(x^1,\ldots,x^k)\in \mathbb{R}^k$. Let us denote local coordinates of $S^{n-k}$ by $(\vartheta^1,\ldots,\vartheta^{n-k})$ and the standard round metric by $ g^s_{\alpha\beta} d\vartheta^\alpha  d\vartheta^\beta  $.

In these $(x,\vartheta)$-coordinates the metric takes the form
\be g= (\delta_{ij}+V_iV_j)dx^idx^j + V^2 g^s_{\alpha\beta}d\vartheta^\alpha d\vartheta^\beta. \ee 
Hence, the inverse metric is
\be \label{eq-ginverse} g^{-1} = (\delta_{ij}-\eta^{-2}V_iV_j)\partial_{x^i} \partial _{x^j}  + V^{-2} (g^s)^{\alpha \beta } \partial_{\vartheta^\alpha}  \partial_{\vartheta^\beta}, \ee 
where 
\be
V_i=\partial_{x^i}V,\quad |DV|^2:=V_{1}^2 + V_{2}^2+\cdots+V_{k}^2 .
\ee
Furthermore, observing that the outward unit normal is
\begin{equation}
\nu=\frac{(-V_{1},\ldots,  -V_{k},\vartheta )}{\sqrt{1+|DV|^2}} \in \mathbb{R}^{n+1}\, ,
\end{equation}
we see that the second fundamental form is given by
\be \label{eq-h}h_{ij} = -\eta^{-1} \partial^2_{ij} V+\eta Vg^{s}_{ij}.
\ee 
In particular, convexity of our hypersurfaces $M_t$ is now captured by the analytic condition that $x \mapsto V(x,t)$ is concave and nonnegative.\footnote{Moreover, note that $-(1+|DV|^2)^{1/2}\textrm{tr}_g h=\Big(\delta_{ij}-\frac{V_{x_i}V_{x_j}}{1+|DV|^2}\Big)V_{x_ix_j}-\frac{1}{V}$, which is of course consistent with the evolution equation \eqref{equation_u} for the renormalized profile function.}

Finally, observing that
\be\label{obs_grad}
|\nabla V|^2 :=g^{ij}  V_{i} V_{j} = \frac{|DV|^2}{1+ |DV|^2},
\ee
throughout this section we will abbreviate
\be\label{abbr_gra}
\gra := (1+|DV|^2)^{1/2}= (1-|\nabla V|^2)^{-1/2}.
\ee

In this section, we aim to prove
\begin{theorem}[quadratic almost concavity]\label{prop-concavity}
There exist constants $\kappa>0$ and $\tau_*>-\infty$ with the following significance. If  $\mathcal{M}$ is $\kappa$-quadratic at time $\tau_0 \le \tau_*$, then for all $\tau\leq\tau_0$ we have
\begin{equation}\label{almost_quadratic concavity_estimates}
\left[{\nabla^2} Q- (\frac{\sqrt{|t|}}{\sqrt{Q\log |t|}})^{3}g-\frac{|\nabla Q|^2}{4Q}\left(\frac{\nabla Q\otimes\nabla Q}{2Q}-(\frac{\sqrt{|t|}}{\sqrt{Q\log |t|}})^{3}g\right)\right]\!\!(X, X)\leq 0
\end{equation}
for all vector field $X$ perpendicular to all spherical vector fields $\partial{\vartheta^{\alpha}}$ along $S^{n-k}$ factor, where $\alpha=1,\dots, n-k$.
\end{theorem}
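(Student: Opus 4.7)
My plan is to apply Hamilton's tensor maximum principle to the novel test tensor
\[\mathcal{A} := \nabla^{2}Q - \gamma g - \frac{|\nabla Q|^{2}}{4Q}\left(\frac{\nabla Q\otimes\nabla Q}{2Q} - \gamma g\right),\qquad \gamma = \Bigl(\tfrac{\sqrt{|t|}}{\sqrt{Q\log|t|}}\Bigr)^{3},\]
on the subbundle of $TM$ consisting of tangent vectors perpendicular to all spherical fields $\partial_{\vartheta^{\alpha}}$. As a preliminary step, Theorem~\ref{strong_uniform0} together with Corollary~\ref{lemma-cylindrical} provides enough asymptotic information to verify directly that $\mathcal{A}$ is strictly negative on this subbundle at all sufficiently negative times in each of the three asymptotic regions: in the parabolic region $Q\approx 2(n-k)|t|$ is a small perturbation of the cylindrical value whose fine cylindrical quadratic form is strictly negative definite; in the tip region the convergence to $\bowl$ gives strict concavity of the squared profile; and the collar region is handled by interpolating these two pictures using the derivative bounds in~\eqref{eqn-cyl0}.

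Suppose for contradiction that the inequality fails somewhere. Since the limit cylinder at $t=-\infty$ has $Q$ spatially constant and hence $\mathcal{A} = -\gamma g < 0$ on the whole subbundle, continuity of the flow provides a first time $t_{*}\le -e^{-\tau_{0}}$, a point $p_{*}\in M_{t_{*}}$, and a unit null vector $X_{*}\perp \partial_{\vartheta^{\alpha}}$ with $\mathcal{A}(X_{*},X_{*}) = 0$. Near $p_{*}$ I would extend $X_{*}$ to a local vector field $X$ preserving the orthogonality condition (cf.\ the extension procedure referenced by Claim~\ref{claim_extension}), and then compute the parabolic evolution $(\partial_{t} - \Delta + \nabla_{Z})(\mathcal{A}(X,X))$ for the drift $Z$ defined in~\eqref{eq-defZ}. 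The tensor maximum principle on the subbundle will force a contradiction provided this evolution is strictly negative at the first-touching triple $(p_{*}, t_{*}, X_{*})$.

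The crucial computation is where the two new gradient corrections to the tensor of~\cite{CDDHS} earn their keep. Differentiating $\nabla^{2}Q$ and $\tfrac{|\nabla Q|^{2}}{4Q}\bigl(\tfrac{\nabla Q\otimes\nabla Q}{2Q} - \gamma g\bigr)$ under mean curvature flow, and using~\eqref{eq-h} and~\eqref{eq-ginverse}, one recovers the offending contribution
\[\Bigl(\tfrac{2(n-k)-3}{4} + \zeta + \gamma\Bigr)\,\frac{|\nabla_{X}Q|^{2}}{Q^{2}}\,\Bigl(\tfrac{|\nabla Q|^{2}}{Q} - 2\gamma\Bigr)\]
flagged in~\eqref{difficulty}, together with a new reaction coming from the corrections. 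The goal is then to show that, once the null condition $\mathcal{A}(X_{*},X_{*})=0$ is invoked to express $\nabla^{2}Q(X_{*},X_{*})$ as a quadratic combination of $\nabla_{X_{*}}Q$, $|\nabla Q|^{2}$ and $\gamma$, this new reaction absorbs the bad term with a strictly negative remainder, which should be achievable because $\gamma\to 0$ as $t\to -\infty$, the auxiliary constant $\zeta$ can be taken arbitrarily small, and $|\tau_{*}|$ can be made negative enough.

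The main obstacle is precisely this cancellation. The coefficient $(2(n-k)-3)/4$ enters from the curvature of the $S^{n-k}$-factor acting on $\Delta(\nabla^{2}Q)$ through the Gauss equation and the commutator $[\Delta,\nabla]$, and is only automatically harmless when $n-k=1$. Verifying that the novel gradient corrections produce an exactly compensating term of the opposite sign, uniformly for every $1\le k\le n-1$, requires careful algebraic bookkeeping. A secondary difficulty is that Hamilton's maximum principle on a subbundle produces additional contributions from the second fundamental form of the subbundle in $TM$, so one must arrange either for these to vanish at $p_{*}$ or for them to be absorbed into the drift $Z$ in~\eqref{eq-defZ}. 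Managing these layers of correction simultaneously is what makes the higher-dimensional argument substantially more subtle than the $n-k=1$ case.
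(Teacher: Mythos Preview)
Your overall architecture matches the paper's: work with the perturbed tensor $A_{ij}=\nabla^2_{ij}Q-\varepsilon g_{ij}-\tfrac12|\nabla V|^2(Q^{-1}\nabla_iQ\nabla_jQ-2\varepsilon g_{ij})$ with $\varepsilon=\gamma+\delta$, find the first point of failure on the $\partial_\vartheta$-orthogonal subbundle, extend the null vector via Claim~\ref{claim_extension}, and derive a contradiction from the sign of the evolved scalar. However, there are two genuine gaps.

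First, your ``preliminary step'' establishes strict negativity only at sufficiently negative times, as an initial condition. The paper needs more: it proves (Proposition~\ref{lemma-soliton-region}) that the sign holds in the \emph{excluded region} $\{V\le L\sqrt{|t|/\log|t|}\}\cup\{V\ge\sqrt{2|t|}-L\sqrt{|t|}/\log|t|\}$ for \emph{every} $t\le -e^{-\tau_0}$, not just for very negative $t$. This is essential because the reaction term estimate (Proposition~\ref{prop-reactionterm}) is only valid in the complementary middle region, where the $o(1)$ factors coming from $|\nabla V|$, $|Vh|$, and $\varepsilon$ are genuinely small. Without localizing the first failure point $\bar p$ outside the excluded region, you cannot invoke the reaction estimate at $\bar p$.

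Second, you explicitly flag the reaction-term cancellation as ``the main obstacle'' and do not carry it out; this is the heart of the proof. The paper splits $N=N_1+N_2+N_3$ (Proposition~\ref{evolution of new A main}) and, using the null-eigenvector identities of Lemma~\ref{lemma_null_eigen} to replace $\nabla^2 V(X,\,\cdot\,)$ by expressions in $\nabla V$, $\varepsilon$, obtains in Lemmas~\ref{lemma-estN1}--\ref{lemma-estN3} the bound
\[
N(X,X)\le -\tfrac{n-k+2}{4}\,Q^{-2}|\nabla_XQ|^2\bigl(Q^{-1}|\nabla Q|^2-2\varepsilon\bigr)-\tfrac18 Q^{-3}|\nabla_XQ|^2|\nabla Q|^2.
\]
The specific extension of Claim~\ref{claim_extension} then makes the subbundle correction terms computable exactly: the cross terms $\sum(\nabla_{e_i}A)(\nabla_{e_i}X,X)$ vanish and $2\sum A(\nabla_{e_i}X,\nabla_{e_i}X)=\tfrac{n-k}{4}Q^{-2}|\nabla_XQ|^2(Q^{-1}|\nabla Q|^2-2\varepsilon)+\tfrac{n-k}{8}\varepsilon Q^{-3}|\nabla_XQ|^2|\nabla Q|^2$. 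Adding these to $N(X,X)$ leaves a strictly negative quantity; this is where the $\tfrac{2(n-k)-3}{4}$ obstruction from~\eqref{difficulty} is overcome. Your proposal identifies the difficulty but does not supply this algebra, and the way you phrase the subbundle issue (``second fundamental form of the subbundle'') suggests you have not yet seen that the extension makes those terms exactly computable rather than merely absorbable into the drift.
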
    
\subsection{Intrinsic quantities and their evolution}\label{sec_intrinsic}
 Throughout this subsection, we will view the unrescaled profile function as an intrinsic time-dependent quantity on $S^n$. Specifically, denoting the parametrized mean curvature flow by $F_t:S^n\to\mathbb{R}^{n+1}$, so that $M_t=F_t(S^n)$, we set
 \begin{equation}\label{intrinsic_prof}
 V(p,t):=F_t(p)\cdot {\vartheta}\left(F_t(p)\right),
 \end{equation}
 where $\vartheta$ denotes the radial vector field in $\mathbb{R}^{n+1}$ defined by
 \begin{equation}
 {\vartheta}(x,y):=\big(0,{y}/{|y|}\big)\quad \text{for } x\in\mathbb{R}^k \text{ and } y\in \mathbb{R}^{n+1-k}.
 \end{equation}

We will now compute the evolution equations of several intrinsic quantities. Throughout, we will briefly denote by $\Delta=\Delta_{g(t)}$ the Laplace-Beltrami operator with respect to the metric $g(t)$ induced by the embedding $F_t$.

  Given any $\delta>0$, we now consider the tensor
 \be\label{Tensor A} A_{ij} =\nabla_{ij}^2Q -\varepsilon g_{ij}-\frac{1}{2}|\nabla V|^{2}(Q^{-1}\nabla_i Q \nabla_j Q-2\varepsilon g_{ij}),\ee
where
\begin{equation}
 \label{eq-Q}
Q:=V^2 \quad \text{and}\quad \varepsilon=\gamma+\delta\quad \text{and} \quad \gamma:= \left(\frac{-t}{\log(-t)}\right)^{3/2} V^{-3}.
 \end{equation}
The tensor $A_{ij}$  will be used in  the proof of Theorem \ref{prop-concavity_restated} in the next subsection.  

As usual in tensor computations, we use the extended summation convention, where indices are raised using the metric and summed over, e.g. $h_{ip}h_{pm}=\sum_{j,\ell=1}^3 h_{ij}g^{j\ell} h_{\ell m}$.
We will now compute the evolution equations of several intrinsic quantities. Throughout, we will briefly denote by $\Delta=\Delta_{g(t)}$ the Laplace-Beltrami operator with respect to the metric $g(t)$ induced by the embedding $F_t$.

In the $(x,\vartheta)$-coordinate, $\nabla^2 V$ is a $n\times n$ matrix that has a block diagonal structure as follows:
\be \label{eq-hessian V} \nabla^2  V = (1+|DV|^2)^{-1} \begin{bmatrix}
 V_{x_ix_j} & 0 \\
 0 & |DV|^2 V {g^s_{\alpha\beta} } 
\end{bmatrix}.\ee 
Using this, we write the second fundamental form $h$ in terms of $\nabla^2 V$ as 
\be \label{eq-equivalent} h(X,Y) = -\eta \nabla^2 V (X,Y) + \eta V  { \nabla _X \vartheta^\alpha \nabla_Y \vartheta^\beta g^s_{\alpha\beta } }.\ee 

\begin{proposition}[evolution of profile function]
\label{lemma-ev-eq}
The profile function $V$, considered as an intrinsic quantity, satisfies the evolution equation
\be
 (\partial _t - \Delta ) V= - (n-k)V^{-1}.
 \ee
 \end{proposition}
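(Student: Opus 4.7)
The plan is to view $V$ as the pullback of an ambient function under the mean curvature flow embedding and to invoke the standard second-order chain rule. Concretely, I would write $V = f\circ F_t$ with $f:\mathbb{R}^{n+1}\to\mathbb{R}$ given by $f(x,y) = |y|$ for $(x,y)\in\mathbb{R}^k\times\mathbb{R}^{n+1-k}$; this reproduces \eqref{intrinsic_prof} because the radial field $\vartheta$ satisfies $F_t\cdot \vartheta(F_t) = |F_t^y|$. Under mean curvature flow $\partial_t F = \vec H = \Delta_{g(t)} F$, and combining this with the second-order chain rule yields the identity
\begin{equation}
(\partial_t - \Delta_{g(t)})(f\circ F_t) \;=\; -\mathrm{tr}_{T_pM_t}\!\bigl(\nabla^2_{\mathbb{R}^{n+1}} f\bigr),
\end{equation}
so the proposition reduces to computing the tangential trace of $\nabla^2 f$ for the specific $f=|y|$.

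Next, I would record the block form of $\nabla^2_{\mathbb{R}^{n+1}}|y|$: it vanishes on every $x$-direction and on the radial direction $\hat y = y/|y|$, while on the orthogonal complement of $\hat y$ inside $\mathbb{R}^{n+1-k}$ it equals $|y|^{-1}\,\mathrm{Id}$. Consequently the full Euclidean trace is $\Delta_{\mathbb{R}^{n+1}}|y| = (n-k)/V$. To pass from the Euclidean trace to the tangential trace I use
\begin{equation}
\mathrm{tr}_{TM}\nabla^2 f \;=\; \mathrm{tr}_{\mathbb{R}^{n+1}}\nabla^2 f \;-\; \nabla^2 f(\nu,\nu),
\end{equation}
where $\nu$ is the outward unit normal. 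Now the geometric observation to be exploited is that, in view of the graphical description $(x,\vartheta)\mapsto(x,V\vartheta)$ and the formula for $\nu$ recorded earlier, the $y$-component of $\nu$ is $\eta^{-1}\vartheta$, which is exactly parallel to the radial direction $\hat y = \vartheta$. Therefore $\nu$ lies in the null space of $\nabla^2 f$, so $\nabla^2 f(\nu,\nu)=0$, and combining gives $(\partial_t-\Delta)V = -(n-k)/V$ as desired.

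I do not anticipate a substantial obstacle: this is a routine extrinsic-to-intrinsic translation. The only mildly nontrivial point is the vanishing $\nabla^2 f(\nu,\nu) = 0$, but this is an immediate consequence of the $\mathrm{O}(n+1-k)$ symmetric geometry, namely that the normal projects radially onto the spherical factor. Alternatively, one could carry out the computation directly in the $(x,\vartheta)$-coordinates using the inverse metric \eqref{eq-ginverse} and the formula $\sqrt{\det g} = V^{n-k}\eta\sqrt{\det g^s}$, but the pullback approach above is cleaner and avoids computing Christoffel symbols.
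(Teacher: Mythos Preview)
Your proof is correct, but it takes a different route from the paper's. The paper treats $V=F_t\cdot\vartheta(F_t)$ as a product and applies the heat-operator product rule: it first shows $(\partial_t-\Delta)\vartheta(F_t)=(n-k)V^{-2}\vartheta(F_t)$ using that $\vartheta$ depends only on the spherical coordinates (so its Laplacian reduces to $V^{-2}\Delta_{S^{n-k}}$), then combines this with $(\partial_t-\Delta)F_t=0$ and the cross term $-2g^{\alpha\beta}\partial_\alpha F_t\cdot\partial_\beta\vartheta(F_t)$. Your approach instead writes $V=|y|\circ F_t$ as a pullback and invokes the general identity $(\partial_t-\Delta)(f\circ F_t)=-\mathrm{tr}_{TM}\nabla^2_{\mathbb{R}^{n+1}}f$, reducing everything to the Euclidean Hessian of $|y|$ and the observation that $\nu$ lies in its kernel. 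Your route is arguably cleaner: it avoids computing the intrinsic Laplacian of $\vartheta$ and the gradient cross term, and the key geometric input (that the $y$-component of $\nu$ is radial) is exactly the same symmetry fact the paper uses, just packaged differently. The paper's approach, on the other hand, stays closer to the intrinsic viewpoint that is used throughout Section~\ref{sec_intrinsic}.
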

 \begin{proof}
     The proof is similar as \cite[Prop 3.2]{CDDHS}.  Under the flow we have $\partial _t \vartheta(F_t)= 0$ due to the symmetry. Also, for any function $f$ that depends only on  $\vartheta$, we have $\Delta f =  V^{-2} \Delta_{S^{k-1}} f $. Hence, \be (\partial_t - \Delta) \vartheta(F_t)  = (n-k)\vartheta(F_t) V^{-2} .   \ee
Together with the mean curvature flow equation $\partial_t F_t  = \Delta F_t$, this yields
 \be
 (\partial_t - \Delta ) (F_t\cdot \vartheta(F_t) )
	=  F_t\cdot \vartheta(F_t)V^{-2} - 2 g^{\alpha\beta}  \partial_\alpha F_t\cdot \partial_\beta\vartheta(F_t)=-(n-k)V^{-1}.
\ee	
This proves the proposition.
 \end{proof}
 Using the above evolution equation, we obtain:
\begin{proposition}[evolution equation of tensor $A$]\label{evolution of new A main}
   Given any $\delta>0$, for $Q=V^2$ we now consider the tensor
\begin{align} \label{eq-tensorA}
     A_{ij} &=\nabla_{ij}^2Q-\varepsilon g_{ij} -2|\nabla V|^2\nabla_{i}V\nabla_{j}V+|\nabla V|^2\varepsilon g_{ij}.
\end{align}
Then $A$ satisfies
   \bea \label{eq-Aij-2} (\partial_t - \Delta + \nabla_Z)& {A}_{ij}=  N_{ij}\, ,\\
\eea 
where $Z$ is defined by  \be \label{eq-defZ} \langle Z,Y\rangle=Q^{-1}\nabla_Y Q+2\eta h(\nabla V, Y)\ee  for all $Y$, and 
\begin{align}\label{N=N1+N2+N3}
    N_{ij} = (N_1)_{ij} + (N_2)_{ij} + (N_3)_{ij}\, , 
\end{align}
where 
\bea  \label{eq-N1-2}
    {(N_1)}_{ij} &= -\frac{n-k}{2}Q^{-2}|\nabla Q|^2\left(Q^{-1}\nabla_i Q \nabla_j Q-\varepsilon g_{ij}\right) \\
    &-Q^{-3}|\nabla  Q|^2\nabla_i  Q \nabla_j Q - Q^{-1}\nabla^2_{ik}Q  \nabla^2_{jk} Q\\
&+
\tfrac{1}{2} Q^{-2} |\nabla  Q|^2 \nabla^2_{ij} Q
  + Q^{-2}(\nabla_i Q\nabla_k Q \nabla^2_{jk} Q  + \nabla_j Q\nabla_k Q  \nabla^2_{ik}Q )\\
    &+ \left(Q^{-1}\nabla_i Q \nabla_j Q-2\varepsilon g_{ij}\right)|\nabla^2 V|^2 + 2\eta^{-2}\varepsilon H h_{ij} + B_{ij}\, , 
\eea 
with 
\begin{align}
    B_{ij} &= (h_{ij} h_{kp}-h_{ip} h_{jk})(2\nabla^2_{kp}Q-Q^{-1}\nabla_k Q\nabla_p Q)\nonumber  \\
  & +(h_{ik}h_{kp}-h_{ip}H) (\nabla^2_{jp}Q-8^{-1}Q^{-2}|\nabla Q|^2\nabla_{p}Q\nabla_{j}Q)\nonumber\\
  &+(h_{jk}h_{kp}-h_{jp}H) (\nabla^2_{ip}Q-8^{-1}Q^{-2}|\nabla Q|^2\nabla_{i}Q\nabla_{p}Q) \, , \nonumber  
\end{align}
\bea \label{eq-N2}    {(N_2)}_{ij} &= 
    -\frac{1}{4}Q^{-1}h^2(\nabla Q,\nabla Q)\left(Q^{-1}\nabla_i Q \nabla_j Q-2\varepsilon g_{ij}\right)\\
    &+Q^{-1}|\nabla Q|^2\nabla^2_{im}V \nabla^2_{jm}V  +\Psi_{ij} +4\nabla_{\ell}(|\nabla V|^2)\nabla_{\ell}(\nabla_{i}V\nabla_{j}V)\\
    &+\nabla_{Z}(-2|\nabla V|^2\nabla_{i}V\nabla_{j}V)+\nabla_Z(|\nabla V|^2)\varepsilon g_{ij} \, ,
\eea 
 with 
\begin{align}\label{eq-Psi3}
   \Psi_{ij} =  &4\eta\nabla_p V h_{pk}(\nabla_k V \nabla^2_{ij}V+\nabla_i V\nabla^2_{jk}V + \nabla_j V \nabla^2_{ik}V)\\
    &-4\nabla_p V h_{pk}(\eta^3 V\nabla^2_{kq}V \nabla_q V \nabla^2_{ij}V-VG_{kij}),\notag
\end{align}
for some $(0,3)$-tensor $G$ satisfying $G_{kij}X^iX^j=0$ for any $X$ perpendicular to the spherical direction, and  
\begin{align}
    (N_3)_{ij}
    =& - 3\eta^{-2}V^{-2}\Big [(n-k)-6|\nabla V|^2 +\tfrac{V^2}{2t}\Big (1-\tfrac{1}{\log(-t)}\Big ) \Big ] \gamma g_{ij}\nonumber\\
    &-6  {\eta^{-1}}V^{-1}  h(\nabla V,\nabla V)  \gamma g_{ij} .\nonumber 
\end{align}

\end{proposition}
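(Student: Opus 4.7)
The plan is a direct but lengthy computation: differentiate $A_{ij}$ in time, subtract its rough Laplacian term by term, and then identify the first-order drift $\nabla_Z$ so that the leftover reaction terms are precisely $N_1 + N_2 + N_3$. The three basic tools are Proposition~\ref{lemma-ev-eq} (giving $(\partial_t - \Delta)V = -(n-k)V^{-1}$), the standard MCF evolutions $\partial_t g_{ij} = -2Hh_{ij}$ and Simons' identity, and the commutator
\begin{equation*}
(\partial_t - \Delta)\nabla^2_{ij} f = \nabla^2_{ij}(\partial_t - \Delta) f + (\text{curvature terms})\cdot\nabla^2 f + (\partial_t\Gamma)\cdot\nabla f,
\end{equation*}
whose curvature part is translated into products of $h$ via the Gauss equation on $M_t$.

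I would first compute $(\partial_t - \Delta)Q = -2(n-k) - 2|\nabla V|^2$ and then evolve $\nabla^2_{ij} Q$. The Ricci commutator produces the ``curvature boxes'' $(h_{ij}h_{kp} - h_{ip}h_{jk})\nabla^2_{kp}Q$ and $(h_{ik}h_{kp} - h_{ip}H)\nabla^2_{jp} Q$; combined with the product-rule expansion of $-\Delta(Q^{-1}\nabla Q \otimes \nabla Q)$ and the Bochner formula for $\nabla^2_{ij}|\nabla V|^2$, this accounts for every summand of $(N_1)_{ij}$. The gradient-Hessian cross terms of the form $Q^{-1}\nabla^k Q\,\nabla_k\nabla^2_{ij} Q$ also appear at this stage but are set aside to be cancelled by the drift. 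Next, I would evolve $\varepsilon g_{ij}$: since $\gamma = (-t/\log(-t))^{3/2}V^{-3}$ depends only on $t$ and $V$, combining $(\partial_t - \Delta)V^{-3}$ from Proposition~\ref{lemma-ev-eq} with $\partial_t g_{ij} = -2Hh_{ij}$ and the explicit time derivative of $(-t/\log(-t))^{3/2}$ produces exactly $(N_3)_{ij}$, with the $\log(-t)$ correction coming from the logarithmic factor. Finally, for the quadratic gradient piece $-2|\nabla V|^2\nabla_i V\nabla_j V + |\nabla V|^2\varepsilon g_{ij}$, repeated application of the product rule together with $(\partial_t - \Delta)\nabla_i V = \nabla_i(\partial_t - \Delta)V + (\text{Simons correction})$ gives the terms $4\nabla_\ell(|\nabla V|^2)\nabla_\ell(\nabla_i V\nabla_j V)$ and the tensor $\Psi_{ij}$ built from $h_{pk}\nabla_p V$ contractions, forming $(N_2)_{ij}$ up to a $(0,3)$-tensor $G_{kij}$ whose contraction with $X\otimes X$ vanishes whenever $X \perp \partial_{\vartheta^\alpha}$.

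The decisive algebraic step is the choice of the drift $Z$ defined by $\langle Z, Y\rangle = Q^{-1}\nabla_Y Q + 2\eta\,h(\nabla V, Y)$. The $Q^{-1}\nabla Q$ piece absorbs the gradient-Hessian cross terms that came out of the evolution of $\nabla^2 Q$, while the $\eta\, h(\nabla V, \cdot)$ piece absorbs the residual first-order contributions that appear when the new gradient-quadratic factor $|\nabla V|^2$ is differentiated. With these two absorptions performed, every first-derivative-of-$A$ expression is cleared from the computation and what remains collapses exactly into $(N_1 + N_2 + N_3)_{ij}$.

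The main obstacle is bookkeeping rather than insight: the product rule and the two commutators generate a profusion of terms, and the real content of the proposition is the precise grouping into drift versus reaction. The novel feature, compared with the analogous $2$-oval tensor of \cite{CDDHS}, is the $\eta\, h(\nabla V,\cdot)$ piece of $Z$: it is exactly what is needed to digest the extra first-order terms arising from the new $|\nabla V|^2$ factor in $A$. Permitting this piece is also the reason the residual $(0,3)$-tensor $G_{kij}$ in $\Psi_{ij}$ need only vanish on $X\otimes X$ for $X$ perpendicular to the spherical direction; this restriction aligns precisely with the hypothesis under which the tensor maximum principle will be applied in the proof of Theorem~\ref{prop-concavity}.
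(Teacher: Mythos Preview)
Your outline follows the same route as the paper: split $A$ into $\nabla^2 Q$, $-\eta^{-2}\varepsilon g$, and the gradient piece $-2|\nabla V|^2\nabla_i V\nabla_j V$, apply $(\partial_t-\Delta+\nabla_Z)$ to each separately (using the Hessian commutator for the first, the evolution of $V^{-3}$ and the metric for the second, and product-rule/Bochner for the third), and then regroup. The computation would go through.

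However, your description of the role of the $2\eta\,h(\nabla V,\cdot)$ piece of $Z$ is off. That piece is \emph{not} there to absorb first-order residuals from differentiating the new $|\nabla V|^2$ factor; it is there to cancel the Codazzi term $2h_{kp}\nabla_k h_{ij}\nabla_p Q$ that appears in the evolution of $\nabla^2_{ij}Q$. Once one writes $\nabla_k h_{ij}$ via \eqref{eq-equivalent} one finds $2h_{kp}\nabla_k h_{ij}\nabla_p Q = -2\eta\,\nabla_p V\,h_{pk}\,\nabla_k\nabla^2_{ij}Q + \Psi_{ij}$, and the first summand is precisely $-\nabla_{Z_2}\nabla^2_{ij}Q$ with $Z_2$ the second piece of $Z$. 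On the gradient piece $-2|\nabla V|^2\nabla_i V\nabla_j V + |\nabla V|^2\varepsilon g_{ij}$, by contrast, the drift $\nabla_Z$ is \emph{not} absorbed at all: those $\nabla_Z(\cdots)$ contributions are left standing and appear explicitly in $(N_2)_{ij}$ (see \eqref{eq-N2}). Relatedly, this same drift $Z$ with both pieces was already used in \cite{CDDHS}; the novelty here is the tensor $A$ (the added gradient terms), not the vector field $Z$.
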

\begin{proof}
    See proof of Proposition \ref{evolution of new A} in Appendix \ref{evolution equation appendix} for details.
\end{proof}
\subsection{Proof of the quadratic almost concavity estimate} \label{sec-apriori}
 
 We will now prove the quadratic almost concavity estimate and its corollary. Similarly as in the previous subsection, we work with the tensor
  \be \label{eq-defAijrest}A_{ij} =\nabla_{ij}^2Q -\varepsilon g_{ij}-\frac{1}{2}|\nabla V|^{2}(Q^{-1}\nabla_i Q \nabla_j Q-2\varepsilon g_{ij}),\ee
 where $Q=V^2$ is the square of the profile function viewed as an intrinsic function. We have seen in Proposition \ref{evolution of new A main} (evolution of $A$-tensor) that $A$ satisifies an evolution equation of the form
\begin{equation}
\label{eq-right-form}
(\partial_t - \Delta+\nabla_Z) A =  N.
\end{equation}
where $N=N_1+N_2+N_3$. We will show that $A(X,X)\leq 0$ for all $X$ perpendicular to all spherical vector fields $\partial_{\vartheta^{\alpha}}$ (from now on we simply denote this condition by $X\perp \partial_{\vartheta^{\alpha}}$) is preserved along the flow. To this end, we will first estimate the reaction term when evaluated on null eigenvectors of $A$, away from a certain region, then check the sign in the region that has been excluded in the first step, and then conclude by adapting the proof of Hamilton's tensor maximum principle  \cite{Ham_pco} to our setting. 

\begin{lemma}[null eigenvector]\label{lemma_null_eigen}
If $X\perp \partial_\vartheta$ is a null eigenvector of $A$, then\begin{equation}\label{null_useful_inequ}
2\eps |X|^2 \leq Q^{-1}|\nabla_X Q|^2.
\end{equation}Moreover, there hold the following identities:\footnote{Recall that throughout this section, $\eta$ denotes the quantity defined in \eqref{abbr_gra}.}
\begin{align}\label{eq-h-ident}
h(X,X)= \frac{1}{4}\eta^{-1} V^{-1}\left(Q^{-1}|\nabla_X Q|^2-2\eps |X|^2\right),
\end{align}
\begin{align}
    h(X,\nabla V) = \frac{1}{4}\eta^{-1}V^{-1}\left(Q^{-1}|\nabla Q|^2-2\varepsilon\right)\nabla_X V ,
\end{align}
\be\label{eq-h2-ident}
h^2(X,\nabla V)= {\eta^{-1}}V^{-1}\left(h(\nabla V,\nabla V) - \frac{1}{8}\varepsilon 
{\eta^{-1}} V^{-1}\left(Q^{-1} |\nabla Q|^2- 2\eps\right) \right)\nabla_X V. 
\ee
\end{lemma}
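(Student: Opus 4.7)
The plan is to exploit that $X$ being a null eigenvector of the symmetric form $A$ on the subbundle $\{Y : Y\perp \partial_{\vartheta^\alpha}\}$ means $A(X,Y)=0$ for \emph{every} such $Y$, not merely $A(X,X)=0$. Combined with the pointwise identities $\nabla_i Q = 2V\nabla_i V$ and $\nabla^2_{ij}Q = 2V\nabla^2_{ij}V + 2\nabla_i V\,\nabla_j V$, together with the formula $h(Y,Z) = -\eta\,\nabla^2 V(Y,Z)$ valid for $Y,Z\perp \partial_{\vartheta^\alpha}$ (which follows from \eqref{eq-equivalent}, since $\nabla_{Y}\vartheta^\alpha = 0$ when $Y$ is horizontal), this reduces the entire lemma to elementary algebra. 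I will also freely use the rewrites $Q^{-1}|\nabla_X Q|^2 = 4(\nabla_X V)^2$ and $Q^{-1}|\nabla Q|^2 = 4|\nabla V|^2$, and the relation $1-|\nabla V|^2 = \eta^{-2}$ from \eqref{abbr_gra}.

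\emph{Step 1 (the identity \eqref{eq-h-ident}).} Taking $Y=X$ in $A(X,Y)=0$ and substituting the pointwise identities above, after collecting the factor $1-|\nabla V|^2=\eta^{-2}$, yields
\begin{equation*}
2V\,\nabla^2 V(X,X) = \eta^{-2}\bigl(\varepsilon|X|^2 - 2(\nabla_X V)^2\bigr).
\end{equation*}
Converting through $\nabla^2 V(X,X) = -\eta^{-1} h(X,X)$ and using $Q^{-1}|\nabla_X Q|^2 = 4(\nabla_X V)^2$ gives \eqref{eq-h-ident}.

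\emph{Step 2 (the inequality \eqref{null_useful_inequ}).} The convexity of $M_t$, encoded analytically as concavity of $V$ in $x$ and hence $h\ge 0$ via \eqref{eq-h}, combined with \eqref{eq-h-ident} immediately forces $Q^{-1}|\nabla_X Q|^2 \ge 2\varepsilon|X|^2$.

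\emph{Step 3 (the $h(X,\nabla V)$ identity).} Since $V$ depends only on $(x^1,\ldots,x^k)$ and $\mathcal{M}$ is $\mathrm{SO}(n+1-k)$-symmetric, the intrinsic gradient $\nabla V$ is horizontal, so $\nabla V\perp \partial_{\vartheta^\alpha}$. Applying $A(X,Y)=0$ with $Y=\nabla V$ and repeating the substitution of Step~1 gives
\begin{equation*}
2V\,\nabla^2 V(X,\nabla V) = \eta^{-2}\bigl(\varepsilon - 2|\nabla V|^2\bigr)\nabla_X V,
\end{equation*}
and converting through $h(X,\nabla V) = -\eta\,\nabla^2 V(X,\nabla V)$ plus $Q^{-1}|\nabla Q|^2 = 4|\nabla V|^2$ yields the asserted identity.

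\emph{Step 4 (the identity \eqref{eq-h2-ident}).} Fix a local orthonormal frame $\{e_\alpha\}_{\alpha=1}^{k}$ of the horizontal subbundle. The block-diagonal structure of $h$ (from \eqref{eq-equivalent} together with \eqref{eq-hessian V}) gives $h^2(X,\nabla V) = \sum_{\alpha=1}^{k} h(X,e_\alpha)\,h(e_\alpha,\nabla V)$. Applying $A(X,e_\alpha)=0$ for each horizontal $e_\alpha$ and reproducing the Step~3 derivation with $Y=e_\alpha$ yields
\begin{equation*}
h(X,e_\alpha) = \tfrac{1}{2}\eta^{-1}V^{-1}\bigl(2\nabla_X V\,\nabla_{e_\alpha}V - \varepsilon\, g(X,e_\alpha)\bigr).
\end{equation*}
Multiplying by $h(e_\alpha,\nabla V)$ and summing, the sum $\sum_\alpha \nabla_{e_\alpha}V\, h(e_\alpha,\nabla V)$ collapses to $h(\nabla V,\nabla V)$ and $\sum_\alpha g(X,e_\alpha)\,h(e_\alpha,\nabla V)$ collapses to $h(X,\nabla V)$; substituting the Step~3 formula for the latter then produces \eqref{eq-h2-ident}.

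There is no substantive obstacle: the lemma is pointwise algebra at a prospective null direction, and the only nuance to keep in mind is to invoke the null eigenvector condition in its full strength $A(X,\cdot)\equiv 0$ on the horizontal subbundle, rather than merely $A(X,X)=0$, as this is precisely what makes the off-diagonal identities of Steps~3 and~4 available.
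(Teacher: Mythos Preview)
Your proof is correct and follows essentially the same approach as the paper. Both arguments exploit that the null eigenvector condition gives $A(X,Y)=0$ for all horizontal $Y$, rewrite this via $Q=V^2$ and $1-|\nabla V|^2=\eta^{-2}$ as $\nabla^2 V(X,Y)=\eta^{-2}V^{-1}\bigl(\tfrac12\varepsilon\langle X,Y\rangle-\nabla_X V\,\nabla_Y V\bigr)$, convert to $h$ using $h=-\eta\,\nabla^2 V$ on horizontal vectors, and then specialize $Y$ to $X$, $\nabla V$, and an orthonormal horizontal frame (using the block-diagonal structure of $h$) to obtain the four assertions.
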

\begin{proof}
By  the null eigenvector assumption and substituting $Q=V^2$, we thus get
\begin{equation}\label{eq_null_eigen_restated}
\nabla^2V(X,Y) = \eta^{-2}V^{-1}\left(\tfrac{1}{2}\eps \langle X,Y\rangle - \nabla_X V \nabla_Y V\right)\, .
\end{equation}
Since $\nabla^2 V(X,X)\leq 0$ thanks to the assumption $X\perp\partial_\vartheta$, this implies 
\begin{equation}
2\eps |X|^2\leq Q^{-1}|\nabla_X Q|^2,
\end{equation}
 which yields \eqref{null_useful_inequ}. Note that $X$ and $\nabla V$ are perpendicular to $\partial_\vartheta$ and hence $h(X,X)= - \eta \nabla^2 V(X,X)$ and $h(X,\nabla V)=-\eta \nabla^2 V(X,\nabla V)$ by \eqref{eq-equivalent} (second fundamental form). By plugging $X$ and $\nabla V$ into  $Y$ in \eqref{eq_null_eigen_restated}, we get the first two identities. 
Finally, arguing similarly we compute
\begin{align}
h^2(X,\nabla V) & =\tfrac{1}{2}\eps   V^{-1} \nabla^2V(X,\nabla V)+ V^{-1} \eta^{-1} h(\nabla V,\nabla V) \nabla_X V\nonumber\end{align} and \eqref{eq-h2-ident} follows by 
\begin{equation}
 \nabla^2V(X,\nabla V)=V^{-1}\eta^{-2} \left(\tfrac12 \eps -  |\nabla V|^2\right) \nabla_X V.
\end{equation}
\end{proof}
Then we have the following key estimate for the reaction term.
\begin{proposition}[reaction term] \label{prop-reactionterm}
\label{N(X, X)_estimates}
 there exist constants $\kappa>0$, $\tau_*>-\infty$ and $L<\infty$, which are all independent of the parameter $\delta$, with the following significance. If $\mathcal{M}$ is  $\kappa$-quadratic at time $\tau_0 \le \tau_*$, then for all times $t\le -e^{-\tau_0}$ in the region $\{ L\sqrt{|t|/\log|t|}\leq V\leq \sqrt{2|t|}-L\sqrt{|t|}/\log|t| \}$ we have the following: If $X\perp \textrm{span}\{\partial \theta^{\alpha}\}_{\alpha=1}^{n-k}$ is a null vector of $A$ then
\be      N(X,X)\leq -\frac{n-k+2}{4 } Q^{-2}|\nabla_X Q|^2(Q^{-1}|\nabla Q|^2-2\varepsilon) -\frac{1}{8}Q^{-3}|\nabla_X Q|^2|\nabla Q|^2. \ee 
\begin{proof}
We break this into digestible lemmas below. Lemma \ref{lemma-estN1}, Lemma \ref{lemma-estN2} and Lemma \ref{lemma-estN3} address estimates on $N_i(X,X)$, for $i=1,2,3$, respectively. The proposition directly follows by adding up three estimates.  
\end{proof}

\end{proposition}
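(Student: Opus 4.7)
The plan is to decompose $N=N_1+N_2+N_3$ as in Proposition \ref{evolution of new A main} and prove three separate lemmas (Lemma \ref{lemma-estN1}, Lemma \ref{lemma-estN2}, Lemma \ref{lemma-estN3}) bounding each $N_i(X,X)$ on a null eigenvector $X\perp \partial_{\vartheta^\alpha}$ of $A$; the proposition will then follow by adding the three estimates. The essential algebraic input is Lemma \ref{lemma_null_eigen}, which gives the pointwise identities
\begin{equation}
\nabla^2 V(X,Y)=\eta^{-2}V^{-1}\bigl(\tfrac{1}{2}\varepsilon\langle X,Y\rangle-\nabla_X V\,\nabla_Y V\bigr),
\end{equation}
together with \eqref{eq-h-ident}--\eqref{eq-h2-ident}, which let us trade every occurrence of $h(X,\cdot)$ in $N_i(X,X)$ for expressions in $Q$, $\nabla Q$, $\nabla V$, and $\varepsilon$. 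The asymptotic input is Theorem \ref{strong_uniform0} (uniform sharp asymptotics) together with Corollary \ref{lemma-cylindrical} (cylindrical estimate), which in the collar region $\{L\sqrt{|t|/\log|t|}\le V\le \sqrt{2|t|}-L\sqrt{|t|}/\log|t|\}$ make $|\nabla V|^2$ and $\gamma$ arbitrarily small once $L$ is large, $\kappa$ small, and $\tau_\ast$ very negative.

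For $N_1(X,X)$, we contract \eqref{eq-N1-2} with $X^iX^j$ and use the null-eigenvector identity, which forces $\nabla^2_{ij}Q(X,X)=\varepsilon|X|^2+\tfrac{1}{2}|\nabla V|^2(Q^{-1}|\nabla_X Q|^2-2\varepsilon|X|^2)$; for the $B_{ij}X^iX^j$ piece, \eqref{eq-h-ident} produces a factor of $(Q^{-1}|\nabla_X Q|^2-2\varepsilon|X|^2)$ from each appearance of $h(X,\cdot)$, so $B(X,X)$ is quadratically small. After collecting terms the dominant contribution is exactly $-\tfrac{n-k}{2}Q^{-2}|\nabla_X Q|^2(Q^{-1}|\nabla Q|^2-2\varepsilon)$, with all other pieces absorbable by the cylindrical smallness of $|\nabla V|^2$ and the nonnegativity $(Q^{-1}|\nabla Q|^2-2\varepsilon)\ge 0$ from \eqref{null_useful_inequ}. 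For $N_2(X,X)$, we apply \eqref{eq-h2-ident} to the $h^2(\nabla Q,\nabla Q)$ factor, note that $G_{kij}X^iX^j=0$ kills the tensor $G$-piece of $\Psi_{ij}$, and use Corollary \ref{lemma-cylindrical} to bound $\Psi_{ij}X^iX^j$, $\nabla_\ell(|\nabla V|^2)\nabla_\ell(\nabla_i V \nabla_j V)X^iX^j$, and the drift contributions $\nabla_Z(\cdots)(X,X)$; every surviving term carries at least one additional factor of $|\nabla V|^2$ or of a higher cylindrical derivative, so $N_2(X,X)$ is at most $o(1)$ times the $N_1$ leading term. For $N_3(X,X)$ we exploit that $\gamma=(|t|/\log|t|)^{3/2}V^{-3}\le L^{-3}$ in our region, and that the bracket multiplying $\gamma g_{ij}$ becomes $O(1)$ once the intermediate asymptotics $V^2\sim 2(n-k)|t|$ are inserted; together with $|h(\nabla V,\nabla V)|\le C\eta^{-1}V^{-1}|\nabla V|^2$ this gives $|N_3(X,X)|\le CL^{-3}|X|^2$, again of lower order.

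Summing the three lemmas yields a coefficient $-\tfrac{n-k}{2}+o(1)$ in front of the principal term $Q^{-2}|\nabla_X Q|^2(Q^{-1}|\nabla Q|^2-2\varepsilon)$, which we then rewrite as $-\tfrac{n-k+2}{4}$ of the same quantity plus the additional gradient margin $-\tfrac{1}{8}Q^{-3}|\nabla_X Q|^2|\nabla Q|^2$; this rearrangement is legitimate because in the collar one has $\varepsilon=\gamma+\delta\ll |\nabla V|^2=|\nabla Q|^2/(4Q)$. The main obstacle is precisely the issue flagged in \eqref{difficulty}: the classical tensor $\bar A=\nabla^2 Q-\gamma g$ produces a reaction term with prefactor $\tfrac{2(n-k)-3}{4}+\gamma+\zeta$ that has no preferred sign when $n-k\ge 2$. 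Incorporating the new gradient correction $-\tfrac{1}{2}|\nabla V|^2(Q^{-1}\nabla Q\otimes\nabla Q-2\varepsilon g)$ into the test tensor $A$ generates precisely the cross-terms (most importantly $\nabla_Z(-2|\nabla V|^2\nabla_i V\nabla_j V)$ in $N_2$ together with the rearrangement of the $B_{ij}$ piece in $N_1$) needed to produce a negative definite margin, which manifests itself as the extra $-\tfrac{1}{8}Q^{-3}|\nabla_X Q|^2|\nabla Q|^2$ on the right-hand side. Verifying this cancellation, and in particular keeping careful track of the combinatorial factors involving $n-k$, is the delicate part of the argument.
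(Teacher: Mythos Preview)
Your high-level plan---decompose $N=N_1+N_2+N_3$, evaluate each on a null eigenvector of $A$ via Lemma \ref{lemma_null_eigen}, and use the cylindrical smallness from Theorem \ref{strong_uniform0} and Corollary \ref{lemma-cylindrical}---is exactly the paper's approach. However, two of your specific claims are wrong and would prevent the argument from closing.

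First, $B(X,X)$ is \emph{not} ``quadratically small''. The term $(h_{ij}h_{kp}-h_{ip}h_{jk})(2\nabla^2_{kp}Q-Q^{-1}\nabla_kQ\nabla_pQ)X^iX^j$ traces over \emph{all} directions including the $n-k$ spherical ones, producing a leading-order contribution $+\tfrac{n-k}{4}Q^{-1}(Q^{-1}|\nabla_XQ|^2-2\varepsilon|X|^2)(Q^{-1}|\nabla Q|^2-2\eta^{-2}\varepsilon)$ (see \eqref{eq-BXX}) which partially cancels the first term of $N_1$. Once this and the non-$B$ terms in the second and third lines of \eqref{eq-N1-2} are accounted for, $N_1(X,X)$ is bounded by $-\tfrac{n-k+2}{4}$ times the principal quantity \emph{plus} an extra margin $-(\tfrac14-o(1))Q^{-3}|\nabla_XQ|^2|\nabla Q|^2$; this is Lemma \ref{lemma-estN1}. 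Your claimed coefficient $-\tfrac{n-k}{2}$ is not what the computation gives, and your proposed final rewrite (recovering $-\tfrac{n-k+2}{4}$ plus the $-\tfrac18$ margin from $-\tfrac{n-k}{2}$) actually fails when $n-k\in\{1,2\}$ since $-\tfrac{n-k}{2}>-\tfrac{n-k+2}{4}$ there.

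Second, your treatment of $N_3$ does not work. A bound $|N_3(X,X)|\le CL^{-3}|X|^2$ cannot be absorbed into the right-hand side: the target contains only gradient-type terms $Q^{-3}|\nabla_XQ|^2|\nabla Q|^2$, and the null-eigenvector inequality $2\varepsilon|X|^2\le Q^{-1}|\nabla_XQ|^2$ only gives $|X|^2\le (2\varepsilon)^{-1}Q^{-1}|\nabla_XQ|^2$, with $\varepsilon=\gamma+\delta$ possibly arbitrarily small (the constants must be independent of $\delta$). The paper instead proves $N_3(X,X)\le 0$ outright (Lemma \ref{lemma-estN3}) by checking that in the specified region the scalar coefficient $[(n-k)-6|\nabla V|^2+\tfrac{V^2}{2t}(1-\tfrac{1}{\log(-t)})]$ is strictly positive, as is $h(\nabla V,\nabla V)$; this sign argument, not a smallness bound, is what makes the $N_3$ piece harmless.
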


 In these lemmas, we assume the condition in Proposition  \ref {N(X, X)_estimates}. In the proof, the factor $o(1)$ (and $O(1)$) means we can make the factor arbitrarily small (and bounded, respectivly) by choosing sufficiently small $\kappa>0$, $\tau_*>-\infty$, and large $L<\infty$.

\begin{lemma}[estimate on $N_1(X,X)$] \label{lemma-estN1}
  \begin{align}
    N_1(X,X) \leq &- \frac{n-k+2}{4}Q^{-2}|\nabla_X Q|^2(Q^{-1}|\nabla Q|^2-2\varepsilon) \\ &  -\left(\frac{1}{4}-o(1)\right)Q^{-3}|\nabla_X Q|^2 |\nabla Q|^2\nonumber.
\end{align}

\begin{proof}For convenience of readers, we first recall definition of $N_1$, \bea 
    {(N_1)}_{ij} &= -\frac{n-k}{2}Q^{-2}|\nabla Q|^2\left(Q^{-1}\nabla_i Q \nabla_j Q-\varepsilon g_{ij}\right) \\
    &-Q^{-3}|\nabla  Q|^2\nabla_i  Q \nabla_j Q - Q^{-1}\nabla^2_{ik}Q  \nabla^2_{jk} Q\\
&+
\tfrac{1}{2} Q^{-2} |\nabla  Q|^2 \nabla^2_{ij} Q
  + Q^{-2}(\nabla_i Q\nabla_k Q \nabla^2_{jk} Q  + \nabla_j Q\nabla_k Q  \nabla^2_{ik}Q )\\
    &+ \left(Q^{-1}\nabla_i Q \nabla_j Q-2\varepsilon g_{ij}\right)|\nabla^2 V|^2 + 2\eta^{-2}\varepsilon H h_{ij} + B_{ij}\, , 
\eea 
with 
\begin{align}
    B_{ij} &= (h_{ij} h_{kp}-h_{ip} h_{jk})(2\nabla^2_{kp}Q-Q^{-1}\nabla_k Q\nabla_p Q)\nonumber  \\
  & +(h_{ik}h_{kp}-h_{ip}H) (\nabla^2_{jp}Q-8^{-1}Q^{-2}|\nabla Q|^2\nabla_{p}Q\nabla_{j}Q)\nonumber\\
  &+(h_{jk}h_{kp}-h_{jp}H) (\nabla^2_{ip}Q-8^{-1}Q^{-2}|\nabla Q|^2\nabla_{i}Q\nabla_{p}Q) \, , \nonumber  
\end{align}

By Lemma \ref{lemma_null_eigen} (null eigenvector), {we first have
\begin{align}
     & -2(h_{ij}h_{kp}-h_{ip}h_{jk})\eta^{-2}\varepsilon g_{kp}X^{i}X^{j}\\
  &=(h_{jk}h_{kp}-h_{jp}H) (\nabla^2_{ip}Q-8^{-1}Q^{-2}|\nabla Q|^2\nabla_{i}Q\nabla_{p}Q) X^{i}X^{j}\nonumber\\
  &+(h_{ik}h_{kp}-h_{ip}H) (\nabla^2_{jp}Q-8^{-1}Q^{-2}|\nabla Q|^2\nabla_{p}Q\nabla_{j}Q)X^{i}X^{j}\nonumber.  
\end{align} 
This together with $\eta = 1+ o(1)$ implies}
\be \label{eq-Hh}2 \eta^{-2} \eps H h(X,X) = \frac{n-k}{2} \eps (1+o(1)) Q^{-1} (Q^{-1} |\nabla_{X} Q|^2 - 2\eps |X|^2).\ee 

Then, by definition of $B$ and above, we have
\begin{align}
    B_{ij}X^iX^j&= (h_{ij} h_{kp}-h_{ip} h_{jk})(2\nabla^2_{kp}Q-Q^{-1}\nabla_k Q\nabla_p Q) X^iX^j \\
  & +(h_{ik}h_{kp}-h_{ip}H) (\nabla^2_{jp}Q-8^{-1}Q^{-2}|\nabla Q|^2\nabla_{p}Q\nabla_{j}Q)X^iX^j\nonumber\\
  &+(h_{jk}h_{kp}-h_{jp}H) (\nabla^2_{ip}Q-8^{-1}Q^{-2}|\nabla Q|^2\nabla_{i}Q\nabla_{p}Q)X^iX^j \nonumber\\
  &=(h_{ij} h_{kp}-h_{ip} h_{jk})(2\nabla^2_{kp}Q-Q^{-1}\nabla_k Q\nabla_p Q-2\eta^{-2}\varepsilon g_{kp}) X^iX^j. \nonumber\\
\end{align}
Note $(2\nabla^2 Q-Q^{-1}\nabla Q\nabla  Q-2\eta^{-2}\varepsilon g) = 2
(2V \nabla ^2 V - \eta^{-2} \eps g )$.
Let $e_1,...,e_k$ be an orthonormal basis that are perpendicular to the spherical direction and such that $\nabla^2_{ij}V = \lambda_i \delta_{ij}$ (recall $\lambda_i<0$).
Then as in  \cite[(3.70)]{CDDHS}, 
\begin{align}\label{eq-BXX}
    B(X, X)& =\sum_{i=1}^{k}2(h(X, X)h(e_{i}, e_{i})- h(X, e_{i})^{2})(2V\lambda_{i}-\eta^{-2}\varepsilon)\nonumber\\
    &+{\frac{n-k}{4}}Q^{-1} \left(Q^{-1}|\nabla_X Q|^2-2\eps  |X|^2\right)(Q^{-1}|\nabla Q|^2-2\eta^{-2}\varepsilon).
\end{align}
Let us estimate the first line of \eqref{eq-BXX}.  Under the assumption on the existence of null eigenvector at $\tau\le \tau_0\le \tau_*$,  \eqref{null_useful_inequ} implies 
\be \label{eq-epsest} 0\le  \eps \le 2^{-1}  Q^{-1}|\nabla Q|^2 =2 |\nabla V| ^2=o(1)  .\ee 
By  \eqref{eq_null_eigen_restated},  \eqref{null_useful_inequ} and \eqref{eq-epsest}, 
\bea  \label{eqh(X,e_i)^2} \sum_{i=1}^k h(X,e_i)^2 = \eta ^{-2} V^{-2} \left| \frac{1}{2}\eps  X - \nabla_X V \nabla V\right|^2=O(1)Q^{-3}|\nabla Q|^2|\nabla_X Q|^2  .  \eea 
By \eqref{eq-h-ident}, $\sum_{i=1}^k h(e_i,e_i) \lambda_i =  - \eta |\nabla^2 V|^2$,    \eqref{eqh(X,e_i)^2},  \eqref{eq-epsest}, and  $V\lambda_i =o(1)$,   
\bea  \label{eq-lasttermcross}
    &\sum_{i=1}^{k} 2(h(X, X)h(e_{i}, e_{i})- h(X, e_{i})^{2})(2V\lambda_{i}-\eta^{-2}\varepsilon)\\
    =& 2h(X,X)\sum_{i=1}^{k}h(e_i,e_i)(2V\lambda_i -\eta^{-2}\varepsilon)-2\sum_{i=1}^{k}h(X,e_i)^2(2V\lambda_i-\varepsilon)\nonumber\\
    \leq & -4\eta V h(X,X)|\nabla^2 V|^2 - 2\sum_{i=1}^{k}h(X,e_i)^2(2V\lambda_i-\varepsilon) \nonumber\\
    \leq & -(Q^{-1}|\nabla_X Q|^2-2\varepsilon |X|^2)|\nabla^2 V|^2 \nonumber\\
     &+o(1)\eta^{-2}V^{-2}\left(\frac{1}{4}\varepsilon^2 |X|^2 + |\nabla_X V|^2|\nabla V|^2 - \varepsilon |\nabla_X V|^2\right) \nonumber\\
     \leq&-(Q^{-1}|\nabla_X Q|^2-2\varepsilon |X|^2)|\nabla^2 V|^2\nonumber\\
&+o(1)\sum_{i=1}^{k}\eta^{-2}V^{-2}\left(\frac{1}{4}\varepsilon^2 \langle X, e_i\rangle ^2 + |\nabla_X V|^2\langle\nabla V, e_i\rangle^2 + \varepsilon |\nabla_X V|^2\right)\nonumber\\
    \leq & -(Q^{-1}|\nabla_X Q|^2-2\varepsilon |X|^2)|\nabla^2 V|^2
    + o(1)Q^{-3}|\nabla Q|^2|\nabla_X Q|^2\nonumber.
\eea 
We  similarly consider remaining terms in $N_1(X,X)$. By the null eigenvector condition, we estimate
\begin{align}\label{eq-N11}
  & \quad Q^{-1}\nabla^2_{ik}Q \nabla^2_{jk}Q X^i X^j = \varepsilon^2 Q^{-1} |X|^2 + O(Q^{-4}|\nabla_X Q|^2|\nabla Q|^4),
\end{align}
\begin{align}\label{eq-N12}
    &\frac{1}{2}Q^{-2}|\nabla Q|^2 \nabla^2_{ij}Q X^i X^j\\
    =& \frac{1}{2}Q^{-2}|\nabla Q|^2(\varepsilon |X|^2 + 2|\nabla V|^2|\nabla_X V|^2 - \varepsilon |\nabla V|^2|X|^2) \\
    =& \frac{1}{2}\varepsilon Q^{-2}|\nabla Q|^2 |X|^2 + O(Q^{-4}|\nabla_X Q|^2|\nabla Q|^4) \nonumber,
\end{align}
\begin{align}\label{eq-N13}
    &Q^{-2}(\nabla_i Q\nabla_k Q \nabla^2_{jk} Q  + \nabla_j Q\nabla_k Q  \nabla^2_{ik}Q )X^iX^j\\ 
    &= 2Q^{-2}\nabla_X Q \langle \nabla Q, \varepsilon X + 2|\nabla V|^2
    \nabla_X V \nabla V -\varepsilon |\nabla V|^2 X\rangle \\
    &= 2Q^{-2}|\nabla_X Q|^2 \left(\varepsilon +2|\nabla V|^4-\varepsilon |\nabla V|^2\right) \nonumber\\
    &= 2\varepsilon Q^{-2}|\nabla_X Q|^2 + O(Q^{-4}|\nabla_X Q|^2|\nabla Q|^2) \nonumber.
\end{align}

Adding up the estimates \eqref{eq-Hh},\eqref{eq-BXX},\eqref{eq-lasttermcross},\eqref{eq-N11}, \eqref{eq-N12}, \eqref{eq-N13}  with the other (two) remaining terms of $N_1(X,X)$ in above or \eqref{eq-N1},  
\bea \label{eq-N1reaction}
    N_1(X,X) \le &-\frac{n-k}{2}Q^{-2}|\nabla Q|^{2} (Q^{-1}|\nabla_X Q|^2-\varepsilon|X|^2) \\
    &-Q^{-1}\left(Q^{-1}|\nabla Q|^2-2\varepsilon\right)\left(Q^{-1}|\nabla_X Q|^2- 2^{-1} \varepsilon |X|^2\right) \\
    &+\frac{n-k}{4}Q^{-2}|\nabla Q|^2\left(Q^{-1}|\nabla_X Q|^2-2\eps|X|^2\right)\\
&+ o(1)Q^{-3}|\nabla Q|^2|\nabla_X Q|^2. 
\eea 
Here we used  
$\eps  Q^{-1} (Q^{-1} |\nabla_{X} Q|^2 - 2\eps |X|^2)= O(1) Q^{-3}|\nabla_X Q|^2 |\nabla Q|^2  $ and $Q^{-4}|\nabla_X Q|^2|\nabla Q|^{4}= o(1)Q^{-3}|\nabla_X Q|^2|\nabla Q|^2$.

 Note, by \eqref{null_useful_inequ}, the terms in the second line of \eqref{eq-N1reaction} is bounded (from above) by $-\frac 34 Q^{-2} |\nabla_XQ|^2 (Q^{-1}|\nabla Q|^2- 2\eps )$.  Rearranging the terms on the right hand side of \eqref{eq-N1reaction}, we obtain the desired estimate. 
\end{proof}
\end{lemma}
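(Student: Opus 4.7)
The plan is to evaluate each of the ten structural terms of $N_1(X,X)$ at the null eigenvector $X \perp \partial_{\vartheta^\alpha}$ of $A$, using two reductions available at such a vector. The null condition $A(X,\cdot)=0$ rewrites $\nabla^2 Q(X,Y) = \varepsilon\langle X,Y\rangle + |\nabla V|^2(Q^{-1}\nabla_X Q\,\nabla_Y Q - \varepsilon\langle X,Y\rangle)$, so that every contraction of $\nabla^2 Q$ against $X$ becomes an algebraic polynomial in $\nabla Q$, $\varepsilon$, and $|\nabla V|^2$. The orthogonality $X\perp\partial_{\vartheta^\alpha}$ combined with Lemma \ref{lemma_null_eigen} converts the second fundamental form contractions $h(X,X)$, $h(X,\nabla V)$, and $h^2(X,\nabla V)$ into explicit expressions in the same variables via $h(X,\cdot) = -\eta\nabla^2 V(X,\cdot)$. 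Throughout, Corollary \ref{lemma-cylindrical} provides $|\nabla V|^2=o(1)$ in the collar region, and the null-vector inequality $2\varepsilon|X|^2\le Q^{-1}|\nabla_X Q|^2$ from Lemma \ref{lemma_null_eigen} then forces $\varepsilon=o(1)$, which lets every remainder of order $Q^{-4}|\nabla_X Q|^2|\nabla Q|^4$ be absorbed into the target $o(1)Q^{-3}|\nabla_X Q|^2|\nabla Q|^2$ slack.

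For the non-$B$ part of $N_1$, direct substitution handles things cleanly. The explicit $-\tfrac{n-k}{2}Q^{-2}|\nabla Q|^2(Q^{-1}\nabla Q\otimes\nabla Q - \varepsilon g)(X,X)$ is already of target form; the cubic $-Q^{-3}|\nabla Q|^2\nabla_X Q\otimes\nabla_X Q$ supplies precisely the isolated $-\tfrac14 Q^{-3}|\nabla_X Q|^2|\nabla Q|^2$; the three $\nabla^2 Q$-quadratic terms collapse to $\varepsilon$-multiples of $Q^{-1}|X|^2$ and $Q^{-2}|\nabla_X Q|^2$ modulo $o(1)$ remainders; the $|\nabla^2 V|^2$ factor multiplies the nonnegative quantity $Q^{-1}|\nabla_X Q|^2 - 2\varepsilon|X|^2$, producing a term I plan to cancel against the corresponding piece in $B$; and the $\varepsilon H h$ term evaluates to $\tfrac{n-k}{2}\varepsilon Q^{-1}(Q^{-1}|\nabla_X Q|^2 - 2\varepsilon|X|^2)(1+o(1))$, since the spherical trace $(n-k)/V$ dominates $H$ and $\eta = 1+o(1)$.

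The main obstacle is the quartic-in-$h$ tensor $B(X,X)$, which has no one-step algebraic reduction. I plan to fix an orthonormal frame $\{e_i\}_{i=1}^k$ in $\mathrm{span}\{\partial_{\vartheta^\alpha}\}^{\perp}$ that diagonalizes $\nabla^2 V$ with eigenvalues $\lambda_i\le 0$, and to decompose $B(X,X)$ into two pieces: a ``trace'' piece $\tfrac{n-k}{4}Q^{-1}(Q^{-1}|\nabla_X Q|^2 - 2\varepsilon|X|^2)(Q^{-1}|\nabla Q|^2 - 2\eta^{-2}\varepsilon)$ coming from pairing $\nabla^2 Q - \tfrac12 Q^{-1}\nabla Q\otimes\nabla Q$ with $g$-terms of the curvature commutator, and a residual sum $\sum_i 2(h(X,X)h(e_i,e_i)-h(X,e_i)^2)(2V\lambda_i-\eta^{-2}\varepsilon)$. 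The trace piece supplies the crucial extra $\tfrac{n-k}{4}$ coefficient that lifts the running total to $\tfrac{n-k+2}{4}$. The residual sum is controlled by combining $\sum_i h(e_i,e_i)\lambda_i = -\eta|\nabla^2 V|^2$, the cylindrical smallness $V\lambda_i=o(1)$, and the pointwise bound $\sum_i h(X,e_i)^2 = O(1)Q^{-3}|\nabla_X Q|^2|\nabla Q|^2$ obtained by expanding $h(X,e_i) = -\eta^{-1}V^{-1}(\tfrac{\varepsilon}{2}\langle X,e_i\rangle - \nabla_X V\,\langle\nabla V,e_i\rangle)$ and summing; together these give an upper bound of the form $-(Q^{-1}|\nabla_X Q|^2 - 2\varepsilon|X|^2)|\nabla^2 V|^2 + o(1)Q^{-3}|\nabla_X Q|^2|\nabla Q|^2$, exactly canceling the $|\nabla^2 V|^2$ contribution from the previous step.

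Collecting and applying the null-vector inequality once more to bound the leftover quadratic-in-$\nabla^2 Q$ contribution $-Q^{-1}(Q^{-1}|\nabla Q|^2-2\varepsilon)(Q^{-1}|\nabla_X Q|^2-\tfrac{\varepsilon}{2}|X|^2) \le -\tfrac{3}{4}Q^{-2}|\nabla_X Q|^2(Q^{-1}|\nabla Q|^2-2\varepsilon)$ reassembles the total into the claimed upper bound; the gap between the natural $\varepsilon$-coefficient $\tfrac{3}{2}$ that emerges in the calculation and the target coefficient $\tfrac{n-k+2}{2}$ is always favorable for the upper-bound direction since $n-k\ge 1$. The delicate accounting lies entirely in $B(X,X)$: one must verify that a diagonalizing frame can be chosen inside $\mathrm{span}\{\partial_{\vartheta^\alpha}\}^{\perp}$ (which is automatic since $\nabla^2 V$ is block-diagonal in $(x,\vartheta)$-coordinates and the non-spherical block is symmetric), and that the cross-term bound $\sum_i h(X,e_i)^2 = O(1)Q^{-3}|\nabla_X Q|^2|\nabla Q|^2$ does not pick up a stray factor that would obstruct the $|\nabla^2 V|^2$ cancellation.
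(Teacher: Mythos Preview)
Your proposal is correct and follows essentially the same route as the paper: exploit the null-eigenvector identity to reduce every $\nabla^2 Q(X,\cdot)$ contraction to polynomials in $\nabla Q$, $\varepsilon$, $|\nabla V|^2$; diagonalize $\nabla^2 V$ in an orthonormal frame $\{e_i\}$ of $\mathrm{span}\{\partial_{\vartheta^\alpha}\}^\perp$ to split $B(X,X)$ into the trace piece $\tfrac{n-k}{4}Q^{-1}(Q^{-1}|\nabla_X Q|^2-2\varepsilon|X|^2)(Q^{-1}|\nabla Q|^2-2\eta^{-2}\varepsilon)$ plus the residual $\sum_i 2(h(X,X)h(e_i,e_i)-h(X,e_i)^2)(2V\lambda_i-\eta^{-2}\varepsilon)$; and cancel the $|\nabla^2 V|^2$ contributions between term (f) and the residual. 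Your closing observation about the $\varepsilon$-coefficient gap ($\tfrac{3}{2}$ versus $\tfrac{n-k+2}{2}$) being favorable is exactly the final inequality the paper uses. One small imprecision: the cubic $-Q^{-3}|\nabla Q|^2|\nabla_X Q|^2$ does not by itself ``supply the isolated $-\tfrac14$''; rather, it combines with the three $\nabla^2 Q$-quadratic terms (c), (d), (e) into the single factored expression $-Q^{-1}(Q^{-1}|\nabla Q|^2-2\varepsilon)(Q^{-1}|\nabla_X Q|^2-\tfrac{\varepsilon}{2}|X|^2)$, and it is this package that the null-vector inequality bounds by $-\tfrac{3}{4}Q^{-2}|\nabla_X Q|^2(Q^{-1}|\nabla Q|^2-2\varepsilon)$.
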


\begin{lemma}[estimate on $N_2(X,X)$] \label{lemma-estN2}
    \begin{equation}
         N_2(X,X) \leq o(1)Q^{-3}|\nabla_X Q|^2|\nabla Q|^2.
    \end{equation}
\begin{proof} We start by estimating the last term in $N_2$ in \eqref{eq-N2}.  For convenience of reader, we recall that
\begin{align}
    {(N_2)}_{ij} &= 
    -\frac{1}{4}Q^{-1}h^2(\nabla Q,\nabla Q)\left(Q^{-1}\nabla_i Q \nabla_j Q-2\varepsilon g_{ij}\right)\nonumber\\
    &+Q^{-1}|\nabla Q|^2\nabla^2_{im}V \nabla^2_{jm}V  +\Psi_{ij} +4\nabla_{\ell}(|\nabla V|^2)\nabla_{\ell}(\nabla_{i}V\nabla_{j}V)\nonumber\\
    &+\nabla_{Z}(-2|\nabla V|^2\nabla_{i}V\nabla_{j}V+|\nabla V|^2\varepsilon g_{ij})\nonumber.
\end{align} 
Hence
\begin{align}
    N_2(X,X) &= -\frac{1}{4}Q^{-1}h^2(\nabla Q,\nabla Q)\left(Q^{-1}|\nabla_X Q|^2-2\varepsilon|X|^2\right)\nonumber\\
    &+Q^{-1}|\nabla Q|^2|\nabla^2 V(X)|^2  +\Psi(X,X)\\
    &+4\nabla_{\ell}(|\nabla V|^2)\nabla_{\ell}(\nabla_{i}V\nabla_{j}V)X^iX^j\nonumber\\
    &+\nabla_{Z}(-2|\nabla V|^2\nabla_{i}V\nabla_{j}V+|\nabla V|^2\varepsilon g_{ij})X^iX^j\nonumber.
\end{align}
Recalling the vector field $Z$ in \eqref{eq-defZ}, note that 
\bea  \nabla _Z (\nabla_i V \nabla_j V) &=-\eta^{-1}(h_{ki}\nabla_j V + h_{kj}\nabla_i V)(Q^{-1}\nabla_k Q + 2\eta \nabla_p V h_{pk}),\\ 
        \nabla_Z|\nabla V|^2 
        &=-4\eta^{-1}V^{-1}h(\nabla V, \nabla V)-4h^2(\nabla V, \nabla V)
    .\eea 
Using these and Lemma \ref{lemma_null_eigen} (null vector condition), we have
\begin{align}
    &\quad\nabla_{Z} (-2|\nabla V|^2\nabla_iV\nabla_j V)X^iX^j\\
    &= 8\left[\eta^{-1}V^{-1}h(\nabla V, \nabla V)+h^2(\nabla V, \nabla V)\right]|\nabla_X V|^2\nonumber\\
    &+ \frac{1}{8}\eta^{-2}Q^{-3}|\nabla Q|^2|\nabla_X Q|^2(Q^{-1}|\nabla Q|^2-2\varepsilon)\nonumber\\
    &+ \frac{1}{2}Q^{-3}|\nabla Q|^2|\nabla_X Q|^2\left( {\eta^{-1}}V h(\nabla V, \nabla V)-\frac{1}{8}\varepsilon\eta^{-2}(Q^{-1}|\nabla Q|^2-2\varepsilon)\right)\nonumber
\end{align}
and 
\begin{align}
    &\quad\nabla_{Z} (|\nabla V|^2)\varepsilon g_{ij}X^iX^j= -\left[4\eta^{-1}V^{-1}h(\nabla V, \nabla V)+4h^2(\nabla V, \nabla V)\right]\varepsilon |X|^2. \nonumber
\end{align}
Recall that $|Vh|$, $|\nabla V|=o(1)$.  Combining these with \eqref{null_useful_inequ} and \eqref{eq-epsest}, 
\[\nabla_{Z}(-2|\nabla V|^2\nabla_{i}V\nabla_{j}V)X^iX^j + \nabla_Z(|\nabla V|^2) \varepsilon |X|^2 =o(1)Q^{-3}|\nabla_XQ|^2|\nabla Q|^2. 
\]
Next, \bea 
        &4\nabla_{\ell}(|\nabla V|^2)\nabla_{\ell}(\nabla_i V \nabla_j V)X^iX^j = 16\eta^{-2}h^2(X,\nabla V)\nabla_X V , \eea 
and by Lemma \ref{lemma-nabla-h}(Codazzi term)
\bea 
    \Psi(X,X) = &-4 h(\nabla V,\nabla V)h(X,X) - 8h^2(X,\nabla V) \nabla_X V  \\
-&4\eta V h^2(\nabla V, \nabla V)h(X,X)\\
 =& -\eta^{-1}V^{-1}h(\nabla V, \nabla V)(Q^{-1}|\nabla_X Q|^2-2\varepsilon |X|^2) - 8h^2(X,\nabla V) \nabla_X V \nonumber\\
    -& h^2(\nabla V, \nabla V)(Q^{-1}|\nabla_X Q|^2-2\varepsilon |X|^2)\nonumber.\eea 
Replacing $h(X,X)$ and $h^2(X,\nabla V)$ via Lemma \ref{lemma_null_eigen},  we have
\begin{align}
    &\Psi(X,X) + 4\nabla_{\ell}(|\nabla V|^2)\nabla_{\ell}(\nabla_i V \nabla_j V)X^iX^j \\
    = &  -\eta^{-1}V^{-1}h(\nabla V, \nabla V)(Q^{-1}|\nabla_X Q|^2-2\varepsilon |X|^2) + (16\eta^{-2}\!-\! 8)h^2(X,\nabla V) \nabla_X V \nonumber\\
    &- h^2(\nabla V, \nabla V)(Q^{-1}|\nabla_X Q|^2-2\varepsilon |X|^2)\nonumber\\
    = &  -\eta^{-1}V^{-1}h(\nabla V, \nabla V)(Q^{-1}|\nabla_X Q|^2-2\varepsilon |X|^2)\\
   &- h^2(\nabla V, \nabla V)(Q^{-1}|\nabla_X Q|^2-2\varepsilon |X|^2)\nonumber\\
    &+ (4\eta^{-2}-2)Q^{-1}V^{-1}\left(h(\nabla V,\nabla V) \!-\! \frac{1}{8}\varepsilon \eta^{-2} V^{-1}\left(Q^{-1} |\nabla Q|^2- 2\eps\right) \right)|\nabla_X Q|^2\\
    =&o(1) Q^{-3} |\nabla Q|^2 |\nabla _X Q|^2 .
\end{align}
Next, we have $Q^{-1} |\nabla Q|^2 |\nabla^2 V(X)|^2 = o(1) Q^{-3}|\nabla _XQ|^2 |\nabla Q|^2 $ since 
  \bea 
        |\nabla^2 V(X)|^2 &= \eta^{-4}Q^{-1}\left(\frac{1}{4}\varepsilon^2|X|^2 -\varepsilon |\nabla_X V|^2 + |\nabla_X V|^2|\nabla V|^2\right) \\
&=\eta^{-4}Q^{-1}\left(|\nabla_X V|^2\left(|\nabla V|^2-\frac{1}{2}\varepsilon\right)-\frac{1}{2}\varepsilon \left(|\nabla_X V|^2-\frac{1}{2}\varepsilon|X|^2\right)\right)\\
        &\leq  \eta^{-4}Q^{-1}|\nabla_X V|^2\left(|\nabla V|^2-\frac{1}{2}\varepsilon\right)\nonumber\\
        &\leq  \frac{1}{16}Q^{-3}|\nabla_X Q|^2(Q^{-1}|\nabla Q|^2-2\varepsilon)\\
&= O(1) Q^{-3}|\nabla _XQ|^2 |\nabla Q|^2 .
    \eea
and 
\begin{align}
        Q^{-1}|\nabla Q|^2|\nabla^2 V(X)|^2 &\leq \frac{1}{16}Q^{-4}|\nabla_X Q|^2|\nabla Q|^2(Q^{-1}|\nabla Q|^2-2\varepsilon)\\
        & =  \frac{1}{4}|\nabla V|^2 Q^{-2}|\nabla_X Q|^2(Q^{-1}|\nabla Q|^2-2\varepsilon). \nonumber
    \end{align}
Finally, the first term of $N_2(X,X)$ in \eqref{eq-N2}   has negative sign. Combining things together, we finish the proof.  
\end{proof} 

\end{lemma}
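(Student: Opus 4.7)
The plan is to evaluate each of the five constituent pieces of $N_2(X,X)$ from the formula \eqref{eq-N2} and show that every one of them contributes at most $o(1)\,Q^{-3}|\nabla_X Q|^{2}|\nabla Q|^{2}$. Throughout I would lean on the null eigenvector identities \eqref{null_useful_inequ}, \eqref{eq-h-ident}--\eqref{eq-h2-ident} from Lemma \ref{lemma_null_eigen}, together with the smallness inputs $|\nabla V|=o(1)$, $|Vh|=o(1)$, and $\varepsilon=o(1)$ available in the collar region $\{L\sqrt{|t|/\log|t|}\le V\le \sqrt{2|t|}-L\sqrt{|t|}/\log|t|\}$ under $\kappa$-quadraticity, via Corollary \ref{lemma-cylindrical}. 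The identity $\eta=1+o(1)$, coming from $|\nabla V|=o(1)$, will be repeatedly used.

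Two of the five pieces are dispatched by sign or direct expansion. Convexity of $M_t$ gives $h\ge 0$, so $h^{2}(\nabla Q,\nabla Q)\ge 0$, and the null eigenvector inequality gives $Q^{-1}|\nabla_X Q|^{2}-2\varepsilon|X|^{2}\ge 0$; hence the leading term $-\tfrac{1}{4}Q^{-1}h^{2}(\nabla Q,\nabla Q)(Q^{-1}|\nabla_X Q|^{2}-2\varepsilon|X|^{2})$ is nonpositive and may simply be dropped. For the Hessian-squared piece $Q^{-1}|\nabla Q|^{2}|\nabla^{2}V(X)|^{2}$, I would feed the null eigenvector identity $\nabla^{2}V(X,Y)=\eta^{-2}V^{-1}(\tfrac{1}{2}\varepsilon\langle X,Y\rangle-\nabla_X V\nabla_Y V)$ into the squared norm and expand; each cross term comes with at least one extra factor of $\varepsilon$ or $|\nabla V|^{2}$, so using $\varepsilon\le 2|\nabla V|^{2}$ and $Q^{-1}|\nabla Q|^{2}=4|\nabla V|^{2}$ one reduces the whole contribution to $O(|\nabla V|^{2})\,Q^{-3}|\nabla_X Q|^{2}|\nabla Q|^{2}$, which is of the desired order.

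The delicate step is to handle the Codazzi piece $\Psi(X,X)$, the gradient piece $4\nabla_{\ell}(|\nabla V|^{2})\nabla_{\ell}(\nabla_i V\nabla_j V)X^{i}X^{j}$, and the two $\nabla_Z$ terms in combination. I would first invoke Lemma \ref{lemma-nabla-h} (Codazzi) to rewrite $\Psi(X,X)$ as a linear combination of $h(\nabla V,\nabla V)h(X,X)$ and $h^{2}(X,\nabla V)\nabla_X V$, plus corrections carrying a prefactor of $Vh$. The Weingarten-type identity $\nabla_\ell\nabla_i V=-\eta^{-1}h_{\ell i}+\text{(spherical correction)}$ then evaluates the gradient piece to $16\eta^{-2}h^{2}(X,\nabla V)\nabla_X V$ modulo acceptable error. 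Adding the two, the $h^{2}(X,\nabla V)\nabla_X V$ contributions combine with net prefactor $16\eta^{-2}-8=o(1)$, after which substituting \eqref{eq-h2-ident} converts the result to a scalar expression controlled by $\varepsilon$, $|Vh|$ and the gradient of $Q$. The $\nabla_Z$ pieces I would expand using the definition \eqref{eq-defZ} of $Z$ together with the Weingarten identity above; every term produced carries at least one factor of $Vh$, $|\nabla V|$, or $\varepsilon$, and hence absorbs into the $o(1)$ bound.

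The main obstacle is exactly the cancellation in the third paragraph: a priori both $\Psi(X,X)$ and the gradient piece are of size $Q^{-3}|\nabla Q|^{2}|\nabla_X Q|^{2}$, and the estimate holds only because their combined prefactor collapses to $o(1)$ as $\eta\to 1$. Establishing this cleanly forces the Codazzi rewriting to be organized carefully around the $h^{2}(X,\nabla V)\nabla_X V$ structure, and it is precisely this bookkeeping that the additional $-\tfrac{1}{2}|\nabla V|^{2}(Q^{-1}\nabla Q\otimes \nabla Q-2\varepsilon g)$ correction in the modified test tensor $A$ was introduced to facilitate, in contrast to the simpler tensor $\bar A=\nabla^{2}Q-\gamma g$ used for $2$-ovals in \cite{CDDHS}. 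Once this cancellation is verified, the remaining manipulations are algebraic and amount to tracking factors of smallness.
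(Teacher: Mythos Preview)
Your decomposition into five pieces, the disposal of the first term by sign, the treatment of $Q^{-1}|\nabla Q|^{2}|\nabla^{2}V(X)|^{2}$ via the null eigenvector identity, and the strategy of expanding the $\nabla_Z$ terms through \eqref{eq-defZ} all match the paper's proof. But there is a concrete error in your handling of the $\Psi$ and gradient pieces.

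You assert that the $h^{2}(X,\nabla V)\nabla_X V$ contributions from $\Psi(X,X)$ and from $4\nabla_{\ell}(|\nabla V|^{2})\nabla_{\ell}(\nabla_i V\nabla_j V)X^{i}X^{j}$ combine with net prefactor $16\eta^{-2}-8=o(1)$, and in your last paragraph you identify this ``collapse of the prefactor as $\eta\to 1$'' as the essential cancellation. This is false: $\eta=(1+|DV|^{2})^{1/2}\to 1$ in the collar, so $16\eta^{-2}-8\to 8$, not $0$. There is no cancellation at the level of the prefactor.

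The genuine source of smallness is the step you mention only in passing: substituting the null eigenvector identity \eqref{eq-h2-ident}, which rewrites
\[
h^{2}(X,\nabla V)=\eta^{-1}V^{-1}\Big(h(\nabla V,\nabla V)-\tfrac{1}{8}\varepsilon\,\eta^{-1}V^{-1}\big(Q^{-1}|\nabla Q|^{2}-2\varepsilon\big)\Big)\nabla_X V.
\]
The factor $h(\nabla V,\nabla V)$ is bounded by $|h|\,|\nabla V|^{2}=o(1)\,V^{-1}|\nabla V|^{2}$ via $|Vh|=o(1)$, giving $V^{-1}h(\nabla V,\nabla V)=o(1)\,Q^{-2}|\nabla Q|^{2}$; the $\varepsilon$-correction is likewise small by \eqref{eq-epsest}. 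Consequently $(16\eta^{-2}-8)h^{2}(X,\nabla V)\nabla_X V$ is already $o(1)\,Q^{-3}|\nabla_X Q|^{2}|\nabla Q|^{2}$ despite the prefactor being of order one, and the same applies to the remaining $h(\nabla V,\nabla V)h(X,X)$ and $h^{2}(\nabla V,\nabla V)h(X,X)$ terms in $\Psi(X,X)$ after inserting \eqref{eq-h-ident}. The paper carries out exactly this substitution; your sketch has the right ingredient but misattributes where the smallness comes from.
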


\begin{lemma}[estimate on $N_3(X,X)$] \label{lemma-estN3}

 \begin{equation}
         N_3(X,X)\leq 0.
     \end{equation}
\begin{proof}
In view of terms in $(N_3)_{ij}$, it suffices to check 
\begin{align}
  &    3\eta^{-2}V^{-2}\Big[(n-k)-6|\nabla V|^2 +\tfrac{V^2}{2t}\Big (1-\tfrac{1}{\log(-t)}\Big ) \Big ] +6  \eta^{-1}V^{-1}  h(\nabla V,\nabla V)   > 0 \nonumber .
\end{align}
The second term is {positive} as $h$ is positive definite. Next, recall that on $ L\sqrt { \frac{-t}{\log (-t)}}\le V \le \sqrt {(2(n-k)(-t) } -L \frac{\sqrt{-t}}{\log (-t)}$, we have   \begin{align}
    \tfrac{V^2}{2t} \geq -(n-k)+\tfrac{L}{\log(-t)} \quad \text{and} \quad |\nabla V|^2 \leq \tfrac{C}{\log(-t)}.
\end{align}
We can take $L$ sufficiently large such that $L \gg C$ 
\begin{align}
    \Big [(n-k)-6|\nabla V|^2+\tfrac{V^2}{2t}  \Big (1-\tfrac{1}{\log(-t)}\Big ) \Big ] \geq \tfrac{L}{2\log (-t)}>0.
\end{align}
This finishes the proof.
\end{proof}
\end{lemma}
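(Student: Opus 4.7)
\textbf{Proof proposal for Lemma~\ref{lemma-estN3}.}

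The plan is to observe that $N_3(X,X) \le 0$ is an essentially algebraic inequality: both summands of $(N_3)_{ij}$ are scalar multiples of $g_{ij}$, so
\begin{equation}
N_3(X,X) = -\gamma\, |X|^2\left\{3\eta^{-2}V^{-2}\!\left[(n-k)-6|\nabla V|^2 + \tfrac{V^2}{2t}\!\left(1-\tfrac{1}{\log(-t)}\right)\right] + 6\eta^{-1}V^{-1}h(\nabla V,\nabla V)\right\}.
\end{equation}
Since $\gamma, \eta, V > 0$ and $|X|^2 \ge 0$, the claim reduces to showing the scalar quantity in braces is nonnegative throughout the region $\{L\sqrt{|t|/\log|t|} \le V \le \sqrt{2(n-k)|t|} - L\sqrt{|t|}/\log|t|\}$. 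Note that the side condition $X\perp \mathrm{span}\{\partial_{\vartheta^\alpha}\}$ is not needed for this particular estimate because $X$ only enters through $|X|^2$.

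First I would discard the $h(\nabla V,\nabla V)$-contribution: since $M_t$ is convex we have $h \succeq 0$, so $6\eta^{-1}V^{-1}h(\nabla V,\nabla V)\ge 0$. The problem then reduces to verifying the purely cylindrical inequality
\begin{equation}
(n-k) - 6|\nabla V|^2 + \tfrac{V^2}{2t}\!\left(1-\tfrac{1}{\log(-t)}\right) \ge 0.
\end{equation}

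Next I would feed in two quantitative inputs obtained from the uniform sharp asymptotics of Theorem~\ref{strong_uniform0} and the cylindrical estimate of Corollary~\ref{lemma-cylindrical}. Squaring the upper bound $V \le \sqrt{2(n-k)|t|} - L\sqrt{|t|}/\log|t|$ and dividing by $2|t|$ gives, after absorbing universal constants into $L$,
\begin{equation}
\frac{V^2}{2t} \ge -(n-k) + \frac{L}{\log(-t)} - O\!\left(\frac{L^2}{(\log|t|)^2}\right),
\end{equation}
while the intermediate-region asymptotics together with convexity yield the uniform derivative bound $|\nabla V|^2 \le C/\log|t|$ on this region, for a universal constant $C$ depending on neither $L$ nor $\delta$. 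Substituting these two inputs into the bracket produces a telescoping cancellation of the leading $(n-k)$, leaving a remainder of order
\begin{equation}
\frac{n-k}{\log(-t)} - \frac{6C}{\log(-t)} + \frac{L}{\log(-t)} - O\!\left(\frac{L^2 + 1}{(\log|t|)^2}\right).
\end{equation}
Choosing $L \gg C$ (crucially, $L$ is selected \emph{before} the smallness parameters $\kappa$ and $\delta$ and the largeness parameter $|\tau_*|$) and then taking $\tau_*$ sufficiently negative, this remainder is bounded below by $\tfrac{L}{2\log(-t)} > 0$, which closes the estimate.

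The main obstacle is purely one of bookkeeping: one must ensure the gradient bound $|\nabla V|^2 \le C/\log|t|$ holds \emph{uniformly} across the entire $\kappa$-quadratic family, not merely pointwise along a single solution, and that the selection order $L \mapsto \kappa \mapsto \tau_*$ is compatible with the hypotheses used elsewhere in Proposition~\ref{N(X, X)_estimates}. The uniformity itself is a direct consequence of Theorem~\ref{strong_uniform0} combined with the quantitative upgrade from $\kappa$-quadraticity at a single time to strong $\kappa$-quadraticity provided by Theorem~\ref{point_strong}, so no new analytic input is required beyond the results already in place.
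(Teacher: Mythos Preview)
Your proposal is correct and follows essentially the same approach as the paper: reduce $N_3(X,X)$ to a scalar inequality via $(N_3)_{ij}\propto g_{ij}$, discard the nonnegative $h(\nabla V,\nabla V)$ term by convexity, then use the upper bound on $V$ and the gradient bound $|\nabla V|^2\le C/\log(-t)$ to show the bracket is at least $L/(2\log(-t))$ for $L\gg C$. Your version is slightly more explicit about the lower-order $O(L^2/(\log|t|)^2)$ correction and the parameter-selection order, but the argument is the same.
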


Next, we check the sign in the excluded region. Because the third term in $A$ is negative definite and the fourth term in $A$ is  negligible compared with the second term in $A$, as the argument in \cite{CDDHS} we have good sign in the excluded region:

\begin{proposition}[sign in excluded region]\label{lemma-soliton-region} For every $L<\infty$ there exist constants $\kappa>0$ and $\tau_*>-\infty$ with the following significance. If $\mathcal{M}$ is $\kappa$-quadratic at time $\tau_0 \le \tau_*$ then for all times $t\le -e^{-\tau_0}$
 in $\{ V \leq L\sqrt{|t|/\log|t|}\}$ and in $\{ V\geq \sqrt{2|t|}-L\sqrt{|t|}/\log|t| \}$ we have 
 \begin{equation}
\left[{\nabla^2} Q- \gamma g-\frac{|\nabla Q|^2}{4Q}\left(\frac{\nabla Q\otimes\nabla Q}{2Q}-\gamma g\right)\right]\!\!(X, X)\leq 0
\end{equation}
for all $X$ perpendicular to all spherical vector fields $\partial{\vartheta^{\alpha}}$.
\end{proposition}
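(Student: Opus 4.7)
Using $\nabla Q=2V\nabla V$ and $|\nabla V|^2=1-\eta^{-2}$, the tensor in the conclusion simplifies to
\[
T := \nabla^2 Q - \gamma g - \tfrac{|\nabla Q|^2}{4Q}\bigl(\tfrac{\nabla Q\otimes\nabla Q}{2Q}-\gamma g\bigr) = 2\eta^{-2}\nabla V\otimes\nabla V + 2V\nabla^2 V - \gamma\eta^{-2}g,
\]
so for any $X\perp\partial_{\vartheta}$ it suffices to show
\[
T(X,X) = 2\eta^{-2}(\nabla_X V)^2 + 2V\nabla^2 V(X,X) - \gamma\eta^{-2}|X|^2 \le 0,
\]
recalling $\nabla^2 V(X,X)\le 0$ by convexity. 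The two excluded subregions will be handled separately.

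In the near-cylindrical region $\{V\ge \sqrt{2(n-k)|t|}-L\sqrt{|t|}/\log|t|\}$, Theorem~\ref{strong_uniform0} and Corollary~\ref{lemma-cylindrical} place us in a bounded $|\mathbf y|$-region in which $v(\mathbf y,\tau)=\sqrt{2(n-k)}-\tfrac{\sqrt{2(n-k)}}{4|\tau|}(|\mathbf y|^2-2k)+o(|\tau|^{-1})$ in $C^m$. Undoing the rescaling yields $|\nabla V|^2 = O(|\tau|^{-2})$ and $V\nabla^2 V(X,X)\le -\tfrac{n-k}{|\tau|}(1+o(1))|X|^2$ for $X\perp\partial_{\vartheta}$. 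Since $\gamma\eta^{-2}=O(|\tau|^{-3/2})$, the dominant negative contribution $2V\nabla^2 V(X,X)$ beats the positive $2\eta^{-2}(\nabla_X V)^2 = O(|\tau|^{-2})|X|^2$, so $T(X,X)\le 0$ for $\tau_0$ sufficiently negative.

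In the soliton region $\{V\le L\sqrt{|t|/\log|t|}\}$, the tip asymptotics in Theorem~\ref{strong_uniform0} give $\varepsilon_1$-closeness in $C^{\lfloor 1/\varepsilon_1\rfloor}$ of the rescaled flow to $\mathbb{R}^{k-1}\times N_t$, with $N_t\subset\mathbb{R}^{n-k+2}$ the bowl translator of speed $c=1/\sqrt 2$. By this closeness it suffices to verify $T(X,X)<0$ strictly on the limit model and transfer to the $k$-oval using the room coming from $-\gamma\eta^{-2}g$. Decomposing $X=X_0+X_1$ with $X_0\in T\mathbb{R}^{k-1}$ and $X_1$ tangent to the meridian of $N$, the $X_0$-contribution is $-\gamma\eta^{-2}|X_0|^2<0$ (as $V$ depends only on bowl coordinates). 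Parametrizing $N$ by $|y|=\rho(x)$ in the intrinsic metric $(1+(\rho')^2)dx^2+\rho^2 g_{S^{n-k}}$, a direct calculation gives
\[
2V\nabla^2 V(X_1,X_1)+2\eta^{-2}(\nabla_{X_1}V)^2 = \eta^{-4}(\rho^2)''(x)|X_1|^2,
\]
so the meridional inequality reduces to the quadratic concavity $(\rho^2)''(x)\le 0$ on the bowl.

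To establish this concavity, write $p(r)=z'(r)$ for the inverse graph and $q(r)=p(r)/r$; then $(\rho^2)''(x)\le 0$ is equivalent to $q'(r)\ge 0$. The translator equation $H=c\langle \nu,e\rangle$ reduces to the ODE
\[
rq'(r) = c-(m-1)q+r^2 q^2\bigl(c-(m-2)q\bigr),\qquad m:=n-k+2,
\]
with boundary values $q(0)=c/(m-1)$ and $q(\infty)=c/(m-2)$. A Taylor expansion at the tip yields $q(r)=c/(m-1)+\tfrac{c^3}{(m+1)(m-1)^3}r^2+O(r^3)$, so $q$ is increasing for small $r>0$, and a barrier argument shows $q(r)<c/(m-2)$ for all finite $r$. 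Differentiating the ODE at any interior critical point of $q$ gives $q''(r)=2q^2(c-(m-2)q)>0$, so $q$ admits no local maximum and is therefore monotonically non-decreasing. The main technical obstacle is this dimension-dependent quadratic concavity on the higher-dimensional bowl, which extends the $m=3$ computation of \cite[Section~3]{CDDHS} to general $m\ge 3$ and requires careful tracking of coefficients coming from the $(n-k)$-dimensional spherical factor of $N$.
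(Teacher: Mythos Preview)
Your proof is correct and follows essentially the same strategy as the paper: in both excluded regions you use the sharp asymptotics of Theorem~\ref{strong_uniform0} to show that the main tensor is dominated by the $-\gamma g$ term. The differences are presentational. First, you work directly with the simplified form $T=2\eta^{-2}\nabla V\otimes\nabla V+2V\nabla^2 V-\gamma\eta^{-2}g$, whereas the paper bounds $\nabla^2 Q-\gamma g$ first and then observes that the remaining gradient terms $-\tfrac{|\nabla Q|^2}{4Q}(\tfrac{\nabla Q\otimes\nabla Q}{2Q}-\gamma g)$ are a small perturbation; these are algebraically equivalent. Second, in the near-cylindrical region you exploit the strict concavity $V\nabla^2 V(X,X)\sim -\tfrac{n-k}{|\tau|}|X|^2$ from the parabolic expansion, while the paper uses only $\nabla^2 Q(X,X)\le 2|\nabla V|^2|X|^2\le C(\log|t|)^{-2}|X|^2$ together with $\gamma\ge(\log|t|)^{-3/2}$; both work, but note that you should first argue (as the paper does in a footnote, via convexity and a tangent-plane comparison) that $\{V\ge\sqrt{2(n-k)|t|}-L\sqrt{|t|}/\log|t|\}$ is actually contained in a bounded-$|\mathbf y|$ region so that the parabolic asymptotics apply. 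Third, and most substantively, in the soliton region the paper simply cites \cite[Lemma~5.4]{ADS2} for the quadratic concavity $(\rho^2)''\le0$ of the round bowl in arbitrary dimension, whereas you supply a self-contained ODE proof via the monotonicity of $q(r)=z'(r)/r$; your argument is correct and is a useful direct verification of that cited lemma.
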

\begin{proof}
    The proof is similar as proof of \cite[Prop 3.8]{CDDHS}. For convenience of reader and completeness, we include the proof here.  Recall from \cite[Lemma 5.4]{ADS2} that the profile function of the  round $k$ dimensional bowl is quadratically concave. Since $\nabla^2 Q$ is a scale invariant quantity, applying Theorem \ref{strong_uniform0} (uniform sharp asymptotics) we thus infer that
\be
\nabla^2 Q(X,X)\leq \lambda \, |X|^2
\ee
in $\{ V \leq L\sqrt{|t|/\log|t|}\}$, where $\lambda>0$ can be made arbitrarily small by choosing $\kappa>0$ small enough and $\tau_\ast<-\infty$ negative enough. On the other hand, inserting $V \leq L\sqrt{|t|/\log|t|}$ in equation \eqref{eq-Q} we see that
\begin{equation}
\gamma \, g(X,X) \geq L^{-3} |X|^2.
\end{equation}
Suppose now $V\geq\sqrt{2|t|}-L\sqrt{|t|}/\log|t|$. Then by Theorem \ref{strong_uniform0} (uniform sharp asymptotics) and convexity we have\footnote{One has to apply this twice, specifically first considering the tangent plane at any point with say renormalized radial coordinate $y=100$ to show that $\{V\geq\sqrt{2|t|}-L\sqrt{|t|}/\log|t|\}$ is contained in the parabolic region, and then again to get the gradient bound.} 
\begin{equation}
|\nabla V|\leq \frac{C}{\log|t|},
\end{equation}
hence
\be
\nabla^2 Q(X,X)\leq \frac{C}{(\log |t|)^2}|X|^2.
\ee
On the other hand, inserting the rough bound $V\geq \sqrt{|t|}$ in the definition of $\gamma$ we get
\begin{equation}
\gamma \, g(X,X)\geq \frac{1}{(\log|t|)^{3/2}}|X|^2.
\end{equation}
 Since the sum of last two tensors 
 \begin{equation}
    \frac{|\nabla Q|^2}{4Q} \left(\frac{\nabla_i Q\otimes\nabla_j Q}{2Q}-\gamma g_{ij}\right)= -2|\nabla V|^2\nabla_{i}V\nabla_{j}V+|\nabla V|^2\varepsilon g_{ij}
 \end{equation}
  in $A_{ij}$ is just small perturbation of $\nabla^2_{ij} Q-\gamma g_{ij}$, the assertion holds.
\end{proof}

 We are now ready to present the proof of Theorem \ref{prop-concavity} (quadratic almost concavity), which we restate here in terms of the unrescaled variables:
 
 \begin{theorem}[quadratic almost concavity]
      \label{prop-concavity_restated}
There exist constants $\kappa>0$ and $\tau_*>-\infty$ with the following significance. If  $\mathcal{M}$ is $\kappa$-quadratic at time $\tau_0 \le \tau_*$, then for all $t\le -e^{-\tau_0}$ we have
\begin{equation}
\left[{\nabla^2} Q- \gamma g-\frac{|\nabla Q|^2}{4Q}\left(\frac{\nabla Q\otimes\nabla Q}{2Q}-\gamma g\right)\right]\!\!(X, X)\leq 0
\end{equation}
for all $X$ perpendicular to all  vector fields $\partial_{\vartheta^{\alpha}}$ along $S^{n-k}$, where $\gamma$ denotes the function defined in \eqref{eq-Q}.
\end{theorem}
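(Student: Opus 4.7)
The strategy is Hamilton's tensor maximum principle applied to the tensor $A_\delta$ of \eqref{eq-defAijrest} with $\varepsilon = \gamma + \delta$, $\delta > 0$, followed by sending $\delta \to 0$; note that if $A_\delta(X,X) \le 0$ for every $\delta > 0$, then in the limit one obtains exactly the claimed inequality. The three key ingredients are already in place: the evolution equation $(\partial_t - \Delta + \nabla_Z) A_\delta = N$ from Proposition \ref{evolution of new A main}, the negative reaction bound of Proposition \ref{N(X, X)_estimates} in the collar region, and the direct sign check of Proposition \ref{lemma-soliton-region} in the two excluded regions.

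First, I verify strict negativity of $A_\delta(X, X)$ for unit $X \perp \partial_{\vartheta^\alpha}$ at all sufficiently negative times. In the parabolic and intermediate regions this follows because, up to small errors from Theorem \ref{strong_uniform0}, $\nabla^2 Q$ is the Hessian of the quadratic polynomial associated with the shrinking bubble sheet and strictly dominates $\gamma g$; in the tip region the radially convex bowl profile is quadratically concave (cf. \cite[Lemma 5.4]{ADS2}); the extra gradient-correction piece of $A_\delta$ is $O(|\nabla V|^2)$ and thus negligible, exactly as in the proof of Proposition \ref{lemma-soliton-region}. Consequently, one may define $t_1 \in (-\infty, -e^{-\tau_0}]$ as the first time at which some null vector $X \in T_p M_{t_1}$ with $X \perp \partial_{\vartheta^\alpha}$ appears. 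Proposition \ref{lemma-soliton-region} rules out $p$ lying in either excluded region, so $p$ lies in the collar region where Proposition \ref{N(X, X)_estimates} applies.

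I then apply the tensor maximum principle restricted to the subbundle $\{Y \in T M_t : Y \perp \partial_{\vartheta^\alpha}\}$, which is smooth on $\{V > 0\}$. Locally around $p$ I extend $X$ to a smooth section $\tilde X$ of this subbundle with $\nabla \tilde X(p, t_1) = 0$, following the extension procedure (to be recorded as Claim \ref{claim_extension}). At $(p, t_1)$ the symmetric tensor $A_\delta$ is negative semidefinite on the subbundle and vanishes on $X$, so Cauchy--Schwarz gives $A_\delta(X, \cdot) \equiv 0$ on the subbundle there; the remaining cross-derivative terms produced when expanding $(\partial_t - \Delta + \nabla_Z)[A_\delta(\tilde X, \tilde X)]$ then vanish or are absorbed, and the standard maximum-principle inequality (the scalar $A_\delta(\tilde X, \tilde X)$ attains an interior maximum $0$ at $(p, t_1)$) yields
\[
0 \le (\partial_t - \Delta + \nabla_Z)\bigl[A_\delta(\tilde X, \tilde X)\bigr]\Big|_{(p, t_1)} = N(X, X)(p, t_1).
\]

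The contradiction comes from Proposition \ref{N(X, X)_estimates}, which gives
\[
N(X, X) \le -\tfrac{n - k + 2}{4}\, Q^{-2}(\nabla_X Q)^2\bigl(Q^{-1}|\nabla Q|^2 - 2\varepsilon\bigr) - \tfrac{1}{8}\, Q^{-3}(\nabla_X Q)^2 |\nabla Q|^2.
\]
By Lemma \ref{lemma_null_eigen} the factor $Q^{-1}|\nabla Q|^2 - 2\varepsilon \ge 0$, while $\nabla_X Q \neq 0$ (otherwise Lemma \ref{lemma_null_eigen} forces $\varepsilon|X|^2 = 0$, hence $X = 0$). So $N(X, X) < 0$, contradicting $N(X,X) \ge 0$, and letting $\delta \to 0$ yields the theorem. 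The principal obstacle, as highlighted in Section \ref{challenge}, is precisely the restriction of the maximum principle to the spherical-perpendicular subbundle: with the naive tensor $\nabla^2 Q - \gamma g$ (which sufficed when $n - k = 1$) the reaction admits an indefinite sign as in \eqref{difficulty}; the additional gradient-type terms in $A_\delta$ are tuned exactly so as to cancel the offending contribution and produce the strictly negative term $-\tfrac{1}{8} Q^{-3}(\nabla_X Q)^2 |\nabla Q|^2$ in Lemma \ref{lemma-estN1}.
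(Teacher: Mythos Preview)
Your overall strategy matches the paper's, but there is a genuine gap in the maximum-principle step. You assert that $X$ can be extended to a section $\tilde X$ of the subbundle $\{Y:Y\perp\partial_{\vartheta^\alpha}\}$ with $\nabla\tilde X(p,t_1)=0$, and that consequently $0\le N(X,X)$ at the null point. Neither claim is correct. The subbundle is not parallel: the Christoffel symbols mixing $x$- and $\vartheta$-directions are nonzero (indeed $\Gamma^{\alpha}_{\alpha j}=V^{-1}V_j$), so any extension that stays perpendicular to $\partial_{\vartheta^\alpha}$ necessarily has $\nabla_{\partial_{\vartheta^\alpha}}\tilde X\neq 0$. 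The extension in Claim~\ref{claim_extension} makes this explicit: property~(ii) gives $\nabla_{\partial_{\vartheta^\alpha}}X=\tfrac12 Q^{-1}\nabla_XQ\,\partial_{\vartheta^\alpha}$. Conversely, if you drop the subbundle constraint and parallel-transport $X$ freely, then $A_\delta(\tilde X,\tilde X)$ need not remain $\le 0$ nearby (since $A_\delta(\partial_{\vartheta^\alpha},\partial_{\vartheta^\alpha})>0$), and the maximum principle breaks down.

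With the correct extension one computes, as in the paper,
\[
(\partial_t-\Delta-\nabla_Z)f \;=\; N(X,X) \;+\; 2\sum_i A(\nabla_{e_i}X,\nabla_{e_i}X),
\]
and the second sum, coming entirely from the $n-k$ spherical directions via property~(ii), equals
\[
\tfrac{n-k}{4}\,Q^{-2}|\nabla_XQ|^2\bigl(Q^{-1}|\nabla Q|^2-2\varepsilon\bigr)+\tfrac{n-k}{8}\,\varepsilon\,Q^{-3}|\nabla_XQ|^2|\nabla Q|^2,
\]
which is \emph{positive}. This is precisely why Proposition~\ref{N(X, X)_estimates} is stated with the coefficient $-\tfrac{n-k+2}{4}$ rather than any negative constant: after adding the correction term the leading coefficient becomes $-\tfrac12$, and only then does one obtain the strict negativity that contradicts $(\partial_t-\Delta-\nabla_Z)f\ge 0$. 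Your writeup treats these cross terms as ``vanish or are absorbed'' without computing them, which hides the one nontrivial cancellation in the argument.
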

 
\begin{proof}Let $\kappa>0$, $\tau_*>-\infty$, and $L<\infty$ be the constants from Proposition \ref{prop-reactionterm} (reaction term). Adjusting $\kappa$ and $\tau_\ast$ we can arrange that the conclusion of Proposition \ref{lemma-soliton-region} (sign in excluded region) holds as well, and also that for all $t\leq - e^{-\tau_\ast}$ we have $\gamma\leq 1/100$.
Suppose now that  $\mathcal{M}$ is $\kappa$-quadratic at time $\tau_0 \le \tau_*$. Given any $\delta\in (0,1/100)$, we work with the tensor $A_{ij}$ in \eqref{Tensor A}

By Proposition \ref{lemma-soliton-region} (sign in excluded region) for all $t\leq -e^{-\tau_0}$ in the region 
\be
E_t:=\big\{ V \leq L\sqrt{|t|/\log|t|}\big\} \cup \big\{ V\geq \sqrt{2|t|}-L\sqrt{|t|}/\log|t| \big\},
\ee
we have $A(X,X)\leq 0 $ for all $X\perp \partial_{\vartheta^{\alpha}}$.

Moreover, by the derivative estimate from Corollary \ref{lemma-cylindrical} (cylindrical estimate)\footnote{One can apply this for some $\kappa'\ll \kappa$, since the inwards quadratic bending improves when one goes further back in time (as we have seen in the proof of Theorem \ref{point_strong}).}, and Proposition \ref{lemma-soliton-region}, there exists some $T_\delta < -e^{-\tau_0}$ such that for all $t\leq T_\delta$ the estimate
\be\label{a_leq_ineq}
A(X,X)<0  \quad \textrm{for all}\,\, \text{nonzero}\,\,  X\perp \partial_\vartheta
\ee
holds at all points (here, by convention we set $A(X,X):=-\infty$ at points with $V=0$). 

\smallskip 

Suppose towards a contradiction that there is some time $t\in (T_\delta, -e^{-\tau_0}]$ such that \eqref{a_leq_ineq}  fails, and let $\bar{t}$ be the first such time. Let $\bar{p}\in S^n$ be a point where this happens. By the above, we have $\bar{p}\notin E_{\bar{t}}$. We now choose a null eigenvector $X\in T_{\bar{p}}S^n\cap \textrm{span}\{\partial_{\vartheta^{\alpha}}\}_{\alpha=1, \dots, n-k}^\perp$ and extend it to a vector field around $\bar{p}$ as follows:

\begin{claim}[extension]\label{claim_extension}  There exists an extension of $X$ to a vector field $X(p)$ in an open neighborhood of $\bar p$, say $U$, with the following properties: 

\begin{enumerate}[(i)]
\item $X\perp \partial_{\vartheta^{\alpha}}$ in $U$,	
\item $\nabla _{\partial _{\vartheta^{\alpha}}} X =(\tfrac12 Q^{-1} \nabla_X Q)  \, \partial_{\vartheta^{\alpha}} $ in $U$,
\item$\nabla _{\dot \gamma(t)} X = 0$ for any geodesics $\gamma$ in $U$ with $\gamma(0)=\bar p$ and $\dot \gamma(0)  \perp \partial _\vartheta  $.   
\end{enumerate}
\end{claim}

\begin{proof}[Proof of the claim]Working in $(x,\vartheta)$-coordinates we can construct a vector field of the form
\be
X=\sum_{i=1}^{k}X^i(x_1, \dots, x_k)\partial_{x_i}.
\ee
by first parallel transporting in the $(x_1, \dots, x_k)$-plane along radial geodesics emanating from the point under consideration, and then declaring that in this local formula for $X$ is independent of $\vartheta$. Then, in a neighborhood of $\bar p$, the properties (i) and (iii) hold by construction, and moreover using  $\Gamma_{\ell j}^\ell =\nabla_jV /V$ for $j\in \{1,\dots, k\}, \ell \in \{k+1, \dots, n\}$  we get (ii). This proves the claim.
\end{proof}

Continuing the proof of the theorem, we consider the function
\begin{equation}
f(p,t):= A_{(p,t)}( X(p), X(p)).
\end{equation}
Then, at $(\bar{p},\bar{t})$, we have
\be\label{2nd_der_test}
\partial_t f \geq 0,\quad \nabla f = 0,\quad \Delta f \leq 0.
\ee
Recall the evolution of $A$ in Proposition \ref{evolution of new A}. At the point $(\bar{p},\bar{t})$, in terms of an orthonormal basis $e_1,\dots, e_{n}$, 
\begin{align}\label{tlaplacegradf}
    (\partial_t - \Delta-\nabla_{Z})f&=N(X, X) -2\sum_{i=1}^{n} A(\nabla_{e_i} X, \nabla_{e_i} X) - 4\sum_{i=1}^{n}(\nabla_{e_i} A)( \nabla_{e_i} X, X).
\end{align}
On the other hand,  using \eqref{obs_grad} and \eqref{eq-hessian V} we see that
\be
g(\partial_{\vartheta^{\alpha}},\partial_{\vartheta^{\alpha}})^{-1}A(\partial_{\vartheta^{\alpha}},\partial_{\vartheta^{\alpha}})=\tfrac12 (Q^{-1}|\nabla Q|^2-2\eps)\, .
\ee
Hence, applying Claim \ref{claim_extension} (extension) we infer that
\be
-2\sum_i A(\nabla_{e_i} X, \nabla_{e_i} X)=-{\tfrac{n-k}{4}}Q^{-2}|\nabla_X Q|^2(Q^{-1}|\nabla Q|^2-2\eps).
\ee
Moreover, working in a frame $\{\bar e_i\}$ that extends $\{e_i\}$ to a neighborhood of $\bar p$, the null eigenvector condition implies
\begin{multline}\label{eq-tensorial}
\sum_i\nabla_{\bar e_i} (A(\nabla_{\bar e_i} X,X))=\sum_i(\nabla_{\bar e_i} A)(\nabla_{\bar e_i} X,X)+ \sum_i A(\nabla_{\bar e_i} X,\nabla_{\bar e_i} X),
\end{multline}
and  we infer that the left-hand side of \eqref{eq-tensorial} evaluated at $\bar p$ is independent of the choice of $\{e_i\}$ and its extension as a frame. Hence, for any  curves $\{\gamma_i\}_{i=1}^{n}$ starting at $\bar p$, if $\{\dot \gamma_i(0)\}$ forms an orthonormal basis, then 
\be \sum_i\nabla_{\bar e_i} (A(\nabla_{\bar e_i} X,X)) = \sum_i \frac{d}{dt}\Big  \vert_{t=0}A_{\gamma_i(t)}(\nabla_{\dot \gamma_i(t)}X,X(\gamma_i(t))).\ee
    Choosing $\gamma_j$ for $1\leq j\leq k$  to be unit speed geodesics satisfying $\dot \gamma_i(0)\perp \partial_{\vartheta^{\alpha}}$ and $\gamma_\alpha$ to be the integral curve of $V^{-1} \partial_{\vartheta^{\alpha}}$, by Claim \ref{claim_extension} (extension), remembering also the block-diagonal structure of $A$, we infer that  for each $i\in\{1,\dots,n\}$ the function $t \mapsto A_{\gamma_i(t)}(\nabla_{\dot \gamma_i(t)}X,X(\gamma_i(t)))$ is identically zero. This yields
\begin{align}
- 4\sum_i(\nabla_{e_i} A)( \nabla_{e_k} X, X)&= 4\sum_i A(\nabla_{\bar e_i} X,\nabla_{\bar e_i} X)\\
&={\frac{n-k}{2}}Q^{-2}|\nabla_X Q|^2(Q^{-1}|\nabla Q|^2-2\eps).
\end{align}
Hence, these together with \eqref{tlaplacegradf}, we have
\begin{align}
    (\partial_t - \Delta-\nabla_{Z})f&=N(X, X) +2\sum_{i=1}^{n} A(\nabla_{e_i} X, \nabla_{e_i} X).
\end{align}

Next, let us further assume $e_{k+1},\dots, e_{n}$ is an orthonormal basis of the spherical part.   From the definition of $A$ in \eqref{eq-tensorA}, \be
2A(e_j ,e_j)=Q^{-1}|\nabla Q|^2-2\eps+2|\nabla V|^2\varepsilon, \quad \text{for }j\in\{k+1,\dots, n\}.
\ee
Hence, applying Claim \ref{claim_extension} (extension) we infer that
\bea \label{eq-Aspherical}
2\sum_i A(\nabla_{e_i} X, \nabla_{e_i} X)&=\tfrac{n-k}{4}Q^{-2}|\nabla_X Q|^2(Q^{-1}|\nabla Q|^2-2\eps)\\
&+\tfrac{n-k}{8}\varepsilon Q^{-3}|\nabla_X Q|^2|\nabla Q|^2.\nonumber    
\eea 
Finally, recall $Q^{-1}|\nabla Q|^2-2\eps>0$ by Lemma \ref{lemma_null_eigen} (null eigenvector) and $\eps = o(1)$ from \eqref{eq-epsest}. Combining \eqref{eq-Aspherical} with the estimate on $N(X,X)$ in Proposition \ref{prop-reactionterm} (reaction term),  \begin{equation}
    (\partial_t - \Delta-\nabla_{Z})f< 0
\end{equation}
at $(\bar{p},\bar{t})$, for some (but fixed) $\kappa>0$, $\tau_*>-\infty$, and $L<\infty$. This gives the desired contradiction with \eqref{2nd_der_test}. Since $\delta>0$ was arbitrary, this finishes the proof of the theorem.
\end{proof}

Finally, we can now prove the crucial Theorem \ref{prop-great} (almost Gaussian collar), which we restate here for convenience of the reader.

\begin{theorem}[almost Gaussian collar]\label{gaussian collar section 3}
For every $\varepsilon > 0$, there exist constants $\kappa>0$, $\tau_*>-\infty$, $L<\infty$, and $\theta >0$ with the following significance. If  $\mathcal{M}$ is a $k$-oval which is $\kappa$-quadratic at time $\tau_0 \le \tau_*$, then for all $\tau \le \tau_0$ and for the profile function $v$ in \eqref{v_radial_representation} and its inverse profile function $Y$ in \eqref{inverse profile}, the following radial derivative estimate
\begin{equation}
\left|y (v^2)_y + 4(n-k)\right| < \varepsilon,
\end{equation}
and equivalently the almost Gaussian collar estimates
\begin{equation}
    \left|1+\frac{vY}{2(n-k)Y_{v}}\right| < \varepsilon
\end{equation}
hold in the collar region $\mathcal{K}= \bigl\{  L/\sqrt{|\tau|} \le v \le 2  \theta \bigr\}$.
 \end{theorem}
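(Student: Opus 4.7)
The plan is to derive a scalar radial ordinary differential inequality for the quantity $G(y,\omega,\tau) := y(v^2)_y$ from Theorem~\ref{prop-concavity_restated} (quadratic almost concavity), and then close the argument by matching this ODI against the boundary values of $G$ at the two ends of the collar $\collar$ supplied by Theorem~\ref{strong_uniform0} (uniform sharp asymptotics).

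\textbf{Step 1 (radial ODI).}  At each point of $\collar$, pick $X$ to be the unit tangent to $M_t$ in the radial direction $\partial_r$ of $\mathbb{R}^k$, so $X \perp \partial_{\vartheta^\alpha}$ and Theorem~\ref{prop-concavity_restated} applies.  Writing $\bar\eta^2 := 1+V_r^2$, the block-diagonal formulas \eqref{eq-hessian V} and \eqref{eq-ginverse} give
\[
\nabla^2 Q(X,X) = \frac{2VV_{rr}}{\eta^2\bar\eta^2} + \frac{2V_r^2}{\bar\eta^2},\qquad \frac{|\nabla Q|^2}{4Q}=\frac{|DV|^2}{\eta^2},\qquad \frac{(\nabla_X Q)^2}{2Q}=\frac{2V_r^2}{\bar\eta^2}.
\]
Substituting into \eqref{almost_quadratic concavity_estimates}, using $1-|DV|^2/\eta^2 = 1/\eta^2$, and multiplying through by $\eta^2\bar\eta^2$, all $|DV|^2$-contributions, in particular the spherical pieces $r^{-2}|\nabla_\omega V|^2$ hidden inside $\eta^2$, cancel to leave the clean pointwise inequality
\begin{equation}\label{quadratic concavity ode inequality}
(V^2)_{rr}\ \le\ \gamma\,(1+V_r^2),
\end{equation}
which in rescaled variables reads $(v^2)_{yy}\le (1+v_y^2)/(v^3|\tau|^{3/2})$.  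Since $(G/y)_y=(v^2)_{yy}$, this is a scalar ODI $(G/y)_y\le (1+v_y^2)/(v^3|\tau|^{3/2})$ along every ray $y\mapsto(y,\omega,\tau)$.

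\textbf{Step 2 (boundary values of $G$).}  At the cylindrical-side boundary $v=2\theta$, the intermediate-region asymptotic $v^2 = (n-k)(2-y^2/|\tau|)+o(1)$ from Theorem~\ref{strong_uniform0} gives $G = -2(n-k)y^2/|\tau| = -4(n-k) + O(\theta^2)$.  At the tip-side boundary $v=L/\sqrt{|\tau|}$, the tip-region convergence of Theorem~\ref{strong_uniform0} to $N_t\times\mathbb{R}^{k-1}$, combined with the classical paraboloid asymptotic $u(r)\sim r^2/(2\sqrt 2(n-k))$ at infinity for the translating bowl $N_t\subset\mathbb{R}^{n-k+2}$ of speed $1/\sqrt{2}$, translates through the matching $\tilde r = \sqrt{|\tau|}v$, $\tilde z = \sqrt{|\tau|}(y_{\mathrm{tip}}-y)$ with $y_{\mathrm{tip}}\sim \sqrt{2|\tau|}$ into $v_y\sim -\sqrt 2(n-k)/(\sqrt{|\tau|}v)$, hence $G = 2yvv_y = -4(n-k) + O(L^{-1})$.

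\textbf{Step 3 (propagation).}  Using the rough bound $v_y^2\le C(n-k)^2/(v^2|\tau|)$ inside $\collar$ (which follows by combining Corollary~\ref{lemma-cylindrical} with convexity and the Step~2 endpoint values, bootstrapped once if necessary), a direct estimation in $v$-coordinates via $dy=|Y_v|\,dv$ yields
\[
\int_{\{L/\sqrt{|\tau|}\,\le\, v\,\le\, 2\theta\}}\frac{1+v_y^2}{v^3|\tau|^{3/2}}\,dy\ \le\ \frac{C(n-k)}{L^3\,|\tau|^{1/2}}.
\]
Integrating the one-sided ODI once from each endpoint squeezes $G/y$ between its two boundary values up to this small integral; multiplying by $y\sim \sqrt{2|\tau|}$ across $\collar$ gives $G = -4(n-k) + O(\theta^2 + L^{-1})$ uniformly.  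Choosing $\theta$ small, $L$ large, and $|\tau_\ast|$ large enough, this is $<\varepsilon$, which is the first estimate of the theorem; the equivalent estimate $|1+vY/(2(n-k)Y_v)|<\varepsilon$ follows from the identity $G = 2vY/Y_v$ (after adjusting constants).

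\textbf{Main obstacle.}  The cleanness of \eqref{quadratic concavity ode inequality} is what makes the above argument succeed, but its existence is a genuine consequence of Theorem~\ref{prop-concavity_restated}, which is itself the true difficulty.  For $n-k\ge 2$, as pointed out after \eqref{difficulty}, the simpler tensor $\bar A = \nabla^2 Q-\gamma g$ used in \cite{CDDHS} no longer produces a sign-favorable reaction term in the tensor maximum principle; the novel gradient corrections in the new tensor \eqref{eq-defAijrest} are precisely what make both (i) the reaction term cancel in the proof of the tensor estimate, and (ii) the spherical $|\nabla_\omega V|^2$-contributions telescope in the radial computation above. A secondary technical point is the bootstrap needed to upgrade the crude cylindrical derivative bound of Corollary~\ref{lemma-cylindrical} to the sharper $v_y^2\lesssim (n-k)^2/(v^2|\tau|)$ used to integrate the ODI; once this is arranged, the remainder is a one-dimensional Gronwall-type comparison.
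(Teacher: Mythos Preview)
Your Steps 1 and 2 are correct and essentially identical to the paper's argument: plugging the unit radial vector into the tensor inequality of Theorem~\ref{prop-concavity_restated} does collapse cleanly to $(V^2)_{rr}\le\gamma(1+V_r^2)$, i.e.\ $(v^2)_{yy}\le(1+v_y^2)v^{-3}|\tau|^{-3/2}$, and the boundary values at $v=2\theta$ and $v=L/\sqrt{|\tau|}$ are exactly those the paper records.

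Step 3, however, has a genuine gap. To bound $\int_{y_1}^{y_2}(1+v_y^2)v^{-3}|\tau|^{-3/2}\,dy$ you change variables $dy=|Y_v|\,dv$ and then need an \emph{upper} bound on $|Y_v|=|v_y|^{-1}$, i.e.\ a \emph{lower} bound $|v_y|\gtrsim 1/(v\sqrt{|\tau|})$. But that lower bound is precisely the content of the theorem you are proving; none of Corollary~\ref{lemma-cylindrical}, convexity, or the endpoint values gives it a priori. Convexity only yields $|v_y(y)|\ge|v_y(y_1)|\sim c/(\theta\sqrt{|\tau|})$, and plugging this into your integral produces an error that, after multiplying by $y\sim\sqrt{2|\tau|}$, \emph{grows} with $|\tau|$. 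So the ``bootstrap once if necessary'' does not close, and no finite iteration fixes it.

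The paper avoids this circularity by a small but crucial change of viewpoint: instead of integrating $(G/y)_y=(v^2)_{yy}$ in $y$, it multiplies the scalar ODI by $v$ to get $\tfrac{d}{dv}(v^2v_y^2)=2v(vv_y)_y\le C|\tau|^{-3/2}v^{-2}$ and integrates this \emph{directly in $v$}. The right-hand side $v^{-2}$ no longer involves $v_y$, so the integral is explicit: $\int_{L/\sqrt{|\tau|}}^{2\theta}v^{-2}\,dv\le \sqrt{|\tau|}/L$, giving $(vv_y)^2$ trapped between its two endpoint values up to $O(L^{-1}|\tau|^{-1})$. Multiplying by $y^2\sim 2|\tau|$ then yields $|G+4(n-k)|<\varepsilon$. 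Since you are already willing to pass to $v$-coordinates, replacing your integration of $G/y$ by this integration of $(vv_y)^2$ repairs the argument with essentially no extra work.
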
 
 
\begin{proof}
To begin with, given any $\delta>0$, using Theorem \ref{strong_uniform0} (uniform sharp asymptotics) and convexity, we see that for $\tau\leq \tau_0$ in the region $\{v\leq 2\theta\}$, provided $\theta=\theta(\delta)$ is small enough, we have
\be\label{cor_bas1}
2|\tau|(1-\delta)\leq y^2 \leq 2|\tau|(1+\delta)
\ee
for any $\mathcal{M}$ that is $\kappa=\kappa(\delta)$-quadratic from time $\tau_0\leq \tau_\ast(\delta)$. Moreover, Theorem \ref{strong_uniform0} (uniform sharp asymptotics) and convexity also yield
\begin{equation}\label{cor_bas2}
(vv_y)^2|_{v=2\theta} \geq \frac{2(n-k)}{|\tau|}(1-\delta), \, 
(vv_y)^2|_{v=  |\tau|^{-\frac12} L}\leq \frac{2(n-k)}{|\tau|}(1+CL^{-1}),
\end{equation}
for $L$ large enough, possibly after decreasing $\kappa$ and ${\tau}_{\ast}$ (see e.g. \cite[Proof of Lemma 5.7]{ADS2} for a similar computation, with more details).

Now, applying Theorem \ref{prop-concavity} (quadratic almost concavity) in direction of the radial vector $X=\partial_y$ and writing in terms of rescaled solution $v$, we get
\be\label{quadratic concavity ode inequality}
2\eta^{-2} (vv_{yy}+v_y^2)\leq |\tau|^{-3/2}v^{-3}\eta^{-2} (1+v_y^2),
\ee
where $\eta^2 =1+|Dv|^2$. 
Together with Corollary \ref{lemma-cylindrical} (cylindrical estimate) this implies
\begin{equation}
(v v_y)_y \le  20|\tau|^{-3/2}v^{-3}
\end{equation}
for all $\tau \le \tau_0$ in the region $\{v\geq L/\sqrt{|\tau|}\}$, provided $L$ is sufficiently large, $\kappa$ is sufficiently small, and $\tau_\ast$ is sufficiently negative.
For any fixed $\omega\in S^{k-1}$ and $\tau$, considering $v_y$ as a function of $v$, we can rewrite our inequality as
   \be   \label{eq-diff-ineq} \frac{d}{dv} (v^2 v_y^2)  \le  20  |\tau|^{-3/2}v^{-2}\, .
   \ee 
Integrating this from $v$ to $2\theta$ and from $L/\sqrt{\tau}$ to $v$ yields
\be
(vv_y)^2|_{v=2\theta} -20  L^{-1} |\tau|^{-1} \leq v^2 v_y^2 \leq (vv_y)^2|_{v=L/\sqrt{|\tau|}} + 20  L^{-1} |\tau|^{-1}.
\ee
Multiplying this by $y^2$, and then using \eqref{cor_bas1} and \eqref{cor_bas2}, the first assertion follows. The second assertion follows from inverse function theorem and the first assertion.
\end{proof}
\bigskip

\section{Spectral uniqueness theorem and reflection symmetry}\label{spectral_uniqueness}
In this section, we prove Theorem \ref{thm:uniqueness_eccentricity_intro} (spectral uniqueness).
Let $\mathcal{M}^{1}=\{M^1_t\}$ and $\mathcal{M}^{2}=\{M^2_t\}$ be two  $k$-ovals in $\mathbb{R}^{n+1}$ that are $\kappa$-quadratic at  time $\tau_{0}$, where $\tau_0\leq \tau_{*}$, and  their truncated renormalized profile functions $v^{1}_{\cC}$ and $v^{2}_{\cC}$ satisfy the spectral conditions
\begin{equation} \label{cent_0}
      \mathfrak{p}_{0}v^{1}_{\cC}(\tau_{0})=\mathfrak{p}_{0}v^{2}_{\cC}(\tau_{0}),
\end{equation}
\begin{equation}\label{p_plus_van}
     \mathfrak{p}_{+}v^{1}_{\cC}(\tau_{0})=\mathfrak{p}_{+}v^{2}_{\cC}(\tau_{0})
\end{equation}
Here, denoting by $v_i$ the profile function of the renormalized flow $e^{\frac{\tau}{2}} M^i_{ - e^{-\tau}}$, the truncated renormalized profile function is defined by
\be
v_\cC^i=\chi_\cC(v_i)v_i,
\ee
where $\chi_\mathcal{C} : [0, \infty) \to [0, 1]$ is a smooth cut-off function satisfying
\be \label{eq-cutoffC}
  (i)\,\, \chi_\mathcal{C} \equiv 0 \mbox{ on } \big[0,\tfrac58 \theta\big]   \qquad \mbox{and} \qquad  (ii) \,\, \chi_\mathcal{C} \equiv 1 \mbox{ on } \big[\tfrac78\theta, \infty\big). 
\ee
We also recall that the evolution of $v_\cC$ is governed by the Ornstein-Uhlenbeck operator, which in Euclidean coordinates is given by the formula in \eqref{OU_operator}
\be\label{2dOU}
\mathcal{L}=\Delta_{\mathbb{R}^{k}}-\tfrac{1}{2}\mathbf{y}\cdot \nabla +1,
\ee
and which is a self-adjoint operator on the Gaussian $L^2$-space
\begin{equation}
\mathcal{H}=L^2\big(\mathbb{R}^k,e^{-|{\bf y}|^2/4} \,  d{\bf y}\big)= \mathcal{H}_+\oplus \mathcal{H}_0\oplus \mathcal{H}_-.
\end{equation}
Here, the unstable and neutral eigenspace are explicitly given by
\begin{equation}
\mathcal{H}_+=\textrm{span}\big\{1, y_1, \dots, y_k\big\}, 
\mathcal{H}_0=\textrm{span}\big\{y_1^2-2, \dots, y_k^2-2, y_iy_j: 1\leq i<j\leq k\big\},
\end{equation}
and as before we denote the orthogonal projections by $\mathfrak{p}_{\pm}$ and $\mathfrak{p}_0$. 
As before, given any renormalized profile function $v$ (e.g. $v=v_1$) we consider the associated cylindrical region and tip region
\begin{equation}
\label{eq-cyl-region_rest}
\cC= \big\{ v \ge \theta/2  \big\}\qquad\mathrm{ and }\qquad \mathcal{T}= \big\{v \le 2   \theta \big\},
\end{equation}
where the latter can be subdivided into the collar region and soliton region:
\begin{equation}
\collar = \bigl\{  L/\sqrt{|\tau|} \le v \le 2  \theta \bigr\} \qquad\mathrm{ and }\qquad 
\mathcal{S} = \bigl\{v \le L/\sqrt{|\tau|} \bigr\}.
\end{equation}
Note that $\chi_\cC$ indeed localizes in the cylindrical region, more precisely\footnote{The wiggle room between $\tfrac12\theta$ and $\tfrac58 \theta$ will be used for estimating $v_1\chi_\cC(v_1)-v_2\chi_\cC(v_2)$.}
\be\label{eq_wiggle_room}
\textrm{spt}(\chi_\cC)\subseteq \{ v\geq \tfrac58\theta\} \subset\cC.
\ee
To localize in the tip region, we fix a smooth function $\chi_\mathcal{T} : [0, \infty) \to [0, 1]$ satisfying
\be\label{chi_cut-off}
  (i)\,\, \chi_\mathcal{T} \equiv 1 \mbox{ on } [0,\theta]   \qquad \mbox{and} \qquad  (ii) \,\, \chi_\mathcal{T} \equiv 0 \mbox{ on } [2\theta, \infty). 
\ee
In the tip region we work with the inverse profile function $Y$ defined by
\begin{equation}\label{def_inv_sec4}
Y(v(y,\omega,\tau),\omega,\tau)=y,
\end{equation}
and its zoomed in version $Z$ defined by
\begin{equation}\label{zoomed_in_5}
Z(\rho,\omega,\tau)= |\tau|^{1/2}\left(Y(|\tau|^{-1/2}\rho, \omega,\tau)-Y(0,\omega,\tau)\right).
\end{equation}

Throughout this section we use the convention that $\theta>0$ is a fixed small constant and $0<L<\infty$ is a fixed large constant. During the proofs one is allowed to decrease $\theta$ and increase $L$ at finitely many instances, as needed.

\subsection{Energy estimate in the cylindrical region}\label{energy-cylindrical}
In this subsection, we prove an energy estimate in the cylindrical region, by generalizing \cite[Section 6]{ADS2} and \cite[Section 5.4]{CHH} to the bubble-sheet setting. We first review the relevant norms and spaces for the energy estimate.
In addition to the Gaussian $L^2$-space $\mathcal{H}$, which is equipped with the norm
\begin{equation}\label{eqn-normp0}
  \|f\|_{\mathcal{H}} = \left(\int_{\R^2}  f(\bry)^2 e^{-|\bry|^2/4}\, d\bry\right)^{1/2},
\end{equation}
we also need the Gaussian $H^1$-space $\hD:= \{f\in\mathcal{H}  : Df  \in \mathcal{H}\}$ with the norm
\be
  \|f\|_\hD = \left(\int_{\R^2} \big( f(\bry)^2 +|Df(\bry)|^2 \big) \, e^{-|\bry|^2/4}d\bry\right)^{1/2},
\ee
and its dual space $\hD^\ast$ equipped with the dual norm
\be
\|f\|_{\hD^\ast}=\sup_{ \|g\|_\hD\leq 1 }\langle f,g\rangle,
\ee
where $\langle \,\,\, , \,\, \rangle: \hD^\ast \times \hD:\to\mathbb{R}$ denotes the canonical pairing.

For time-dependent functions the above induces the parabolic norms
\be
  \|f\|_{\mathcal{X} ,\infty} = \sup_{\tau \le \tau_0} \left(\int_{\tau -
    1}^{\tau} \|f(\cdot,\sigma)\|_{\mathcal{X} }^2\, d\sigma\right)^{\frac12}, 
\ee
where $\mathcal{X}=\mathcal{H},\hD$ or $\hD^\ast$.

Let us also recall a few basic facts that will be used frequently in the following proof. To begin with, by the weighted Poincar\'e inequality \eqref{easy_Poincare} multiplication by $1+|\bry|$ is a bounded operator from $\hD$ to $\mathcal{H}$, and hence by duality from $\mathcal{H}$ to $\hD^\ast$ as well, namely
  \be\label{dhstar norms}
    \|(1+|\bry|)f\|_\mathcal{H}  \leq C \|f\|_\hD\quad\textrm{and}\quad
    \|(1+|\bry|)f\|_{\hD^\ast}  \leq C \|f\|_\mathcal{H}.
  \ee
Consequently, $\pd_{y_i}$ and $\pd_{y_i}^*= -\pd_{y_i}+ \frac{1}{2} y_i$ are bounded operators from  $\hD$ to $\mathcal{H}$, and hence by duality from $\mathcal{H} $ to $\hD^*$ as well. In particular, this implies that the Ornstein-Uhlenbeck operator $\mathcal{L}:\hD\to \hD^\ast$ is well-defined.
Finally, for estimating the $\hD^\ast$-norm it is useful to observe that if $g\in \hD$ and $h\in W^{1,\infty}$, then by the product rule we have $\| hg\|_{\hD}\leq C \| h \|_{W^{1,\infty}}\| g\|_{\hD}$, hence by duality
\begin{equation}\label{eq_product_rule_norm}
\| h f \|_{\hD^\ast} \leq C \| h\|_{W^{1,\infty}} \| f \|_{\hD^\ast}\, .
\end{equation}

Now, given $\mathcal{M}^{1}$ and $\mathcal{M}^2$ we consider  the truncated difference of two renormalized profile functions
\be
w_\cC=v_1\chi_\cC(v_1)-v_2\chi_\cC(v_2),
\ee
whose evolution equation is given by  Proposition \ref{proposition-evolution-wC} (evolution equation of $w_{\cC}$). Then we have the following energy estimate in cylindrical region.
\begin{proposition}[energy estimate in cylindrical region]\label{prop-cyl-est} For every $\eps>0$, there exists $\kappa>0$ and $\tau_*>-\infty$ with the following significance. If $\mathcal{M}^1$ and $\mathcal{M}^2$  are $\kappa$-quadratic at time $\tau_0 \le \tau_*$ and satisfies \eqref{p_plus_van}, then
\begin{equation}
\label{eqn-cylindrical1}
\Vert w_\cC - \mathfrak{p}_0w_\cC \Vert_{\hD ,\infty } \le \eps  \Big( \Vert  w_\cC\Vert_{\hD ,\infty} + \Vert w 1_{\{ \theta/2\leq v_1\leq \theta\}} \Vert_{\mathcal{H},\infty}  \Big).
\end{equation}
\end{proposition}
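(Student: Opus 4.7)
The strategy is a Merle--Zaag type spectral energy estimate on the parabolic difference $w_\cC$, exploiting the vanishing of the positive mode at $\tau_0$ furnished by \eqref{p_plus_van}.

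First, I derive an evolution of the form $\partial_\tau w_\cC = \mathcal{L}\, w_\cC + \mathcal{E}$ by differencing the two copies of \eqref{equation_u}. The error splits as $\mathcal{E} = \mathcal{E}_{\mathrm{nl}} + \mathcal{E}_{\mathrm{cut}}$: the nonlinear part comes from the quadratic terms $u_i u_j/(\sqrt{2(n-k)}+u)$ and $u_{y_i} u_{y_j} u_{y_i y_j}/(1+|Du|^2)$, while the cutoff part is supported in the transition region $\{5\theta/8 \le v_\ell \le 7\theta/8\}$. On $\mathrm{supp}\,\chi_\cC(v_\ell)$, Theorem \ref{strong_uniform0} and Corollary \ref{lemma-cylindrical} give $|v_\ell-\sqrt{2(n-k)}| + |Dv_\ell| = o(1)$ uniformly as $\kappa\to 0$ and $\tau_0\to-\infty$. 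Distributing derivatives via the product rule in $\hD^\ast$ (cf.\ \eqref{eq_product_rule_norm}) yields $\|\mathcal{E}_{\mathrm{nl}}\|_{\hD^\ast} \le o(1)\,\|w_\cC\|_\hD$. For $\mathcal{E}_{\mathrm{cut}}$, the pointwise bound $|w_\cC| \le C |w|$ on the transition region together with the inclusion $\{5\theta/8 \le v_\ell \le 7\theta/8\} \subset \{\theta/2 \le v_1 \le \theta\}$ (valid by uniform $C^0$-closeness of $v_1,v_2$ to $\sqrt{2(n-k)}$ from Theorem \ref{strong_uniform0}) gives $\|\mathcal{E}_{\mathrm{cut}}\|_{\hD^\ast} \le C\,\|w\,\mathbf{1}_{\{\theta/2 \le v_1 \le \theta\}}\|_\mathcal{H}$.

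Second, setting $w_\flat := \mathfrak{p}_\flat w_\cC$ and $X_\flat(\tau) := \|w_\flat(\tau)\|_\mathcal{H}^2$ for $\flat \in \{+,0,-\}$, testing the evolution equation against each $w_\flat$ and using self-adjointness of $\mathcal{L}$ together with the spectral gap of $1/2$ outside $\mathcal{H}_0$ (eigenvalues $\ge 1/2$ on $\mathcal{H}_+$, eigenvalues $\le -1/2$ on $\mathcal{H}_-$) yields
\[
\tfrac{d}{d\tau} X_+ \ge X_+ - R,\qquad \bigl|\tfrac{d}{d\tau} X_0\bigr| \le R,\qquad \tfrac{d}{d\tau} X_- \le -X_- + R,
\]
with $R \le \eps\,(X_++X_0+X_-) + C\,\|w\,\mathbf{1}_{\{\theta/2\le v_1\le\theta\}}\|_\mathcal{H}^2$. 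The matching condition \eqref{p_plus_van} forces $X_+(\tau_0) = 0$; integrating the first inequality backward from $\tau_0$ and the third via backward Duhamel then estimates $X_+ + X_-$ pointwise by an integral of $R$ over $(-\infty,\tau_0]$. Averaging in time over $[\tau-1,\tau]$, taking $\sup_{\tau \le \tau_0}$, upgrading from $\mathcal{H}$ to $\hD$ via the coercivity of $-\mathcal{L}$ on $\mathcal{H}_+\oplus\mathcal{H}_-$, and absorbing the resulting $\eps\,\|w_\cC-\mathfrak{p}_0 w_\cC\|_{\hD,\infty}$ into the left-hand side then delivers \eqref{eqn-cylindrical1}.

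The main obstacle is twofold. First, the top-order (second-derivative) contributions in $\mathcal{E}_{\mathrm{nl}}$ must be placed in $\hD^\ast$ rather than $\mathcal{H}$: this forces one derivative to be moved off via integration by parts against the test function, and is precisely why the right-hand side of \eqref{eqn-cylindrical1} contains $\|w_\cC\|_\hD$, and why the $W^{1,\infty}$-smallness (not merely $L^\infty$-smallness) of the coefficients from Corollary \ref{lemma-cylindrical} is essential. Second, because the cut-off $\chi_\cC(v_i)$ depends on the solution, the supports of $\chi_\cC(v_1)$ and $\chi_\cC(v_2)$ differ, and identifying the cut-off error cleanly with the single indicator $\mathbf{1}_{\{\theta/2\le v_1\le\theta\}}$ requires the uniform $C^0$-closeness of $v_1$ and $v_2$ in the cylindrical region in order to compare their level sets and to convert differences of $\chi_\cC(v_i)$ into pointwise bounds on the un-truncated difference $w$ itself.
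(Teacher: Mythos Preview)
Your overall strategy is correct and matches the paper's: the paper invokes \cite[Lemma 6.7]{ADS2} as a black box to obtain
\[
\|w_\cC - \mathfrak{p}_0 w_\cC\|_{\hD,\infty} \le C\,\|(\partial_\tau - \mathcal{L})w_\cC\|_{\hD^*,\infty},
\]
whose proof is precisely the Merle--Zaag ODE argument you sketch (using $X_+(\tau_0)=0$ from \eqref{p_plus_van}), and then estimates the right-hand side term by term in $\hD^*$ via the evolution equation for $w_\cC$ from Proposition~\ref{proposition-evolution-wC}.

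There is, however, a genuine gap in your treatment of the cut-off error. You claim $\|\mathcal{E}_{\mathrm{cut}}\|_{\hD^*} \le C\,\|w\,1_{\{\theta/2\le v_1\le\theta\}}\|_{\mathcal{H}}$ with an absolute constant $C$, and carry that $C$ through the ODE system as $R \le \eps(X_++X_0+X_-) + C\|w\,1\|_{\mathcal{H}}^2$. But the Duhamel/ODE integration does not improve this $C$ to $\eps$, so your final estimate would read $\|w_\cC - \mathfrak{p}_0 w_\cC\|_{\hD,\infty} \le \eps\|w_\cC\|_{\hD,\infty} + C\|w\,1_{\{\theta/2\le v_1\le\theta\}}\|_{\mathcal{H},\infty}$, which is not the stated proposition. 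The missing mechanism is twofold. First, most cut-off terms (those in $a,c,d$ in the paper's notation) carry factors of $Dv_i$, whose $W^{1,\infty}$-smallness from Corollary~\ref{lemma-cylindrical} already gives $\eps$. Second, and more subtle, the zeroth-order cut-off term $b\,w$ with $b = \tfrac{2(n-k)-v_1v_2}{2v_1v_2}\chi_\cC'(v_1)v_2$ has a coefficient that is \emph{not} pointwise small; here one must exploit that the transition region sits at $|\mathbf{y}| \gtrsim |\tau|^{1/2}$ and use the dual embedding \eqref{dhstar norms} in the form $\|bw\|_{\hD^*} \le C\|(1+|\mathbf{y}|)^{-1} bw\|_{\mathcal{H}} \le C|\tau|^{-1/2}\|w\,1_{D_\tau}\|_{\mathcal{H}}$. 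This spatial-decay-via-duality step is essential to get $\eps$ rather than $C$ in front of the transition term, and it is absent from your sketch.
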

\begin{proof}
 The proof is  the same as the energy estimates in \cite[Proposition 4.4]{CDDHS}, the only difference is that our current evolution equation of $w_{\cC}$ derived in Proposition \ref{proposition-evolution-wC} has slight difference, so the coefficient $b=\frac{2(n-k)-v_{1}v_{2}}{2v_{1}v_{2}}\chi'_{\cC}(v_1)v_2$ in \eqref{eq-diff-E} is different from the one in \cite[proof of Proposition 4.4]{CDDHS}. For the readers' convenience, we include the proof here.

Due to \eqref{p_plus_van} and \cite[Lemma 6.7]{ADS2}, we have the general estimate
\begin{equation}
\label{eq-gen-energy}
\|w_\cC - \mathfrak{p}_0 w_\cC \, \|_{\hD,\infty} \le C\|(\partial_{\tau} - \mathcal{L}) w_\cC\|_{\hD^*,\infty}.
\end{equation}
Then we write Proposition \ref{proposition-evolution-wC} (evolution of $w_\cC$)   in the form
\begin{equation}
\label{eq-ev-wC}
(\partial_\tau -\mathcal{L})\, w_\cC = \cE[w_\cC] + \bar{\cE}[w,\chi_\cC(v_1)]+J+K,
\end{equation}
where
\be\label{eqn-IJK}
\begin{split}
&J \, =  (v_{2,\tau} - v_{2,y_iy_i}  +\tfrac{1}{2}  y_i v_{2,y_i}  -\cE[v_2] ) w^{\chi_\cC} -  2 D v_2 \cdot Dw^{\chi_\cC},\\
&K\, = \cE[v_2] w^{\chi_\cC} - \cE[v_2w^{\chi_\cC}] + v_2\, (\partial_\tau -\mathcal{L})w^{\chi_\cC}.\end{split}
\ee
Following the same arguments as in \cite[Proof of Lemma 5.13, Lemma 5.14]{CHH_translator}, which in turn is similar to \cite[Proof of Lemma 6.8 and Lemma 6.9]{ADS2}, but now using the evolution of $w$, $(\partial_\tau -\mathcal{L})w=\mathcal{E}[w]$, the derivative estimates from Lemma \ref{lemma-cylindrical} (cylindrical estimate) and the uniform sharp asymptotics from Theorem \ref{strong_uniform0} (uniform sharp asymptotics), we have for any $\eps > 0$, there exist $\kappa > 0$ and $\tau_* > -\infty$, such that for all $\tau\leq\tau_0\leq \tau_*$ 
\begin{align}\label{eq-I1}
\quad&\big\| \cE[w_\cC(\tau)]   \big\|_{\hD^\ast}  +\big\| \bar{\cE}[w(\tau),\chi_\cC(v_1(\tau))]  \big\|_{\hD^\ast}+ \| J(\tau) \|_{\hD^*} \\
\leq&  \eps  \|w_\cC  (\tau) \|_{\hD} +\varepsilon \| w(\tau)  \, 1_{D_\tau } \|_{\mathcal{H}},
\end{align}
and
\be
1_{D_\tau} \leq 1_{ \{ \theta/2 \leq v_1(\cdot,\tau)\leq \theta\} },
\ee
where
\be
D_\tau:=\Big\{ y: \tfrac58 \theta \leq v_1(y,\tau)\leq \tfrac78 \theta\,\,\,\textrm{or}\,\,\,\tfrac58 \theta \leq v_2(y,\tau)\leq \tfrac78 \theta\Big\}.
\ee

Then we split $K(\tau)$ into following two lines of terms.
\begin{align}
K(\tau )&= v_2(\partial_\tau-\mathcal{L})w^{\chi_\cC}- \chi_\cC'(v_1)v_2 \cE[w]\\
&+\cE[v_2] w^{\chi_\cC} - \cE[v_2 w^{\chi_\cC}]+ \chi_\cC'(v_1)v_2 \cE[w] 
\end{align}
By a similar computation and estimates as in \cite[(4.54)-(4.55)]{CDDHS} we see that
\begin{align}\label{second line K}
\| v_2(\partial_\tau-\mathcal{L})w^{\chi_\cC}- \chi_\cC'(v_1)v_2 \cE[w]\|_{\hD^\ast} \leq \eps \left\| w 1_{D_\tau} \right\|_{\cH},
\end{align}
For the estimates of first line, we have:
\begin{claim}\label{abcd estimates}
    \be\label{abcd estimates eq}
\left\| \cE[v_2] w^{\chi_\cC} - \cE[v_2 w^{\chi_\cC}]+\chi_\cC'(v_1)v_2  \cE[w] \right\|_{\hD^\ast}\leq \eps \left\| w 1_{D_\tau} \right\|_{\cH}.
\ee
\end{claim}
Under assuming Claim \ref{abcd estimates} hold and combining the above estimates, we conclude the proof of the proposition.

\begin{proof}[Proof of Claim \ref{abcd estimates}]
By a similar computation as in \cite[(4.43)-(4.47)]{CDDHS} we have 
\begin{equation}
\label{eq-diff-E}
\quad\cE[v_2] w^{\chi_\cC}- \cE[v_2 w^{\chi_\cC}]+ \chi_\cC'(v_1)v_2 \cE[w]     = a\!\cdot\!Dw + b\, w + c\, w^{\chi_\cC'} + d \, w^{\chi_\cC''},
\end{equation}
where
\be\begin{split}
a& = \frac{v_2\chi_\cC''(v_1) Dv_1\!\cdot\! D(v_1+v_2)  + 2\chi_\cC(v_1) Dv_1\!\cdot\! Dv_2 }{1+|Dv_1|^2}\, Dv_1,  \\
b &= \frac{2(n-k)-v_1v_2}{2v_1v_2}\, \chi_\cC'(v_1) v_2,\\
c &= \frac{v_2D^2v_2\!:\!  ( Dv_1\otimes Dv_1+D(v_1+v_2)\otimes Dv_2)  +2(Dv_1\!\cdot\! D v_2)^2}{1+|Dv_1|^2} \\
& - \frac{v_2  Dv_2 \!\cdot\! D(v_1+v_2) \,  D^2v_2\!:\! Dv_2\otimes Dv_2  }{(1+|Dv_1|^2)(1+|Dv_2|^2)},\\
d &= \frac{v_2 (Dv_1\!\cdot\! D v_2)^2}{1+|Dv_1|^2}.
\end{split}
\ee
Now, using the basic facts reviewed in \eqref{dhstar norms}, \eqref{eq_product_rule_norm} above and  Theorem \ref{strong_uniform0} (uniform sharp asymptotics) and and using the additional $|{\bf y}|\geq |\tau|^{1/2}$ on the support of $1_{D_\tau}$ for term $b$, we can estimate
\be
\| a\cdot Dw \|_{\hD^\ast}+ \| c w^{\chi_\cC'} \|_{\hD^\ast}+\| d w^{\chi_\cC''}\|_{\hD^\ast}   \leq C (\| a\|_{W^{1,\infty}}+\| c\|_{W^{1,\infty}}+\| d\|_{W^{1,\infty}}) \| w1_{D_\tau}\|_{\mathcal{H}},
\ee
\be
\| b w \|_{\hD^\ast}\leq C \left\| \frac{1}{1+|{\bf y}|} bw  \right\|_{\mathcal{H}} \leq \frac{C}{|\tau|^{1/2}} \| w 1_{D_\tau} \|_{\cH}.
\ee
Furthermore, thanks to Corollary \ref{lemma-cylindrical} (cylindrical estimate) we can make $\| a\|_{W^{1,\infty}}+\| c\|_{W^{1,\infty}}+\| d\|_{W^{1,\infty}}$ arbitrarily small by choosing $\kappa>0$ small enough and $\tau_\ast>-\infty$ negative enough. Combining the above we finish the proof of claim.
\end{proof}


\end{proof}

\subsection{Energy estimate in the  tip region}\label{sec-tip}
In this subsection, we prove an energy estimate in the tip region, by generalizing some arguments from \cite[Section 7]{ADS2} and \cite[Section 5.5]{CHH} to the cylinder setting. We first set up the function spaces for energy estimates. Denote by $Y_B$ the rescaled profile function of the $\bowl$, namely
\begin{equation}
Y_B(v,\tau):=\frac{1}{|\tau|^{1/2}}Z_B(|\tau|^{1/2}v), 
\end{equation}
where $Z_B=Z_B(\rho)$ the profile function of  $\bowl $ with speed $1/\sqrt{2}$, namely the unique solution of
\be
\frac{Z_{B,\rho\rho}}{1+Z_{B,\rho}^2}+\frac{n-k}{\rho} Z_{B,\rho} + \frac{1}{\sqrt{2}}=0,\qquad Z_B(0)=Z_{B,\rho}(0)=0.\footnote{Note that in our sign convention $Z_B\leq 0$, which is consistent with \eqref{zoomed_in_5}.}
\ee

In the tip region, for a given inverse profile function $Y(v,\omega,\tau)$, we consider the weight function 
\begin{multline}\label{eqn-weight1}
 \mu(v, \omega , \tau) = - \frac{1}4 Y^2(\theta, \omega , \tau)\\
  + \int_v^\theta\left[  \zeta (v') \, \left(\frac{Y^2({v'}, \omega , \tau)}{4}\right)_{v'} - (1-\zeta(v'))(n-k) \, \frac{1+Y_{B,{v'}}^2(v',\tau)}{v'}\right] \, dv'\, ,
\end{multline}
where $\zeta: \mathbb{R} \to [0,1]$ is a smooth monotone function satisfying
\be\label{zeta_cutoff}
\zeta(v) = 0\,\, \textrm{for} \,\, v \leq \tfrac18 \theta\quad\textrm{ and }\quad \zeta(v) = 1\,\,\textrm{for} \,\, v \geq \tfrac14 \theta. 
\ee
In the tip region we work on energy estimates in function spaces equipped with the following norm
\begin{equation}\label{def_norm_tip_r}
\big\| F\big\|_{2,\infty}= \sup_{\tau\leq \tau_0} \frac{1}{|\tau|^{\frac14}} \left( \int_{\tau-1}^\tau \int_0^{2\theta} \int_{{S}^{k-1}} F(v,\omega ,\sigma)^2 e^{\mu(v,\omega,\sigma)} \, d\omega  dv d\sigma \right)^{\frac12}.
\end{equation}
Here, the integration on the sphere ${S}^{k-1}$ is taken with respect to the volume form induced by the standard round metric. Moreover, in what follows, the norm $|\cdot |$ and inner product are taken with respect to the round cylindrical metric $ dv^2 + g_{{S}^{k-1}}$ on $(v,\omega)\in \mathbb{R}\times {S}^{k-1}$, and $\nabla$ denotes the corresponding Levi-Civita connection. Moreover, $\nabla_{{S}^{k-1}}G$ and $\nabla^2_{{S}^{k-1}}G$ denotes the first and second covariant derivative in the spherical part, respectively.

Then we collect some derivative estimates in tip region as discussed in \cite[Sec 4.2]{CDDHS}.

\begin{proposition}[tip derivatives]\label{lemma-mainY}
For every $\eta>0$, there exist  $\kappa > 0$ small enough, $\theta > 0$ small enough $L>0$ large enough and $\tau_* > -\infty$ negative enough,
such that if $\mathcal{M}$ is $\kappa$-quadratic at time $\tau_0 \le \tau_*$ and satisfies \eqref{p_plus_van}, then
\be
\frac{1}{4}|\tau|^{1/2}\leq \left|\frac{\nabla_v Y}{v}\right|  \leq |\tau|^{1/2},\qquad |\nabla_{S^{k-1}} Y|\leq \eta |\tau|^{1/2}\quad \textrm{and} \quad |Y_\tau|\leq \eta \left|\frac{\nabla_v Y}{v}\right|
\ee
hold in the whole tip region $v \le 2\theta$ for $\tau\le \tau_0$,
\be\label{eqn-cylf}
\left|\frac{\nabla^2_{vv} Y}{1+|\nabla_v Y|^2}\right| + \left|\frac{\nabla_v Y  \nabla_v\nabla_{S^{k-1}}Y }{Y(1+|\nabla_v Y|^2)}\right| + \left|\frac{\nabla^2_{S^{k-1}} Y}{Y^2}\right|  \leq \eta \,  \left| \frac{\nabla_v Y}{v} \right|
\ee
hold in the collar region $L/|\tau|^{1/2}\leq v\leq 2\theta$  for $\tau\le \tau_0$,
\be\label{eqn-uphi}
\left|\nabla^2_{vv}Y\right|  \le C|\tau|^{1/2}, \quad 
\left| \nabla_v\nabla_{S^{k-1}}Y\right|  \leq \eta\, |\tau|, \quad  |\nabla^2_{S^{k-1}}Y|  \leq  \eta |\tau|^{3/2}
\ee
hold in the soliton region $v\leq L/|\tau|^{1/2}$ for $\tau\le \tau_0$, 
\begin{equation}\label{eq-soliton-improvement}
\left|\frac{1+Y_{B,v}^2}{1+Y_v^2} - 1 \right| \leq \eta \min\left\{ 1,\frac{|\tau|^{1/2} v}{L}\right\}
\end{equation}
hold in the whole tip region $v \le 2\theta$ for $\tau\le \tau_0$, and
\be\label{eqn-imp}
\Big |\frac{v \mu_v}{1+Y_{v}^2} - (n-k)\Big | \,  \leq \eta,\qquad
 |\mu_\vp  | \leq \eta \, |\tau |,\qquad
|\mu_\tau | \leq \eta \,  |\tau|\, 
\ee
hold in the whole tip region $v \le 2\theta$ for $\tau\le \tau_0$.
\begin{proof}
    The proof follows by the same proofs as in \cite[Prop 4.5, Cor 4.6, Cor 4.7, Prop 4.8]{CDDHS} by using tip region asymptotics and Theorem \ref{gaussian collar section 3} (almost Gaussian collar estimate).
\end{proof}

\end{proposition}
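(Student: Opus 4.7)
The plan is to split the tip region $\{v \le 2\theta\}$ into the collar region $\collar = \{L/\sqrt{|\tau|} \le v \le 2\theta\}$ and the soliton region $\soliton = \{v \le L/\sqrt{|\tau|}\}$, establish each inequality on whichever subregion is natural for it, and then verify compatibility on the overlap $\{v \sim L/\sqrt{|\tau|}\}$ after adjusting $L$, $\theta$, $\kappa$, and $\tau_*$. The two workhorses are Theorem~\ref{strong_uniform0} (uniform sharp asymptotics, tip part) on the soliton side and Theorem~\ref{prop-great} (almost Gaussian collar) on the collar side; Corollary~\ref{lemma-cylindrical} (cylindrical estimate) is used on the transition annulus.

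On the soliton region I would unfold the tip asymptotics of Theorem~\ref{strong_uniform0}: uniformly in $\omega\in S^{k-1}$, after the rescaling $\lambda(\tau)=\sqrt{|\tau|^{-1}\log|\tau|}$ the rescaled hypersurface is $\varepsilon$-close in $C^{\lfloor 1/\varepsilon\rfloor}$ to $N_t\times\mathbb{R}^{k-1}$ with $N_t$ the rotationally symmetric $\bowl$. Translating back, on $\{v\le L/\sqrt{|\tau|}\}$ one has $Y(v,\omega,\tau)=Y(0,\omega,\tau)+Y_B(v,\tau)+o(1)$ in $C^{\lfloor 1/\varepsilon\rfloor}$, from which \eqref{eqn-uphi} and the soliton-side version of \eqref{eq-soliton-improvement} are immediate (using smoothness and strict concavity of $Z_B$ with $Z_{B,\rho}(0)=0$ and $Z_{B,\rho\rho}(0)\ne 0$). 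Because the limit model is flat in the $\mathbb{R}^{k-1}$-direction, every spherical derivative is born from the $\varepsilon$-error, while carrying the scaling $|\tau|^{1/2}$ per derivative that comes from the radial coordinate having length $\sim\sqrt{|\tau|}$; this is how the bounds $|\nabla_{S^{k-1}}Y|\le \eta|\tau|^{1/2}$ and $|\nabla^2_{S^{k-1}}Y|\le \eta|\tau|^{3/2}$ arise on $\soliton$.

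On the collar region the crucial input is Theorem~\ref{prop-great}, rewritten as $\nabla_v Y = -vY/(2(n-k))(1+O(\varepsilon))$. Combined with $Y=(1+o(1))\sqrt{2|\tau|}\cdot\sqrt{1-v^2/(2(n-k))}$ from the intermediate/tip asymptotics of Theorem~\ref{strong_uniform0}, this yields the two-sided bound on $|\nabla_v Y/v|$ in \eqref{lemma-mainY} and forces $1+|\nabla_v Y|^2\asymp |\nabla_v Y|^2 \asymp v^2 |\tau|$ on $\{v\ge L/\sqrt{|\tau|}\}$ once $L$ is large. Differentiating the almost Gaussian relation in $v$ bounds $\nabla^2_{vv} Y$, and dividing by $1+|\nabla_v Y|^2$ yields the first term in \eqref{eqn-cylf}. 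The spherical and mixed terms in \eqref{eqn-cylf} come from $\omega$-differentiating the almost Gaussian relation and using the $\omega$-derivative estimate on the support radius $R(\omega,s)$ established in the proof of Proposition~\ref{strong_sharp_asymptotics}, bootstrapped via the evolution equation for $Y$ recorded in Subsection~\ref{challenge}. The bound $|Y_\tau|\le \eta|\nabla_v Y/v|$ follows on both $\collar$ and $\soliton$ by substituting the spatial-derivative bounds just obtained into that same evolution equation.

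Finally, the weight function estimates \eqref{eqn-imp} are obtained by direct differentiation of \eqref{eqn-weight1}: on $\{v\ge \tfrac14\theta\}$ the cutoff $\zeta$ equals $1$ and $v\mu_v = -\tfrac12 vYY_v \approx (n-k)(1+Y_v^2)$ by Theorem~\ref{prop-great} (since $-vY/(2Y_v)\approx n-k$ and $Y_v^2 \approx v^2 Y^2/(2(n-k))^2$ recovers the $1+Y_v^2$ factor), on $\{v\le \tfrac18\theta\}$ only the bowl branch contributes and the bound is by construction, and the $\zeta$-transition band is handled by continuity; the bounds on $\mu_\omega$ and $\mu_\tau$ follow by differentiating under the integral sign and invoking the smallness of $\nabla_{S^{k-1}}Y$ and $Y_\tau$. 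The main obstacle compared with the $2$-oval case in \cite{CDDHS} is the presence of genuinely higher-dimensional spherical derivatives $\nabla_{S^{k-1}}Y$ and their mixed variants $\nabla_v\nabla_{S^{k-1}}Y$ when $k\ge 3$; these cannot be absorbed by a one-dimensional ODE argument, and their control relies on the $\mathbb{R}^{k-1}$-triviality of the limit soliton on $\soliton$ together with an $\omega$-differentiated almost Gaussian collar identity on $\collar$, matched uniformly in $\omega$ across the overlap annulus $\{v\sim L/\sqrt{|\tau|}\}$.
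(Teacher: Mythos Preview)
Your overall strategy is correct and matches the paper's approach (which simply defers to \cite{CDDHS}): split the tip into soliton and collar, invoke the tip asymptotics of Theorem~\ref{strong_uniform0} on the soliton side and Theorem~\ref{prop-great} on the collar side, then differentiate the weight $\mu$ directly for \eqref{eqn-imp}.

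There is, however, a gap in your treatment of the collar second-derivative bounds \eqref{eqn-cylf}. You propose to obtain $Y_{vv}$ by ``differentiating the almost Gaussian relation in $v$'' and the spherical and mixed terms by ``$\omega$-differentiating the almost Gaussian relation''. But Theorem~\ref{prop-great} is only a pointwise inequality on a first-derivative quantity, $|y(v^2)_y+4(n-k)|<\varepsilon$; differentiating it yields no control on the derivative of the error term, and there is no $\omega$-differentiated version of the almost Gaussian collar available (its proof goes through the radial quadratic almost concavity estimate). The route taken in \cite{CDDHS}, which the paper follows verbatim, is instead to use Corollary~\ref{lemma-cylindrical} directly: it already bounds all mixed derivatives $|v^{m+\ell-1}y^{-m}\nabla^m_{S^{k-1}}\partial^\ell_y v|<\varepsilon$ in $\{v\ge L/\sqrt{|\tau|}\}$, and one transports these to $Y$ via the inverse-function relations $Y_v=1/v_y$, $Y_{vv}=-v_{yy}/v_y^3$, $Y_\omega=-v_\omega/v_y$, and their higher-order analogues. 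This also shows that the ``main obstacle for $k\ge 3$'' you describe in the last paragraph does not actually arise here: Corollary~\ref{lemma-cylindrical} controls arbitrarily many spherical derivatives uniformly in $\omega$, so the collar argument for \eqref{eqn-cylf} is identical to the $k=2$ case.
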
 
Moreover, it is worthwhile to note that in the proof of Proposition \ref{lemma-mainY} in the soliton region relies on a consequence of Theorem \ref{strong_uniform0} (uniform sharp asymptotics). Indeed, the zoomed in profile function $Z=Z(\rho,\omega,\tau)$, as defined in \eqref{zoomed_in_5}, satisfies that for every $\eps>0$, there exist $\kappa > 0$  and $\tau_* > -\infty$,
such that if $\mathcal{M}$ is $\kappa$-quadratic at time $\tau_0 \le \tau_*$, then
\be\label{der_sol1}
\max_{0\leq i+j\leq 10}\sup_{\omega \in {S}^{k-1}}\sup_{\tau\leq \tau_0} \sup_{\rho\leq \eps^{-1}}|\partial_\tau^i\partial_\rho^j(Z-Z_B)|\leq \eps. \ee 

A Poincar\'e inequality, adapted to our weighted Sobolev space in tip region, follows from the derivative estimates.

\begin{corollary}[weighted Poincar\'e inequality]
\label{prop-Poincare}
There exist  $C_0<\infty$, $\theta>0$, $\kappa > 0$ and $\tau_* > -\infty$ with the following significance.
If $\mathcal{M}$ is $\kappa$-quadratic at time $\tau_0 \le \tau_*$ and satisfies \eqref{p_plus_van},  then for $\tau \le \tau_0$ and $\omega\in {S}^{k-1}$ we have 
\begin{equation}
\label{eq-Poincare}
\int_0^{2\theta} F^2(v) \, e^{\mu(v,\omega ,\tau )}\, dv  \le \frac{C_0}{|\tau|}\, \int_0^{2\theta} \frac{F_v^2(v)}{1+Y_{v}^2(v,\omega ,\tau)}\, e^{\mu(v,\omega ,\tau)}\, dv
\end{equation}
for all smooth functions $F$ satisfying $F'(0)=0$ and $\textrm{spt}(F)\subset [0,2\theta)$.
\end{corollary}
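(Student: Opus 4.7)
The plan is to prove the inequality as a weighted Hardy-type estimate, relying on the derivative bounds for $\mu$ supplied by Proposition \ref{lemma-mainY}. Since $\mathrm{spt}(F) \subset [0, 2\theta)$, the fundamental theorem of calculus yields $F(v)^2 = -2\int_v^{2\theta} F\, F_{v'}\,dv'$. Multiplying by $e^{\mu(v)}$, integrating over $v \in [0,2\theta]$, and applying Fubini gives
\begin{equation*}
\int_0^{2\theta} F^2 \, e^{\mu} \, dv \;=\; -2 \int_0^{2\theta} F \, F_v \, M(v) \, dv, \qquad M(v) \,:=\, \int_0^v e^{\mu(v')} \, dv'.
\end{equation*}
By Cauchy--Schwarz followed by absorption of the $F^2$-term, the inequality \eqref{eq-Poincare} reduces to the pointwise bound
\begin{equation*}
M(v) \;\le\; C \, \frac{e^{\mu(v)}}{\sqrt{|\tau|\,(1 + Y_v^2(v,\omega,\tau))}} \qquad \text{for all } v \in (0,2\theta], \;\omega \in S^{k-1}.
\end{equation*}

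To establish this bound on $M$, the first ingredient is that Proposition \ref{lemma-mainY} gives the sharp lower bound $\mu_v \ge c_0 (1+Y_v^2)/v$, which combined with $|Y_v| \ge v\sqrt{|\tau|}/4$ yields $\mu_v \ge c\, \sqrt{|\tau|\,(1+Y_v^2)}$ uniformly on the tip region. The second ingredient is the identity
\begin{equation*}
\frac{d}{dv} \!\left( \frac{e^{\mu}}{\mu_v} \right) \;=\; e^{\mu} \! \left( 1 - \frac{\mu_{vv}}{\mu_v^2} \right).
\end{equation*}
Integrating from $0$ to $v$ and using the tip asymptotics $\mu_v \sim (n-k)/v$, $e^{\mu} \sim v^{n-k}$ (valid near $v=0$ because $\zeta \equiv 0$ there and $Y_{B,v}(0,\tau) = 0$) to see that $\lim_{v \to 0^+} e^{\mu}/\mu_v = 0$, we obtain
\begin{equation*}
M(v) \;=\; \frac{e^{\mu(v)}}{\mu_v(v)} + \int_0^v e^{\mu}\, \frac{\mu_{vv}}{\mu_v^2}\, dv'.
\end{equation*}
Provided $\mu_{vv}/\mu_v^2 \le \lambda < 1$ pointwise, a direct absorption then gives $M(v) \le e^{\mu(v)}/\bigl((1-\lambda)\mu_v(v)\bigr)$, and combining with the lower bound on $\mu_v$ finishes the argument.

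The main technical obstacle is thus to verify the pointwise upper bound $\mu_{vv}/\mu_v^2 \le \lambda < 1$ uniformly across the tip region. In the collar regime $\{L/\sqrt{|\tau|} \le v \le 2\theta\}$, where $\zeta \equiv 1$ so that $\mu_v = -YY_v/2$ and $\mu_{vv} = -Y_v^2/2 - YY_{vv}/2$, direct computation using $Y \sim \sqrt{|\tau|}$, $|Y_v| \sim v\sqrt{|\tau|}$, and $|Y_{vv}/(1+Y_v^2)| \le \eta\, |Y_v|/v$ from \eqref{eqn-cylf} yields $|\mu_{vv}/\mu_v^2| \le C\eta$, which can be made arbitrarily small. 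In the soliton regime $\{v \le L/\sqrt{|\tau|}\}$, where $\zeta \equiv 0$ and $\mu_v = (n-k)(1+Y_{B,v}^2)/v$, substituting the self-similar coordinate $\rho = v\sqrt{|\tau|}$ and invoking the bowl ODE $Z_{B,\rho\rho}/(1+Z_{B,\rho}^2) + (n-k)Z_{B,\rho}/\rho + 1/\sqrt{2} = 0$ reduces $\mu_{vv}/\mu_v^2$ to an explicit $|\tau|$-independent function of $\rho$, which, using the asymptotics of the bowl profile, is bounded above by a universal constant strictly less than $1$. In the intermediate range where $\zeta'$ or $\zeta''$ is supported, the cross-terms between the $Y$ and $Y_B$ contributions are controlled via \eqref{eq-soliton-improvement}, which provides a smooth interpolation between the two asymptotic regimes and preserves the upper bound.
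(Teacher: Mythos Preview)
Your Hardy-type reduction is clean, but the crucial hypothesis $\mu_{vv}/\mu_v^2 \le \lambda < 1$ is not justified by the tools available. Proposition \ref{lemma-mainY} gives only \emph{first-derivative} control on $\mu$, namely $\big|v\mu_v/(1+Y_v^2) - (n-k)\big| \le \eta$; it says nothing about $\mu_{vv}$. In the soliton region your claim unwinds (after substituting the bowl ODE) to the pointwise inequality
\[
\sqrt{2}\,\rho\,|Z_{B,\rho}| \;<\; 1 + (n-k) + 3(n-k)\,Z_{B,\rho}^2 \qquad\text{for all }\rho>0,
\]
which is a nontrivial statement about the bowl profile. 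It does not follow from the endpoint asymptotics alone: for intermediate $\rho$ the quadratic in $s=|Z_{B,\rho}|$ has real roots once $\rho^2 \ge 6(n-k)(n-k+1)$, and one must check that the actual trajectory $s(\rho)$ avoids the bad interval $(s_-,s_+)$. Using quadratic concavity of the bowl one can show $s/\rho$ is monotone with $s/\rho \ge 1/(\sqrt{2}(n-k+1))$, which settles the case $n-k\ge 2$; but for $n-k=1$ this lower bound only gives $s/\rho \ge 1/(2\sqrt 2)$, which is insufficient, and a further ODE argument is needed. None of this is in the paper, so the step ``using the asymptotics of the bowl profile, is bounded above by a universal constant strictly less than $1$'' is a genuine gap.

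The paper's proof takes a different route that needs only the first-derivative estimate on $\mu$. It splits at $v_0 = 3\Lambda/\sqrt{|\tau|}$. On $[v_0,2\theta]$ it integrates $-(F^2)_v/v$ by parts (rather than $F^2$ itself), producing the term $\int (v\mu_v - 1)\,F^2/v^2\,e^\mu$, whose coefficient is bounded below by $\tfrac{n-k}{4}(1+Y_{B,v}^2)$ using only \eqref{eqn-imp} and \eqref{eq-soliton-improvement}; this yields the Poincar\'e bound together with control of the boundary value $F(v_0)^2 e^{\mu(v_0)}/v_0$. On $[0,v_0]$ it uses that $e^{\mu}$ is comparable to $(v/v_0)^{n-k}$ and applies the standard Poincar\'e inequality with polynomial weight. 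This two-region argument never touches $\mu_{vv}$.
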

\begin{proof}
    The proof is the same as in \cite[Cor 4.9]{CDDHS}. 
    Recall the asymptotic expansions of bowl soliton 
\begin{equation}
  \label{eq-ZB-asymptotics}
  Z_B(\rho) =
  \begin{cases}
    -\frac{1}{2\sqrt2 (n-k)}\rho^2 + O(\log \rho) & \rho\to\infty \\[2pt]
    -\frac{1}{2\sqrt 2(n+1-k)}\rho^2 +O(\rho^4) & \rho\to0,
  \end{cases}
\end{equation}
and the fact that these expansions may be differentiated, see e.g. \cite[Proposition 2.1]{AV_degenerate_neckpinch}.

From this, there is a constant $\Lambda<\infty$ such that
\be\label{lambda_bound_bowl}
\Lambda^{-1} |\tau|^{1/2} v\leq |Y_{B,v}|\leq  \Lambda |\tau|^{1/2}v.
\ee
We fix $v_0=v_0(\tau)= 3\Lambda/ |\tau|^{1/2}$ and start with the integration by parts formula
\be
 - \int_{v_0}^{2\theta}  \frac{(F^2 )_v}v  \, e^{\mu}  dv \\
    =    \frac{F(v_0)^2}{v_0}e^{\mu(v_0,\omega, \tau)} +  \int_{v_0}^{2\theta}  (v\mu_v - 1) \frac{F^2}{v^2}  e^{\mu} dv.
\ee
Together with
\be
  -\frac {2\,F F_v}v  \leq \frac{4 F_v^2}{(n-k)(1+Y_{v}^2)}  +   (n-k)(1+Y_{v}^2)\, \frac{F^2}{4v^2},
 \ee
 this yields
  \bea   \label{eqn-Y13}
&  \frac{F(v_0)^2}{v_0}e^{\mu(v_0,\omega, \tau)}+  \int_{v_0}^{2\theta} \Big ( v\mu_v -  \frac {n-k}4  (1+Y_{v}^2) -1 \Big ) \, \frac{F^2}{v^2} \, e^\mu dv \\
   &\quad  \leq  \frac{4}{n-k}   \int_{v_0}^{2\theta}  \frac{F_v^2}{1+Y_{v}^2} \, e^{\mu}  dv .
  \eea 
Now, using \eqref{eq-soliton-improvement}, \eqref{eqn-imp},\eqref{lambda_bound_bowl},  we  estimate our integrand by
\be
v\mu_v  -  \frac {n-k}4  (1+Y_{v}^2) -1  \geq \frac{n-k}2 (1+Y_{B,v}^2)-1\geq \frac{n-k}4 (1+Y_{B,v}^2).
\ee
This implies
\begin{equation}\label{eqn-good100}
  |\tau| \int_{v_0}^{2\theta}  F^2\, e^{\mu}\, dv
  \leq  C   \int_{v_0}^{2\theta}  \frac{F_v^2}{1+Y_{v}^2} \, e^\mu \, dv,
\end{equation}
and
\be\label{eqn_good_bdryterm}
\frac{F(v_0)^2}{v_0}e^{\mu(v_0,\omega, \tau)}\leq C   \int_{v_0}^{2\theta}  \frac{F_v^2}{1+Y_{v}^2} \, e^\mu \, dv.
\ee
Finally, for $v\leq v_0$ by our choice of weight function we have
\be
\left|\mu(v,\omega,\tau)-\mu(v_0,\omega,\tau)-\log\left(\frac{v}{v_0}\right)^{n-k}\right| =\left| \int_{v}^{v_0}\frac{n-k}{v'}Y_{B,v'}^2 dv'\right| \leq C,
\ee
hence
\be
C^{-1} \left(\frac{v}{v_0}\right)^{n-k}\leq e^{\mu(v,\omega,\tau)-\mu(v_0,\omega,\tau)}\leq C \left(\frac{v}{v_0}\right)^{n-k}.
\ee
Together with the standard Poincar\'e inequality 
\be
\int_0^{v_0} (F(v)-F(v_0))^2\,  v ^{n-k} dv \leq C v_0^2 \int_0^{v_0} F_v^2\, v^{n-k} dv,
\ee
taking also into account \eqref{eqn_good_bdryterm}, this yields
\be
|\tau|\int_0^{v_0} F^2 e^{\mu} dv \leq C \int_0^{2\theta} \frac{F_v^2}{1+Y_v^2}e^\mu dv,
\ee
and thus concludes the proof of the corollary.
\end{proof}
We denote the inverse profile functions of $\mathcal{M}^1$ and $\mathcal{M}^2$ by $Y=Y_1$ and $\bY = Y_2$, respectively. Recall the weight $e^\mu$ is a function of $Y$. i.e. it is defined in terms of $\mathcal{M}^1$.   Consider the difference 
\be
W :=Y-\bY .
\ee
Utilizing the cut-off function  $\chi_{\mathcal{T}}$ from \eqref{chi_cut-off}, we define  
\be W_{\mathcal T}(v,\omega,\tau) := \chi_{\mathcal{T}}(v) \, W(v,\omega,\tau). \ee
 Using the evolution equation of $W$ in  Proposition \ref{lemma-ev-W-appendix}, we have the following energy inequality.
\begin{remark} \label{remark-cylindericalconnection}In what follows, we will express and prove identities using normal coordinates $(v,\varphi_1,\ldots, \varphi_{k-1})$ based at each given point. Here, $(\varphi_i)_{i=1}^{k-1}$ are normal coordinates of ${S}^{k-1}$. Within such a coordinate chart, we have followings basic identities: for given smooth function $G=G(v,\omega)$, at the base point of normal coordinates, 
\bea   \nabla _v G = G_v,&\quad  \nabla_{\varphi_m} G = G_{\varphi_m}, \\
 \nabla^2_{vv}G=G_{vv} ,\quad  \nabla^2_{v\varphi_m}G&= G_{\varphi_m v}, \quad \nabla^2_{\varphi_\ell \varphi_m} G= G_{\varphi_m \varphi_\ell}. \eea
As a consequence, $\nabla_{{S}^{k-1}}G= G_{\varphi_m} \partial_{\varphi_m}$, $|\nabla_{{S}^{k-1}} G|^2= G_{\varphi_m}^2$ and $\Delta_{{S}^{k-1}}G= G_{\varphi_m\varphi_m}$. Here, summation over repeated indices (from $1$ to $k-1$) is assumed.
 
\end{remark}

\begin{proposition}[energy inequality]\label{lemma-energy}
 There exist constants $\theta > 0$, $\kappa > 0$, $\tau_* > -\infty$
and $C = C(\theta) < \infty$ with the following significance. If $\mathcal{M}^1$ and $\mathcal{M}^2$ are  $\kappa$-quadratic at time $\tau_0 \le \tau_*$ and satisfies \eqref{p_plus_van},  then for $\tau \le \tau_0$ we have
\begin{align}
\frac12\frac{d}{d\tau}\, \int W_\cT^2 \mm  &\leq  - \frac{1}{20} \int \frac{| \partial_v W_\cT|^2}{1+ Y_v^2}\, \mm\\\nonumber
&+ \frac{C}{|\tau|}\, \int W^2 1_{\{ \theta\leq v\leq 2\theta\}}\, \mm
+ \int \,  \mathcal{I} W_\cT^2\, \mm ,
\end{align}
where
\be\label{abbrev_I}
\mathcal{I}=    \frac{ D}{Y^2}|\tilde{a}\partial_v|^2 + \frac{D}{1+Y_v^2}|\tilde{b}_m \partial_{\varphi_m} |^2+\tilde c ,
\ee
and the components of vector fields $\tilde a\partial_v $, $\tilde b_m\partial_{\varphi_m}$, and the coefficient $\tilde c $ are specified in equation \eqref{tildea}, \eqref{tildeb}, \eqref{tildec} below.
\end{proposition}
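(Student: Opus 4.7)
The plan is a weighted $L^2$-energy estimate in the tip region, modeled on the $2$-oval case in \cite{CDDHS} but carefully accounting for the new spherical-and-mixed-derivative terms in the $Y$-evolution equation that only appear when $k\ge 3$. I will start from the evolution equation of $W=Y_1-Y_2$ in Proposition \ref{lemma-ev-W-appendix}, multiply the resulting equation for $W_\cT=\chi_\cT(v)W$ by $W_\cT\, e^{\mu}$, integrate over $(v,\omega)\in[0,2\theta]\times S^{k-1}$, and compute
\[
\tfrac12\frac{d}{d\tau}\!\int W_\cT^2\, e^{\mu} d\omega dv=\int W_\cT\,\partial_\tau W_\cT\, e^{\mu} d\omega dv+\tfrac12\!\int W_\cT^2\,\mu_\tau\, e^{\mu} d\omega dv.
\]
For $\partial_\tau W_\cT=\chi_\cT\,\partial_\tau W$ I will substitute the evolution equation for $W$, which (after subtracting the $Y_2$ equation from the $Y_1$ equation) has a linear parabolic principal part whose leading coefficient is exactly $\frac{1}{1+Y_v^2}\,\partial_{vv}^2 W$ up to lower-order coefficients depending smoothly on $(Y,\bar Y,\nabla Y,\nabla\bar Y)$, plus first-order terms in $\partial_v W$ and $\nabla_{S^{k-1}}W$, plus zeroth-order terms.

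The heart of the argument is an integration by parts in $v$ on the principal term: writing $\chi_\cT \,\partial_{vv}^2 W = \partial_{vv}^2 W_\cT - (\chi_\cT'' W + 2\chi_\cT' W_v)$, integration by parts against $\tfrac{W_\cT\, e^\mu}{1+Y_v^2}$ produces the good term $-\int\frac{(\partial_v W_\cT)^2}{1+Y_v^2} e^{\mu} d\omega dv$ together with a boundary commutator from the $\mu$-weight of the form $\int\frac{\mu_v}{1+Y_v^2}W_\cT\partial_v W_\cT\, e^{\mu}$. This latter term is where \eqref{eqn-imp} is crucial: because $\frac{v\mu_v}{1+Y_v^2}\approx n-k$, a Cauchy–Schwarz split gives a piece absorbed into $\frac{1}{20}\int\frac{(\partial_v W_\cT)^2}{1+Y_v^2} e^\mu$ and a piece of the form $\int\tilde c\, W_\cT^2 e^\mu$ which is declared as part of $\mathcal{I}$. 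All other first-order and zeroth-order contributions, together with $\tfrac12\mu_\tau W_\cT^2$, are folded directly into the $\tilde c$ entry of \eqref{abbrev_I} by invoking \eqref{eqn-imp}. The cut-off commutators $\chi_\cT''W+2\chi_\cT'W_v$ are supported in $\{\theta\le v\le 2\theta\}$; after a single integration by parts in $v$ they combine with $\chi_\cT$-independent boundary remainders and, using Corollary \ref{lemma-cylindrical} plus the $|\tau|^{1/2}$-upper bound on $Y_v$ in Proposition \ref{lemma-mainY}, are controlled by $\frac{C}{|\tau|}\int W^2\mathbf{1}_{\{\theta\le v\le 2\theta\}}\, e^\mu\, d\omega dv$.

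For the newly appearing spherical and mixed terms, I will perform integration by parts on $S^{k-1}$. The terms $\frac{(1+|\nabla_{\mathbb R}Y|^2)\Delta_{S^{k-1}}Y}{Y^2(1+|\nabla_{\mathbb R}Y|^2)+|\nabla_{S^{k-1}}Y|^2}$ and $\frac{1}{Y^2}\cdot\frac{\Delta_{S^{k-1}}Y|\nabla_{S^{k-1}}Y|^2-\nabla^2 Y(\nabla_{S^{k-1}}Y,\nabla_{S^{k-1}}Y)}{Y^2(1+|\nabla_{\mathbb R}Y|^2)+|\nabla_{S^{k-1}}Y|^2}$ from the last line of the $Y$-equation, when linearized in $W$, produce coefficients involving $\frac{1}{Y^2}$ times $|\nabla_{S^{k-1}}Y|$ and $|\nabla_v\nabla_{S^{k-1}}Y|$; these are quantitatively small thanks to \eqref{eqn-cylf} and \eqref{eqn-uphi} in Proposition \ref{lemma-mainY}, and can therefore be absorbed into the $\frac{D}{Y^2}|\tilde a\partial_v|^2+\frac{D}{1+Y_v^2}|\tilde b_m\partial_{\varphi_m}|^2$ terms of $\mathcal{I}$ by Cauchy–Schwarz and the smallness $\eta$. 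A small amount is again absorbed into the good term via the factor $\tfrac{1}{20}$ rather than $\tfrac12$.

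The main obstacle I anticipate is algebraic/organizational rather than analytic: the mixed terms $\nabla^2 Y(\nabla_{S^{k-1}}Y,\nabla_{\mathbb R}Y)$ and the spherical curvature-type term in the last line of the evolution equation for $Y$ do not appear in the $k=2$ setting of \cite{CDDHS}, so one must carefully write out the linearized coefficients $\tilde a_i,\tilde b_m,\tilde c$ in \eqref{tildea}--\eqref{tildec} as bounded functions of $(Y,\bar Y,\partial Y,\partial\bar Y)$ and then verify that every structure term is either (i) part of the good $-\tfrac{1}{20}\int\frac{(\partial_v W_\cT)^2}{1+Y_v^2}e^\mu$ term, (ii) localized to $\{\theta\le v\le 2\theta\}$ and hence part of the middle term, or (iii) sufficiently small in pointwise norm (by Proposition \ref{lemma-mainY}, eq.~\eqref{eqn-imp} and \eqref{eq-soliton-improvement}) to be placed into $\mathcal{I}$. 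Once this bookkeeping is done, the inequality follows by summing the integrations by parts.
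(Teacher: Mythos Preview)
Your overall strategy---multiply the $W$-evolution by $W_\cT e^\mu$, integrate by parts in $v$ and on $S^{k-1}$, and collect the remainder into $\mathcal{I}$---is correct and matches the paper. However, your organization differs from the paper's in a way that matters for the statement as written, and one of your claims is misplaced.

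The paper's proof is almost purely algebraic. After integration by parts it \emph{defines} $\tilde a,\tilde b_m,\tilde c$ by the explicit formulas \eqref{tildea}--\eqref{tildec}: the $\mu_v$ contribution (from differentiating $e^\mu$ in $v$) goes into $\tilde a$, the $\mu_{\varphi}$ contribution into $\tilde b_m$, and only $\tfrac12\mu_\tau$ into $\tilde c$. Then a single Young-inequality split $\tilde a W_v W\chi^2\le \tfrac14\frac{Y^2}{D}W_v^2\chi^2+\frac{D}{Y^2}|\tilde a|^2 W_\cT^2$ (and similarly for $\tilde b_m$) produces the gradient piece and the $\mathcal{I}$ piece simultaneously. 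No fine weight estimate is used here; the only quantitative inputs in this proposition are the crude bound $|\nabla_{S^{k-1}}Y|^2/Y^2\le\eta$ (to absorb the mixed $W_vW_{\varphi_m}$ cross term into the spherical good term) and, at the very end, $Y^2/D\sim(1+Y_v^2)^{-1}$ and $(1+Y_v^2)^{-1}\sim|\tau|^{-1}$ on the cutoff support. The estimate \eqref{eqn-imp} on $v\mu_v/(1+Y_v^2)$ is \emph{not} used in this proposition---it enters only in Lemma~\ref{lemma-coefficients}, where $(1+Y_v^2)|\tilde a|^2$ is shown to be $o(|\tau|)$. So your sentence ``This latter term is where \eqref{eqn-imp} is crucial'' conflates the present proposition with the next lemma, and your plan to route the $\mu_v$ commutator into $\tilde c$ would not reproduce the specific $\tilde a,\tilde b_m,\tilde c$ the statement refers to.

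One further point: you describe the spherical second-order terms as ``quantitatively small'' after integration by parts. In the paper the main spherical term $\frac{1+Y_v^2}{D}\Delta_{S^{k-1}}W$ is not treated as small; its IBP produces the good-sign term $-\int\frac{1+Y_v^2}{D}|\nabla_{S^{k-1}}W|^2\chi^2 e^\mu$, part of which is needed to absorb the mixed cross term $\frac{2Y_vY_{\varphi_m}}{D}W_vW_{\varphi_m}$ (via $|\nabla_{S^{k-1}}Y|^2/Y^2\le\eta$), and the remainder is then discarded. Only the genuinely lower-order spherical corrections (the last line of the $Y$-equation) are handled by smallness. If you reorganize as you propose, make sure you keep enough of this spherical good term to control the cross term before throwing the rest away.
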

\begin{proof}
 Throughout this proof, we denote $\chi_\cT$ simply by $\chi$, which only depends on variable $v$, and denote $\int d\omega  dv$ simply by $\int$. To begin with, using Proposition \ref{lemma-ev-W-appendix} (evolution of $W$) and integration by parts we see that

\begin{align}\label{differential W inequality}
  &\quad \frac12 \frac{d}{d\tau}   \int  W_\cT^2 e^\mu \\
  &= -   \int \left( \frac{Y^2 + |\nabla_{S^{k-1}}Y|^2}{D} \, W_v^2  - \frac{2  Y_{\varphi_m}  Y_v}{D}   W_{\vp_m} W_v 
  \right) \chi^2 e^\mu\\
 &-\int\left(\frac{1 + |\nabla_v Y|^2}{D}   |\nabla_{S^{k-1}}W|^2  \right)\chi^2 e^\mu \nonumber\\
&+\int\left(\frac{Y^{-2}Y_{\vp_\ell}Y_{\vp _m}}{D}W_{\vp_\ell } W_{\vp_m}-\frac{Y^{-2}|\nabla_{S^{k-1}}Y|^2}{D}   |\nabla_{S^{k-1}}W|^2\right)\chi^2 e^\mu \nonumber\\
 &+ \int \left( \tilde aW_v  W+ \tilde b_m W_{\varphi_m}  W   +   \tilde c W^2\right) \chi^2e^\mu\nonumber \\
 &-\int\left(\frac{Y^2+|\nabla_{S^{k-1}}Y|^2}{D} W W_v -\frac{2Y_v Y_{\vp _m}}{D} W W_{\vp _m}   \right) (\chi^2)' e^{\mu} ,\nonumber
\end{align}
where 
\begin{align}\label{tildea}
    \tilde a &=  a  -  \left ( \frac{Y^2 + |\nabla_{S^{k-1}}Y|^2}{D} \right)_v  -  \left ( \frac{Y^2 + |\nabla_{S^{k-1}}Y|^2}{D} \right)\mu_v,
\end{align}
\begin{align}\label{tildeb}
    \tilde b_m &= b_m+ \left(\frac{2  Y_{\vp_m}  Y_v}{D} \right)_{v}  +\left(\frac{2  Y_{\vp_m} Y_v}{D} \right)  \mu_{v}-(Y^{-2})_{\vp_m} -Y^{-2}\mu_{\vp_m}\\\nonumber
&\, +\left(\frac{Y^{-2} Y_{\varphi_{\ell}}  Y_{\varphi_{m}} }{D}\right)_{\varphi_{\ell}}+\left(\frac{Y^{-2} Y_{\varphi_{\ell}} Y_{\varphi_m} }{D}\right)\mu_{\varphi_{\ell}} ,
\end{align}
\begin{align}\label{tildec}
    \tilde{c}=c+\frac{\mu_{\tau}}{2}.
\end{align}
and $a, b, c$ are given in \eqref{eq-anew}, \eqref{eq-bnew}, \eqref{eq-cnew}. 
With the aim of absorbing various mixed terms in \eqref{differential W inequality} we estimate
\begin{align}
\tilde a W_v W   \chi^2 &  \leq  \frac14  \frac{Y^2}{D}  W_v^2 \chi^2 +   \frac {D}{Y^2} |\tilde a|^2 W_\cT^2 ,\\
\tilde b_m W_{\vp_m} W    \chi^2  &\leq  \frac 14  \frac{1+Y_v^2}{D}  |\nabla_{{S}^{k-1}} W|^2  \chi^2+\frac {D}{1+  Y_v^2} |\tilde b_m \partial_{\vp_m}|^2  W_\cT^2 ,
\end{align}
and
\begin{align} \label{eq-YvYp}
 \frac{2  Y_v Y_{\vp_m}}{D} W_v   W_{\vp_m}\leq \frac14  \frac{Y^2}{D} W_v^2 +  \frac{1}{100} \frac{ Y_v^2}{D}|\nabla_{{S}^{k-1}} W|^2,
  \end{align}
where in the last step we used that $|\nabla_{S^{k-1}} Y|^2/Y^2\leq \eta$ thanks to Proposition \ref{lemma-mainY} (tip derivatives) and Theorem \ref{strong_uniform0} (uniform sharp asymptotics). 

Then, we notice that 
\begin{equation}
\nabla_{\varphi_\ell}Y\nabla_{\varphi_m}Y\nabla_{\varphi_\ell}W\nabla_{\varphi_m}W\leq |\nabla_{S^{k-1}}Y|^2|\nabla_{S^{k-1}}W|^2
\end{equation}
and use the above mixed terms estimates and discard some lower order good terms in \eqref{differential W inequality},  we obtain
Combining the above, and discarding some lower order good terms, gives
 \begin{align}\label{eqn-We2}
\frac12 \frac{d}{d\tau}  \int  W_\cT^2 e^\mu  \leq& -\frac12 \,  \int \left( \frac{Y^2 }{D} \, W_v^2 + 
 \frac{ Y_v^2}D  \, |\nabla_{S^{k-1}}W|^2 \right) \chi^2 e^\mu+\int \mathcal{I}\, W_\cT^2 e^\mu\nonumber\\
  &-2\int\left(\frac{Y^2+|\nabla_{S^{k-1}} Y|^2}{D} WW_v -\frac{2Y_{\varphi_{m}} Y_v}{D} W W_{\varphi_{m}} \right) \chi\chi' e^{\mu},
\end{align}
where $\mathcal{I}$ is as given in \eqref{abbrev_I}.
Now, using the fact that $W_v\, \chi = (W_\cT)_v-W\chi'$ we can estimate
\be
- W_v^2 \chi^2 \leq -\frac{1}{2} (W_\cT)_v^2 + 2  W^2\chi'^2.
\ee
Moreover, we also have 
\be
-W W_v \chi \chi' = -(W_\cT)_v  W \chi' + W^2 \chi'^2 \le \frac{1}{12} (W_\cT)_v^2 + 4\chi'^2 W^2  ,
\ee
and
\be
 \frac{Y_v Y_{\varphi_{m}}}{D} W W_{\varphi_{m}} \chi\chi' \le \frac{Y^2 }{D} W^2 \chi'^2 + \frac{1}{100} \frac{Y_v^2}{D} |\nabla_{S^{k-1}}W|^2\chi^2,
\ee
where in the last step we used again that $Y_\vp^2/Y^2\leq \eta$. This yields
 \begin{align}\label{eqn-We3}
\frac12 \frac{d}{d\tau}  \int  W_\cT^2 e^\mu  \leq&-\frac{1}{12} \,  \int \frac{Y^2 }{D} \, |\partial_v W_\cT|^2\,  e^\mu
+\int \mathcal{I}\, W_\cT^2\, e^\mu
+13\int \frac{Y^2}{D} W^2\chi'^2\, e^{\mu}.
\end{align}
where $\mathcal{I}$ is given by \eqref{abbrev_I}. From here, the desired energy inequality directly follows from the various derivative estimates in Proposition \ref{lemma-mainY} (tip derivatives), and this completes the proof of proposition.
\end{proof}
\begin{lemma}[estimate for coefficients]
\label{lemma-coefficients}
For every $\eta >0$, there exist constants $\kappa > 0$, $\tau_* > -\infty$ and $\theta > 0$ with the following significance.
If $\mathcal{M}^1$ and $\mathcal{M}^2$ are $\kappa$-quadratic at time $\tau_0 \le \tau_*$ and satisfies \eqref{p_plus_van},  then for $\tau \le \tau_0$ and $v\leq 2\theta$ we have
\be
    (1+Y_v^2 ) |\tilde{a}\partial_v |^2+     |\tau||\tilde{b}_m \partial_{\vp_m}  |^2  +|\tilde c | \leq \eta \, |\tau|.
\ee
In particular, the quantity $\mathcal{I}$ defined in \eqref{abbrev_I} satisfies $\mathcal{I}\leq 3\eta|\tau|$.
\end{lemma}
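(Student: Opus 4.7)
The plan is to estimate the three coefficients $\tilde a$, $\tilde b_m$, and $\tilde c$ separately, exploiting the fact that the weight $\mu$ in \eqref{eqn-weight1} is engineered precisely so that its $v$-derivative cancels the leading-order drift in $\tilde a$, while $|\mu_\tau|$ and $|\mu_{\vp_m}|$ are already controllably small by \eqref{eqn-imp}. The integrand of $\mu_v$ is a convex combination (via $\zeta$) of the cylindrical weight $-(Y^2/4)_v$ and the bowl--soliton weight $(n-k)(1+Y_{B,v}^2)/v$, and Theorem~\ref{prop-great} guarantees that these two expressions agree to leading order in the collar region. All estimates below invoke Proposition~\ref{lemma-mainY} (tip derivatives) together with \eqref{eq-soliton-improvement} and the asymptotic $Y^2\ge c|\tau|$.

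First I would dispose of $\tilde c$: by \eqref{tildec}, $\tilde c=c+\mu_\tau/2$, and $|\mu_\tau|\le\eta|\tau|$ is already contained in \eqref{eqn-imp}. The coefficient $c$ from the evolution equation of $W$ in Proposition~\ref{lemma-ev-W-appendix} is a finite sum of $Y^{-2}$ terms, $Y_\tau$ couplings, and Hessian-of-$Y$ couplings; each is bounded by $C\eta|\tau|$ using \eqref{eqn-cylf}--\eqref{eqn-uphi} and $|Y_\tau|\le\eta|\nabla_vY/v|\le\eta|\tau|^{1/2}$. Next I would handle $\tilde b_m$: every term in \eqref{tildeb} carries at least one spherical derivative of $Y$ or $\mu$, or a factor $Y^{-2}$. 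The bounds $|\nabla_{S^{k-1}}Y|\le\eta|\tau|^{1/2}$, $|\mu_{\vp_m}|\le\eta|\tau|$, $Y^{-2}\le C|\tau|^{-1}$, together with the pointwise bounds on $Y_v$, $\mu_v$ and the mixed and second spherical derivatives from Proposition~\ref{lemma-mainY}, give each summand a pointwise factor of $C\eta$ in $|\tilde b_m|$; hence $|\tau|\,|\tilde b_m\partial_{\vp_m}|^2\le C\eta^2|\tau|$, which is absorbed into $\eta|\tau|$ after replacing $\eta$ by a smaller constant.

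The main obstacle is $\tilde a$. Writing $D=Y^2(1+Y_v^2)+|\nabla_{S^{k-1}}Y|^2$, Proposition~\ref{lemma-mainY} yields $Y^2/D=(1+Y_v^2)^{-1}(1+O(\eta))$. By \eqref{eqn-imp} the dominant part of $-(Y^2/D)\mu_v$ is $-(n-k)/v$, and this must cancel the $+(n-k)/v$ drift appearing in $a$ from the evolution equation of $W$; similarly, $-(Y^2/D)_v$ equals $2Y_vY_{vv}(1+Y_v^2)^{-2}$ modulo $\eta$-corrections, and this is designed to cancel the quadratic-Hessian piece of $a$. The strategy is to decompose $\tilde a$ schematically as a drift cancellation, a Hessian cancellation, and a remainder, and to estimate each remainder by $C\eta|\tau|^{1/2}(1+Y_v^2)^{-1/2}$; squaring and multiplying by $(1+Y_v^2)$ then gives $(1+Y_v^2)|\tilde a\partial_v|^2\le C\eta^2|\tau|$. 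The delicate sub-region is $\tfrac18\theta\le v\le\tfrac14\theta$, where both the cylindrical and soliton contributions to $\mu_v$ are simultaneously active; the cancellation there requires the two forms of the weight to match to leading order, which is exactly the content of Theorem~\ref{prop-great} (almost Gaussian collar) combined with \eqref{eq-soliton-improvement}. Making this matching rigorous, and tracking the factor $(1+Y_v^2)$ produced when squaring, will determine how small $\theta$ and $\kappa$ must be taken and how negative $\tau_*$ must be.
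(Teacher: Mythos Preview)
Your approach is essentially the paper's: the paper also decomposes $\tilde a$ into a drift-cancellation piece $\tilde a_1=\frac{n-k}{v}-\frac{Y^2+|\nabla_{S^{k-1}}Y|^2}{D}\mu_v$, a Hessian-cancellation piece $\tilde a_2=\frac{2Y^2Y_vY_{vv}}{D^2}-\frac{\bar Y^4(Y_v+\bar Y_v)\bar Y_{vv}}{D\bar D}$, and a remainder, and then treats $\tilde b_m$ and $\tilde c$ term-by-term via Proposition~\ref{lemma-mainY} and \eqref{eqn-imp}.

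Two points you gloss over that matter. First, the Hessian cancellation in $\tilde a_2$ pairs a $Y_{vv}$-term against a $\bar Y_{vv}$-term (the latter coming from $\bar B$ in $a$), so you need $Y_v\approx\bar Y_v$, $Y_{vv}\approx\bar Y_{vv}$, $D\approx\bar D$; the paper gets these via a product-rule-for-differences argument, invoking \eqref{der_sol1} in the soliton region and Theorem~\ref{prop-great} together with \eqref{eqn-cylf} in the collar. Second, in the soliton region the drift error $\tilde a_1$ carries a raw $v^{-1}$ that is not killed by $\eta$ alone; you need the improved factor $\min\{1,|\tau|^{1/2}v/L\}$ in \eqref{eq-soliton-improvement} to convert $\eta/v$ into $\eta|\tau|^{1/2}/L$. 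Your remark about the transition zone $[\tfrac18\theta,\tfrac14\theta]$ being the delicate spot is slightly off: the matching of the two weight forms is already packaged into \eqref{eqn-imp}, which holds uniformly on the whole tip region, and the paper uses \eqref{eqn-imp} as a black box there. Minor: $c$ in \eqref{eq-cnew} contains no $Y_\tau$ terms.
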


\begin{proof} The proof follows similarly as \cite[Lem 4.13]{CDDHS} except some new terms in the coefficients that have not appeared in the case $k=2$. We first point out those terms for those readers familiar with \cite[Lem 4.13]{CDDHS}. Note all $\tilde a$, $\tilde b_m$, $\tilde c$, have a common term\begin{equation}
  \frac{ |\nabla_{S^{k-1}}\bar Y|^2}{\bar Y} -\bar B,
\end{equation}
where
\begin{align}\label{barB}
    \bar{B}&=(\bar Y^2 + |\nabla_{S^{k-1}}\bar Y|^2)\nabla^2_{vv}\bar Y  - 2 \nabla^2_{\varphi_m v}\bar Y\nabla_{\varphi_m}\bar Y\nabla_v \bar Y \\
    &-\bar Y^{-2}\nabla^2_{\varphi_m\varphi_{\ell}}\bar Y\nabla_{\varphi_\ell}\bar Y\nabla_{\varphi_m}\bar Y.
\end{align}The last term in $\bar B$  has been changed from the previous last term of $\bar B$ in \cite{CDDHS}, namely $+(1+\bar Y_v^2)\bar Y_{\varphi\varphi}$.  This induces more natural expressions for remaining terms in $a$, $b$, $c$. As a consequence of this change and high dimensionality in $k$, first in $a$, the previously existing term $ \frac{\bar Y_{\varphi\varphi} (Y_v+\bar Y_v) }{D}$ is absorbed into $\bar B$. In $\tilde{b}_m$, we have new terms 
\begin{align} \label{eq-bnew1}
    -\frac{\nabla_{\varphi_\ell}(Y+\bar{Y})\nabla^2_{\varphi_m\varphi_{\ell}}\bar{Y}}{DY^2} ,
\end{align}
and
\begin{align} \label{eq-bnew2}
     &-\left(\frac{Y^{-2}|\nabla_{S^{k-1}}Y|^2}{D}\right)\mu_{\vp_m} +\left(\frac{Y^{-2} Y_{\vp_\ell }  Y_{\vp_m}}{D}\right)\mu_{\vp_\ell }
\\ \label{eq-bnew3} 
&-\left(\frac{Y^{-2}|\nabla_{S^{k-1}}Y|^2}{D}\right)_{\vp_m} +\left(\frac{Y^{-2} Y_{\vp_\ell }  Y_{\vp_m}}{D}\right)_{\vp_\ell }.\end{align}

In $\tilde c$, we have new terms 
\begin{align}\label{eq-cnew'}
   -\frac{(Y+\bar{Y})}{Y^{2}\bar{Y}^{2}}\Delta_{S^{k-1}}\bar{Y}+ \frac{Y+\bar{Y}}{DY^{2}\bar{Y}^{2}}\bar{Y}_{\vp_m\vp _\ell }\bar{Y}_{\vp_\ell }\bar{Y}_{\vp_m}.
\end{align}

To begin with the proof, we recall and list some derivative estimates used in \cite[(4.131)-(4.138)]{CDDHS} to estimate the above new additional terms. Let $o(1)$ denote a quantity that can be made arbitrarily small in the tip region $v\leq 2\theta$, by choosing $\kappa$ small enough, $\tau_*$ negative enough and $\theta$ small enough.  Moreover, we use
\be
f\sim g \qquad :\Leftrightarrow \qquad \left|\frac{f}{g}-1\right|=o(1).
\ee
In particular, by definition we have
\be
v=o(1),
\ee
and thanks to Theorem \ref{strong_uniform0} (uniform sharp asymptotics) we get
\be\label{Y_tip_estimates}
Y\sim \bar{Y}\sim \sqrt{2|\tau|}\, .
\ee
Recall that by Proposition \ref{lemma-mainY} (tip derivatives) we have the following estimates on the whole tip region $v \le 2\theta$:
\be
|\nabla_{S^{k-1}}Y|=o(1)|\tau|^{\frac12  },
\ee
\be\label{D_estimates}
D\sim Y^2(1+|\nabla_v Y|^2)\sim \bar{Y}^2(1+|\nabla_v\bar{Y}|^2)\sim\bar{D},
\ee
\be
\qquad \tfrac14 v|\tau|^{\frac12 }\leq |\nabla_v Y|\leq v|\tau|^{\frac12 }\, ,
\ee
\be\label{eqn-uphi_rest}
\left|\frac{\nabla^2_{vv}Y}{1+|\nabla_v Y|^2}\right|  \le C|\tau|^{\frac12 }, \,\,\, 
\left|\frac{\nabla_v Y \nabla_v\nabla_{S^{k-1}} Y }{1+|\nabla_v Y|^2}\right|  =o(1) |\tau|, \,\,\, |\nabla^2_{S^{k-1}}Y|  =o(1) |\tau|^{\frac32 },
\ee
\be\label{eqn-imp_rest}
v \nabla_v \mu \sim 1+|\nabla_v Y|^2,\qquad
 |\nabla_{S^{k-1}}\mu  | + |\mu_\tau | =o(1) |\tau|\, ,
\ee
and the same estimates hold for $\bar Y$ as well.


To estimate $\tilde a$, recall 
\begin{align}
    \tilde a &=  a  -  \left ( \frac{Y^2 + |\nabla_{S^{k-1}}Y|^2}{D} \right)_v-  \left ( \frac{Y^2 + |\nabla_{S^{k-1}}Y|^2}{D} \right)\mu_v,
\end{align}
where
\begin{align}
    a&=\frac{{n-k}} {v} - \frac v 2 -2 \frac{\bar Y_{{\varphi_m} v}Y_{\varphi_m}}{D}  + \frac{\bar Y^2 (Y_v +\bar Y_v)}{D\bar D} \left[ \frac{|\nabla_{S^{k-1}}\bar Y|^2}{\bar Y} -\bar B\right].
\end{align} 

Rearrange terms in $\tilde a$ so that $\tilde a = (\tilde a-\tilde a_1-\tilde a_2)+\tilde a_1+ \tilde a_2$, where 
\bea   \tilde a_1 &= \frac{n-k}{v} - \frac{Y^2+|\nabla _{S^{n-k}} Y|^2}{D} \mu _v \\
\tilde a_2& = \frac{2Y^2Y_vY_{vv} }{D^2} - \frac{\bar Y^4 (Y_v+\bar Y_v) \bar Y_{vv}}{D\bar D }.\eea

To estimate $(1+Y_v^2 )\tilde a_1$, note that, in the soliton region $\!\! v\!\le\! L /|\tau|^{1/2} $, $\mu_v= {(n-k)}({1+Y_{B,{v}}^2})/v$ and hence
 \bea |\tilde{a}_1| &= \frac{n-k}{v}\left \vert \frac{Y^2 Y_v^2 - Y^2 Y_{B,v}^2 -|\nabla_{S^{k-1}}Y|^2 Y_{B,v}^2 }{Y^2 (1 +Y_v^2)+|\nabla_{S^{k-1}}Y|^2} \right \vert \\
&\le \frac{n-k}{v}\left \vert  1-\frac{1+Y_{B,v}^2}{1+Y_v^2} \right \vert +(n-k)\frac{Y_{B,v}^2}{v} \frac{|\nabla_{S^{k-1}}Y|^2}{Y^2}  =o(1)|\tau|^{1/2},
\eea
On the other hand, in the collar region ${L}/{\sqrt{|\tau|}} \le v\le 2\theta$ using weight estimates \eqref{eqn-imp}, we get
\be
\left| {(n-k)} -v\mu_v \frac{Y^2+|\nabla_{S^{k-1}}Y|^2}{D}\right| = o(1).
\ee
Observing also that $v^{-1}\sqrt{1+Y_v^2}\leq C|\tau|^{1/2}$ in the collar region, this yields
\be
\sqrt{1+Y_v^2} |\tilde{a}_1|=o(1)|\tau|^{1/2}.
\ee

To estimate $\sqrt{1+Y_v^2}|\tilde{a}_2|$, motivated by the product rule for differences, we rewrite $\tilde{a}_2$ in the form
\begin{multline}
\tilde{a}_2=\frac{2 (Y^4-\bar Y^4)Y_vY_{vv}}{D\bar D}
+\frac{\bar Y^4  (Y_{v}- \bar Y_{v})\bar Y_{vv} }{D\bar D}\\
+\frac{2 \bar Y^4 Y_{v}(Y_{vv}-\bar Y_{vv})}{D\bar D} 
-\frac{2Y^4Y_vY_{vv}}{D}\frac{D-\bar D}{D\bar D}.
\end{multline}
The contribution from the first term can be readily estimated observing that $|Y^4-\bar{Y}^4|=o(1)Y^4$. To deal with the second term we use that  
$-Y_v \sim  \frac{vY}{2(n-k)} \sim -\bar Y_v$ in the collar region thanks to Corollary \ref{prop-great} (almost Gaussian collar) and that $|Y_v-\bar Y_v|=o(1)$ in the soliton region thanks to \eqref{der_sol1}. To deal with the third term we use that $|Y_{vv}|=\eps (1+Y_v^2)|\tau|^{1/2}$ in the collar region thanks to \eqref{eqn-cylf} (second tip derivatives), where $\eps$ can be made arbitrarily small by adjusting the parameters, in particular choosing $L$ large enough, and that $|Y_{vv}-\bar Y_{vv}|=o(1)|\tau|^{1/2}$ in the soliton region thanks to \eqref{der_sol1}. Finally, to deal with the last term we use that $D-\bar{D}=o(1)D$ thanks to \eqref{eq-soliton-improvement}. Summing up, this yields
\be
\sqrt{1+Y_v^2} |\tilde{a}_2|=o(1)|\tau|^{1/2}.
\ee
Finally regarding $\tilde a- \tilde a_1-\tilde a_2$,  a direct computation shows that
\begin{align}
&\tilde{a}-\tilde{a}_1-\tilde{a}_2 =-2 \frac{(Y_{\vp_m v}+\bar Y_{\varphi_m v})Y_{\varphi_m} }{D} -2 \frac{ YY_v}{D}- \frac v 2\nonumber  \\
 &+ \frac{ 2Y^2|\nabla _{{S}^{k-1}} Y|^2  Y_v Y_{vv}}{D^2} +\frac{2(Y^2 + |\nabla _{{S}^{k-1}} Y|^2 )\left(Y_{\vp_m} Y_{\vp_m v} +  Y Y_v (1+ Y_v^2)\right)}{D^2}\nonumber\\
 &- \frac{\bar Y^2 (Y_v +\bar Y_v)}{D\bar D} \!\left(\!|\nabla _{{S}^{k-1}} \bar Y|^2\bar Y_{vv} \!-\! 2\bar Y_{\varphi_m} \bar Y_v \bar Y_{\varphi_m v}  \!-\!  \frac{\bar Y_{\varphi_m}\bar Y_{\varphi_\ell }  \bar Y_{\varphi_m\varphi_{\ell}} }{\bar Y^{2}}\! -\!\frac{|\nabla _{{S}^{k-1}} \bar Y|^2}{\bar Y}\right)\!.
\end{align}

Most of these terms above, once they are multiplied by $\sqrt{1+Y_v^2}$, can be estimated using derivative estimates to give $o(|\tau|^{1/2})$. The only three terms for which one has to argue somewhat differently are $v$ and $(Y_{\vp_m v}+\bar{Y}_{\vp_m v}) Y_{\vp_m}/D$ and $(Y^2+|\nabla_{{S}^{k-1}}Y|^2 )Y_{\vp_m} Y_{\vp_m v}/D^2$. Regarding the first term, since $v=o(1)$ and $\sqrt{1+Y_v^2}=o(1)|\tau|^{1/2}$ we easily get
\be
\sqrt{1+Y_v^2}\,v=o(1)|\tau|^{1/2}.
\ee
To deal with the second term, in the collar region we estimate
\begin{multline}
\left|\sqrt{1+Y_v^2}\frac{(Y_{\vp_m v}+\bar{Y}_{\vp_m v})Y_{\vp_m}}{D}\right| \\
\sim  \left|\frac{1}{2|\tau|}\frac{(Y_{\vp_m v}+\bar{Y}_{\vp_m v})Y_v}{(1+Y_v^2)} \frac{\sqrt{1+Y_v^2}}{Y_v}Y_{\vp_m}\right| = o(1)|\tau|^{1/2}.
\end{multline}
 On the other hand, thanks to \eqref{eqn-uphi} of Proposition \ref{lemma-mainY} in the soliton region $v\leq L/|\tau|^{1/2}$ we have have the sharper estimate $|Y_{\vp v}|+|\bar Y_{\vp v}|=o(1)|\tau|$, so we also get the desired bound in the soliton region. Finally, since $(Y^2+|\nabla_{{S}^{k-1}}Y|^2)/D\leq 1$ the same argument applies for the third term as well. This yields
$
\sqrt{1+Y_v^2} \,|\tilde{a}-\tilde{a}_1-\tilde{a}_2|=o(1)|\tau|^{1/2}.$

Next, we estimate $\tilde b_m \partial_{\vp_m}$ and $\tilde c$. First, by \eqref{Y_tip_estimates}, \eqref{D_estimates}, \eqref{barB}, \begin{equation}\label{B/D}
    \frac{1}{D}\left[ \frac{ |\nabla_{S^{k-1}}\bar Y|^2}{\bar Y} -\bar B\right]\leq C|\tau|^{\frac12 }.
\end{equation}
The estimates \eqref{Y_tip_estimates}-\eqref{eqn-uphi_rest} and \eqref{B/D} can be used to show 
 each term in $c$ (see \eqref{eq-cnew}) is of order $o(1)$. (For readers' reference, see \cite[(4.141)]{CDDHS}.) In particular, new terms introduced in \eqref{eq-cnew'} are \bea
   \left|  -\frac{(Y+\bar{Y})}{Y^{2}\bar{Y}^{2}}\Delta_{S^{k-1}} \bar{Y} \right| \le C |\tau|^{-3/2}|\nabla ^2_{{S}^{k-1}} \bar Y |= o(1),
\eea 
\bea  \left| \frac{Y+\bar{Y}}{DY^{2}\bar{Y}^{2}}\nabla^2_{\varphi_m\varphi_{\ell}}\bar{Y}\nabla_{\varphi_\ell}\bar{Y}\nabla_{\varphi_m}\bar{Y} \right| \le C|\tau|^{-3/2}\frac{|\nabla_{{S}^{k-1}} \bar Y|^2 }{D} |\nabla^2_{{S}^{k-1}} \bar Y | =o(1).  \eea 
The only remaining term is $\tilde c -c = \mu _\tau/2  $, and this is of order $o(|\tau|)$ by \eqref{eqn-imp_rest}. This shows $\tilde c = o(|\tau|)$. 

In the estimate of $\tilde b_m \partial_{\vp_m}$, note that each new term in \eqref{eq-bnew1} and \eqref{eq-bnew2} is of order $o(1)$ by \eqref{Y_tip_estimates}-\eqref{eqn-imp_rest}. To estimate \eqref{eq-bnew3}, note by similar estimates as in \cite[(4.147)]{CDDHS} we have  $| {\nabla_{S^{k-1}}D}|/D=o(1)|\tau|$, and hence 
\bea \label{eqquotientderivative1}
&\left|\Big (\frac{Y^{-2}|\nabla_{S^{k-1}}Y|^2}{D}\Big )_{\vp_m} \right|+\left|\Big (\frac{Y^{-2} Y_{\vp_\ell}  Y_{\vp_m}}{D}\Big )_{\vp_{\ell}} \right|\\\nonumber
&\leq C\Big (   \frac{|\nabla_{S^{k-1}}Y|^3}{Y^3 D}+\frac{|\nabla_{S^{k-1}}Y||\nabla ^2 _{S^{k-1}}Y|}{Y^2D}+\frac{|\nabla_{S^{k-1}} Y|^2}{Y^2D} \left|\frac{\nabla_{S^{k-1}}D}{D}\right|\Big  ) =o(1).
\eea 
Each of remaining terms in $\tilde b_m$ can be estimated as $o(1)$ by  \eqref{B/D} and \eqref{Y_tip_estimates}-\eqref{eqn-imp_rest}. (For readers' reference, see \cite[(4.148)]{CDDHS}.)
We have $    |\tilde{b}|=o(1)$.
This completes the proof. 
\end{proof}
\begin{proposition}[energy estimate in tip region]\label{prop-tip}  For every $\eps>0$, there exist $\kappa > 0$ small enough, $\theta > 0$ small enough and $\tau_* > -\infty$ negative enough with the following significance. If $\mathcal{M}^1$ and $\mathcal{M}^2$ are $\kappa$-quadratic at time $\tau_0 \le \tau_*$ and satisfies \eqref{p_plus_van},  then for $\tau \le \tau_0$ we have
  \begin{equation}
    \label{eqn-tip}
   \big \| W_\cT \big\|_{2,\infty} \leq \eps \, \big\| W \, 1_{ \{ \theta \leq v \leq 2\theta\} } \big \|_{2,\infty}.
  \end{equation}
\end{proposition}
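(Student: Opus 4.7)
The plan is to derive a differential inequality for $E(\tau) := \int_{S^{k-1}} \int_0^{2\theta} W_\cT^2\, e^{\mu}\, dv\, d\omega$ and then integrate it backward in time. Starting from the energy inequality of Proposition~\ref{lemma-energy}, I would first absorb the reaction term $\int \mathcal{I}\, W_\cT^2\, e^\mu$ into the good gradient term. By Lemma~\ref{lemma-coefficients} we may arrange $\mathcal{I} \leq 3\eta |\tau|$ for any preassigned $\eta>0$ by choosing $\kappa,\theta$ small and $\tau_*$ negative; the weighted Poincar\'e inequality of Corollary~\ref{prop-Poincare} then yields $\int \mathcal{I}\, W_\cT^2 e^\mu \leq 3\eta C_0 \int \frac{|\partial_v W_\cT|^2}{1+Y_v^2}\, e^\mu$. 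Taking $\eta$ small enough relative to $C_0$, this absorbs into $-\frac{1}{20}\int \frac{|\partial_v W_\cT|^2}{1+Y_v^2}e^\mu$ with room to spare. A second application of Poincar\'e to the remaining gradient term converts it into $-\tfrac{|\tau|}{c_1} E(\tau)$, producing the ODE
\begin{equation}
E'(\tau) + \frac{|\tau|}{c_1} E(\tau) \leq \frac{C}{|\tau|}\, B(\tau), \qquad B(\tau) := \int W^2\, 1_{\{\theta \leq v \leq 2\theta\}}\, e^\mu,
\end{equation}
for universal constants $c_1,C<\infty$.

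Next, I would integrate backward. Since $W_\cT$ is the truncated difference of two bounded inverse profile functions, $E(\sigma)$ has at most polynomial growth as $\sigma\to-\infty$; applying the integrating factor $\exp\left(-\int_\sigma^\tau \tfrac{|s|}{c_1}\,ds\right)$, integrating on $[\tau-T,\tau]$, and passing $T\to\infty$ (the super-exponential decay of the factor dominates polynomial growth of $E$) gives
\begin{equation}
E(\tau) \leq \int_{-\infty}^\tau \exp\left(-\int_\sigma^\tau \frac{|s|}{c_1}\, ds\right) \frac{C}{|\sigma|}\, B(\sigma)\, d\sigma.
\end{equation}
The bound $\int_\sigma^\tau |s|/c_1\, ds \geq |\tau|(\tau-\sigma)/c_1$ shows the kernel decays exponentially in $\tau-\sigma$ at rate $|\tau|/c_1$. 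Decomposing into unit intervals in $\sigma$ and using $\int_{\sigma-1}^\sigma B(s)\, ds \leq |\sigma|^{1/2} \|W 1_{\{\theta\leq v\leq 2\theta\}}\|_{2,\infty}^2$ from the definition of $\|\cdot\|_{2,\infty}$, the resulting geometric series in $n$ with weights $e^{-n|\tau|/c_1}$ is easily summable and dominated by its $n=0$ term. This yields $E(\tau) \leq C'|\tau|^{-1/2}\, \|W 1_{\{\theta \leq v \leq 2\theta\}}\|_{2,\infty}^2$. Integrating over $\sigma \in [\tau-1,\tau]$ and dividing by $|\tau|^{1/2}$ gives $\|W_\cT\|_{2,\infty}^2 \leq C''|\tau_0|^{-1} \|W 1_{\{\theta\leq v\leq 2\theta\}}\|_{2,\infty}^2$, which is less than $\eps^2\|W 1_{\{\theta\leq v\leq 2\theta\}}\|_{2,\infty}^2$ for $\tau_*$ sufficiently negative.

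The main obstacle is the absorption step in the first paragraph. The coefficient $\mathcal{I}$ defined in \eqref{abbrev_I} contains the gradient-type pieces $\tfrac{D}{Y^2}|\tilde a\partial_v|^2$ and $\tfrac{D}{1+Y_v^2}|\tilde b_m\partial_{\varphi_m}|^2$, which scale like $|\tau|$ only thanks to the specific cancellations in the coefficient estimates of Lemma~\ref{lemma-coefficients}. These cancellations rely on all the tip derivative estimates of Proposition~\ref{lemma-mainY}, which in turn depend on Theorem~\ref{prop-great} (almost Gaussian collar). Consequently the parameters $\kappa$, $\theta$, $L$, $\tau_*$ must be chosen consistently across these four results and tight enough to make $3\eta C_0 \ll 1/20$, while preserving the coercivity afforded by the Poincar\'e inequality; this is the delicate bookkeeping underlying the proof.
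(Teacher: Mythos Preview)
Your proposal is correct and follows essentially the same route as the paper: combine Lemma~\ref{lemma-coefficients} with the weighted Poincar\'e inequality (Corollary~\ref{prop-Poincare}) to absorb $\int \mathcal{I}\,W_\cT^2 e^\mu$ into the gradient term, obtain the differential inequality $E'(\tau)+c|\tau|E(\tau)\le \tfrac{C}{|\tau|}B(\tau)$, and then integrate with the Gaussian integrating factor $e^{-c\tau^2/2}$. The only cosmetic difference is that the paper first passes to the time-averaged quantities $A(\tau)=\int_{\tau-1}^\tau E$ and $B(\tau)=\int_{\tau-1}^\tau B_0$ and derives the same ODE for $A$ before integrating, whereas you integrate the ODE for $E$ directly and time-average at the end; both yield $\|W_\cT\|_{2,\infty}\le \tfrac{C}{|\tau_0|}\|W\,1_{\{\theta\le v\le 2\theta\}}\|_{2,\infty}$.
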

\begin{proof} 
The proof follows similarly as the proof of  \cite[Prop 4.14]{CDDHS} by applying the above Proposition \ref{lemma-energy} (energy inequality), Proposition \ref{prop-Poincare} (Poincare inequality) and  Lemma \ref{lemma-coefficients} (estimate for coefficients).

In view of Lemma \ref{lemma-coefficients} (estimate for coefficients) and Proposition \ref{prop-Poincare} (Poincare inequality), the energy inequality in Proposition \ref{lemma-energy} (energy inequality) becomes 
\begin{equation}
\frac{d}{d\tau}\, \int W_\cT^2 \, e^{\mu} \le - c |\tau|  \int W_\cT^2 \, e^{\mu} +\frac{C}{|\tau|}\int W^2 1_{\{ \theta\leq v\leq 2\theta\}}\, e^{\mu},
\end{equation}
for some constants $c>0$ and $C<\infty$ depending on $\theta$. 
Considering
\begin{align}
&A(\tau)=\int^{\tau}_{\tau-1} \int W_\cT^2 \, e^{\mu} , && B(\tau)=\int^{\tau}_{\tau-1}  \int W^2 1_{\{ \theta\leq v\leq 2\theta\}}\, e^{\mu},
\end{align}
it follows that
\begin{equation}
\frac{d}{d\tau} \left[ e^{-\frac{c\tau^2}{2}} A(\tau) \right] \leq C|\tau| e^{-\frac{c\tau^2}{2}}\frac{B(\tau)}{|\tau|^2} .
\end{equation}
Integrating this from $-\infty$ to $\tau$ yields
\begin{align}
 A(\tau) &\leq C \sup_{\tau'\leq \tau}|\tau'|^{-2} B(\tau') \, ,
\end{align}
and hence in particular
\begin{equation}
|\tau|^{-\frac{1}{2}}A(\tau)\leq C|\tau|^{-2}\sup_{\tau'\leq \tau}|\tau'|^{-\frac{1}{2}}B(\tau')\, .
\end{equation}
This shows that
  \begin{equation}
    \label{eq-nice}
   \big \|W_\cT\big\|_{2,\infty} \le \frac{C}{|\tau_0|}\big \|W \, 1_{ \{ \theta \leq v \leq 2\theta\} } \big\|_{2,\infty}\, ,
  \end{equation}
and thus concludes the proof of the proposition.
\end{proof}

\subsection{Proof of spectral uniqueness theorem and reflection symmetry}
\label{sec-conclusion}
In this subsection, we prove our spectral uniqueness theorem by generalizing the arguments from \cite[Section 8]{ADS2} and \cite[Section 5.6]{CHH} to the cylinder setting. We first recall the following coercivity estimate theorem and lemma in \cite[Sec 4]{CDDHS} used for proving spectral uniqueness theorem.
\begin{theorem}[{coercivity estimate, c.f. \cite[Prop 4.15]{CDDHS}}]
  \label{prop-cor-main}
  For every $\varepsilon > 0$ there exist $\kappa > 0$ small enough, $\theta > 0$  small enough in definitions of cylindrical region $\mathcal{C}$ and tip region $ \mathcal{T}$ and $\tau_* > -\infty$ negative enough such that if $\mathcal{M}^{1}$ and $\mathcal{M}^{2}$ are $k$-ovals which are $\kappa$-quadratic at  time $\tau_0\leq \tau_{*}$ and satisfy \eqref{p_plus_van}, then for $\tau \le \tau_0$ we have
\be
\|w_\cC-\mathfrak{p}_0 w_\cC \|_{\hD,\infty}+\|W_\cT\|_{2,\infty} \leq  \eps \|\mathfrak{p}_0 w_\cC \|_{{\hD},\infty}\, .
\ee
\end{theorem}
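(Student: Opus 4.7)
The plan is to combine the cylindrical estimate of Proposition~\ref{prop-cyl-est} with the tip estimate of Proposition~\ref{prop-tip} by relating their error terms through a norm equivalence on the transition annulus $\{\theta/2\le v\le 2\theta\}$, and then absorb via the triangle inequality. Writing $X:=\|w_\cC-\mathfrak{p}_0 w_\cC\|_{\hD,\infty}$, $Y:=\|W_\cT\|_{2,\infty}$, $P:=\|\mathfrak{p}_0 w_\cC\|_{\hD,\infty}$, $A:=\|w\,\mathbf{1}_{\{\theta/2\le v_1\le \theta\}}\|_{\mathcal{H},\infty}$ and $B:=\|W\,\mathbf{1}_{\{\theta\le v\le 2\theta\}}\|_{2,\infty}$, the two propositions give $X\le \varepsilon(X+P+A)$ and $Y\le\varepsilon B$, and the key technical step is to establish the coupling $A\lesssim Y$ and $B\lesssim(X+P)$ modulo compensating $|\tau_0|$-dependent factors that cancel in the product.

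To obtain the norm comparison I would use the pointwise identity $w(Y_1(v,\omega,\tau),\omega,\tau)=-v_{2,y}(Y_1,\omega,\tau)\,W(v,\omega,\tau)+O(W^2)$, obtained by expanding $v_i\circ Y_i=\mathrm{id}$ for $i=1,2$, and then change variables $\mathbf{y}=y\omega\leftrightarrow(v,\omega)$ with Jacobian $y^{k-1}|Y_v|$. Two crucial facts line the weights up: first, by \eqref{zeta_cutoff} and \eqref{eqn-weight1} the weight satisfies $\mu=-Y^2/4$ \emph{exactly} on $\{\theta/4\le v\le 2\theta\}$, which matches $e^{-|\mathbf{y}|^2/4}$ under the change of variables; and second, Theorem~\ref{prop-great} together with Proposition~\ref{lemma-mainY} yields $|Y_v|\sim \theta\sqrt{|\tau|}$ throughout the transition. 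Noting in addition that $\chi_\cC\equiv 1$ on $\{v\ge \theta\}$ by \eqref{eq-cutoffC} (so $w_\cC=w$ there for $\tau$ negative enough to ensure $v_2\ge 7\theta/8$ too), and $W_\cT=W$ on $\{v\le \theta\}$ by \eqref{chi_cut-off}, a direct bookkeeping calculation gives
\begin{equation*}
A \le C_1\,|\tau_0|^{(k-1)/4}\,Y
\qquad\text{and}\qquad
B \le C_2\,|\tau_0|^{-(k-1)/4}\,(X+P),
\end{equation*}
for universal constants $C_1,C_2$ depending only on $n$, $k$, $\theta$.

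The exponents $\pm(k-1)/4$ cancel in the combined estimate, which is the decisive point. Substituting the $B$ bound into the tip estimate gives $Y\le\varepsilon C_2|\tau_0|^{-(k-1)/4}(X+P)$; feeding this together with the $A$ bound into the cylindrical estimate yields
\begin{equation*}
X \le \varepsilon(X+P) + \varepsilon C_1|\tau_0|^{(k-1)/4}\,Y \le \varepsilon\bigl(1+\varepsilon C_1C_2\bigr)(X+P).
\end{equation*}
For $\varepsilon$ small enough the term $\varepsilon(1+\varepsilon C_1C_2)X$ can be absorbed to the left, giving $X\lesssim \varepsilon P$, and then $Y\lesssim \varepsilon|\tau_0|^{-(k-1)/4}P\lesssim \varepsilon P$. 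Relabelling $\varepsilon$ yields the coercivity estimate $X+Y\le\varepsilon P$.

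The main obstacle lies in the norm-comparison step: verifying it with universal constants requires the precise matching of the Gaussian weight $e^{-|\mathbf{y}|^2/4}$ and the tip weight $e^\mu$ across the transition region. This matching depends crucially on Theorem~\ref{prop-great}, since without the almost Gaussian collar $|Y_v|\sim vY/(2(n-k))$ the two weights would decouple and the clean cancellation of $|\tau_0|^{\pm(k-1)/4}$ would fail; this is precisely the source of the new difficulty in higher dimensions that is absent for $n-k=1$. Once the comparison is in place, the Taylor expansion, the change of variables, and the final absorption are routine.
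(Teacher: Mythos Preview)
Your overall strategy---combine Proposition~\ref{prop-cyl-est} with Proposition~\ref{prop-tip} via a norm comparison on the overlap annulus and then absorb---is exactly the paper's. The paper asserts a clean equivalence $\|W\,1\|_{2,\infty}\sim_\theta\|w\,1\|_{\mathcal{H},\infty}$ in \eqref{eq_equiv_of_norms}; you go further and correctly observe that the polar Jacobian $y^{k-1}\sim|\tau|^{(k-1)/2}$ forces the single-$\tau$ comparison to read $\|w\,1\|_{\mathcal{H}}(\tau)\sim|\tau|^{(k-1)/4}\|W\,1\|_{2}(\tau)$, so that for $k\ge2$ the direction $\|w\,1\|_{\mathcal{H},\infty}\lesssim\|W\,1\|_{2,\infty}$ of the paper's stated equivalence is not in fact available as written.

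Your remedy $A\le C_1|\tau_0|^{(k-1)/4}Y$, however, does not hold either. Writing $a(\tau),b(\tau),y(\tau)$ for the single-$\tau$ versions of $A,B,Y$, at each $\tau$ one has $a(\tau)\le C|\tau|^{(k-1)/4}y(\tau)$, and since $|\tau|\ge|\tau_0|$ the \emph{growing} factor cannot be pulled outside the supremum: $\sup_\tau|\tau|^{(k-1)/4}y(\tau)$ is not bounded by $|\tau_0|^{(k-1)/4}\sup_\tau y(\tau)$. The cancellation of $\pm(k-1)/4$ you advertise is therefore illusory, and the argument does not close for $k\ge2$. The way to fix it is to bypass the sup'd quantity $Y$ and chain the $\tau$-wise inequalities directly: the proof of Proposition~\ref{prop-tip} actually delivers the pointwise decay $y(\tau)^2\le C|\tau|^{-1/2}\sup_{\tau'\le\tau}|\tau'|^{-3/2}b(\tau')^2$; inserting your (correct) $\tau'$-wise comparison $b(\tau')\le C|\tau'|^{-(k-1)/4}(X+P)$ gives $y(\tau)\le C|\tau|^{-(k+3)/4}(X+P)$, whence $a(\tau)\le C|\tau|^{(k-1)/4}y(\tau)\le C|\tau|^{-1}(X+P)$ and finally $A\le C|\tau_0|^{-1}(X+P)$. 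With this in hand the absorption step goes through exactly as you describe.
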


\begin{proof}
As in the proof of \cite[Prop 4.15]{CDDHS}, the proposition follows from the Proposition \ref{prop-cyl-est} (energy estimate in cylindrical region), Proposition \ref{prop-tip}(energy estimate in tip region) and the ratio estimates of $w$ and $W$ in transition region $\theta\leq v\leq 2\theta$. 

First of all, to compare our different norms in the transition region, let us abbreviate
\be
f\sim_\theta g\quad :\Leftrightarrow \quad \exists C=C(\theta)<\infty \,: \,C^{-1}f \leq g\leq Cf.
\ee
Then, by Theorem \ref{strong_uniform0} (uniform sharp asymptotics) and convexity we have
\be 
 \big|\partial_y v_{i}(y,\omega ,\tau)\big|\sim_\theta \frac{1}{|\tau|^{1/2}} \qquad \textrm{for}\,\, \theta \leq v_{i}       (y,\omega,\tau) \leq 2\theta,
 \ee
for sufficiently small $\kappa>0$ and $\tau_*>-\infty$.
By the mean value theorem this implies
\be 
\left|\frac{w(Y_1(v,\omega,\tau),\omega,\tau)}{W(v,\omega,\tau)}\right |\sim_\theta \frac{1}{|\tau|^{1/2}} \qquad \textrm{for}\,\, \theta \leq v \leq 2\theta.
 \ee
Hence, by the change of variables formula we infer that
\be
\frac{1}{|\tau|^{1/2}}\int_{\theta}^{2\theta} (W^2 e^{\mu})(v,\omega,\tau) \, dv \sim_\theta \int_{Y_1(\theta,\omega,\tau)}^{Y_1(2\theta,\omega,\tau)} w^2(y,\omega,\tau) e^{-y^2/4} dy .
\ee
Remembering \eqref{def_norm_tip_r}, this shows that 
\begin{equation}\label{eq_equiv_of_norms}
C^{-1}\big\| W 1_{ \{ \theta \leq v \leq 2\theta\} } \big\|_{2,\infty}\leq \big\| w\, 1_{\{\theta\leq v_1(\tau)\leq 2\theta\}} \big\|_{\mathcal{H},\infty}\leq  C\big\| W 1_{ \{ \theta \leq v \leq 2\theta\} }\big\|_{2,\infty}\, .
\end{equation} Proposition \ref{prop-tip} (energy estimate in tip region) combined with \eqref{eq_equiv_of_norms} yields
  \begin{equation}
    \label{eqn-tip_rr}
    \big\| W_\cT \big\|_{2,\infty} \leq  C\eps \big\| w \, 1_{ \{ \theta \leq v_1 \leq 2\theta\} }  \big\|_{\mathcal{H},\infty} \leq C\eps  \big\| w_{\cC}  \big\|_{\mathcal{H},\infty},
  \end{equation}
where in the last step we used that $w(y,\omega,\tau)=w_{\cC}(y,\omega,\tau)$ for $v_1(y,\omega,\tau)\geq \theta$, provided that $\kappa$ is small enough and $\tau_\ast$ is negative enough.

Similarly, by \eqref{eq_equiv_of_norms} applied with $\theta/2$ instead of $\theta$, we have
\be\label{w_trans_est}
 \big\| w \, 1_{ \{ \theta/2 \leq v_1 \leq \theta\} } \big\|_{\mathcal{H},\infty}  \leq C\big\| W_\cT \big\|_{2,\infty}\ ,
\ee
where we also used that $W(v,\omega,\tau)=W_{\cT}(v,\omega,\tau)$ for $v\leq \theta$, and so Proposition \ref{prop-cyl-est} (energy estimate in cylindrical region) gives
\begin{equation}
\label{eqn-cylindrical1_r}
 \big\Vert w_\cC - \mathfrak{p}_0w_\cC \big \Vert_{\hD ,\infty } \le \eps    \big\Vert  w_\cC \big\Vert_{\hD ,\infty} +C\eps  \big\Vert W_{\cT}  \big\Vert_{2,\infty}  .
\end{equation}
Finally, by the triangle inequality we clearly have
\begin{equation}
 \big\|w_\cC  \big\|_{\hD,\infty}\leq  \big\|w_\cC-\mathfrak{p}_0 w_\cC  \big\|_{\hD,\infty}+ \big\|\mathfrak{p}_0 w_\cC  \big\|_{\hD,\infty}\, .
\end{equation}
Combining the above inequalities, and adjusting $\varepsilon$, the assertion follows.
\end{proof}

We also need the following standard derivative estimates:
\begin{lemma}[{derivative estimates, \cite[Lem 4.16]{CDDHS}}]\label{std_pde_der_est}
For any $L<\infty$ and $\theta>0$, there exist $\kappa>0$, $\tau_\ast>-\infty$ and $C<\infty$ with the following significance. If  $\mathcal{M}$ is $\kappa$-quadratic at time $\tau_0 \le \tau_*$ and satisfies \eqref{p_plus_van}, then for all $\tau\leq\tau_0$ we have
\be\label{very_good_grad}
\sup_{y\leq L} \left(|Dv(\tau)|+|D^2v(\tau)|\right)\leq \frac{C}{|\tau|},
\ee
and
\be\label{good_grad}
\sup_{v(\tau)\geq \theta/2} \left(|Dv(\tau)|+|D^2v(\tau)|\right)\leq \frac{C}{|\tau|^{1/2}}.
\ee
\end{lemma}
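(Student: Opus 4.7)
The plan is to combine the uniform sharp asymptotics of Theorem \ref{strong_uniform0} with standard interior parabolic regularity, applied at the parabolic scale for the first estimate and at the intermediate (cylinder-shrinker) scale for the second. In both regions, the key point is that after the appropriate rescaling one obtains a uniformly parabolic quasilinear equation with smooth coefficients that are controlled independently of $\tau$, so that interior Schauder-type estimates convert the $C^0$ smallness provided by Theorem \ref{strong_uniform0} into the desired $C^{2,\alpha}$ bounds of the same order.

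For \eqref{very_good_grad}, I would work directly with $u = v - \sqrt{2(n-k)}$ on the slightly enlarged parabolic ball $\{|{\bf y}|\leq 2L\}$. The parabolic-region asymptotics of Theorem \ref{strong_uniform0} (applied with $\varepsilon = \varepsilon(L)$ and a corresponding choice of $\kappa$ and $\tau_\ast$) yields $\sup_{|{\bf y}|\leq 2L} |u(\cdot,\tau)|\leq C/|\tau|$ for all $\tau\leq \tau_0$. The evolution equation \eqref{equation_u} is quasilinear with coefficients that are smooth functions of $u$ and $Du$, and uniformly elliptic on $\{|u|+|Du|\leq \tfrac12\}$; the zero-order term $\tfrac12(\sqrt{2(n-k)}+u)-\tfrac{n-k}{\sqrt{2(n-k)}+u}$ is smooth at $u=0$ with value $0$ there, so it is bounded by $C|u|$. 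Interior parabolic Schauder estimates applied on the parabolic cylinder $B_{2L}\times[\tau-1,\tau]$ therefore give
\begin{equation}
\|u\|_{C^{2,\alpha}(B_L\times[\tau-1/2,\tau])}\leq C\|u\|_{C^0(B_{2L}\times[\tau-1,\tau])}\leq C/|\tau|,
\end{equation}
which yields the claimed bound on $|Dv|+|D^2v|=|Du|+|D^2u|$.

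For \eqref{good_grad}, I would rescale by the intermediate variable $\bar{v}(z,\omega,\tau)=v(|\tau|^{1/2}z,\omega,\tau)$. The constraint $v\geq\theta/2$ together with the intermediate-region asymptotics of Theorem \ref{strong_uniform0} confines $z$ to a compact subset of $[0,\sqrt{2})$ and forces $\bar{v}$ to be $C^0$-close, uniformly in $\tau\leq \tau_0$, to the smooth nondegenerate profile $\bar{v}_\infty(z)=\sqrt{(n-k)(2-z^2)}$. Transferring \eqref{equation_u} to the $(z,\omega,\tau)$ variables produces a uniformly parabolic quasilinear equation for $\bar{v}$ whose coefficients are smooth functions of $\bar{v}$ and its $(z,\omega)$-derivatives, with $\tau$-dependence appearing only through bounded drift terms; the convexity plus the derivative control of Corollary \ref{lemma-cylindrical} ensure these derivatives stay bounded in the relevant region, so the regularity constants do not degenerate as $\tau\to-\infty$. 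Interior parabolic regularity therefore gives uniform $C^{2,\alpha}$ bounds on $\bar{v}$ in a neighborhood of the region in question, and translating back by the chain rule gives $|Dv|=|\tau|^{-1/2}|D_z\bar{v}|\leq C/|\tau|^{1/2}$ and $|D^2v|=|\tau|^{-1}|D_z^2\bar{v}|\leq C/|\tau|\leq C/|\tau|^{1/2}$ for $|\tau|\geq 1$.

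The only mildly delicate point is verifying that the interior Schauder constants are independent of $\tau$. In the parabolic-region argument this is automatic since the equation \eqref{equation_u} has $\tau$-independent structure on bounded sets in ${\bf y}$. In the intermediate-scale argument, the potential issue is that the drift $-\tfrac12{\bf y}\cdot\nabla v$ becomes $-\tfrac12 z\cdot\nabla_z \bar v$ under rescaling, which is $\tau$-independent and bounded on the compact $z$-range; the remaining terms in the rescaled equation are smooth functions of $\bar v$ and its first/second derivatives and are controlled using the $C^0$-closeness of $\bar v$ to $\bar v_\infty$ on a slightly enlarged set. With these observations the proof reduces to a routine application of parabolic Schauder estimates, matching the argument of \cite[Lem 4.16]{CDDHS} up to the only genuine change, namely the higher-dimensional spherical factor in $S^{k-1}$.
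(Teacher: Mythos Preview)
Your argument for \eqref{very_good_grad} is essentially correct, modulo a short bootstrap: since the nonlinear error in $(\partial_\tau-\mathcal{L})u$ involves $|Du|^2|D^2u|+O(u^2)$, a single Schauder step only yields $\|u\|_{C^{2,\alpha}}\leq C\big(\|u\|_{C^0}+\|Du\|_{C^\alpha}^2\|D^2u\|_{C^\alpha}+\|u\|_{C^\alpha}^2\big)$; feeding in the admissible--radius bound $\|u\|_{C^4}\leq|\tau|^{-1/5}$ and iterating once gives $C/|\tau|$.

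The argument for \eqref{good_grad} has a genuine gap. When you rescale only in space and keep $\tau$ as the time variable, the second--order term in \eqref{equation_u} picks up a factor $|\tau|^{-1}$: the equation for $\bar v$ reads
\[
\bar v_\tau \;=\; \frac{1}{|\tau|}\,a_{ij}\!\big(|\tau|^{-1/2}D_z\bar v\big)\,\bar v_{z_iz_j}\;-\;\tfrac12\big(1+|\tau|^{-1}\big)\,z\!\cdot\! D_z\bar v\;+\;\tfrac{\bar v}{2}-\tfrac{n-k}{\bar v},
\]
so the diffusion degenerates as $|\tau|\to\infty$ and your assertion that the $\tau$--dependence enters ``only through bounded drift terms'' is false; interior Schauder constants on unit $\tau$--intervals blow up. The paper avoids this by working instead with the \emph{unrescaled} profile $V(x,t)$, which satisfies the autonomous, scaling--invariant equation
\[
\partial_t V=\Big(\delta_{ij}-\tfrac{V_iV_j}{1+|DV|^2}\Big)V_{ij}-\tfrac{n-k}{V}
\]
with no drift. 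In a parabolic ball $P(x_0,t_0,\sqrt{|t_0|})$ one has $C^{-1}\leq V/\sqrt{|t_0|}\leq C$ and $|DV|\leq C/\sqrt{\log|t_0|}$ from Theorem~\ref{strong_uniform0} plus convexity; the parabolic rescaling $\tilde V(\tilde x,\tilde t)=|t_0|^{-1/2}V\big(x_0+\sqrt{|t_0|}\,\tilde x,\;t_0+|t_0|\,\tilde t\big)$ then satisfies the \emph{same} uniformly parabolic equation, and interior estimates (followed by one differentiation and Schauder applied to the linear homogeneous equation for $\partial_{\tilde x_\ell}\tilde V$, whose $C^0$ norm is $\leq C/\sqrt{\log|t_0|}$) give the required bounds. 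The point is that both space \emph{and} time must be rescaled, which is automatic at the level of $V$ but is missed by your $\bar v$.
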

\begin{proof}
The lemma from standard interior estimates for evolution equation of profile function $V(x, t)=\sqrt{|t|}v(\sqrt{|t|}y, -\log|t|)$ of the unrescaled mean curvature flow: \begin{equation}
\partial_t V = \left(\delta_{ij}-\frac{V_i V_j}{1+|DV|^2}\right)V_{ij} - \frac{n-k}{V}.
\end{equation}
which in the parabolic ball $P({\bf x}_{0}, t_{0},\sqrt{-t_0})$ satisfies
\be
C^{-1}\leq\frac{V}{\sqrt{-t_0}}\leq C,\qquad |DV| \leq \frac{C}{\sqrt{\log(-t_0)}}.
\ee
due to  Theorem \ref{strong_uniform0} (uniform sharp asymptotics) and convexity. 
\end{proof}
Now, we prove the spectral uniqueness theorem, which follows from similar argument as in \cite[Proof of Theorem 1.12]{CDDHS}. For completeness, we include its proof here.

\begin{proof}[Proof of Theorem \ref{thm:uniqueness_eccentricity_intro} (spectral uniqueness)]
Let us denote orthogonal neutral eigenfunctions by
\begin{equation}\label{psi_defi_2}
    \psi_{ii}=y^2_{i}-2\quad\text{ and }\quad  \psi_{ij}=2 y_{i}y_{j}\,  \text{ if }i\neq j ,
\end{equation}
so that 
\be
\mathfrak{p}_0w_\cC(\tau)= \sum_{ i\le j } a_{ij}(\tau) \psi_{ij}\, .
\ee
Let $\bar{A}=(a_{ij})$ be the symmetric $k\times k$ spectral coefficients matrix with
\begin{equation}\label{def_exp_coeffs_2}
 a_{ij} = \|\psi_{ij}\|_{\mathcal{H}}^{-2} \langle  \psi_{ij},w_{\cC}\rangle_{\mathcal{H}}.
\end{equation}
recall from \eqref{evolution-wC} (evolution of $w_\cC$) that we can rewrite the evolution of  $w_\cC$ in the following form  
\bea 
(\partial_\tau -\mathcal{L}) w_\cC=\mathcal{E}[w_\cC]+\overline{\mathcal{E}}[w,\chi_\cC(v_1)]+J+K,
\eea 
where $J$ and $K$ are defined by
\bea \label{eqn-IJK-2} 
&J \, =  (v_{2,\tau} - v_{2,y_iy_i}  +\tfrac{1}{2}  y_i v_{2,y_i}  -\cE[v_2] ) w^{\chi_\cC} -  2 D v_2 \cdot Dw^{\chi_\cC},\\
&K\, = \cE[v_2] w^{\chi_\cC} - \cE[v_2w^{\chi_\cC}] + v_2\, (\partial_\tau -\mathcal{L})w^{\chi_\cC}.
\eea 
 Since $\mathcal{L}\psi_i=0$, this implies
\begin{equation}\label{ODE_a_i}
\frac{d}{d\tau}a_{ij}(\tau)=\left\langle \mathcal{E}[w_\cC]+\overline{\mathcal{E}}[w,\chi_\cC(v_1)]+J+K,\frac{\psi_{ij}}{\|\psi_{ij}\|^2}\right\rangle\, .
\end{equation}
 An elementary computation on the inner product of these neutral mode eigenfunctions shows that for the indices $i, j,p,q, \ell, m \in \{ 1, \ldots , k\}$, the only nonvanishing terms of $\langle \psi_{ij}\psi_{pq}, \psi_{\ell m} \rangle $  have the following relations: for pairwise disjoint indices $i$, $j$, $\ell$, $m$, 
\bea 
&   \langle \psi_{ii}\psi_{ii}, \psi_{ii} \rangle = 8\|\psi_{ii}\|_\mathcal{H}^2, \quad     \langle \psi_{ij}\psi_{ij}, \psi_{ii} \rangle = 8\|\psi_{ii}\|_\mathcal{H}^2  , 
\quad     \langle \psi_{im}\psi_{i\ell}, \psi_{\ell m} \rangle = 8\|\psi_{ii}\|_\mathcal{H}^2  .
\eea Combining above with $ \|{\psi_{ii}}\|_\mathcal{H}^2 = \frac{1}{2}\|{\psi_{\ell m }}\|_\mathcal{H}^2$, we can thus rewrite \eqref{ODE_a_i} in the form
\be\label{aij_ODE_F}
  \frac{d}{d\tau}a_{ij}(\tau) = \frac{2a_{ij}(\tau)}{|\tau|} + F_{ij}(\tau),
\ee
where
  \begin{align}\label{eq-F-tau}
      F_{ij} &=  \left\langle \cE[w_\mathcal{C}] -  \frac{|\mathbf{y}|^2-2k}{4|\tau |}\mathfrak{p}_0w_\cC,  \frac{\psi_{ij}}{\|\psi_{ij}\|^2}\right\rangle\!+\!\left\langle\bar{\cE}[w,\varphi_\mathcal{C}(v_1)] + J + K, \frac{\psi_{ij}}{\|\psi_{ij}\|^2}\right \rangle\!.
  \end{align}
Solving these ODEs, taking into account the fact that $a_{ij}(\tau_0)=0$ thanks to the spectral assumption \eqref{cent_0}, we get
\begin{equation}\label{eq_solution_a}
a_{ij}(\tau)=-\frac{1}{\tau^2}\int_{\tau}^{\tau_0} F_{ij}(\sigma)\sigma^2\, d\sigma.
\end{equation}
In the following, we use the notation 
\begin{equation} \label{eq-tildeA}
 \bar {A}(\tau):= \sup_{\tau'\leq \tau}\left( \int_{\tau'-1}^{\tau'}  \sum_{i\le j} | a_{ij} (\sigma )|^2 \, d\sigma \right)^{1/2}\, .
\end{equation}

\begin{claim} \label{claim411}
    For $\varepsilon>0$, there exist $\kappa>0$ and ${\tau}_{\ast}>-\infty$, such that assuming $\kappa$-quadraticity at $\tau_0\leq \tau_\ast$, for $\tau\leq\tau_0$ we have
\begin{equation}
\int_{\tau-1}^\tau |F_{ij}(\sigma)|\, d\sigma \leq \frac{\varepsilon}{|\tau|} \bar  A(\tau_0).
\end{equation}
\end{claim}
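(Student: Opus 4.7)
Proof plan. The plan is to bound $F_{ij}(\sigma)$ pointwise in $\sigma$ via duality between $\hD$ and $\hD^\ast$, then integrate over $[\tau-1,\tau]$ and reduce to the coercivity estimate. The test function $\psi_{ij}/\|\psi_{ij}\|^2$ is a fixed quadratic polynomial of bounded $\hD$-norm (the Gaussian weight dominates polynomial growth), so
\begin{equation}
|F_{ij}(\sigma)| \leq C\,\|\Phi(\sigma)\|_{\hD^\ast},
\end{equation}
where $\Phi(\sigma) = \cE[w_\cC] - \tfrac{|\mathbf{y}|^2-2k}{4|\sigma|}\mathfrak{p}_0 w_\cC + \bar\cE[w,\chi_\cC(v_1)] + J + K$ is the integrand in \eqref{eq-F-tau}. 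The task reduces to the pointwise $\hD^\ast$ estimate
\begin{equation}\label{star-plan}
\|\Phi(\sigma)\|_{\hD^\ast} \leq \frac{\eta}{|\sigma|}\Big(\|w_\cC(\sigma)\|_\hD + \|w(\sigma)\mathbf{1}_{D_\sigma}\|_\cH\Big)
\end{equation}
with $\eta>0$ as small as desired. Once \eqref{star-plan} is established, Cauchy--Schwarz over the unit time interval together with Theorem~\ref{prop-cor-main} (coercivity estimate) and the norm equivalence \eqref{eq_equiv_of_norms} in the transition region will yield $\|w_\cC\|_{\hD,\infty} \leq C\bar A(\tau_0)$ and $\|w\mathbf{1}_{D_\tau}\|_{\cH,\infty} \leq C\eta \bar A(\tau_0)$, giving the claim.

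The terms $\bar\cE[w,\chi_\cC(v_1)]+J+K$ are easy: they are all supported in the transition region $\{5\theta/8\leq v_i \leq 7\theta/8\} \subset \{|\mathbf{y}|\gtrsim |\sigma|^{1/2}\}$, so the Gaussian weight gives exponential decay in $|\sigma|$, which is far stronger than the required $1/|\sigma|$ factor; the baseline estimates derived in the proof of Proposition~\ref{prop-cyl-est} then apply. The delicate piece is the main nonlinear term $\cE[w_\cC] - \tfrac{|\mathbf{y}|^2-2k}{4|\sigma|}\mathfrak{p}_0 w_\cC$. I plan to Taylor expand $\cE$ around the cylindrical limit $v=\sqrt{2(n-k)}$ and insert the uniform sharp asymptotics from Theorem~\ref{strong_uniform0}, namely $v_i = \sqrt{2(n-k)} - \tfrac{\sqrt{2(n-k)}}{4|\sigma|}(|\mathbf{y}|^2-2k) + o(1/|\sigma|)$ in the parabolic region. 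The key observation is that the leading cross term in $\cE[w_\cC]$, produced by pairing $\mathfrak{p}_0 w_\cC$ (itself a quadratic polynomial) against the quadratic asymptotic of $v_i$ through the derivatives in $\cE$, reproduces precisely $\tfrac{|\mathbf{y}|^2-2k}{4|\sigma|}\mathfrak{p}_0 w_\cC$ and is therefore cancelled by the subtracted term. The residual contributions -- products of derivatives of $w_\cC - \mathfrak{p}_0 w_\cC$, sub-leading pieces of $v_i-\sqrt{2(n-k)}$, and genuinely quadratic terms in $w_\cC$ -- each carry an additional smallness factor $1/|\sigma|$ thanks to the sharp derivative estimates of Lemma~\ref{std_pde_der_est} (namely $|Dv_i|,|D^2 v_i| = O(1/|\sigma|)$ on bounded sets) and Corollary~\ref{lemma-cylindrical}. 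This yields \eqref{star-plan}.

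The main obstacle is verifying this exact cancellation and extracting the quantitative $1/|\sigma|$-improvement in \eqref{star-plan}. The baseline bound from Proposition~\ref{prop-cyl-est} only gives $\|\cE[w_\cC]\|_{\hD^\ast} \leq \eta(\|w_\cC\|_\hD + \|w\mathbf{1}_{D_\sigma}\|_\cH)$ without an explicit $1/|\sigma|$ factor; yet the $1/|\sigma|$ gain is crucial for closing the ODE analysis \eqref{aij_ODE_F}--\eqref{eq_solution_a}, since otherwise the integral $\int_\tau^{\tau_0} F_{ij}(\sigma)\sigma^2 d\sigma$ grows too fast. The extra $1/|\sigma|$ must be extracted from the sharp cylindrical asymptotics rather than from mere graphical smallness, and requires careful algebraic bookkeeping to match every term in the expansion of $\cE[w_\cC]$ against the fine profile $v_i - \sqrt{2(n-k)} = O(1/|\sigma|)$ and to verify that the correction $-\tfrac{|\mathbf{y}|^2-2k}{4|\sigma|}\mathfrak{p}_0 w_\cC$ exhausts exactly the non-small portion of the leading nonlinear term.
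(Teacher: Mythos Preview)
Your plan has a genuine gap: the pointwise $\hD^\ast$ estimate \eqref{star-plan} with the $1/|\sigma|$ factor is too strong and not obtainable. The problem is that testing against an arbitrary $g\in\hD$ with $\|g\|_\hD\le 1$ does not see the special polynomial structure of $\psi_{ij}$. Concretely, for the transition-supported terms $\bar\cE+J+K$ the baseline estimate from Proposition~\ref{prop-cyl-est} gives only $\|\bar\cE+J+K\|_{\hD^\ast}\le \eps\,\|w\,1_{D_\sigma}\|_{\cH}$ with no extra $1/|\sigma|$; your claimed exponential gain from the Gaussian weight is already absorbed into $\|w\,1_{D_\sigma}\|_\cH$, and nothing further is extracted by passing to $\hD^\ast$. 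Similarly, for the potential piece, the coefficient $\tfrac{2(n-k)-v_1v_2}{2v_1v_2}-\tfrac{|\mathbf{y}|^2-2k}{4|\sigma|}$ is $O(1)$ in the intermediate zone $|\mathbf{y}|\sim|\sigma|^{1/2}$, so pairing $\big[\tfrac{2(n-k)-v_1v_2}{2v_1v_2}-\tfrac{|\mathbf{y}|^2-2k}{4|\sigma|}\big]w_\cC$ against a generic $g\in\hD$ concentrated there yields only $O(1)\cdot\|w_\cC\|_\cH$, not $O(1/|\sigma|)\cdot\|w_\cC\|_\hD$. In short, the exact cancellation you describe happens only in the parabolic region, and you cannot close \eqref{star-plan} because the intermediate region is invisible to the $\hD^\ast$ norm.

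The paper avoids this by never estimating in $\hD^\ast$: it computes the inner products $\langle\,\cdot\,,\psi_{ij}\rangle$ directly and exploits that $\psi_{ij}$ is a fixed quadratic polynomial. For the transition terms, one inserts a cutoff $\bar\chi(v_1)$ supported near the transition strip and bounds $|\langle\bar\cE+J+K,\psi_i\rangle|\le\|\bar\cE+J+K\|_{\hD^\ast}\,\|\psi_i\bar\chi(v_1)\|_\hD$, gaining the exponential factor $e^{-|\tau|/4}$ from $\|\psi_i\bar\chi(v_1)\|_\hD$ (polynomial times Gaussian on $|\mathbf{y}|\gtrsim|\tau|^{1/2}$). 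For the potential term one splits $w_\cC=\mathfrak p_0 w_\cC+r$; the $r$-part uses $\|u_i\chi_\cC(v_1)\|_\hD\le C/|\tau|$ from the sharp $\hD$-asymptotics \eqref{eq-1678} together with weighted Poincar\'e to show $\|f\psi_i\|_{\hD^\ast}\le C\|f\|_\hD\le C/|\tau|$, and then duality against $\|r\|_\hD\le\eps\bar A(\tau_0)$ from coercivity; the $\mathfrak p_0 w_\cC$-part reduces to finite-dimensional integrals where the cancellation with $\tfrac{\psi_0}{4|\tau|}\psi_j$ is exact up to $o(1/|\tau|)$. Both steps rely crucially on the polynomial form of the test function and cannot be phrased as a $\hD^\ast$ bound.
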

\begin{proof}[Proof of Claim]

     By Theorem \ref{prop-cor-main} (coercivity estimate) we have
\be\label{cons_from_coerc}
\|w_\cC-\mathfrak{p}_0 w_\cC \|_{\hD,\infty}+\|W_\cT\|_{2,\infty} \leq  \eps C{\bar  A}(\tau_0).
\ee

Now, fix a smooth cut-off function $\bar{\chi}:\mathbb{R}^+\to [0,1]$ such that $\bar{\chi}(v)=1$ if $v\in [\tfrac{9}{16}\theta,\tfrac{15}{16}\theta]$ and $\bar{\chi}(v)=0$ if $v\notin [\tfrac{\theta}{2},\theta]$. Then, since $\textrm{spt}(\chi_\cC')\subseteq[\tfrac{5}{8}\theta,\tfrac{7}{8}\theta]$ using the estimates     
\eqref{eq-I1} \eqref{second line K} and \eqref{abcd estimates eq} we infer that
    \begin{align}\label{eq-barE-est}
        \big|\langle \bar{\cE}[w(\tau),\chi_\mathcal{C}&(v_1(\tau))] + J(\tau) + K(\tau) , \psi_i\rangle\big| \nonumber\\
&\leq     \big\|\bar{\cE}[w(\tau),\chi_\mathcal{C}(v_1(\tau))] + J(\tau)  + K(\tau) \big\|_{\hD^*} \big\|\psi_i \, \bar {\chi}(v_1) \big\|_{\hD} \nonumber\\
        &\leq \eps \, \big\| w(\tau)  1_{\{\theta/2\leq v_1(\tau)\leq \theta\} }\big\|_{\mathcal{H}} \, e^{-|\tau|/4}.
\end{align}
Together with \eqref{w_trans_est} and \eqref{cons_from_coerc} this yields
\be
\int_{\tau-1}^\tau\left| \langle\bar{\cE}[w,\varphi_\mathcal{C}(v_1)] + J + K, \psi_i \rangle\right |\, \leq \frac{\eps}{|\tau|}{\bar  A}(\tau_0).
\ee

\medskip
Next, by \eqref{eqn-E15} we have
\be
 \cE[w_\cC]-  \frac{\psi_0}{4|\tau |}\mathfrak{p}_0w_\cC =  \mathcal{D}[w_\cC] + \frac{2(n-k)-v_1v_2}{2v_1v_2}w_\cC-  \frac{\psi_0}{4|\tau |}\mathfrak{p}_0w_\cC,
\ee
where
\begin{multline}
\mathcal{D}[w_\cC] =  - \frac{Dv_1\otimes Dv_1\! :\! D^2 w_\cC}{1+|Dv_1|^2 } - \frac{D^2v_2\! :\! D(v_1+v_2)\!\otimes\! Dw_\cC}{(1+|Dv_1|^2)} \\
+\frac{D^2v_2\! :\! Dv_2\otimes Dv_2\, D(v_1+v_2)\!\cdot\! Dw_\cC}{(1+|Dv_1|^2)(1+|Dv_2|^2)}.
 \end{multline}
 
Now, using Lemma \ref{std_pde_der_est} (derivative estimates) we see that
\be
\big| \langle \mathcal{D}[w_\cC(\tau)] ,\psi_i\rangle\big| \leq \frac{\eps}{|\tau|} \| w_\cC(\tau) \|_{\mathcal{D}}.
\ee
Indeed, when computing  $\langle \mathcal{D}[w_\cC(\tau)], \psi_i \rangle$, the contribution to the integrals from the region $y\leq L$ decays quadratically in $|\tau|$ thanks to \eqref{very_good_grad}, and the contribution from the region $y\geq L$ can be bounded by $\eps(L)/|\tau|$ thanks to \eqref{good_grad} and the Gaussian weight. Hence,
\be
\int_{\tau-1}^\tau \big| \langle \mathcal{D}[w_\cC], \psi_i \rangle \big| \leq \frac{C\eps}{|\tau|}{\bar  A}(\tau_0).
\ee

\medskip

Next, we estimate
\begin{align}\label{est_third_term2_F}
&  \left|\left\langle \frac{2(n-k)-v_1v_2}{2v_1v_2}w_\cC -\frac{\psi_0}{4|\tau|}\mathfrak{p}_0w_\cC , \psi_i \right\rangle \right|\\
&\leq 
 \left|\left\langle \frac{2(n-k)-v_1v_2}{2v_1v_2}\chi_{\cC}(v_1) \Big(w_\cC-\sum_{j=0}^2a_j\psi_j\Big),\psi_i\right\rangle \right| \\
&+\sum_{j}|a_j|\left|\left\langle\frac{2(n-k)-v_1v_2}{2v_1v_2} \chi_{\cC}(v_1)\psi_j-\frac{\psi_0}{4|\tau|}\psi_j,\psi_i\right\rangle \right|\\
&+\left|\left\langle \frac{2(n-k)-v_1v_2}{2v_1v_2}(1-\chi_{\cC}(v_1))w_\cC,\psi_i\right\rangle \right|\, . \nonumber
\end{align}

{To proceed, we need the following estimate 
\begin{equation} \label{eq-1678}
\big\|(\sqrt{2(n-k)}-v_i)\chi_{\cC}(v_1)\big\|_{\cD} \leq \frac{C}{|\tau|}.
\end{equation}
Note first that due to $\kappa$-quadraticity at of $v_i$ at time $\tau_0$ and the Gaussian weight, we have
\begin{equation}\label{diff_norm_small}
\big\|\sqrt{2(n-k)}-v_i1_{\{v_1 \ge 0 \}} \big\|_{\mathcal{H}}\leq \frac{C}{|\tau|}.
\end{equation}
An estimate on $\Vert D(v_i \chi_{\cC}(v_1))\Vert_{\mathcal{H}} $ follows by an energy estimate similar to  \cite[Lemma 3.2]{DH_hearing_shape}. Here we present this estimate. As the proof will be identical for $v_i$, $i=1,2$, we simply use $v$ below. The evolution equation of $v$  
\[v_\tau  =\Big (\delta_{ij} -\frac{v_iv_j}{1+|Dv|^2}\Big ) v_{ij} - \frac12 y_i v_i + \frac v{2} -\frac{n-k}{v}  \]
can be rewritten as 
\[(\partial_{\tau} - \Delta + 2^{-1} y\cdot  D )v =-\frac{v_{y_i}v_{y_j}}{1+|Dv|^2} v_{y_iy_j} +\frac{v^2 - 2(n-k)}{2v}.\]
Let us consider $\bar v_\cC:= v \bar \chi(v_1)$ for some fixed smooth cut-off function $\bar\chi$ supported on $[\theta/4,\infty)$ and $\bar \chi =1$ on $[\theta/2,\infty)$.   
\bea (\partial_{\tau} - \Delta + 2^{-1} y\cdot  D) \bar v_\cC  &= \chi (\partial_{\tau} - \Delta + 2^{-1} y\cdot  \nabla )v +v\chi' (\partial_{\tau} - \Delta + 2^{-1} y\cdot  \nabla )v_1 \\&  -v \chi ''|Dv_1|^2 -2 \chi' Dv\cdot Dv_1 \eea 
and let us denote the right side by $g$. i.e. $ (\partial_{\tau} - \Delta + 2^{-1} y\cdot  \nabla ) \bar v_\cC =: g$. For a given time $\tau_1\le \tau_0-1$,
\bea &\partial_\tau \int ( \tau-\tau_1) |D \bar v_\cC |^2 e^{-\frac14 |y|^2} dy  \\ &= \int|D\bar v_\cC |^2 e^{-\frac14 |y|^2} dy + \int (\tau-\tau_1 ) 2D\bar v_\cC  D(\partial_\tau\bar v_\cC )e^{-\frac14 |y|^2} dy \\
& =\int|D \bar v_\cC |^2 e^{-\frac14 |y|^2} \!-\! 2(\tau-\tau_1) ( \Delta - \frac12 y \cdot D)\bar v_\cC ) (g\!+\!( \Delta \!-\!\frac12 y \cdot D)\bar v_\cC )  )   e^{-\frac14 |y|^2}dy \\
&\le \int|D\bar v_\cC|^2 e^{-\frac14 |y|^2} + 2(\tau-\tau_1) \int  \frac 14 g^2   e^{-\frac14 |y|^2} dy,
\eea and also
\bea &\partial_\tau \int  e^{\tau_1 - \tau} |  \bar v_\cC - (2(n-k))^{1/2} |^2 e^{-\frac{1}{4}|y|^2}  dy \\ 
&= e^{\tau_1-\tau} \!\int\! - |  \bar v_\cC - (2(n-k))^{1/2} |^2 e^{-\frac14 |y|^2} \!+\!2 (\bar v_\cC\!-\! (2(n-k))^{1/2} ) \partial_\tau v_c e^{-\frac 14 |y|^2}dy \\
&\le  e^{\tau_1-\tau} \int(- 2 |D \bar v_\cC|^2  +  g^2) e^{-\frac 14 |y|^2} dy .\eea 
On $\tau \in [\tau_1,\tau_1+1]$, 
\bea\label{eq-16778} \partial_\tau \int  \big ( (\tau-\tau_1)|D\bar v_\cC|^2 \!+\! \frac{e }{2}e^{\tau_1-\tau}|\bar v_\cC\!-\! \sqrt {2(n-k)} |^2  \big) e^{-\frac 14 |y|^2} dy \le 2 \int g^2 e^{-\frac 14 |y|^2 } dy  \eea  
To estimate $g$, observe $ |Dv|\le C /|\tau|^{1/2}$ on $v_1\ge \theta/4$ due to convexity and the asymptotics in the intermediate region  (c.f. Lemma \ref{std_pde_der_est}). Combining this with \eqref{diff_norm_small} and  $|D^2v|\le C$ (by Lemma \ref{lemma-cylindrical})
\bea\int \left[ -\frac{v_{y_i}v_{y_j}}{1+|Dv|^2} v_{y_iy_ j} +\frac{v^2 - 2(n-k)}{2v} \right]^2 1_{\{v_1\ge \theta/4\}}e^{-\frac 14 |y|^2} dy \le  {C}/{|\tau|^2}.\eea 
Thus we have $\int g^2 e^{-\frac 14 |y|^2 } dy \le C/|\tau|^2$. 
Integrating \eqref{eq-16778} from $\tau=\tau_1$ to $\tau_1+1$,  we obtain the assertion that for all $\tau\le \tau_0$, 
\begin{equation}
\big\| D (v\bar \chi (v_1 )) \big\|_{\mathcal{H}}\leq \frac{C}{|\tau|}.\, \end{equation}
Remembering also \eqref{eq_wiggle_room} this yields \eqref{eq-1678}.
}

Hence, arguing similarly as in \cite[Proof of Claim 5.26]{CHH_translator} we get
\be
\int_{\tau-1}^\tau \left|\left\langle \frac{2(n-k)-v_1v_2}{2v_1v_2}\chi_{\cC}(v_1) \Big(w_\cC-\sum_{j=0}^2a_j\psi_j\Big),\psi_i\right\rangle \right| \leq \frac{\eps}{|\tau|} {\bar  A}(\tau_0).
\ee
and arguing similarly as in \cite[Proof of Claim 8.3]{ADS2} we get
\be
\int_{\tau-1}^\tau \sum_{j=0}^2 |a_j|\left|\left\langle\frac{2(n-k)-v_1v_2}{2v_1v_2} \chi_{\cC}(v_1)-\frac{\psi_0}{4|\tau|},\psi_i\psi_j\right\rangle \right| \leq \frac{\eps}{|\tau|} {\bar  A}(\tau_0).
\ee
Finally, since $(1-\chi_{\cC}(v_1))w_{\cC}$ is supported in the region where the Gaussian weight is exponentially small, we easily get
\begin{align}
\int_{\tau-1}^\tau \left|\left\langle \frac{2(n-k)-v_1v_2}{2v_1v_2}(1-\chi_{\cC}(v_1))w_\cC,\psi_i\right\rangle \right| 
\leq \frac{\eps}{|\tau|} {\bar  A}(\tau_0).
\end{align}
This concludes the proof of the claim.
\end{proof}

Chopping $[\tau, \tau_0]$ into unit intervals and applying the claim repeatedly and using \eqref{eq_solution_a} as in \cite[Section 8]{ADS2}, we infer
that for all $\tau\leq \tau_0$ we have
\begin{equation}\label{a_small_always}
\bigg( \sum_{i\le j} |a_{ij}(\tau)|^2 \bigg)^{1/2}  \leq \eps \bar {A}(\tau_0).
\end{equation}
Integrating and choosing $\varepsilon>0$ small enough, this implies
$
a_{ij}(\tau_0)=0$ for all $i,j$. 
Together with Theorem \ref{prop-cor-main} (coercivity estimate) we conclude that
\begin{equation}
\|w_\cC\|_{\cD,\infty} +\| W_\cT \|_{2,\infty} =0\, ,
\end{equation}
hence
 $
\mathcal{M}^1=\mathcal{M}^2.
$
This finishes the proof of the spectral uniqueness theorem.
\end{proof}

To prove the Corollary \ref{reflection symmetry}, we need to transform the $k$-ovals to the correct position.
Given any $k$-oval $\mathcal{M}=\{M_t\}$ in $\mathbb{R}^{n+1}$ (with coordinates chosen as usual) and  parameters $\alpha\in \mathbb{R}^{k}, \beta\in \mathbb{R}, \gamma\in \mathbb{R}$ and $R\in \mathrm{SO}(k)$, we set
\begin{equation}
\mathcal{M}^{\alpha, \beta, \gamma, R}:=\{e^{\gamma/2} R(M_{e^{-\gamma}(t-\beta)}-\alpha)\},
\end{equation}
We first have the following proposition on the modes orthogonality.

\begin{proposition}[orthogonality]\label{prop_orthogonality}
For  any $\kappa>0$, there exist   constant $\tau_{\ast}={\tau}_\ast(\kappa)>-\infty$ and $\kappa'=\kappa'(\kappa)>0$ with the following significance. If a $k$-oval $\mathcal{M}$ is $\kappa'$-quadratic at $\tau_0\leq \tau_*$, for every $\eta \in [-\frac{1}{1000}, \frac{1}{1000}]$,  there exist  $\alpha\in \mathbb{R}^{k}, \beta\in \mathbb{R}, \gamma\in \mathbb{R}$ and $R\in \mathrm{SO}(k)$ depending on $\mathcal{M}, \kappa, \tau_0, \eta$  such that the truncated renormalized profile function $v^{\alpha, \beta, \gamma, R}_{\cC}$ of the transformed flow $\mathcal{M}^{\alpha, \beta, \gamma, R}$  is $\kappa$-quadratic at time $\tau_0$ and satisfies
\begin{equation}\label{positive_mode}
\fp_+ \big(v^{\alpha, \beta, \gamma, R}_\cC(\tau_0)-\sqrt{2(n-k)}\big)=0,
\end{equation}
\begin{equation}\label{cross}
\qquad \Big\langle v^{\alpha, \beta, \gamma, R}_{\cC}(\tau_0), y_{i}y_{j}\Big\rangle_\cH=0\quad i\not=j.
\end{equation}
\begin{equation}\label{quadractic_mode}
 \Big\langle v^{\alpha, \beta, \gamma, R}_{\cC}(\tau_0) +\frac{\sqrt{2(n-k)}(|\mathbf{y}|^2-2k)}{4|\tau_0|},|\mathbf{y}|^2-2k\Big\rangle_\cH=\frac{\eta\kappa}{|\tau_0|},
\end{equation}
\end{proposition}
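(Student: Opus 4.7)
The approach is to set up the map
\[
\Phi: \mathbb{R}^{k}\times\mathbb{R}\times\mathbb{R}\times\mathrm{SO}(k)\longrightarrow\mathbb{R}^{k+1}\times\mathbb{R}^{\binom{k}{2}}\times\mathbb{R},
\]
sending $(\alpha,\beta,\gamma,R)$ to the tuple consisting of the coefficients of $\mathfrak{p}_{+}(v^{\alpha,\beta,\gamma,R}_{\cC}(\tau_{0})-\sqrt{2(n-k)})$ in the basis $\{1,y_{1},\dots,y_{k}\}$, the inner products $\langle v^{\alpha,\beta,\gamma,R}_{\cC}(\tau_0),y_iy_j\rangle_\cH$ for $i<j$, and the trace pairing on the left-hand side of \eqref{quadractic_mode}; and then to show via an inverse-function/degree argument that $\Phi$ attains every target value of the form $(0,\dots,0,\eta\kappa/|\tau_0|)$ for $\eta\in[-\tfrac{1}{1000},\tfrac{1}{1000}]$. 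The dimension count matches exactly: both the parameter space and the target space have dimension $k+2+\binom{k}{2}$.

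To analyze the linearization of $\Phi$ at the base point $(0,0,0,I)$, the plan is to use Theorem~\ref{strong_uniform0} (uniform sharp asymptotics) to approximate $v_\cC(\mathbf{y},\tau_0) \approx \sqrt{2(n-k)} - \tfrac{\sqrt{2(n-k)}}{4|\tau_0|}(|\mathbf{y}|^2-2k)$ in the parabolic region, and then to identify which spectral modes each parameter dominantly activates. A spatial translation $\alpha\in\mathbb{R}^k$ contributes $e^{\tau_0/2}\alpha\cdot\nabla_{\mathbf{y}}v_\cC$ at leading order, whose dominant component is proportional to $-\tfrac{e^{\tau_0/2}\sqrt{2(n-k)}}{2|\tau_0|}\alpha\cdot\mathbf{y}$ and hence pairs non-trivially only with $\{y_1,\dots,y_k\}$; a time shift $\beta$ produces essentially a constant correction through $\partial_\tau v_\cC$, pairing only with the constant mode; a parabolic rescaling $\gamma$ scales the quadratic bending coefficient and moves only the trace pairing at leading order; and a rotation $R=\exp(S)$ with skew-symmetric $S$ conjugates the spectral matrix $A=(a_{ij})$ of $\mathfrak{p}_0 v_\cC$ by $A\mapsto R\,A\,R^{T}$, so the off-diagonal entries transform linearly in those of $S$. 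To leading order the four blocks of $d\Phi$ decouple, and the $\alpha$, $\beta$, $\gamma$ blocks are explicit diagonal matrices with nonzero entries.

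The main obstacle is the rotational block: since the base flow is only $\kappa'$-quadratic, the diagonal entries of $A$ all lie within $O(\kappa'/|\tau_0|)$ of $-\sqrt{2(n-k)}/(4|\tau_0|)$, and in the limiting case where $A$ is a multiple of the identity the linearized action of $R$ on the off-diagonal entries vanishes identically. To bypass this degeneracy, the plan is to not linearize in $R$ about $I$, but instead to select $R=R_0$ directly by applying the spectral theorem to the symmetric matrix $(\langle v_\cC(\tau_0),\psi_{ij}\rangle_\cH)$ from \eqref{psi_defi}, producing an orthogonal matrix that exactly annihilates the cross-term pairings. The remaining parameters $(\alpha,\beta,\gamma)$ then solve a well-conditioned system whose Jacobian, by the computation above, is a nonsingular diagonal-type matrix, so the standard inverse function theorem applies. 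A Brouwer-degree or contraction-mapping argument will close the coupling between the choice of $R_0$ and the auxiliary parameters $(\alpha,\beta,\gamma)$, and smallness of the chosen parameters (quantified in terms of $\kappa'$) will ensure that the transformed flow remains $\kappa$-quadratic provided $\kappa'=\kappa'(\kappa)$ is taken sufficiently small.
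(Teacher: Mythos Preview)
Your proposal lands on the same two ingredients as the paper --- a degree/IFT argument for $(\alpha,\beta,\gamma)$ and the spectral theorem for $R$ --- but the order of operations and one key observation differ. The paper first solves for $(a,b,\Gamma)$ (your $(\alpha,\beta,\gamma)$ in rescaled form) with $R=\mathrm{Id}$ via a degree argument essentially identical to your linearization, and only \emph{afterwards} chooses $R$ to diagonalize the matrix $S_{ij}=\langle y_iy_j, v^{ab\Gamma\,\mathrm{Id}}_\cC(\tau_0)\rangle_\cH$. The reason this works without any coupling is the observation you are missing: the span of $\{1,y_1,\dots,y_k,|\mathbf{y}|^2-2k\}$ is $\mathrm{SO}(k)$-invariant, and since $v^{ab\Gamma R}(y)=v^{ab\Gamma\,\mathrm{Id}}(R^{-1}y)$, a solution of the positive-mode and trace equations for $R=\mathrm{Id}$ is automatically a solution for \emph{every} $R\in\mathrm{SO}(k)$. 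So once $(a,b,\Gamma)$ is fixed, any rotation preserves \eqref{positive_mode} and \eqref{quadractic_mode}, and one simply picks $R$ to diagonalize $S$ at the end.

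By contrast, you diagonalize first at the original profile and then solve for $(\alpha,\beta,\gamma)$, which forces you to worry about whether the cross terms stay zero --- hence your vague ``Brouwer-degree or contraction-mapping argument will close the coupling.'' In fact that coupling is illusory: by the same invariance, the $(a,b,\Gamma)$ you find is independent of the choice of $R$, so your iteration would converge in a single step (choose $R$ to diagonalize $S(a,b,\Gamma)$ rather than $S(0,0,0)$). Your scheme is therefore not wrong, but the coupling concern evaporates once you notice the invariance, and the paper's ordering makes this transparent. A minor further difference: the paper carries out the $(a,b,\Gamma)$ step by an explicit degree argument (homotoping $\Psi^{\mathrm{Id}}|_{\partial D_\kappa}$ to an injective model map) rather than IFT, which sidesteps any need for differentiability of the map in $(a,b,\Gamma)$.
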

\begin{remark}[partial orthogonality]\label{remark_orthogonality}
The proof below actually shows that (i) translations in space and time (i.e., $\alpha \in \mathbb{R}^k$ and $\beta \in \mathbb{R}$) are responsible for \eqref{positive_mode}, (ii) parabolic dilations ($\gamma\in\mathbb{R}$) are responsible for \eqref{quadractic_mode}, and (iii) rotations in $\mathbb{R}^k$ ($R\in \mathrm{SO}(k)$) are responsible for \eqref{cross}. Consequently, after a small modification of the proof below, we obtain that if we only allows partial transformations while fixing other transformations as $0$ or identity  $\mathrm{Id}_k$, then we can make the renormalized profile function satisfies only one or two equations among \eqref{positive_mode}-\eqref{quadractic_mode}. 
\end{remark}
\begin{proof}
We will solve equations in \eqref{quadractic_mode} and \eqref{positive_mode} using a degree argument similarly as in \cite[Section 7]{ADS2}, and solve the remaining  equations in \eqref{cross} using basic linear algebra. For convenience, we set
\begin{equation}\label{new_vars_scale}
a=e^{\tau/2}\alpha,\quad  b= \sqrt{1+\beta e^{\tau}}-1,\quad \Gamma=\frac{\gamma-\ln (1+\beta e^{\tau})}{\tau}.
\end{equation}
Then
\begin{align}\label{transofrm_scale}
     v^{ab\Gamma R}(y,\tau)=(1+b)v\left(\frac{R^{-1}y-a}{1+b},(1+\Gamma)\tau\right).
\end{align}
Our first goal is, given any $R\in \mathrm{SO}(k)$, to find  a suitable zero of the map
\begin{equation}\label{Psi=0}
\Psi^{R}(a, b, \Gamma)=\left(\begin{array}{c}
   
      \Big\langle y_{1}, v^{ab\Gamma R}_\cC-\sqrt{2(n-k)} \Big\rangle_\cH\\
      \cdots\\
       \Big\langle y_{k}, v^{ab\Gamma R}_\cC-\sqrt{2(n-k)} \Big\rangle_\cH\\
         \Big\langle  1,v^{ab\Gamma R}_\cC-\sqrt{2(n-k)} \Big\rangle_\cH \\
        \Big\langle  |\mathbf{y}|^2-2k, v^{ab\Gamma R}_\cC +\frac{\sqrt{2(n-k)}(|\mathbf{y}|^2-2k)}{4|\tau|}\Big\rangle_\cH-\frac{\eta\kappa}{|\tau|}
            \end{array}
    \right).
\end{equation}
To this end, observe that the vector space spanned by the eigenfunctions
\begin{equation}
 \psi_{i}= y_{i} \quad (1\leq i\leq k),\quad  \psi_{k+1}=1, \quad \psi_{k+2}=|\mathbf{y}|^2-2k
\end{equation}
is $\mathrm{SO}(k)$-invariant. Hence, if $(a, b, \Gamma)$ is a solution of $\Psi^{\mathrm{Id}}(a, b, \Gamma)=0$, then it actually solves $\Psi^R(a, b, \Gamma)=0$ for all $R\in \mathrm{SO}(k)$. To proceed, we need:
\begin{claim}[transformation estimate]\label{tranform_est}
For every $\kappa>0$ there exist  constant $\tau_{\ast}={\tau}_\ast(\kappa)>-\infty$ and  $\kappa'=\kappa'(\kappa)>0$ with the following significance. If a $k$-oval $\mathcal{M}$ is $\kappa'$-quadratic at $\tau_0\leq \tau_*$, for every $\eta \in [-\frac{1}{1000}, \frac{1}{1000}]$ and all $(a,b, \Gamma)\in[-1,1]^{k}\times  [-1/|\tau|,1/|\tau|]\times [-1/2,1/2]$, we have 
\begin{align}
&\max_{i=1, \dots, k}\left| \Big\langle \frac{\psi_i}{ \|\psi_i\|^{2}}, v^{ab\Gamma \mathrm{Id}}_\cC-\sqrt{2(n-k)} \Big\rangle_\cH-\frac{\sqrt{2(n-k)}a_{i}}{2|\tau|(1+\Gamma)}\right|\leq \frac{\kappa}{500|\tau|},\\
&\left| \Big\langle\frac{\psi_{k+1} }{ \|\psi_{k+1}\|^{2}}, v^{ab\Gamma \mathrm{Id}}_\cC\!-\!\sqrt{2(n-k)} \Big\rangle_\cH\!\!-\!\left(\sqrt{2(n-k)}b\!-\!\frac{\sqrt{2(n-k)}|a|^2}{4|\tau|(1+\Gamma)}\right)\right| \leq \frac{\kappa}{500|\tau|},\\
&\left| \Big\langle \frac{\psi_{k+2}}{ \|\psi_{k+2}\|^{2}}, v^{ab\Gamma \mathrm{Id}}_\cC +\frac{\sqrt{2(n-k)}(|\mathbf{y}|^2-2k)}{4|\tau|}\Big\rangle_\cH-\frac{\eta\kappa}{|\tau|}-\frac{\sqrt{2(n-k)}\Gamma}{4|\tau|(1+\Gamma)}\right|\leq \frac{\kappa}{500|\tau|}.
\end{align}
\end{claim}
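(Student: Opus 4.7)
The plan is to Taylor-expand $v^{ab\Gamma\mathrm{Id}}$ via the sharp asymptotics of $v$, take inner products mode by mode, and absorb higher-order corrections into $\kappa/(500|\tau|)$. First, Theorem~\ref{point_strong} (strong $\kappa$-quadraticity) together with Theorem~\ref{strong_uniform0} promotes single-time $\kappa'$-quadraticity of $\mathcal{M}$ into the $\mathcal{H}$-norm sharp asymptotic at every past time: for $\kappa'$ small enough and $\tau_*$ negative enough, and for all $\sigma\le\tau_*/2$,
\begin{equation}
v(z,\sigma)=\sqrt{2(n-k)}-\tfrac{\sqrt{2(n-k)}}{4|\sigma|}(|z|^2-2k)+r(z,\sigma),\qquad \|r(\cdot,\sigma)\|_{\mathcal{H}}\le C\kappa'/|\sigma|.
\end{equation}
Since $\Gamma\in[-1/2,1/2]$, the expansion applies at $\sigma=(1+\Gamma)\tau$ once $|\tau_*|$ is large. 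Substituting $z=(y-a)/(1+b)$ and multiplying by $(1+b)$ via \eqref{transofrm_scale} yields
\begin{equation}
v^{ab\Gamma\mathrm{Id}}(y,\tau)-\sqrt{2(n-k)}=b\sqrt{2(n-k)}-\tfrac{(1+b)\sqrt{2(n-k)}}{4|\tau|(1+\Gamma)}\Bigl(\tfrac{|y-a|^2}{(1+b)^2}-2k\Bigr)+\tilde r(y,\tau),
\end{equation}
where $\tilde r(y,\tau)=(1+b)\,r((y-a)/(1+b),(1+\Gamma)\tau)$.

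Second, the truncation gap $v^{ab\Gamma\mathrm{Id}}_\cC-v^{ab\Gamma\mathrm{Id}}$ is supported in $\{v^{ab\Gamma\mathrm{Id}}\le 7\theta/8\}$, which by Theorem~\ref{strong_uniform0} corresponds to $|y|\ge c\sqrt{|\tau|}$; the Gaussian weight there is $\le e^{-c|\tau|}$, contributing only exponentially small corrections to every inner product. On the support of $\chi_\cC(v^{ab\Gamma\mathrm{Id}})$ (i.e.\ $|y|\le C\sqrt{|\tau|}$), the affine shift $y\mapsto u=(y-a)/(1+b)$ with $|a|\le 1$ and $|b|\le 1/|\tau|$ perturbs the Gaussian weight by only a bounded multiplicative factor, so combining the $H^1$-control of $r$ obtained by parabolic regularity applied to \eqref{equation_u} with the weighted Poincaré inequality \eqref{easy_Poincare} yields $|\langle P,\tilde r\rangle_\mathcal{H}|\le C\|P\|_\mathcal{H}\cdot\kappa'/|\tau|$ for every polynomial $P$ of degree $\le 2$. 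This is the main technical point: the Gaussian weight does not commute with translations of unit size, and without either the $H^1$ upgrade or the cutoff localization the $a$-shift would produce an unbounded weight distortion.

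Finally, I compute the three inner products. Using the Hermite orthogonalities $\langle y_i,y_j\rangle_\cH=2\|1\|_\cH^2\delta_{ij}$, $\langle y_i,1\rangle_\cH=\langle y_i,|y|^2\rangle_\cH=\langle 1,\psi_{k+2}\rangle_\cH=0$, together with $\langle\psi_{k+2},|y-a|^2\rangle_\cH=\|\psi_{k+2}\|_\cH^2$ and the expansion $|y-a|^2=|y|^2-2y\cdot a+|a|^2$, a direct computation gives
\begin{equation}
\tfrac{\langle y_i,\, v^{ab\Gamma\mathrm{Id}}-\sqrt{2(n-k)}\rangle_\cH}{\|y_i\|_\cH^2}=\tfrac{\sqrt{2(n-k)}\,a_i}{2|\tau|(1+\Gamma)(1+b)^2}+O(\kappa'/|\tau|),
\end{equation}
\begin{equation}
\tfrac{\langle 1,\, v^{ab\Gamma\mathrm{Id}}-\sqrt{2(n-k)}\rangle_\cH}{\|1\|_\cH^2}=b\sqrt{2(n-k)}-\tfrac{\sqrt{2(n-k)}|a|^2}{4|\tau|(1+\Gamma)(1+b)}+O(|\tau|^{-2})+O(\kappa'/|\tau|),
\end{equation}
\begin{equation}
\tfrac{\langle \psi_{k+2},\, v^{ab\Gamma\mathrm{Id}}+\tfrac{\sqrt{2(n-k)}\psi_{k+2}}{4|\tau|}\rangle_\cH}{\|\psi_{k+2}\|_\cH^2}=\tfrac{\sqrt{2(n-k)}}{4|\tau|}\Bigl(1-\tfrac{1}{(1+\Gamma)(1+b)}\Bigr)+O(\kappa'/|\tau|).
\end{equation}
Since $|a|\le 1$, $|b|\le|\tau|^{-1}$, $|\Gamma|\le 1/2$, the expansions $(1+b)^{-2}=1+O(|b|)$ and $1-[(1+\Gamma)(1+b)]^{-1}=\Gamma/(1+\Gamma)+O(|b|)$ reduce these to the stated linearizations modulo further $O(|\tau|^{-2})$ corrections. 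Choosing $C\kappa'\le\kappa/2000$ and $|\tau_*|^{-1}\le\kappa/2000$, and accounting for the absorbed term $|\eta|\kappa/|\tau|\le\kappa/(1000|\tau|)$ in the third estimate, every error totals at most $\kappa/(500|\tau|)$, proving the claim.
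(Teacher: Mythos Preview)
Your approach is essentially identical to the paper's: upgrade $\kappa'$-quadraticity to strong $\kappa''$-quadraticity via Theorem~\ref{point_strong}, substitute the resulting $\mathcal{H}$-norm expansion of $v$ at time $(1+\Gamma)\tau$ into the transformation formula \eqref{transofrm_scale}, absorb the truncation $\chi_\cC$ via Gaussian tails, and read off the Hermite projections. You spell out the orthogonality computations and the $(1+b)$, $(1+\Gamma)$ bookkeeping more explicitly than the paper, which simply asserts the expansion holds in $\mathcal{H}$-norm and refers to \cite[proof of Lemma 4.2]{ADS2} for the tail estimates; both treatments are brief on precisely why the unit-size shift by $a$ leaves the remainder contribution $O(\kappa''/|\tau|)$ (your sentence about a ``bounded multiplicative factor'' on the weight over $|y|\le C\sqrt{|\tau|}$ is not literally true, but the needed inner-product bound $|\langle\psi_j,\tilde r\rangle_{\mathcal H}|\le C\|\hat r\|_{\mathcal H}$ does hold, e.g.\ by expanding $\psi_j((1+b)u+a)e^{-|(1+b)u+a|^2/4}$ in Hermite functions of $u$).
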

\begin{proof} By Theorem \ref{point_strong} (strong $\kappa$-quadraticity),   for every $\kappa''>0$ there exist $\kappa'>0$ small enough and $\tau_{*}>-\infty$ negative enough, such that if a $k$-oval 
 $\mathcal{M}$ in $\mathbb{R}^{n+1}$ is $ \kappa'$-quadratic at some time $\tau_{0}\leq \tau_{*}$, then it is strongly $\kappa''$-quadratic from time $\tau_{0}$. This together with standard Gaussian tail estimates for replacing cut-off functions as in \cite[proof of Lemma 4.2]{ADS2} implies
\begin{equation}
    \left\| v(\mathbf{y},  \tau)-\sqrt{2(n-k)}+\frac{\sqrt{2(n-k)}(|\mathbf{y}|^2-2k)}{4|\tau|}  \right\|_{\mathcal{H}}\leq \frac{\kappa''}{|\tau|} .
\end{equation}
holds for all $\tau\leq\tau_0\leq \tau_\ast$.

Since $(a,b, \Gamma)\in[-1,1]^{k}\times  [-1/|\tau|,1/|\tau|]\times [-1/2,1/2]$, using \eqref{transofrm_scale} this implies
\begin{multline}
 v^{ab\Gamma \mathrm{Id}}=\sqrt{2(n-k)} + \sqrt{2(n-k)}b-\frac{\sqrt{2(n-k)}|a|^2}{4|\tau|(1+\Gamma)} 
-\frac{\sqrt{2(n-k)}}{4|\tau|(1+\Gamma)}(|\mathbf{y}|^2-2k)\\
 +\sum_{i=1}^{k}\frac{\sqrt{2(n-k)}}{4}\frac{2a_{i}y_{i}}{|\tau|(1+\Gamma)}+O\left(\frac{\kappa''}{|\tau|}\right)
\end{multline}
in $\mathcal{H}$-norm. For every $\kappa>0$, fixing above $\kappa''\ll \frac{\kappa}{1000}$ small enough, which then determines above $\kappa'>0$ and $\tau_*>-\infty$, and noting $\eta \in [-\frac{1}{1000}, \frac{1}{1000}]$, together with standard Gaussian tail estimates to include the effect of cut-off, the claim follows.
\end{proof}

Now, by Claim \ref{tranform_est} (transformation estimate), for $\kappa>0$ small enough, for every $\tau_0\leq \tau_\ast$ and $|\eta|\leq \frac{1}{1000}$, the map $\Psi^{\mathrm{Id}}$ restricted to the boundary of
\begin{equation}\label{Dk}
D_{\kappa}:=\left\{(a, b, \Gamma)\in\mathbb{R}^{k+2}\;:\; |a|^2+|\tau_0|^2b^2+\Gamma^2\leq 100 \kappa^2  \right\}
\end{equation}
is homotopic to the injective map 
\begin{equation}
(a,b,\Gamma)\mapsto \sqrt{2(n-k)}\left(
     \,
     \frac{a\|\psi_{1}\|^2}{2|\tau_0|(1+\Gamma)},  b{||{\psi_{k+1}}||^2}-\frac{|a|^2\|\psi_{k+1}\|^2}{4|\tau_0|(1+\Gamma)},
     \frac{\Gamma\|\psi_{k+2}\|^2}{4|\tau_0|(1+\Gamma)}
    \right),
\end{equation}
through maps from $\partial D_\kappa$ to $\mathbb{R}^{k+2}\setminus\{0\}$. The map $\Psi^{\mathrm{Id}}$ from the full ball to $\mathbb{R}^{k+2}$ has therefore degree one. Hence, there exists $(a,b,\Gamma)\in  D_\kappa$ solving $\Psi^{\mathrm{Id}}(a,b,\Gamma)=0$. Remembering the above discussion, this actually solves
\begin{equation}
\Psi^R(a, b, \Gamma)=0\,\,\, \textrm{ for all  } R\in \mathrm{SO}(k).
\end{equation}
Finally, we want to find $R\in \mathrm{SO}(k)$ such that the remaining equations
\begin{equation}\label{R_cross_vanishing}
0=\big\langle y_{i}y_{j}, v^{ab\Gamma R}_\cC(\tau_0) \big\rangle_\cH =\big\langle (R^{T}y)_{i}(R^{T}y)_{j}, v^{ab\Gamma \mathrm{Id}}_\cC(\tau_0) \big\rangle_\cH\quad i\not=j
\end{equation}
hold. If we let
matrix $S=(\big\langle y_{s}y_{\ell}, v^{ab\Gamma \mathrm{Id}}_\cC(\tau_0) \big\rangle_\cH)_{k\times k}$,  \eqref{R_cross_vanishing} is equivalent to finding $R\in \mathrm{SO}(k)$ to diagonalize the matrix $S$. By the symmetric property of $S$, we can always find a $R\in \mathrm{SO}(k)$  solving the remaining equations \eqref{R_cross_vanishing}. In addition, noting  that by the above estimates the transformed flow $\mathcal{M}^{\alpha, \beta, \gamma, R}$ is $\kappa$-quadratic at time $\tau_0$, this finishes the proof of the proposition.
\end{proof}

Then we prove the $\mathbb{Z}_2^k\times\mathrm{O}(n+1-k)$ symmetry of $k$-ovals.

\begin{proof}[Proof of Corollary \ref{reflection symmetry}]There is no harm in applying parabolic rescaling and time-shift to $k$-ovals, since the desired symmetry remains invariant under parabolic rescaling and and time-shift. Then, for any $k$-oval $\mathcal{M}$, by Lemma \ref{lem-quadraticity} we have that given any $\kappa'>0$ there is $\tau_0>-\infty$ negative enough such that $\mathcal{M}$ is $\kappa'$-quadratic at $\tau_0$. Choose small $\kappa>0 $ from Theorem \ref{thm:uniqueness_eccentricity_intro} (spectral uniqueness) and $\kappa'=\kappa'(\kappa)>0$ from Proposition \ref{prop_orthogonality} (orthogonality). By Proposition \ref{prop_orthogonality}  with $\eta=0$, after suitable space-time rigid motion and parabolic rescaling, $\mathcal{M}$ is $\kappa$-quadratic at $\tau_0$  and satisfies orthogonality conditions \eqref{positive_mode}, \eqref{cross}, \eqref{quadractic_mode} with $\eta=0$. 

For $j=1,\dots, k$, let $\mathcal{M}^{-, j}$ be the flow obtained by reflecting $\mathcal{M}$ about the hyperplane through the origin that is orthogonal to $j$-th axis of $\mathbb{R}^k$. Then we have $\mathcal{M}^{-, j}$  preserves $\kappa$-quadraticity at $\tau_0$  and orthogonality conditions \eqref{positive_mode}, \eqref{cross}, \eqref{quadractic_mode}. Moreover, the above reflection  preserves  the 
following quadratic mode spectral values of $\mathcal{M}$ at $\tau_0$
\begin{equation}
{\Big\langle v_{\cC}(\cdot, \tau_{0}), y^2_{j}-2 \Big\rangle_\cH}\quad j=1,\dots, k.
\end{equation}
Therefore, $\mathcal{M}$ and $\mathcal{M}^{-, j}$ are both $\kappa$-quadratic at $\tau_0$  and have the same neutral mode and positive mode projections. By Theorem \ref{thm:uniqueness_eccentricity_intro} (spectral uniqueness), we have 
\begin{equation}
    \mathcal{M}=\mathcal{M}^{-, j} \quad j=1,\dots, k
\end{equation}
which verifies the $\mathbb{Z}_2^k$-symmetry. Together with \cite[Theorem 1.8]{DZ_spectral_quantization}, we know any $k$-oval is $\mathbb{Z}_2^k\times\mathrm{O}(n+1-k)$-symmetric and we completes the proof of Corollary    \ref{reflection symmetry}.
\end{proof}

\bigskip

\section{Spectral stability theorem and Lipschitz continuity}\label{stability section}
In this section,  we prove Theorem \ref{spectral_stability_intro} (the spectral stability), which is a quantitative version of spectral uniqueness theorem for $k$-ovals, via showing a Lipschitz continuity result. We first recall the following definition. Given $\kappa>0$ and $\tau_0>-\infty$, $\bar{\mathcal{A}}_{\kappa}(\tau_0)$ is the set of all $k$-ovals which are $\kappa$-quadratic at time $\tau_0$, and the corresponding truncated profile function  $v_{\cC}$  satisfies orthogonality conditions (this is always possible for $k$-ovals due to Proposition \ref{prop_orthogonality} (orthogonality)).
\begin{equation}
    \mathfrak{p}_{+}\big(v_{\cC}({\bf{y}}, \tau_{0})-\sqrt{2(n-k)}\big)=0,
\end{equation}
\begin{equation}
    \Big\langle v^{\mathcal{M}}_{\cC}({\bf{y}}, \tau_0), y_{i}y_{j}\Big\rangle_\cH=0,\quad 1\leq i<j\leq k
\end{equation}
and
\begin{equation}
     \Big\langle v^{\mathcal{M}}_{\cC}({\bf{y}}, \tau_0) +\frac{\sqrt{2(n-k)}(|\mathbf{y}|^2-2k)}{4|\tau_0|}, |\mathbf{y}|^2-2k\Big\rangle_\cH=0.
\end{equation}

Then we define the spectral ratio map $\mathcal{E}=\mathcal{E}(\tau_{0}): \bar{\mathcal{A}}_{\kappa}(\tau_0)\rightarrow \Delta^{k-1}$ at $\tau_0$:
\begin{equation}
   \mathcal{E}(\mathcal{M})=\mathcal{E}(\tau_{0})(\mathcal{M})=\left(\frac{\langle v^{\mathcal{M}}_{\cC}(\tau_{0}), y^2_{1}-2 \rangle_\cH}{\langle v^{\mathcal{M}}_{\cC}(\tau_{0}), |{\bf{y}}|^2-2k \rangle_\cH}, \dots,\frac{\langle v^{\mathcal{M}}_{\cC}(\tau_{0}),  y^2_{k}-2 \rangle_\cH}{\langle v^{\mathcal{M}}_{\cC}(\tau_{0}), |{\bf{y}}|^2-2k \rangle_\cH}\,\right).
\end{equation}
We also recall $
w_\cC=v_1\chi_\cC(v_1)-v_2\chi_\cC(v_2)$ and
$W_{\mathcal{T}}:=\chi_{\mathcal{T}} W=\chi_{\mathcal{T}} (Y_1-Y_2)$, where for $i=1, 2$, $Y_i(\cdot,\omega,\tau)$ is defined as inverse function of $v_i(\cdot,\omega,\tau)$, as in last subsection. For convenience, we state the detailed version of spectral stability theorem below.
\begin{theorem}[spectral stability]\label{spectral_stability}
    There exist constant $\kappa>0$ small enough, $\tau_{*}>-\infty$ negative enough, $C<\infty$  with the following significance: If $\mathcal{M}^1, \mathcal{M}^2\in \bar{\mathcal{A}}_{\kappa}(\tau_0)$, namely they are $k$-ovals which are $\kappa$-quadratic at $\tau_0\leq \tau_{*}$ and whose profile function $v$ satisfy orthogonality conditions \eqref{condition_positive}, \eqref{OC11} and \eqref{OC21}, then 
    \begin{align}\label{part1-1}
        \|w_\cC\|_{\hD,\infty} +\| W_\cT \|_{2,\infty} \leq C\|\mathfrak{p}_0 w_{\mathcal{C}}(\tau_0)\|_{\mathcal{H}}.
    \end{align}
  Moreover,  for each fixed $m\in \mathbb{N}$, fixed $0<l<+\infty$ and $\tau_0\le \tau_*$, there are constants $c(\tau_0, l, m)>0$ and $\delta(\tau_0, l)>0$, such that if the $\tau_0$-time spectral closeness  condition  
    \begin{equation}
        |\mathcal{E}(\tau_0)(\mathcal{M}^1)-\mathcal{E}(\tau_0)(\mathcal{M}^2)|\leq \delta(\tau_0, l)
    \end{equation}
    holds,  then for  the renormalized flows  $\bar{M}^i_{\tau}$ of $\mathcal{M}^i$, $i=1, 2$, one can write $\bar{M}^1_{\tau}$ as a graph over $\bar{M}^2_{\tau}$ 
    of a function $\zeta(\tau)$ for $\tau\in [\tau_0-l, \tau_0]$ such that
   \begin{align}
        &\quad c(\tau_0, l, m)\sup_{\tau\in [\tau_0-l, \tau_0]}\|\zeta(\tau)\|_{C^{m}}\\
        &\leq |\mathcal{E}(\tau_0)(\mathcal{M}^1)-\mathcal{E}(\tau_0)(\mathcal{M}^2)|\\
        &\leq \!C\|\zeta(\tau_0)\|_{C^{m}}.
    \end{align}
\end{theorem}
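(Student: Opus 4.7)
The plan is to quantify the proof of Theorem~\ref{thm:uniqueness_eccentricity_intro} (spectral uniqueness) and then combine it with parabolic regularity. For the first bound \eqref{part1-1}, I would revisit the ODE analysis \eqref{aij_ODE_F}--\eqref{eq-F-tau} from Section~\ref{sec-conclusion}. In the uniqueness proof, the matching-mode assumption forced $a_{ij}(\tau_0)=0$; here instead I solve the same ODE backward from $\tau_0$ with initial data $a_{ij}(\tau_0)$, obtaining
\[
a_{ij}(\tau) = \frac{\tau_0^2}{\tau^2}\,a_{ij}(\tau_0) - \frac{1}{\tau^2}\int_\tau^{\tau_0} F_{ij}(\sigma)\sigma^2\, d\sigma.
\]
Combined with the unit-interval estimate on $F_{ij}$ from Claim~\ref{claim411} (which remains valid under the current hypotheses) and an absorption of a small $\varepsilon\bar{A}(\tau_0)$ term, this yields $\bar A(\tau_0) \leq C\,\|\mathfrak{p}_0 w_\cC(\tau_0)\|_\mathcal{H}$. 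Feeding this into Theorem~\ref{prop-cor-main} (coercivity estimate) immediately gives \eqref{part1-1}.

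For the Lipschitz equivalence, the orthogonality conditions \eqref{OC11} and \eqref{OC21} applied to \emph{both} $\mathcal{M}^i$ have two consequences: first, $\langle v^i_\cC(\tau_0),|\mathbf{y}|^2-2k\rangle_\mathcal{H}$ is the \emph{same} nonzero quantity of order $|\tau_0|^{-1}$ for $i=1,2$, so the denominators in $\mathcal{E}(\tau_0)(\mathcal{M}^i)$ agree; second, $\mathfrak{p}_0 w_\cC(\tau_0)$ lies in $\mathrm{span}\{y_i^2-2\}_{i=1}^k$ with the constraint $\sum_i \langle w_\cC(\tau_0),y_i^2-2\rangle_\mathcal{H}=0$. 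A direct calculation then shows
\[
|\mathcal{E}(\tau_0)(\mathcal{M}^1)-\mathcal{E}(\tau_0)(\mathcal{M}^2)| \asymp |\tau_0|\,\|\mathfrak{p}_0 w_\cC(\tau_0)\|_\mathcal{H}.
\]
The construction of the normal graph function $\zeta(\tau)$ over $\bar M^2_\tau$ proceeds by a nearest-point projection/implicit function argument: the cylindrical derivative estimate Corollary~\ref{lemma-cylindrical} together with Proposition~\ref{lemma-mainY} (tip derivatives), plus the spectral closeness \eqref{spectral closedness condition} and Part~1, ensure that $\bar M^1_\tau$ is $C^1$-close to $\bar M^2_\tau$ uniformly on $[\tau_0-l,\tau_0]$. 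To upgrade the integral bound $\|w_\cC\|_{\cD,\infty}+\|W_\cT\|_{2,\infty}\leq C\|\mathfrak{p}_0 w_\cC(\tau_0)\|_\mathcal{H}$ to pointwise $C^m$ control on $[\tau_0-l,\tau_0]$, I would apply interior parabolic Schauder estimates to the linearized mean curvature flow equation satisfied by $\zeta$, exploiting the bounded geometry of the reference flow $\bar M^2_\tau$; this produces the left inequality, with $c(\tau_0,l,m)^{-1}$ absorbing the time-window and regularity losses. The right inequality $|\mathcal{E}(\tau_0)(\mathcal{M}^1)-\mathcal{E}(\tau_0)(\mathcal{M}^2)| \leq C\|\zeta(\tau_0)\|_{C^m}$ follows by expressing each component of $\mathcal{E}$ as a Gaussian-weighted linear functional of $w_\cC(\tau_0)$, using Gaussian decay to localize to a compact region where $w_\cC(\tau_0)$ and $\zeta(\tau_0)$ are comparable through a smooth geometric factor.

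The main obstacle I anticipate is reconciling the three parametrizations---the cylindrical profile difference $w_\cC$, the tip inverse-profile difference $W_\cT$ (weighted with $e^\mu$ attached to $\mathcal{M}^1$), and the normal graph $\zeta$ over $\bar M^2$---uniformly across the cylindrical, collar, and soliton regions. The tip is especially delicate because the two flows are parametrized by their own inverse profile functions $Y_1,Y_2$, so transporting the $\|W_\cT\|_{2,\infty}$-estimate into a geometric normal-distance estimate of $\bar M^1$ over $\bar M^2$ requires a cutoff/interpolation argument in the collar region, analogous in spirit to the norm-equivalence \eqref{eq_equiv_of_norms} in the transition region $\{\theta\leq v\leq 2\theta\}$. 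Once this translation of estimates is carried out, the remaining steps---parabolic regularity on a compact interval and the direct evaluation of the spectral functionals---are relatively standard.
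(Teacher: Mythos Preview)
Your proposal is correct and follows essentially the same strategy as the paper. The ODE argument for Part~1 matches the paper's Proposition~\ref{Lipshcitz continuity} exactly, including the integration of \eqref{aij_ODE_F} from $\tau_0$, the use of Claim~\ref{claim411}, and the absorption step yielding $\bar A(\tau_0)\leq C\|\mathfrak{p}_0 w_\cC(\tau_0)\|_{\mathcal{H}}$; your identification of the relation $\|\mathfrak{p}_0 w_\cC(\tau_0)\|_{\mathcal{H}}\lesssim|\mathcal{E}(\mathcal{M}^1)-\mathcal{E}(\mathcal{M}^2)|$ via the orthogonality conditions also matches.

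There is one minor organizational difference worth noting. You propose applying parabolic Schauder estimates directly to the linearized MCF equation satisfied by the normal graph $\zeta$. The paper instead applies interior parabolic estimates separately to the already-derived evolution equations for $w_\cC$ (Proposition~\ref{proposition-evolution-wC}) and $W_\cT$ (Proposition~\ref{lemma-ev-W-appendix}), upgrading the $\|\cdot\|_{\hD,\infty}$ and $\|\cdot\|_{2,\infty}$ bounds to pointwise $C^m$ bounds on $w_\cC$ and $W_\cT$, and only afterwards invokes an implicit-function lemma (Lemma~\ref{graphical_estimates claim}) to convert these into $C^m$ bounds on $\zeta$. This ordering has the practical advantage that the equations for $w_\cC$ and $W_\cT$ are already available and their coefficients already controlled by Proposition~\ref{lemma-mainY} and Lemma~\ref{lemma-coefficients}, whereas your route would require writing out and controlling the coefficients of the linearized graphical MCF equation over $\bar M^2_\tau$ across all regions. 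Both approaches are valid; the paper's is more modular given what has already been established. Your anticipated ``main obstacle'' of reconciling the three parametrizations is exactly what Lemma~\ref{graphical_estimates claim} addresses, via a two-region implicit-function argument.
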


To prove Theorem \ref{spectral_stability_intro} (the spectral stability), we need the following  proposition about local Lipschitz continuity estimates of $\mathcal{E}^{-1}$.
\begin{proposition}[Lipschitz continuity of $\mathcal{E}^{-1}$]\label{Lipshcitz continuity}
    There exist  $\kappa>0$ small enough, $\tau_{*}>-\infty$ negative enough and a constant $C$ with the following significance: if two $k$-ovals $\mathcal{M}^1, \mathcal{M}^2 \in \bar{\mathcal{A}}_{\kappa}(\tau_0)$, where $\tau_0\leq \tau_{*}$, then 
    \begin{align}\label{part1}
        \|w_\cC\|_{\hD,\infty} +\| W_\cT \|_{2,\infty} \leq C\|\mathfrak{p}_0 w_{\mathcal{C}}(\tau_0)\|_{\mathcal{H}}.
    \end{align}
  Moreover, for each fixed $m\in \mathbb{N}$, fixed $0<l<+\infty$ and $\tau_0\le \tau_*$ , there are $C(\tau_0, l, m)<\infty$ and $\delta(\tau_0, l)>0$ such that if  the condition $|\mathcal{E}(\tau_0)(\mathcal{M}^1)-\mathcal{E}(\tau_0)(\mathcal{M}^2)|\leq \delta(\tau_0, l)$ holds, then for $\tau\in [\tau_0-l, \tau_0]$ one can write renormalized flow $\bar{M}^1_{\tau}$ as a graph over $\bar{M}^2_{\tau}$ 
    of a function $\zeta(\tau)$ satisfying
    \begin{align}\label{Lip spectral graph}{
        \sup_{\tau\in [\tau_0-l, \tau_0]}\|\zeta(\tau)\|_{C^{m}}\leq C(\tau_0, l, m)|\mathcal{E}(\tau_0)(\mathcal{M}^1)-\mathcal{E}(\tau_0)(\mathcal{M}^2)|, \quad m\geq 1.
    }\end{align}
\end{proposition}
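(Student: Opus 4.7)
The plan is to quantify the argument that proved Theorem \ref{thm:uniqueness_eccentricity_intro} in Section \ref{sec-conclusion}. For part \eqref{part1}, I start from the ODE for the neutral-mode coefficients $a_{ij}(\tau)=\|\psi_{ij}\|_{\mathcal{H}}^{-2}\langle w_\cC(\tau),\psi_{ij}\rangle_{\mathcal{H}}$ derived in \eqref{aij_ODE_F},
\[
\frac{d}{d\tau} a_{ij}(\tau) = \frac{2a_{ij}(\tau)}{|\tau|} + F_{ij}(\tau),
\]
together with the bound $\int_{\tau-1}^{\tau}|F_{ij}(\sigma)|\,d\sigma \leq \eps |\tau|^{-1}\bar{A}(\tau_0)$ from Claim \ref{claim411}. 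Unlike the uniqueness proof, the initial value $a_{ij}(\tau_0)$ is no longer zero. Solving with the integrating factor $\tau^2$ gives the Duhamel representation
\[
a_{ij}(\tau) \;=\; \frac{\tau_0^2}{\tau^2}\,a_{ij}(\tau_0) \;-\; \frac{1}{\tau^2}\int_\tau^{\tau_0} F_{ij}(\sigma)\,\sigma^2\,d\sigma.
\]
Chopping $[\tau, \tau_0]$ into unit intervals and iterating Claim \ref{claim411} exactly as in Section \ref{sec-conclusion} yields
\[
\bar A(\tau_0) \;\leq\; C\,\|\mathfrak{p}_0 w_\cC(\tau_0)\|_{\mathcal{H}} \;+\; \eps\, \bar A(\tau_0);
\]
absorbing the last term produces $\bar A(\tau_0)\leq C\|\mathfrak{p}_0 w_\cC(\tau_0)\|_{\mathcal{H}}$. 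Plugging this into Theorem \ref{prop-cor-main} (coercivity estimate) yields \eqref{part1}.

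For part (b) I first translate the spectral ratio closeness into a Gaussian $L^2$ bound on $\mathfrak{p}_0 w_\cC(\tau_0)$. The orthogonality conditions \eqref{OC11} and \eqref{OC21} force the cross-term coefficients of both $\mathcal{M}^1$ and $\mathcal{M}^2$ to vanish at $\tau_0$, and also pin down the common value
\[
\big\langle v^i_\cC(\tau_0),\,|\mathbf{y}|^2-2k\big\rangle_{\mathcal{H}} \;=\; -\frac{\sqrt{2(n-k)}}{4|\tau_0|}\,\big\||\mathbf{y}|^2-2k\big\|_{\mathcal{H}}^2 \;=:\; D(\tau_0).
\]
Hence $\langle w_\cC(\tau_0), y_j^2-2\rangle_{\mathcal{H}} = \bigl(\mathcal{E}_j(\tau_0)(\mathcal{M}^1)-\mathcal{E}_j(\tau_0)(\mathcal{M}^2)\bigr)\,D(\tau_0)$, and expanding $\mathfrak{p}_0 w_\cC(\tau_0)$ over the neutral basis $\{\psi_{ij}\}$ gives
\[
\|\mathfrak{p}_0 w_\cC(\tau_0)\|_{\mathcal{H}} \;\leq\; \frac{C}{|\tau_0|}\,|\mathcal{E}(\tau_0)(\mathcal{M}^1)-\mathcal{E}(\tau_0)(\mathcal{M}^2)|.
\]
Combined with part (a), this controls $\|w_\cC\|_{\hD,\infty}+\|W_\cT\|_{2,\infty}$ by the same quantity.

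To upgrade this weighted $L^2$-type smallness to a pointwise $C^m$ graph bound, I apply parabolic interior regularity in each of the two standard regions. In the cylindrical region the difference $v_1-v_2$ satisfies a quasilinear parabolic equation (cf.\ \eqref{equation_u} and the evolution formula for $w_\cC$ in Proposition \ref{proposition-evolution-wC}) whose coefficients are $C^m$-close to those of the Ornstein--Uhlenbeck equation on the large graphical ball $\{|\mathbf{y}|\le |\tau|^{1/10}\}$; interior Schauder estimates on parabolic cylinders contained in $[\tau_0-l-1, \tau_0]\times\{|\mathbf{y}|\le R(\tau_0,l,m)\}$ then convert the $\hD$-bound on $w_\cC$ into a $C^m$ bound. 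In the tip region, the tip derivative estimates in Proposition \ref{lemma-mainY}, together with the $\|\cdot\|_{2,\infty}$-bound on $W_\cT$ and parabolic regularity for the $Y$-equation, yield a $C^m$ bound on $Y_1-Y_2$ on the rescaled compact tip for $\tau\in[\tau_0-l,\tau_0]$. Choosing $\delta(\tau_0,l)>0$ small enough ensures that $\bar{M}^1_\tau$ remains in a tubular neighborhood of $\bar{M}^2_\tau$ throughout $[\tau_0-l, \tau_0]$, so the normal graph $\zeta(\tau)$ of $\bar{M}^1_\tau$ over $\bar{M}^2_\tau$ is well defined, and the two regional $C^m$ estimates glue through the collar transition zone $\{\theta\le v\le 2\theta\}$ via the norm equivalence \eqref{eq_equiv_of_norms}, producing the single bound \eqref{Lip spectral graph}.

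The main obstacle is the gluing step: the natural parametrizations in the two regions (cylindrical Euclidean graphs over the asymptotic cylinder versus inverse-profile $Y$-graphs in the tip) must be reconciled along the collar into one bona fide normal graph $\zeta$ on the whole hypersurface, and this must be maintained over the full finite interval $[\tau_0-l,\tau_0]$. A further subtlety is that the constant $C(\tau_0,l,m)$ legitimately inherits $l$-dependence both from the Duhamel factor $\tau_0^2/\tau^2$ that propagates the initial-data contribution backward in time and from the iterated interior Schauder estimates backward over an interval of length $l$; making any of these constants independent of $l$ would require a substantially more refined stability analysis.
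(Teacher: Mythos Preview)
Your proposal is essentially correct and follows the paper's approach closely: the Duhamel representation for $a_{ij}$, the absorption via Claim \ref{claim411} to bound $\bar A(\tau_0)$, the appeal to Theorem \ref{prop-cor-main}, and then parabolic interior regularity in each region to pass from the weighted $L^2$ bounds to $C^m$ bounds on the difference.

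Two points of correction. First, the gluing of the regional $C^m$ estimates into a single normal graph $\zeta$ is not accomplished by the norm equivalence \eqref{eq_equiv_of_norms}, which is purely an $L^2$-level statement in the transition annulus. The paper instead proves a dedicated Lemma \ref{graphical_estimates claim} (graphical estimate) that runs an explicit implicit function theorem argument in each region separately: in $\{v_2\geq \tfrac{39\theta}{40}\}$ one solves $\mathbf{G}_{\mathbf{y}}(\zeta)=0$ built from the cylindrical profiles, and in $\{v_2\leq \tfrac{99\theta}{100}\}$ one solves $\mathcal{G}_{(\mathbf{v},\omega)}(\zeta)=0$ built from the inverse profiles, in both cases using the first-derivative bounds (Lemma \ref{std_pde_der_est}, Proposition \ref{lemma-mainY}) to control $\mathbf{G}'$ and $\mathcal{G}'$ away from zero. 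The overlap of these two regions and convexity then give a single well-defined $\zeta$. Second, your remark that the Duhamel factor $\tau_0^2/\tau^2$ contributes $l$-dependence is incorrect: since $\tau\le\tau_0<0$ we have $\tau_0^2/\tau^2\le 1$, so this factor is uniformly bounded and the estimate $\bar A(\tau_0)\le C\|\mathfrak{p}_0 w_\cC(\tau_0)\|_{\mathcal{H}}$ is independent of $l$. The $l$-dependence of $C(\tau_0,l,m)$ enters only through the parabolic interior estimates and the implicit function bounds in the graph construction.
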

\begin{proof}
For proving the desired inequality, by Theorem \ref{prop-cor-main} using $\bar {A}$ in \eqref{eq-tildeA},
\begin{align}
    \|w_\cC\|_{\hD,\infty} +\| W_\cT \|_{2,\infty} \leq C\|\mathfrak{p}_0 w_\cC \|_{\hD,\infty}.
\end{align}
Since $\mathfrak{p}_0 w_\cC $ lies in a finite dimensional neutral pace $\mathcal{H}_0$ (see \eqref{H0+} for definition), the $\hD$-norm and $\mathcal{H}$-norm are equivalent and  we have 
\begin{align}
    \|\mathfrak{p}_0 w_\cC \|_{\hD,\infty} \leq C\|\mathfrak{p}_0 w_\cC \|_{\mathcal{H},\infty} =  C\bar {A}(\tau_0),
\end{align}
where $C>0$ is a constant depending on $k$. Thus
\begin{align}
    \|w_\cC\|_{\hD,\infty} +\| W_\cT \|_{2,\infty} \leq C\bar {A}(\tau_0).
\end{align}
Since 
\begin{align}
    \|\mathfrak{p}_0 w_{\mathcal{C}}(\tau_0)\|_{\mathcal{H}} = \big (\sum_{i\le j} a_{ij}^2 (\tau_0)\big )^{\frac12},
\end{align}
for the first part of the theorem, it suffices to prove 
\begin{align}
    \bar {A}(\tau_0)\leq C\big (\sum_{i\le j} a_{ij}^2 (\tau_0)\big )^{\frac12}  .
\end{align}
Recall $a_{ij}$ satisfies the ODE in \eqref{aij_ODE_F}:
\be
  \frac{d}{d\tau}a_{ij}(\tau) = \frac{2a_{ij}(\tau)}{|\tau|} + F_{ij}(\tau),
\ee
where $F_{ij}$ is from \eqref{eq-F-tau}. Applying the integrating factor $\tau^2$ to the equation we get
\be
  \frac{d}{d\tau}(\tau^2a_{ij}(\tau)) = \tau^2 F_{ij}(\tau),
\ee
Integrating the ODE from $\tau$ to $\tau_0$ gives: 
\begin{align}
    a_{ij}(\tau) = \frac{\tau_{0}^2}{\tau^2}a_{ij}(\tau_0) -\frac{1}{\tau^2}\int_{\tau}^{\tau_0} F_{ij}(\sigma)\sigma^2\, d\sigma.
\end{align}
Chopping $[\tau, \tau_0]$ into unit intervals and applying this repeatedly and using the estimate in Claim \ref{claim411}, for $\varepsilon>0$ small enough, there are $\kappa>0$ small enough and $\tau_*$ negative enough such that, for $\tau\leq \tau_0\leq \tau_*$,
\begin{equation}
\left|\int_{\tau}^{\tau_0} F_{ij}(\sigma)\sigma^2\, d\sigma \right|\leq \varepsilon|\tau|^2\bar {A}(\tau_0).
\end{equation}
We find that for $\tau\leq \tau_0\leq \tau_*$ 
\begin{align}
    |a_{ij}(\tau)| \leq  |a_{ij}(\tau_0)| + \varepsilon \bar {A}(\tau_0).
\end{align}
Integrating the square of both sides of above inequality from $\tau-1$ to $\tau$ and taking the supremum over $\tau \leq \tau_0$, we infer that
\begin{align}
    \bar {A}(\tau_0)\leq C \left[\big (\sum_{i\le j} a_{ij}^2 (\tau_0)\big )^{\frac12} + \varepsilon\bar {A}(\tau_0)\right].
\end{align}
where $C>0$ is a constant depending on $k$.  Choosing $\varepsilon=\frac{1}{2}$, we conclude that
\begin{align}
    \bar {A}(\tau_0)\leq C\big (\sum_{i\le j} a_{ij}^2 (\tau_0)\big )^{\frac12},
\end{align}
and finish the proof of \eqref{part1} in the first part of the proposition.

Now we prove \eqref{Lip spectral graph} in the second part of the proposition.  Note that Proposition \ref{lemma-mainY} and Lemma \ref{lemma-coefficients} imply that the first derivative of $Y$ at time $\tau$ is bounded by a constant $C(\tau_0,l)$ depending on $\tau_0$ and $l$, when $\tau \in [\tau_0-l, \tau_0]$. 
Therefore the coefficients of the evolution equation of $W$ (see Appendix \ref{lemma-ev-W-appendix}) are bounded by $C(\tau_0, l)$. By parabolic interior estimate in \cite[Theorem 6.17]{Lieberman}, we have
\begin{align}\label{W_c20}
	&\quad\sup_{\tau\in [\tau_0-l, \tau_0]}||W_{\cT}(\cdot,\tau)||_{C^{m}(B_{17\theta/18})}\\
    &\leq C(\tau_0, l, m, \theta) \Big ( \sup_{\tau\leq \tau_0}\int_{\tau-1}^{\tau}||W_{\cT}(\cdot, s)||_{L^2(B_{\theta })}^2 ds \Big)^{1/2}\\
    &\leq C(\tau_0, l, m, \theta)\| W_\cT \|_{2,\infty}
\end{align}
We shall estimate $w_{\mathcal{C}}$ similarly. To this end, we define the following balls:
\begin{align*}
	B^1 = B(0,\sqrt{|\tau|(2(n-k)-(19\theta/20)^2)}) \subset\mathbb{R}^k
\end{align*}
and
\begin{align*}
    \ B^2 = B(0,\sqrt{|\tau|(2(n-k)-(9\theta/10)^2)}) \subset\mathbb{R}^k.
\end{align*}
By the uniform sharp asymptotics in Proposition \ref{strong_sharp_asymptotics}, there exists $\cT(\theta) < -1$ negative enough such that when $\tau \leq \cT(\theta)$ 
\begin{align*}
	\{v\geq \theta\} \subset B^1 \subset  B^2 \subset \{v\geq 7\theta/8\}.  
\end{align*} 

By definition,  $\chi_{\cC} \equiv 1$ on $B^2$. In particular, all derivatives of $\chi_{\cC}$ vanish on $B^2$. Then we can apply standard parabolic estimates (c.f \cite[Theorem 6.17]{Lieberman}) to the evolution equation of  $w_{\cC}$  in Proposition \ref{evolution-wC} and get
\begin{align}\label{wc20}
	&\quad\sup_{\tau\in [\tau_0-l, \tau_0]}||w_{\cC}(\cdot,\tau)||_{C^{m}(B^1)}\\ &\leq C(\tau_0, l, m, \theta) \sup_{\tau\leq \tau_0}\left(\int_{\tau-1}^{\tau}||w_{\cC}(\cdot, s)||^2_{L^2(B^2)}ds\right)^{\frac{1}{2}} \\
	&\leq C(\tau_0, l, m, \theta)  \|w_\cC\|_{\hD,\infty}.
    \end{align}

Combining \eqref{W_c20}, \eqref{wc20} with \eqref{part1}, we conclude that for $\tau \in [\tau_0-l, \tau_0]$
\begin{align}\label{C20-L2}
	&\quad||w_{\cC}(\cdot,\tau)||_{C^{m}(B^1)}  +  ||W_{\cT}(\cdot,\tau)||_{C^{m}(B_{17\theta/18})}\\
    &\leq
	C(\tau_0, l, m, \theta) \|\mathfrak{p}_0 w_{\mathcal{C}}(\tau_0)\|_{\mathcal{H}}.
\end{align}
Then for the second part of proof of Proposition \ref{Lipshcitz continuity} 
 ({Lipshcitz continuity} of $\mathcal{E}^{-1}$), we
combine the above inequalities and \eqref{graph-w-W} in Lemma \ref{graphical_estimates claim} with fixed small enough $\theta>0$ under assuming $|\mathcal{E}(\tau_0)(\mathcal{M}^1)-\mathcal{E}(\tau_0)(\mathcal{M}^2)|\leq \delta(\tau_0, l, \theta)$ to obtain
\begin{align}
    ||\zeta(\cdot,\tau)||_{C^{m}} &\leq C(\tau_0, l, m, \theta)(||w_{\cC}(\cdot,\tau)||_{C^{m}(B^1)}  +  ||W_{\cT}(\cdot,\tau)||_{C^{m}(B_{17\theta/18})}) \\
    &\leq C(\tau_0, l, m, \theta)\|\mathfrak{p}_0 w_{\mathcal{C}}(\tau_0)\|_{\mathcal{H}}\\
    &\leq C(\tau_0, l, m, \theta)|\mathcal{E}(\tau_0)(\mathcal{M}^1)-\mathcal{E}(\tau_0)(\mathcal{M}^2)|,
\end{align}
where $\theta>0$ is fixed. This completes the proof of the proposition. 
\end{proof}

\bigskip

Then, we prove Theorem \ref{spectral_stability_intro} (the spectral stability). 
\begin{proof}[Proof of Theorem \ref{spectral_stability_intro}]
First, by definition of the two truncated profile functions $ v^{i}_{\mathcal{C}}$ of  the two profile functions $v^{i}$ for $i=1, 2$, we have
\begin{align}
    &\quad v^{1}_{\mathcal{C}}-v^{2}_{\mathcal{C}}\\
    &=(v^{1}-v^{2})\chi(v^{1})+v^{2}(\chi(v^{1})-\chi(v^{2}))\\
    &=(v^{1}-v^{2})\chi(v^{1})+v^{2}\chi'(\tilde{v})(v^{1}-v^{2}),
\end{align}
where we used intermediate value theorem in the last line. Together with the  definition of $\mathcal{E}(\tau_0)(\mathcal{M}^{i})$ at $\tau_0$ time for $i=1, 2$, normalization condition \eqref{OC21} and  we obtain
\begin{equation}
    |\mathcal{E}(\tau_0)(\mathcal{M}^1)\!-\!\mathcal{E}(\tau_0)(\mathcal{M}^2)|\!\leq \!C\|\zeta(\tau_0)\|_{C^{m}}.
\end{equation}

Then,  let $c(\tau_0, l, m)=C(\tau_0, l, m)^{-1}$ where $C(\tau_0, l, m)^{-1}$ is from Proposition \ref{Lipshcitz continuity}. Together with the above, the estimates \eqref{bilip stability} in Theorem \ref{spectral_stability_intro} (the spectral stability) directly follow from
  Proposition \ref{Lipshcitz continuity} (Lipschitz continuity of $\mathcal{E}^{-1}$), the definition of $\mathcal{E}(\tau_0)(\mathcal{M}^{i})$ at $\tau_0$ time for $i=1, 2$ and 
  that  by \eqref{OC21} for $\tau_0$ negative enough
 \begin{align}
       |\mathfrak{p}_0 w_{\mathcal{C}}(\tau_0)\|_{\mathcal{H}} &\leq \frac{\sqrt{2(n-k)}}{4|\tau_0|}\||{\bf y}|^2-2k\|_{\mathcal{H}}|\mathcal{E}(\tau_0)(\mathcal{M}^1)-\mathcal{E}(\tau_0)(\mathcal{M}^2)|\\
       &\leq |\mathcal{E}(\tau_0)(\mathcal{M}^1)-\mathcal{E}(\tau_0)(\mathcal{M}^2)|.
    \end{align}

\end{proof}

\bigskip

\begin{appendix}
    \section{Derivation of some  evolution equations}\label{evolution equation appendix}

We first recall some definitions from Section \ref{Quadratic_almost_concavity_section}. For constant $\delta>0$, the $(0,2)$-tensor $A$ is defined by
\be  A_{ij} =\nabla_{ij}^2Q-\varepsilon g_{ij} -2|\nabla V|^2\nabla_{i}V\nabla_{j}V+|\nabla V|^2\varepsilon g_{ij},\ee 
where $\eps = \delta+ \gamma $ is a scalar function with 
\be\gamma:= \left(\frac{-t}{\log(-t)}\right)^{3/2} V^{-3}.\ee

\begin{proposition}[evolution of Hessian]\label{prop_evol_hess} The Hessian of $Q=V^2$, viewed as an intrinsic function, evolves by
\begin{align}\label{evolution_Qij}
&\quad (\partial_t - \Delta) \nabla^2_{ij}Q\\
&=-Q^{-1}\nabla_k  Q\,  \nabla_k \nabla^2_{ij}Q - Q^{-1}\nabla^2_{ik}Q  \nabla^2_{jk} Q\\
 &+
\tfrac{1}{2} Q^{-2} |\nabla  Q|^2 \nabla^2_{ij} Q    - Q^{-3}|\nabla  Q|^2\nabla_i  Q \nabla_j Q \nonumber\\
&+Q^{-2}(\nabla_i Q\nabla_k Q \nabla^2_{jk} Q  + \nabla_j Q\nabla_k Q  \nabla^2_{ik} Q)\\
 & - (H h_{ik}-h_{ip} h_{pk}  ) \nabla^2_{jk} Q- (H h_{jk}-h_{jp} h_{pk}  )\nabla^2_{ik} Q\nonumber\\
  & - (h_{ij}h_{kp}-h_{ip}h_{jk})Q^{-1}\nabla_k Q \nabla_p Q\nonumber\\
  &+2(h_{ij} h_{pq} - h_{iq}h_{jp}) \nabla^2_{pq} Q +2 h_{kp}\nabla_k h_{ij}\nabla_p Q. 
\end{align}
\end{proposition}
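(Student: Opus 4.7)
The plan is to derive the evolution of $\nabla^2_{ij}Q$ by successively differentiating the scalar evolution equation $(\partial_t - \Delta)V = -(n-k)V^{-1}$ from Proposition~\ref{lemma-ev-eq}, and then carefully commuting $\nabla_i\nabla_j$ past the heat operator on the mean curvature flow. First I would pass from $V$ to $Q=V^2$: using $(\partial_t-\Delta)Q = 2V(\partial_t-\Delta)V - 2|\nabla V|^2$ together with $|\nabla V|^2 = |\nabla Q|^2/(4Q)$, one obtains the scalar identity
\begin{equation*}
(\partial_t-\Delta)Q = -2(n-k) - \tfrac{1}{2}Q^{-1}|\nabla Q|^2.
\end{equation*}
Applying $\nabla^2_{ij}$ to both sides and expanding by the chain rule on the nonlinear term $-\tfrac12 Q^{-1}|\nabla Q|^2$ produces, after one more round of differentiation, exactly the algebraic quartet $-Q^{-3}|\nabla Q|^2\nabla_iQ\nabla_jQ$, $\tfrac12 Q^{-2}|\nabla Q|^2\nabla^2_{ij}Q$, $Q^{-2}(\nabla_iQ\nabla_kQ\nabla^2_{jk}Q + \nabla_jQ\nabla_kQ\nabla^2_{ik}Q)$, together with the transport term $-Q^{-1}\nabla_kQ\,\nabla_k\nabla^2_{ij}Q$ and the pure-Hessian term $-Q^{-1}\nabla^2_{ik}Q\,\nabla^2_{jk}Q$ that come from $\nabla_k(Q^{-1}\nabla Q)$.

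The heart of the computation is the commutator $[\partial_t - \Delta,\nabla_i\nabla_j]$ acting on a scalar. This decomposes into a time commutator and a space commutator. For the time piece, under mean curvature flow the Christoffel symbols satisfy $\partial_t \Gamma_{ij}^k = -g^{kl}(\nabla_i(Hh_{jl}) + \nabla_j(Hh_{il}) - \nabla_l(Hh_{ij}))$, so that $\partial_t \nabla^2_{ij}Q - \nabla^2_{ij}\partial_t Q$ is a contraction of $\nabla(Hh)$ with $\nabla Q$. For the space piece, the standard Bochner-type identity gives
\begin{equation*}
\Delta \nabla^2_{ij}Q - \nabla^2_{ij}\Delta Q = 2R_{ikjl}\nabla^2_{kl}Q - R_{ik}\nabla^2_{jk}Q - R_{jk}\nabla^2_{ik}Q + (\nabla_i R_{jk} + \nabla_j R_{ik} - \nabla_k R_{ij})\nabla^k Q.
\end{equation*}
I would then invoke Gauss' equation $R_{ikjl} = h_{ij}h_{kl} - h_{il}h_{jk}$ and $R_{ij} = Hh_{ij} - h_{ip}h_{pj}$ to convert Riemann and Ricci curvature into the second-fundamental-form contributions $2(h_{ij}h_{pq} - h_{iq}h_{jp})\nabla^2_{pq}Q$ and $-(Hh_{ik}-h_{ip}h_{pk})\nabla^2_{jk}Q$ (together with its $(i\leftrightarrow j)$ partner). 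The first-order Ricci-derivative block collapses, through the contracted second Bianchi identity and Codazzi's equation $\nabla_ih_{jk} = \nabla_jh_{ik}$, against the $\partial_t\Gamma_{ij}^k$ contribution coming from the time commutator; the single leftover piece is precisely $2h_{kp}\nabla_k h_{ij}\nabla_p Q$.

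The term $-(h_{ij}h_{kp}-h_{ip}h_{jk})Q^{-1}\nabla_kQ\,\nabla_pQ$ arises from the interaction of the Riemann contraction $2R_{ikjl}\nabla^2_{kl}Q$ with the quadratic gradient piece of $\nabla^2Q$ via $\nabla^2_{ij}Q = 2V\nabla^2_{ij}V + 2\nabla_iV\nabla_jV$, together with the identity $\nabla_iV\nabla_jV = (4Q)^{-1}\nabla_iQ\nabla_jQ$ used to eliminate all explicit $V$-factors. The main obstacle, and what makes the statement nontrivial, is the exact bookkeeping: one must track three independent sources (chain-rule derivatives of the nonlinear right-hand side, $\partial_t\Gamma$ contributions from MCF, and the full Bochner commutator including derivatives of Ricci), and verify that all derivatives of $h$ that are not of the form $\nabla h\cdot \nabla Q$ cancel, leaving only the single Codazzi-type term $2h_{kp}\nabla_k h_{ij}\nabla_p Q$. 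This is the same cancellation pattern used by Hamilton in his tensor maximum principle for $\nabla^2 f$ on mean curvature flow, and once it is verified, the stated formula follows by collecting like terms.
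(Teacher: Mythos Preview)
Your overall strategy---differentiate the scalar evolution of $Q$, then compute the commutator $[\partial_t-\Delta,\nabla_i\nabla_j]$ via the Bochner formula together with Gauss and Codazzi---is exactly the paper's approach. However, there is one concrete misattribution in your bookkeeping that would cause the computation to fail as written.

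When you differentiate the right-hand side $-\tfrac12 Q^{-1}|\nabla Q|^2$ twice, the third-derivative term you obtain is $-Q^{-1}\nabla_kQ\,\nabla_i\nabla^2_{jk}Q$, \emph{not} $-Q^{-1}\nabla_kQ\,\nabla_k\nabla^2_{ij}Q$. To pass from the former to the latter (which is what the stated formula contains) you must apply the Ricci identity
\[
\nabla_i\nabla^2_{jk}Q = \nabla_k\nabla^2_{ij}Q + (h_{ij}h_{kp}-h_{ip}h_{jk})\nabla_pQ,
\]
and this commutation is precisely what generates the term $-(h_{ij}h_{kp}-h_{ip}h_{jk})Q^{-1}\nabla_kQ\nabla_pQ$. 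It does \emph{not} come from splitting $\nabla^2_{kl}Q$ into $2V\nabla^2_{kl}V+2\nabla_kV\nabla_lV$ inside the Bochner term $2R_{ikjl}\nabla^2_{kl}Q$: that term already appears intact as $2(h_{ij}h_{pq}-h_{iq}h_{jp})\nabla^2_{pq}Q$ in the final formula, so any such splitting would double-count and give the wrong sign. Once you insert this missing commutation step, your outline matches the paper's proof exactly.
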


 \begin{proof}
Applying Proposition \ref{lemma-ev-eq} (evolution of profile function) yields
 \begin{equation}
 \label{eq-f}
 (\partial_t-\Delta) Q = -\tfrac{1}{2}Q^{-1}|\nabla Q|^2 - 2(n-k).
 \end{equation}
Differentiating we get
\begin{equation}
\nabla_{j} (\partial_t-\Delta) Q =- Q^{-1}\nabla_k Q\nabla^2_{jk}Q+\tfrac{1}{2}Q^{-2}|\nabla Q|^2 \nabla_j Q ,
\end{equation}
and differentiating again we obtain
\begin{align} \label{comp_begin_with}
&\quad \nabla^2_{ij} (\partial_t-\Delta) Q \\&= - Q^{-1}\nabla_k Q {\nabla_i\nabla^2_{jk} Q}- Q^{-1}\nabla^2_{ik}Q  \nabla^2_{jk} Q \nonumber\\
&+ \tfrac{1}{2}Q^{-2}|\nabla  Q|^2 \nabla^2_{ij} Q - Q^{-3}|\nabla  Q|^2\nabla_i  Q \nabla_j Q\nonumber\\
&+Q^{-2}(\nabla_i Q\nabla_k Q \nabla^2_{jk} Q  + \nabla_j Q\nabla_k Q  \nabla^2_{ik} Q)
\end{align}
Hence, our main task is to compute the commutator of the heat operator and the Hessian.

In general, the time derivative of the Hessian of a function $f$ equals
\begin{equation}
\partial_t \big(\nabla^2_{ij} f\big) 
 = \nabla^2_{ij}  \big(\partial_t f\big) - \big(\partial_t \Gamma_{ij}^k\big) \,\partial_k f\, ,
\end{equation}
where the variation of the Christoffel symbols is given by the formula
\begin{equation}
\partial_t \Gamma_{ij}^k= \frac{1}{2}g^{k\ell}\big(\nabla_i (\partial_t g_{j\ell}) +\nabla_j (\partial_t g_{i\ell}) -\nabla_\ell (\partial_t g_{ij})  \big).
\end{equation}
Since under mean curvature flow we have $\partial_t g_{ij} = -2H h_{ij}$ together with the Codazzi equation $\nabla_i h_{jk}=\nabla_j h_{ik}$ this yields
 \begin{align}
 \label{eq-comm-tder}
&\quad\, \partial_t \big(\nabla^2_{ij} Q\big) -\nabla^2_{ij}\big(\partial_t Q\big)\\
& =    \big(H \nabla_i h_{jk}  - h_{ij} \nabla_k H +  h_{ik} \nabla_j H  +  h_{jk} \nabla_i H \big)\, \nabla_k Q.
 \end{align}
On the other hand, thanks to the second Bianchi identity we have the general commutator formula
 \begin{align}
 \label{comm-lap-hess}
 &\quad \nabla^2_{ij} \Delta f - \Delta  \nabla^2_{ij} f\\
 &=2R_{i p j q}\nabla^2_{pq} f-R_{ik} \nabla^2_{jk} f-R_{jk} \nabla^2_{ik} f\\
 &-(\nabla_i R_{jk}+\nabla_j R_{ik}-\nabla_k R_{ij})\nabla_k f,
 \end{align}
where $R_{ijk\ell}$ is the Riemann tensor and $R_{ik}=R_{i p k p}$ is the Ricci tensor. In our setting of hypersurfaces, we in addition have the Gauss equation
\begin{equation}
R_{ijk\ell} = h_{ik} h_{j\ell} - h_{i\ell}h_{jk},
\end{equation}
and its trace
\begin{equation}
R_{ik} = H h_{ik}-h_{ij}h_{jk}.
\end{equation}
In particular, together with the Codazzi equation this yields
\begin{align}
&\quad\nabla_i R_{jk}+\nabla_j R_{ik}-\nabla_k R_{ij}\\
&=\big(H \nabla_i h_{jk}  - h_{ij} \nabla_k H +  h_{ik} \nabla_j H  +  h_{jk} \nabla_i H \big)-2h_{kp}\nabla_p h_{ij} .
\end{align}
Combining the above formulas we infer that
\begin{align} \label{eq-comm-heat-op}
&\quad (\partial_t -\Delta) \nabla^2_{ij} Q - \nabla^2_{ij} (\partial_t -\Delta)Q\\
&= 2 h_{kp}\nabla_p h_{ij}\nabla_k Q+2(h_{ij} h_{pq} - h_{iq}h_{jp})\, \nabla^2_{pq} Q  \\
&- (  H h_{ik}-h_{ip} h_{pk} )\, \nabla^2_{jk} Q - (H h_{jk} -h_{jp} h_{pk})\, \nabla^2_{ik} Q .
\end{align}
Together with \eqref{comp_begin_with}, where we rewrite the third derivative term using
\begin{equation}
{\nabla_i\nabla^2_{jk} Q}=\nabla_{k}\nabla^2_{ij} Q+(h_{ij}h_{kp}-h_{ip}h_{jk})\nabla_p Q,
\end{equation}
this yields the assertion.
 \end{proof}

We rewrite the Codazzi term from the last line of \eqref{evolution_Qij} in terms of $V$:

 \begin{lemma}[Codazzi term] \label{lemma-nabla-h}
 We have
  \bea 2  h_{kp}\nabla_k h_{ij}\nabla_p Q = -2\gra  \nabla_p V h_{pk} \nabla_k \nabla^2_{ij}Q + \Psi_{ij},   \eea
  where
   \begin{align}   \label{eq-psi2}
  \Psi_{ij}
  &= 4\gra  \nabla_pV h_{pk}( \nabla_k V \nabla^2_{ij}V+\nabla_j V \nabla^2_{ik}V +\nabla_i V \nabla^2_{jk}V ) \\
   &- 4 \nabla_p V h_{pk}(\gra ^3 V \nabla^2_{kq}V \nabla_q V \nabla^2_{ij}V-V G_{kij} ),
 \end{align}
and $G_{kij}$ is a 3-tensor that satisfies $G_{kij}=G_{kji}$ and  $G_{kij}X^iX^j=0$ for all vector fields such that $X\perp \partial_\vartheta$.
 \end{lemma}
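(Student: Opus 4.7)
The strategy is to rewrite the left-hand side entirely in terms of $V$ and its covariant derivatives using the relation $Q = V^2$ and the decomposition $h_{ij} = -\eta \nabla^2_{ij}V + \eta V G^s_{ij}$, where $G^s_{ij} := \nabla_i\vartheta^\alpha \nabla_j\vartheta^\beta\, g^s_{\alpha\beta}$ encodes the purely spherical part of the metric (cf.\ \eqref{eq-equivalent}). Since $\nabla_p Q = 2V\nabla_p V$, the left-hand side becomes $4V h_{pk}\nabla_p V\,\nabla_k h_{ij}$. Thus I need to compute $\nabla_k h_{ij}$ and then recognize the resulting third-derivative term as (a multiple of) $\nabla_k\nabla^2_{ij}Q$ plus controlled lower-order corrections that will match those appearing in $\Psi_{ij}$.

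The concrete steps are as follows. First, differentiate $h_{ij} = -\eta \nabla^2_{ij}V + \eta V G^s_{ij}$ using the identity $\nabla_k \eta = \eta^3 \nabla^2_{kq}V\nabla_q V$ (which follows from $\eta^{-2} = 1-|\nabla V|^2$), obtaining
\begin{equation}
\nabla_k h_{ij} = -\eta^3 \nabla^2_{kq}V\nabla_q V\, \nabla^2_{ij}V - \eta\, \nabla_k\nabla^2_{ij}V + \bigl(\eta\nabla_k V + V\eta^3 \nabla^2_{kq}V\nabla_q V\bigr)G^s_{ij} + \eta V\, \nabla_k G^s_{ij}\, .
\end{equation}
Second, multiply by $4V h_{pk}\nabla_p V$; the term involving $\nabla_k\nabla^2_{ij}V$ gives $-4\eta h_{pk}\nabla_pV \bigl(V\,\nabla_k\nabla^2_{ij}V\bigr)$, and for this I use the Leibniz expansion $\nabla_k\nabla^2_{ij}Q = 2V\nabla_k\nabla^2_{ij}V + 2\nabla_k V\nabla^2_{ij}V + 2\nabla^2_{ik}V\nabla_j V + 2\nabla^2_{jk}V\nabla_i V$ to substitute $V\nabla_k\nabla^2_{ij}V = \tfrac12 \nabla_k\nabla^2_{ij}Q - \nabla_kV\nabla^2_{ij}V - \nabla^2_{ik}V\nabla_jV - \nabla^2_{jk}V\nabla_iV$. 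Collecting, the highest-order piece is exactly $-2\eta \nabla_p V h_{pk}\nabla_k\nabla^2_{ij}Q$, the three gradient-type cross terms assemble into the first line of $\Psi_{ij}$, and the term from $-\eta^3\nabla^2_{kq}V\nabla_qV\,\nabla^2_{ij}V$ produces exactly $-4V\eta^3\nabla_pVh_{pk}\nabla^2_{kq}V\nabla_qV\nabla^2_{ij}V$.

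What remains are the two spherical contributions, which I define to be
\begin{equation}
G_{kij} := \bigl(\eta\nabla_k V + V\eta^3 \nabla^2_{kq}V\nabla_q V\bigr)G^s_{ij} + \eta V\, \nabla_k G^s_{ij}\, ,
\end{equation}
so that the remainder equals $4V\nabla_pV h_{pk} G_{kij}$, matching the last term of $\Psi_{ij}$. Symmetry $G_{kij} = G_{kji}$ is immediate since both $G^s$ and its covariant derivative are symmetric in the last two slots. The key point, where a little care is required, is verifying $G_{kij}X^iX^j = 0$ for $X\perp \partial_\vartheta$. For the algebraic part this is clear since $G^s(X,X) = 0$ whenever $X$ has no spherical component. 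For the derivative piece I would invoke the extension argument used in Claim~\ref{claim_extension} to extend $X$ locally as a horizontal vector field and differentiate the identity $G^s(X,X)\equiv 0$; since $G^s(\cdot, X) = 0$ whenever $X\perp \partial_\vartheta$, the cross term $G^s(\nabla_k X, X)$ vanishes, leaving $(\nabla_k G^s)(X,X)=0$. Tensoriality then gives the pointwise conclusion. The main bookkeeping obstacle is simply keeping track of signs and combinatorics when matching the four groups of terms above against the three distinct summands of $\Psi_{ij}$; once the identity $V\nabla_k\nabla^2_{ij}V = \tfrac12\nabla_k\nabla^2_{ij}Q - \nabla_kV\nabla^2_{ij}V - \nabla^2_{ik}V\nabla_jV - \nabla^2_{jk}V\nabla_iV$ is used, the matching is forced.
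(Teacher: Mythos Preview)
Your proposal is correct and follows essentially the same approach as the paper: differentiate the relation $h_{ij}=-\eta\nabla^2_{ij}V+\eta V\,G^s_{ij}$ using $\nabla_k\eta=\eta^3\nabla^2_{kq}V\nabla_qV$, then substitute $V\nabla_k\nabla^2_{ij}V=\tfrac12\nabla_k\nabla^2_{ij}Q-\nabla_kV\nabla^2_{ij}V-\nabla^2_{ik}V\nabla_jV-\nabla^2_{jk}V\nabla_iV$. The only minor difference is in verifying $G_{kij}X^iX^j=0$: the paper writes $G_{kij}$ out explicitly with factors $\nabla_i\vartheta$, $\nabla_j\vartheta$ so that the vanishing is immediate (each summand has $\nabla_i\vartheta\,X^i=0$ or $\nabla_j\vartheta\,X^j=0$), whereas you use an extension/differentiation argument --- both are fine.
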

 
 \begin{proof} Using \eqref{eq-equivalent} (second fundamental form) we see that
  \bea 
 \label{eq-nabla-h-U}
 \nabla_k h_{ij}  = -\gra  ^3 \nabla_q V \nabla^2_{kq}V\nabla^2_{ij}V - \gra  \nabla_k\nabla^2_{ij}V + G_{kij} \, , \eea 
 where
  \begin{equation}
 \label{eq-Gijk}
G_{kij}=\nabla_k(\eta V)\nabla_i \vartheta\nabla_j\vartheta+\eta V \nabla_{i}\vartheta \nabla^2_{jk}\vartheta  +\eta V \nabla^2_{ik}\vartheta \nabla_{j}\vartheta\, .
 \end{equation}
Note $G_{kij}=G_{kji}$ and $G_{kij}X^iX^j=0$ whenever $X\perp \partial_\vartheta$. Moreover, substituting $Q=V^2$ we see that
\begin{equation}
\nabla _k \nabla^2_{ij}Q = 2 ( V\nabla_k \nabla^2_{ij}V+\nabla_k V \nabla^2_{ij}V   +\nabla_i V \nabla^2_{kj}V +\nabla_j V \nabla^2_{ki}V ).
\end{equation}
Combining the above facts yields the assertion.
 \end{proof}

   \begin{proposition}[evolution of $A_{ij}$]\label{evolution of new A} The tensor $A$ evolves by   \bea \label{eq-Aij} (\partial_t - \Delta + \nabla_Z)& {A}_{ij}=  N_{ij}\, .\\
\eea 
Here  $Z$ is the vector field defined by $\langle Z,Y\rangle=Q^{-1}\nabla_Y Q+2\eta h(\nabla V, Y)$ for all $Y$, and
$N_{ij}$ is $(0,2)$-tensor that is       \begin{align}\label{N=N1+N2+N3 2}
    N_{ij} = (N_1)_{ij} + (N_2)_{ij} + (N_3)_{ij}\, , 
\end{align}
where 
\bea  \label{eq-N1}
    {(N_1)}_{ij} &= -\frac{n-k}{2}Q^{-2}|\nabla Q|^2\left(Q^{-1}\nabla_i Q \nabla_j Q-\varepsilon g_{ij}\right) \\
    &-Q^{-3}|\nabla  Q|^2\nabla_i  Q \nabla_j Q - Q^{-1}\nabla^2_{ik}Q  \nabla^2_{jk} Q\\
&+
\tfrac{1}{2} Q^{-2} |\nabla  Q|^2 \nabla^2_{ij} Q
  + Q^{-2}(\nabla_i Q\nabla_k Q \nabla^2_{jk} Q  + \nabla_j Q\nabla_k Q  \nabla^2_{ik}Q )\\
    &+ \left(Q^{-1}\nabla_i Q \nabla_j Q-2\varepsilon g_{ij}\right)|\nabla^2 V|^2 + 2\eta^{-2}\varepsilon H h_{ij} + B_{ij}\, , 
\eea 
with 
\begin{align}
    B_{ij} &= (h_{ij} h_{kp}-h_{ip} h_{jk})(2\nabla^2_{kp}Q-Q^{-1}\nabla_k Q\nabla_p Q)\nonumber  \\
  & +(h_{ik}h_{kp}-h_{ip}H) (\nabla^2_{jp}Q-8^{-1}Q^{-2}|\nabla Q|^2\nabla_{p}Q\nabla_{j}Q)\nonumber\\
  &+(h_{jk}h_{kp}-h_{jp}H) (\nabla^2_{ip}Q-8^{-1}Q^{-2}|\nabla Q|^2\nabla_{i}Q\nabla_{p}Q) \, , \nonumber  
\end{align}
\bea \label{eq-N2-2}    {(N_2)}_{ij} &= 
    -\frac{1}{4}Q^{-1}h^2(\nabla Q,\nabla Q)\left(Q^{-1}\nabla_i Q \nabla_j Q-2\varepsilon g_{ij}\right)\\
    &+Q^{-1}|\nabla Q|^2\nabla^2_{im}V \nabla^2_{jm}V  +\Psi_{ij} +4\nabla_{\ell}(|\nabla V|^2)\nabla_{\ell}(\nabla_{i}V\nabla_{j}V)\\
    &+\nabla_{Z}(-2|\nabla V|^2\nabla_{i}V\nabla_{j}V)+\nabla_Z(|\nabla V|^2)\varepsilon g_{ij} \, ,
\eea 
 with 
\begin{align}\label{eq-Psi1}
   \Psi_{ij} &=  4\eta\nabla_p V h_{pk}(\nabla_k V \nabla^2_{ij}V+\nabla_i V\nabla^2_{jk}V + \nabla_j V \nabla^2_{ik}V)\\
    &-4\nabla_p V h_{pk}(\eta^3 V\nabla^2_{kq}V \nabla_q V \nabla^2_{ij}V-VG_{kij}),\notag
\end{align}
for some $(0,3)$-tensor $G$ satisfying $G_{kij}X^iX^j=0$ for any $X$ perpendicular to the spherical direction, and  
\begin{align}
    (N_3)_{ij}
    =& - 3\eta^{-2}V^{-2}\Big [(n-k)-6|\nabla V|^2 +\tfrac{V^2}{2t}\Big (1-\tfrac{1}{\log(-t)}\Big ) \Big ] \gamma g_{ij}\nonumber\\
    &-6  {\eta^{-1}}V^{-1}  h(\nabla V,\nabla V)  \gamma g_{ij} .\nonumber 
\end{align}
\end{proposition}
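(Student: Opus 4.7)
The plan is to expand $(\partial_t - \Delta + \nabla_Z) A_{ij}$ by linearity, applying the heat-drift operator term by term to the four pieces defining $A_{ij}$ in \eqref{eq-defAijrest}, and then to reorganize the resulting expressions into the three blocks $N_1$, $N_2$, $N_3$. The three basic inputs are Proposition \ref{prop_evol_hess} (evolution of the Hessian), Lemma \ref{lemma-nabla-h} (Codazzi term), and the intrinsic evolution equation $(\partial_t - \Delta)V = -(n-k)V^{-1}$ from Proposition \ref{lemma-ev-eq}, together with $\partial_t g_{ij} = -2Hh_{ij}$ and $\nabla g = 0$.

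First I would treat the Hessian term. The two third-derivative terms appearing in the computation---namely $-Q^{-1}\nabla_k Q\,\nabla_k\nabla^2_{ij}Q$ from Proposition \ref{prop_evol_hess} and the term $-2\eta\,\nabla_p V\, h_{pk}\,\nabla_k\nabla^2_{ij}Q$ extracted via Lemma \ref{lemma-nabla-h} from the Codazzi piece $2h_{kp}\nabla_k h_{ij}\nabla_p Q$---combine into exactly $-\nabla_Z\nabla^2_{ij}Q$, where $\langle Z,Y\rangle = Q^{-1}\nabla_Y Q + 2\eta\, h(\nabla V,Y)$. Moving this to the left-hand side accounts for the $+\nabla_Z$ on the heat operator. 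The remaining zeroth- and second-order terms from Proposition \ref{prop_evol_hess}, together with the curvature contractions $(Hh_{ik}-h_{ip}h_{pk})\nabla^2_{jk}Q$ (and symmetric), and the Gauss term $(h_{ij}h_{kp}-h_{ip}h_{jk})Q^{-1}\nabla_k Q\nabla_p Q$, assemble into the first five lines of $N_1$ together with $B_{ij}$ (the latter being the precise rewriting that isolates the dominant $|\nabla^2V|^2$ and $h^2$ factors that will eventually be used when evaluating on null vectors of $A$).

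Second I would handle the scalar term $\varepsilon g_{ij}$ with $\varepsilon = \delta + \gamma$. Using $\nabla g = 0$ and $\partial_t g_{ij} = -2Hh_{ij}$ gives
\[
(\partial_t - \Delta + \nabla_Z)(\varepsilon g_{ij}) = \bigl((\partial_t - \Delta + \nabla_Z)\varepsilon\bigr)g_{ij} - 2\varepsilon H h_{ij},
\]
so the task reduces to evaluating the heat-drift expression on $\gamma = (-t/\log(-t))^{3/2} V^{-3}$. The time derivative of the prefactor supplies the $\frac{V^2}{2t}(1-\tfrac{1}{\log(-t)})$ contribution; applying $-\Delta$ to $V^{-3}$ and eliminating $\Delta V$ via Proposition \ref{lemma-ev-eq} produces the $(n-k)$ and $6|\nabla V|^2$ terms; and the $2\eta h(\nabla V,\cdot)$ part of $\nabla_Z\gamma$ generates the $-6\eta^{-1}V^{-1}h(\nabla V,\nabla V)\gamma g_{ij}$ piece. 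These package into $N_3$, while the $-2\varepsilon H h_{ij}$ contribution, after an $\eta^{-2}$-compatible rewriting using \eqref{eq-h}, supplies the $+2\eta^{-2}\varepsilon H h_{ij}$ term inside $N_1$ (with a sign flip coming from the $-\varepsilon g_{ij}$ appearing in $A$).

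Third I would handle the remaining quadratic correction $-2|\nabla V|^2\nabla_i V\nabla_j V + |\nabla V|^2\varepsilon g_{ij}$. Applying $(\partial_t - \Delta + \nabla_Z)$ via the product rule and commuting $\nabla$ with the heat operator, one uses Proposition \ref{lemma-ev-eq} to evaluate $(\partial_t-\Delta)V$ and basic identities for $\partial_t\nabla_i V$ (involving $\partial_t\Gamma^k_{ij}$ and $h$) to produce the quadratic pieces $-\tfrac14 Q^{-1}h^2(\nabla Q,\nabla Q)(Q^{-1}\nabla_iQ\nabla_jQ-2\varepsilon g_{ij})$ and $Q^{-1}|\nabla Q|^2\nabla^2_{im}V\nabla^2_{jm}V$. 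The residual Codazzi-type correction from Lemma \ref{lemma-nabla-h} supplies exactly $\Psi_{ij}$ in the form displayed in \eqref{eq-Psi1}, and the drift contributions are recorded explicitly as $\nabla_Z(-2|\nabla V|^2\nabla_iV\nabla_jV) + \nabla_Z(|\nabla V|^2)\varepsilon g_{ij}$; together these form $N_2$. The main obstacle will be the step-three bookkeeping, specifically tracking the $\eta$-powers and signs while re-expressing $\Delta V^{-3}$ and $\Delta(\nabla_iV\nabla_jV)$ in purely intrinsic quantities, and then verifying that the cross-terms arising from the Gauss equation are distributed into $B_{ij}$ (inside $N_1$) as opposed to being absorbed into $N_2$---a convention-fixing that has to be done consistently to match the stated decomposition.
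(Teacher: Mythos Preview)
Your overall strategy matches the paper's: split $A_{ij}$ into its summands, apply $(\partial_t-\Delta+\nabla_Z)$ to each, and reassemble. The paper carries this out exactly as you describe in step one, combining Proposition~\ref{prop_evol_hess} with Lemma~\ref{lemma-nabla-h} so that the two third-order terms absorb into $-\nabla_Z\nabla^2_{ij}Q$; the paper also computes $(\partial_t-\Delta)(\nabla_iV\nabla_jV)$ and $(\partial_t-\Delta)|\nabla V|^2$ separately as you propose.

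However, your bookkeeping of which contributions land in which $N_i$ is off in two places. First, $\Psi_{ij}$ is the residual from Lemma~\ref{lemma-nabla-h} applied to the Codazzi term $2h_{kp}\nabla_kh_{ij}\nabla_pQ$ in the evolution of $\nabla^2_{ij}Q$; it arises in your step one, not step three. The evolution of $\nabla_iV\nabla_jV$ involves no $\nabla h$ terms at all (only $h^2$ and $Hh$ via the commutator formula), so no Codazzi-type correction appears there. Second, the decomposition $N_1+N_2+N_3$ does not correspond term-by-term to the pieces of $A$: for instance, the first line of $N_1$, namely $-\tfrac{n-k}{2}Q^{-2}|\nabla Q|^2(Q^{-1}\nabla_iQ\nabla_jQ-\varepsilon g_{ij})$, and the $(Q^{-1}\nabla_iQ\nabla_jQ-2\varepsilon g_{ij})|\nabla^2V|^2$ term both come from your steps two and three, not from the Hessian. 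The paper handles this by grouping $-\varepsilon g_{ij}+|\nabla V|^2\varepsilon g_{ij}=-\eta^{-2}\varepsilon g_{ij}$ and computing $(\partial_t-\Delta+\nabla_Z)$ on this combined term; this is where the $\eta^{-2}$ prefactor in $N_3$ and the $2\eta^{-2}\varepsilon Hh_{ij}$ in $N_1$ arise cleanly. In your separated treatment, the $\nabla_Z\gamma$ contribution gives $-6\eta V^{-1}h(\nabla V,\nabla V)\gamma$ with a positive power of $\eta$, and the $\eta^{-1}$ in $N_3$ only emerges after including the cross term $-2\langle\nabla\varepsilon,\nabla|\nabla V|^2\rangle g_{ij}$ from the product rule, which you have not accounted for.
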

\begin{proof} 
{Combining  Proposition \ref{prop_evol_hess} (evolution of Hessian) and Lemma \ref{lemma-nabla-h} (Codazzi term),
\bea\label{eq-1111} 
&\quad (\partial_t - \Delta+\nabla _Z) \nabla ^2 _{ij}Q\\ 
&\,=+ \tfrac{1}{2} Q^{-2} |\nabla  Q|^2 \nabla^2_{ij} Q
 + Q^{-2}(\nabla_i Q\nabla_k Q \nabla^2_{jk} Q  + \nabla_j Q\nabla_k Q  \nabla^2_{ik} Q)\\
 &\,\, {- Q^{-1} \nabla^2_{ik} Q \nabla^2 _{jk} Q}
 - Q^{-3}|\nabla  Q|^2\nabla_i  Q \nabla_j Q {- (h_{ij} h_{kp}-h_{ip} h_{jk})Q^{-1}\nabla_k Q\nabla_p Q} \\
 &\,   + 2(h_{ij}h_{kp} - h_{ip}h_{jk}) \nabla^2_{kp}Q+ (h_{ik}h_{kp}-h_{ip}H) \nabla^2_{jp}Q \!+\!(h_{jk}h_{kp}-h_{jp}H) \nabla^2_{ip}Q\\
 & + \Psi_{ij}.
   \eea 
 By the commutator identity and Proposition \ref{lemma-ev-eq} (evolution of profile function), \ \bea  \label{eq-evolDVDV'}
     &\quad(\partial _t - \Delta ) (\nabla_{i}V\nabla_{j}V) \\
     &=\tfrac{n-k}{2}Q^{-2} \nabla_{i}Q\nabla_{j}Q -2\nabla_{im}^2V \nabla^2_{jm}V \\ 
     &+4^{-1}Q^{-1}(h_{im}h_{m\ell}-h_{i\ell}H)\nabla_{\ell}Q\nabla_{j}Q+4^{-1}Q^{-1}(h_{jm}h_{m\ell}-h_{j\ell}H)\nabla_{\ell}Q\nabla_{i}Q.
\eea 
Combining \eqref{eq-evolDVDV'}, and $\partial_t g_{ij} = - 2H h_{ij}$, 
\bea
     &(\partial _t - \Delta ) |\nabla V|^2  =\tfrac{n-k}{2}Q^{-2}|\nabla Q|^2 
      +(2Q)^{-1}h^2(\nabla Q, \nabla Q)-2|\nabla^2 V|^2. \eea 
\bea \label{eq-2222}
    &\quad(\partial_t-\Delta+\nabla_Z)(-2|\nabla V|^2\nabla_iV\nabla_jV)\\
    &=-\tfrac{n-k}{2}Q^{-3}|\nabla Q|^2\nabla_{i}Q\nabla_{j}Q +Q^{-1}|\nabla Q|^2\nabla_{im}^2V \nabla^2_{jm}V \\
     &\, -8^{-1}Q^{-2}|\nabla Q|^2 \big [ (h_{im}h_{m\ell}-h_{i\ell}H)\nabla_{\ell}Q\nabla_{j}Q+(h_{jm}h_{m\ell}-h_{j\ell}H)\nabla_{\ell}Q\nabla_{i}Q \big ]  \\
     &\, -4^{-1}Q^{-2} h^2(\nabla Q, \nabla Q)\nabla_{i}Q\nabla_{j}Q+Q^{-1}\nabla_{i}Q\nabla_{j}Q|\nabla^2 V|^2\\
     &\, +4\nabla_{\ell}(|\nabla V|^2)\nabla_{\ell}(\nabla_{i}V\nabla_{j}V)+\nabla_Z(-2|\nabla V|^2\nabla_iV\nabla_jV) .  
\eea 
Next, $ \nabla_Z \eps =  \langle Z, \nabla   { \gamma}\rangle = -6(V^{-2}|\nabla V|^2 + \eta V^{-1}h(\nabla V,\nabla V)) \gamma  $ implies  
\bea & \quad (\partial_t -\Delta+\nabla_Z) \eps\\
&=   \Big  [  \Big (\tfrac{-t}{\log(-t)}\Big )^{3/2} (\partial_t - \Delta) V^{-3} +\tfrac{3}{2t}\Big (1-\tfrac{1}{\log(-t)}\Big )\gamma \Big  ] g_{ij}+\nabla _Z \eps \\
& = 3V^{-2}  \Big [(n-k)-6|\nabla V|^2 +\tfrac{V^2}{2t}\Big (1-\tfrac{1}{\log(-t)}\Big ) \Big ] \gamma  -6 \eta V^{-1}h(\nabla V,\nabla V) \gamma .\eea 
Since $- \eps g_{ij} + \eps |\nabla V|^2 g_{ij} = - \eta^{-2}\eps g_{ij}  $,
\bea \label{eq-3333}& \quad (\partial_t-\Delta+\nabla_Z) ( -\eta^{-2}\eps   g_{ij}) \\
&=2 \eta^{-2}\eps Hh_{ij}  \\
&\, +\Big[\frac{n-k}{2}Q^{-2}|\nabla Q|^2 
      +(2Q)^{-1}h^2(\nabla Q, \nabla Q)-2|\nabla^2 V|^2\Big]\varepsilon g_{ij}\\
& \, -3\eta^{-2} V^{-2}  \Big [(n-k)-6|\nabla V|^2 +\tfrac{V^2}{2t}\Big (1-\tfrac{1}{\log(-t)}\Big ) \Big ] \gamma g_{ij}\\
& \, +6\eta^{-1}  V^{-1}h(\nabla V, \nabla V)\gamma g_{ij}+  \nabla_Z( |\nabla V|^2 ) \eps g_{ij}  {-12\eta^{-1}  V^{-1}h(\nabla V, \nabla V)\gamma g_{ij}} .\eea 
Here the last term comes from $-2 \langle \nabla \eps, \nabla |\nabla V|^2 \rangle g_{ij}$.
Adding \eqref{eq-1111}, \eqref{eq-2222}, and \eqref{eq-3333}, we obtain the result.
}

\end{proof}

Next, we derive equations appeared in Section \ref{spectral_uniqueness}. In the following, we denote $
w_\cC=v_1\chi_\cC(v_1)-v_2\chi_\cC(v_2)$, $D=(\partial_{y_i})$, $D^2 = (\partial^2_{y_iy_j})$ and $A\! :\! B=a_{ij}b_{ij}$, where $\chi_{\cC}$ is defined in the beginning of Section \ref{spectral_uniqueness}. We simplify the notation in the computations that follow,  for any cut-off scalar function  (e.g. for $\chi_\cC$, $\chi_\cC'$ or $\chi_\cC''$) we denote by $w^\chi$  the function 
\begin{equation}
\label{eq-diff-cut}
w^\chi (\bry, \tau) = \chi  (v_1(\bry,\tau)) -\chi (v_2(\bry,\tau)). 
\end{equation}
In particular, note that $w^{\textrm{id}}=w$ and $\left|w^\chi\right| \leq  |w| \sup|\chi'|$.

\begin{proposition}[{evolution of $w_\cC$}]\label{proposition-evolution-wC} The function $w_\cC$ satisfies
\begin{align}\label{evolution-wC}
(\partial_\tau -\mathcal{L})w_\cC =&\cE[w_\cC] + \bar{\cE}[w,\chi_\cC(v_1)]-\mathcal{E}[v_2w^{\chi_\cC}]- 2 D  v_2 \cdot D w^{\chi_\cC}\nonumber \\
&+ v_2 (\partial_\tau -\mathcal{L}) w^{\chi_\cC} + w^{\chi_\cC} (\partial_\tau -\mathcal{L}+1)v_2,
 \end{align}
where $\mathcal{L}$ is the Ornstein-Ulenbeck operator in \eqref{OU_operator}, $\mathcal{E}$ and  $\bar{\mathcal{E}}$  is given by 
\begin{multline}\label{eqn-E15}
 \cE[w] =  - \frac{Dv_1\otimes Dv_1\! :\! D^2 w}{1+|Dv_1|^2 } - \frac{D^2v_2\! :\! D(v_1+v_2)\!\otimes\! Dw}{(1+|Dv_1|^2)} \\
+\frac{D^2v_2\! :\! Dv_2\otimes Dv_2\, D(v_1+v_2)\!\cdot\! Dw}{(1+|Dv_1|^2)(1+|Dv_2|^2)} + \frac{2{(n-k)}-v_1v_2}{2v_1v_2}w,
 \end{multline}and
\begin{equation}
\label{eq-line-E}
\bar{\cE}[w,\chi_\cC(v_1)]= (\partial_\tau -\mathcal{L})(w\chi_\cC(v_1))- \cE[ w \chi_\cC(v_1)].
\end{equation}
\end{proposition}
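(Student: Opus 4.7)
The plan is to start from the algebraic identity obtained by the product rule for differences:
\begin{equation}
w_{\cC} \;=\; v_1\chi_{\cC}(v_1)-v_2\chi_{\cC}(v_2)
\;=\; w\,\chi_{\cC}(v_1) \;+\; v_2\,w^{\chi_{\cC}},
\end{equation}
where $w=v_1-v_2$ and $w^{\chi_{\cC}}=\chi_{\cC}(v_1)-\chi_{\cC}(v_2)$ as in \eqref{eq-diff-cut}. This splits $w_{\cC}$ into a \emph{linearized} piece $w\chi_{\cC}(v_1)$, whose evolution is encoded in the operators $\mathcal{E}$ and $\bar{\mathcal{E}}$, and a \emph{cut-off error} piece $v_2 w^{\chi_{\cC}}$, which will be treated by the ordinary parabolic product rule.

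I would then apply $(\partial_\tau-\mathcal{L})$ to both sides. For the first summand, the very definition \eqref{eq-line-E} of $\bar{\mathcal{E}}$ gives
\begin{equation}
(\partial_\tau-\mathcal{L})\bigl(w\chi_{\cC}(v_1)\bigr) \;=\; \mathcal{E}\bigl[w\chi_{\cC}(v_1)\bigr] \;+\; \bar{\mathcal{E}}\bigl[w,\chi_{\cC}(v_1)\bigr].
\end{equation}
Since $\mathcal{E}$ is linear in its argument (inspection of \eqref{eqn-E15}), I can rewrite
\begin{equation}
\mathcal{E}\bigl[w\chi_{\cC}(v_1)\bigr] \;=\; \mathcal{E}[w_{\cC}] \;-\; \mathcal{E}[v_2 w^{\chi_{\cC}}].
\end{equation}
This is the step that produces the first three terms of the stated identity.

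For the second summand, I would expand $\mathcal{L}=\Delta-\tfrac12{\bf y}\cdot\nabla+1$ and apply the product rule to each piece, noting that
\begin{equation}
(\partial_\tau-\mathcal{L})(fg) \;=\; g\,(\partial_\tau-\mathcal{L}+1)f \;+\; f\,(\partial_\tau-\mathcal{L})g \;-\; 2\,Df\cdot Dg,
\end{equation}
which follows by combining $\Delta(fg)=g\Delta f+f\Delta g+2Df\cdot Dg$ with the first-order Leibniz rule for $\tfrac12{\bf y}\cdot\nabla$ and accounting for the constant $1$ in $\mathcal{L}$. Setting $f=v_2$ and $g=w^{\chi_{\cC}}$ gives exactly the last three terms of the claimed identity. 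Summing yields \eqref{evolution-wC}.

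The derivation is essentially algebraic, so there is no serious analytic obstacle; the only bookkeeping care is in (i) the placement of the $+1$ in the factor $(\partial_\tau-\mathcal{L}+1)v_2$, which comes from the shift between $\mathcal{L}$ and its first-order surrogate $\Delta-\tfrac12{\bf y}\cdot\nabla$, and (ii) verifying that every derivative-containing term of $\mathcal{E}$ is linear in its input so that the splitting $\mathcal{E}[w\chi_{\cC}(v_1)]=\mathcal{E}[w_{\cC}]-\mathcal{E}[v_2 w^{\chi_{\cC}}]$ is legitimate. Both are immediate from the definitions, so the proof reduces to assembling the pieces.
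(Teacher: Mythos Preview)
Your proposal is correct and the identity is indeed purely algebraic in the way you describe: the splitting $w_{\cC}=w\chi_{\cC}(v_1)+v_2 w^{\chi_{\cC}}$, the definition of $\bar{\cE}$, linearity of $\cE$, and the Leibniz rule for $(\partial_\tau-\mathcal{L})$ (with the $+1$ bookkeeping you note) are all that is needed.

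The paper's proof takes a slightly different emphasis: it first subtracts the evolution equations \eqref{eqn-v} for $v_1$ and $v_2$ and uses the product rule for differences on the quasilinear term to establish the PDE identity $(\partial_\tau-\mathcal{L})w=\cE[w]$, thereby \emph{deriving} the specific form of $\cE$ in \eqref{eqn-E15}; it then declares the remaining manipulations ``straightforward.'' Your argument skips that derivation (since $\cE$ is already given in the statement) and goes straight to the algebraic assembly, which is exactly the part the paper elides. The advantage of the paper's route is that it makes transparent why $\bar{\cE}[w,\chi_{\cC}(v_1)]$ will later be small: because $(\partial_\tau-\mathcal{L})w=\cE[w]$, the quantity $\bar{\cE}$ consists entirely of terms containing derivatives of $\chi_{\cC}$ and is thus supported in the transition region. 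Your route is cleaner for proving the identity itself.
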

\begin{proof}
The proof is based on inspecting the computation in \cite[Lem 4.2, Lem 4.3]{CDDHS} and using the evolution equation of $v$:
 \be\label{eqn-v}
 v_\tau= \left(\delta_{ij}-\frac{v_{y_i}v_{y_j}}{1+|Dv|^2}\right)\, v_{y_iy_j}-\frac{1}{2}  \, y_i v_{y_i} + \frac{v}2 -\frac{n-k}{v}. 
\ee

Subtracting the evolution equations of $v_1$ and $v_2$ yields
\bea w_\tau 
 &= \mathcal{L} w - \frac{Dv_1\otimes Dv_1\!:\! D^2v_1 }{1+|Dv_1|^2} + \frac{Dv_2\otimes Dv_2\!:\! D^2v_2 }{1+|Dv_2|^2} +\frac{2(n-k)-v_1v_2}{2v_1v_2} w  .\eea 
Now, using the product rule for differences we compute
\bea   & \quad  \frac{Dv_{1}\otimes  Dv_{1}\! :\!  D^2 v_{1} }{1+|Dv_1|^2} -\frac{Dv_{2}\otimes  Dv_{2}\! :\!  D^2 v_{2} }{1+|Dv_2|^2} \\  & =   \frac{Dv_{1}\otimes  Dv_{1}\! :\!  D^2 w }{1+|Dv_1|^2}  + \frac{Dv_{1}\otimes  Dw\! :\!  D^2 v_2 }{1+|Dv_1|^2} + \frac{Dw\otimes  Dv_2\! :\!  D^2 v_2 }{1+|Dv_1|^2} \\ & + \left( \frac{1}{1+|Dv_1|^2}- \frac{1}{1+|Dv_2|^2}\right) {Dv_2\otimes  Dv_2\! :\!  D^2 v_2 } .\eea 
Moreover, we have
\bea Dv_{1}\otimes  Dw\! :\!  D^2 v_2 +Dw\otimes  Dv_2\! :\!  D^2 v_2 = Dw\otimes D ( v_1+v_2)\! :\!  D^2 v_2,\eea
and
\bea \frac{1}{1+|Dv_1|^2}- \frac{1}{1+|Dv_2|^2}= - \frac{ Dw \cdot D(v_1+v_2)}{(1+|Dv_1|^2)(1+|Dv_2|^2)}.\eea 
The rest of derivation is straightforward.\end{proof}

Let us consider the polar coordinates $(y_1,\cdots,y_{k}) = y \omega $ for  $y>0$ and $\omega\in{S}^{k-1}$, and view $v$ as a function on the polar coordinates $v( y, \omega,\tau)$. Let us define the inverse profile function $Y=Y(y,\omega,\tau)$ by 
\[y= Y(v(y,\omega, \tau),\omega ,\tau). \]   We impose the round cylindrical metric $ dv^2 + g_{{S}^{k-1}}$ on $(v,w)\in \mathbb{R}\times {S}^{k-1}$. We derive an evolution equation of $Y$.    \begin{proposition}[evolution equation of inverse profile function  $Y$]\label{evolution equation of inverse profile function}
   \begin{align} \label{eq-Ytensor}
    Y_{\tau}
    &=\frac{(Y^2 \!+\! |\nabla_{S^{k-1}}Y|^2)\nabla^2_{vv}Y  \!-\! 2 \nabla^2 Y ( \nabla_{{S}^{k-1} }Y ,  \nabla_{\mathbb{R}} Y ) + (1 +|\nabla_\mathbb{R} Y|^2)\Delta_{S^{k-1}}Y}{ Y^2\, (1 + |\nabla_\mathbb{R} Y|^2)+|\nabla_{S^{k-1}}Y|^2} \\\nonumber
    &+\Big (\frac{n-k}{v}-\frac{v}{2}\Big )\nabla_v Y-\frac{|\nabla_{S^{k-1}}Y|^2}{Y[Y^2(1+|\nabla_\mathbb{R} Y|^2)+|\nabla_{S^{k-1}}Y|^2]}+\frac{Y}{2}-\frac{k-1}{Y}\\\nonumber
 &+ \frac{1}{Y^2} {\frac{\Delta_{S^{k-1}}Y|\nabla_{S^{k-1}}Y|^2-\nabla^2 Y( \nabla_{{S}^{k-1}}Y, \nabla_{{S}^{k-1}}Y)}{Y^{2}(1+|\nabla_\mathbb{R} Y|^2)+|\nabla_{S^{k-1}}Y|^2}}.
    \end{align}
   Here, $|\cdot|$ and $\nabla$ are the norm and connection induced by the cylindrical metric $dv^2 +g_{{S}^{k-1}}$. $\nabla _{{S}^{k-1}} V$ and $\nabla_{\mathbb{R}}V$ are orthogonal projections of $\nabla V$ onto the spherical part and $\mathbb{R}$-part, respectively, and $\Delta_{{S}^{k-1}}$ is the Laplacian in the spherical part. 
   \end{proposition}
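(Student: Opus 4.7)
The plan is to derive the evolution equation for $Y$ directly by implicit differentiation, starting from the evolution equation \eqref{eqn-v} of $v(\mathbf{y},\tau)$ expressed in polar coordinates. Writing $\mathbf{y}=y\omega$ with $y=|\mathbf{y}|$ and $\omega\in S^{k-1}$, the Euclidean Laplacian becomes
\begin{equation}
\Delta_{\mathbb{R}^{k}} = \partial_y^2 + \frac{k-1}{y}\partial_y + \frac{1}{y^2}\Delta_{S^{k-1}},
\end{equation}
and the Euclidean gradient decomposes as $|Dv|^2 = v_y^2 + y^{-2}|\nabla_{S^{k-1}} v|^2$, while $\mathbf{y}\cdot \nabla v = y v_y$. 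The full quasilinear operator $\bigl(\delta_{ij}-\frac{v_{y_i}v_{y_j}}{1+|Dv|^2}\bigr)v_{y_iy_j}$ in polar coordinates contains a radial piece $v_{yy}$, lower-order terms $(k-1)y^{-1}v_y + y^{-2}\Delta_{S^{k-1}}v$, and three cross terms coming from $v_y^2 v_{yy}$, $v_y (y^{-1} v_\omega\cdot \nabla v_y - y^{-2} v_\omega \cdot v_\omega)$, and $y^{-4}(v_\omega\otimes v_\omega):\nabla^2_{S^{k-1}} v$, all divided by the denominator $1+v_y^2 + y^{-2}|\nabla_{S^{k-1}}v|^2$.

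Next, I would apply implicit differentiation to $Y(v(y,\omega,\tau),\omega,\tau)=y$. This yields immediately
\begin{align}
v_y &= \tfrac{1}{Y_v}, & v_{\omega_\alpha} &= -\tfrac{Y_{\omega_\alpha}}{Y_v}, & v_\tau &= -\tfrac{Y_\tau}{Y_v},
\end{align}
and then, by repeated differentiation and the chain rule, the second derivatives $v_{yy}$, $v_{y\omega_\alpha}$, $v_{\omega_\alpha \omega_\beta}$ are expressed as rational functions of $Y_v,Y_{\omega_\alpha},Y_{vv},Y_{v\omega_\alpha},Y_{\omega_\alpha\omega_\beta}$. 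Substituting these identities into the polar form of \eqref{eqn-v} and multiplying through by $-Y_v$ turns every appearance of $v$-derivatives into the corresponding $Y$-derivatives. Crucially, the denominator $1+|Dv|^2$ transforms as
\begin{equation}
1+|Dv|^2 = \tfrac{1}{Y_v^2}\Bigl(Y_v^2 + 1 + y^{-2}|\nabla_{S^{k-1}}Y|^2\Bigr),
\end{equation}
so multiplying numerator and denominator by $y^2 Y_v^2$ produces the common denominator $D = Y^2(1+|\nabla_\mathbb{R} Y|^2) + |\nabla_{S^{k-1}} Y|^2$ appearing in the statement, after replacing $y$ by $Y$.

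The last step is translating the expression into the intrinsic cylindrical language on $\mathbb{R}\times S^{k-1}$ with metric $dv^2 + g_{S^{k-1}}$. Since this metric is flat in $v$ and uses the standard round metric on $S^{k-1}$, the covariant Hessian $\nabla^2 Y$ reduces to ordinary partial derivatives $Y_{vv}$, $Y_{v\omega_\alpha}$, and the spherical covariant Hessian $\nabla^2_{S^{k-1}} Y$; similarly $\Delta_{S^{k-1}}Y$ is the standard spherical Laplacian. Thus the quasilinear term becomes the first line of \eqref{eq-Ytensor}, the linear drift $-\frac12 yv_y + \frac{v}{2} - \frac{n-k}{v}$ combined with $\frac{k-1}{y}v_y$ produces the second line after using $v_y=1/Y_v$ and $y=Y$, and the remaining cross terms from differentiating $y^{-2}|\nabla_{S^{k-1}}v|^2$ in the quasilinear operator yield the third line.

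The main obstacle is purely bookkeeping: tracking all cross terms arising when one substitutes the implicit-function identities into the fully quasilinear operator and showing that, after clearing denominators with $Y_v^2 Y^2$, the terms regroup exactly into the covariant quantities $\nabla^2 Y(\nabla_{S^{k-1}}Y,\nabla_\mathbb{R} Y)$ and $\nabla^2 Y(\nabla_{S^{k-1}}Y,\nabla_{S^{k-1}}Y)$. No analytical input beyond the implicit function theorem is needed; the computation is deterministic once one has organized the polar form of \eqref{eqn-v} and the substitution table for $v$-derivatives in terms of $Y$-derivatives.
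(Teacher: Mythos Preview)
Your proposal is correct and follows essentially the same approach as the paper: write \eqref{eqn-v} in polar coordinates $\mathbf{y}=y\omega$ (computing $\Delta v$, $|Dv|^2$, and $D^2v(Dv,Dv)$ in these coordinates), then substitute the implicit-differentiation identities for $v_\tau,v_y,v_{\varphi_m},v_{yy},v_{y\varphi_m},v_{\varphi_m\varphi_\ell}$ in terms of $Y$-derivatives and replace $y$ by $Y$. The paper carries this out in normal coordinates $(v,\varphi_1,\dots,\varphi_{k-1})$ at a point and then invokes the tensorial nature of the final expression, which is exactly the ``translation to intrinsic cylindrical language'' you describe.
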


In what follows, we will express and prove identities using normal coordinates $(v,\varphi_1,\ldots, \varphi_{k-1})$ based at each given point. We refer readers to Remark \ref{remark-cylindericalconnection} for basic useful identities. Summation over repeated indices (from $1$ to $k-1$) is assumed.

\begin{proof}Recall 
    \begin{equation}\label{eq-recallv}
    v_{\tau}=\Big (\delta_{ij}-\frac{v_{y_i}v_{y_j}}{1+|Dv|^2}\Big )v_{y_{i}y_{j}}-\frac{1}{2}y_{i}v_{y_{i}}+\frac{v}{2}-\frac{n-k}{v}.
\end{equation}
Our first goal is to write the evolution of $v$ in polar coordinates. We denote  $\mathbf{y}=y\omega$, where $y>0$ and $\omega=\omega(\varphi_{1},\dots \varphi_{k-1})$ is a local parametrization of sphere $S^{k-1}$ via normal coordinates $(\varphi_{1},\dots \varphi_{k-1})$ of ${S}^{k-1}$ at a given point. We will compute the evolution equation at this point. Note that $\partial_y$ and $y^{-1}\partial_{\varphi_m}$, for  $m=1,\cdots,k-1$, form an orthogonal basis of tangent space of Euclidean space. If $D$ denotes the flat connection (a.k.a. usual derivative) on $\mathbb{R}^n$, \bea |Dv|^2 = v_y^2 + \sum_{m=1}^{k-1}(y^{-1} v_{\varphi_m})^2, \quad\text{and}\quad  y_i v_{y_i} = Dv \cdot \mathbf{y} = y v_y .\eea 
In the following computations, repeated index represents summation from $1$ to $k-1$. e.g. $v_{\varphi_m}^2= \sum_{m=1}^{k-1} v_{\varphi_m}^2$, $v_{\varphi_m \varphi_m} = \sum_{m=1}^{k-1} v_{\varphi_m\varphi_m}$ and $v_{\varphi_\ell}v_{\varphi_m}v_{\varphi_{\ell }\varphi_m}= \sum_{m=1}^{k-1}\sum_{\ell =1}^{k-1}v_{\varphi_\ell}v_{\varphi_m}v_{\varphi_{\ell }\varphi_m}$. From the definition of the second fundamental form on the sphere,
\bea D^2 v(\partial_{\varphi_m}, \partial_{\varphi_\ell})&= v_{\varphi_m\varphi_\ell} - \langle D v , D _{{\varphi_\ell }}\partial_{ \varphi_{m}} \rangle  \\
&=v_{\varphi_m\varphi_\ell} + y v_y \delta_{m\ell} ,\eea 
and similarly 
\bea D^2 v(\partial_{y}, \partial_{\varphi_m})&= v_{\varphi_my} - \langle D v , D _{{\varphi_m }}\partial_{y} \rangle \\
&= v_{\varphi_my} - y^{-1} v_{\varphi_m} .\eea 
Therefore, we get the Laplacian in the polar coordinates 
\bea \Delta v &=  \nabla^2 v( \partial_y,\partial_y) + \nabla^2 v(y^{-1} \partial _{\varphi_m},y^{-1}\partial _{\varphi_m})  \\ &=
v_{yy}+(k-1)y^{-1}v_{y}+y^{-2}v_{\varphi_m\varphi_m},\eea
and $  v_{y_i} v_{y_j}v_{y_iy_j} = D^2 v(D v, Dv) $ can be written as  
\bea &\quad v_y^2 D^2 v( \partial_y, \partial_y) +2y^{-2} v_yv_{\varphi_m}D ^2 v (\partial_y, \partial_{\varphi_m}) + y^{-4}v_{\varphi_m} v_{\varphi_\ell }D^2 v ( \partial_{\varphi_m},  \partial_{\varphi_\ell})\\
  &=v_{y}^2v_{yy}
 +y^{-4}v_{\varphi_\ell \varphi_m}v_{\varphi_\ell}v_{\varphi_m}+2y^{-2}v_{y\varphi_m}v_{y}v_{\varphi_m}-y^{-3}v_{\varphi_m}v_{\varphi_m}v_{y}.\eea 
By plugging above into \eqref{eq-recallv}, at the given point we get
\bea \label{vtau_evolution_sphere}
v_{\tau}&= \frac{(y^2+v_{\varphi_m}^2)v_{yy}
 -2v_{y\varphi_m}v_{y}v_{\varphi_m}}{y^{2}(1+v^{2}_{y})+v_{\varphi_m}^2} + \frac{ v_{\varphi_m\varphi_m}}{y^2}-\frac{1}{y^2}\frac{ v_{\varphi_\ell \varphi_m}v_{\varphi_\ell}v_{\varphi_m}}{ y^{2}(1+v^{2}_{y})+v_{\varphi_m}^2}\\
 &+\Big  (\frac{k}{y^2}-\frac{1}{2}-\frac{1+v^2_{y}}{y^2(1+v^2_y)+v_{\varphi_m}^2}\Big  )yv_{y}+\frac{v}{2}-\frac{n-k}{v}.
\eea 
Finally, we derive an equation of $Y(v,\omega,\tau)$. Differentiating the identity $y= Y(v(y,\omega ,\tau),\omega,\tau)$, there are identities 
\bea &v_\tau =-\frac{Y_\tau }{Y_v}, \quad  v_y= \frac{1}{ Y_v},\quad   v_{\varphi_m}= - \frac{Y_{\varphi_m}}  {Y_v} ,\\ &v_{yy}= - \frac{Y_{vv}}{Y_v^3},\quad v_{y\varphi_{m}} =- \frac{Y_{\varphi_{m} v}}{Y_v^2}+ \frac{Y_{\varphi_m}Y_{vv}}{Y_v^3},\\ 
&v_{\varphi_m\varphi_\ell}=- \frac{Y_{\varphi_m\varphi_\ell}}{Y_v}+ { \frac{ Y_{\varphi_m} Y_{\varphi_{\ell} v}}{Y_v^2}+ \frac{ Y_{\varphi_\ell } Y_{\varphi_{m} v}}{Y_v^2} } - \frac{Y_{\varphi_{m}}Y_{\varphi_{\ell}}  Y_{vv}}{Y_v^3} .
\eea  
Plugging these into \eqref{vtau_evolution_sphere} yields \eqref{eq-Ytensor} at the given point written in the chosen coordinates: \bea  \label{eq-evolYcoord}
    Y_{\tau} &=\frac{(Y^2 + Y_{\varphi_m}^2)Y_{vv}  - 2Y_{\varphi_m}Y_v Y_{\varphi_m v} +(1+Y_v^2)Y_{\varphi_m\varphi_m}}{ Y^2\, (1 + Y_v^2)+Y_{\varphi_m}^2}  \\ &+\frac{1}{Y^2}\frac{Y_{\varphi_\ell}^2   Y_{\varphi_m\varphi_m}-Y_{\varphi_\ell \varphi_m}Y_{\varphi_\ell}Y_{\varphi_m}}{Y^{2}(1+Y^{2}_{v})+Y_{\varphi_m}^2}\\
    &+\Big (\frac{n-k}{v}-\frac{v}{2}\Big )Y_{v}-\frac{Y_{\varphi_m}^2}{Y[Y^2(1+Y_{v}^2)+Y_{\varphi_m}^2]}+\frac{Y}{2}-\frac{k-1}{Y}.
\eea  Recall that \eqref{eq-Ytensor} is independent of coordinate charts (tensorial). Since one can repeat above at any given points, we conclude \eqref{eq-Ytensor} holds.

\end{proof}
Next we derive an evolution equation of $W=Y-\bar{Y}$ when $Y(v,\omega,\tau)$ and $\bar Y(v,\omega,\tau)$ are two solutions. Though the evolution equation below can be stated without using charts, here we write it in normal coordinates of cylinder $(v,\varphi_m)$ for the sake of notational simplicity. One can recall Remark \ref{remark-cylindericalconnection} for basic identities.
\begin{proposition}[{evolution of $W$}]\label{lemma-ev-W-appendix} Let $\varphi_m$ be a normal coordinates of ${S}^{k-1}$ at a given point $\omega$. At this point, the function $W(v,\omega,\tau )$ evolves by
\begin{align}\label{eq-W}
    W_{\tau}
    &=\frac{(Y^2 + |\nabla_{S^{k-1}}Y|^2)W_{vv}  - 2 Y_{\varphi_m}Y_v W_{\varphi_m v} + (1+Y_v ^2)  W_{\varphi_m \varphi_m}   }{Y^2\, (1 + Y_v^2)+|\nabla_{S^{k-1}}Y|^2} \\\nonumber
    &  + \frac{1}{Y^2}\frac{ |\nabla _{{S}^{k-1}} Y|^2W_{\varphi_m \varphi_m }  - Y_{\varphi_\ell} Y_{\varphi_m}W_{\varphi_\ell \varphi_m}}{Y^{2}(1+ Y_v^2)+|\nabla_{S^{k-1}}Y|^2}+\langle a\partial_v +b_m \partial_{\varphi_m},\nabla W\rangle +cW\nonumber,
\end{align}
where $|\cdot|$ and $\nabla$ denote the metric and the connection of the round cylinder, and $\nabla_{{S}^{k-1}}G$ denote the spherical part of $\nabla G$.  The components of vector fields $a\partial_v $, $\sum_{m=1}^{k-1}b_m \partial_{\varphi_m}$, and the coefficient $c$ are as follows: \\
\bea \label{eq-anew}
    a&=\frac{n-k} {v} - \frac v 2 -2 \frac{\bar Y_{{\varphi_m} v}Y_{\varphi_m}}{D}  + \frac{\bar Y^2 (Y_v +\bar Y_v)}{D\bar D} \left[ \frac{|\nabla_{S^{k-1}}\bar Y|^2}{\bar Y} -\bar B\right],
\eea 
\bea \label{eq-bnew}
    b_{m}&= \frac{\bar Y_{vv} (Y_{\varphi_m}+\bar Y_{\varphi_m} )-2\bar Y_{{\varphi_m} v}\bar Y_v }{D} - \frac{Y_{\varphi_m} +\bar Y_{\varphi_m}}{YD}  \\
    &{-\frac{1}{DY^2 }\nabla_{\varphi_\ell}(Y+\bar{Y})\nabla^2_{\varphi_m\varphi_{\ell}}\bar{Y} }+\frac{ (Y_{\varphi_m}  +\bar Y_{\varphi_m} )}{D\bar D} \left[ \frac{|\nabla_{S^{k-1}}\bar Y|^2}{\bar Y} -\bar B\right],
\eea 
and
\bea \label{eq-cnew}
    c&=\frac12 +\frac{{k-1}} {Y\bar Y} + \frac{\bar Y_{vv} (Y+\bar Y ) }{D} + \frac{|\nabla_{S^{k-1}}\bar Y|^2}{DY\bar Y}-{\frac{ Y+\bar Y}{Y^{2}\bar Y^{2}}\Delta_{{S}^{k-1}} \bar Y} \\
&+\frac{Y+\bar{Y}}{DY^2\bar Y^2 }\nabla_{\varphi_\ell}\bar{Y}\nabla_{\varphi_m}\bar{Y}\nabla^2_{\varphi_m\varphi_{\ell}}\bar{Y}    
    + \frac{ (Y  +\bar Y )(1+Y_v^2)}{D\bar D} \left[ \frac{|\nabla_{S^{k-1}}\bar Y|^2}{\bar Y} -\bar B\right],
\eea
where 
\bea 
D&= Y^2\, (1 + Y_v^2)+|\nabla_{S^{k-1}}Y|^2\quad \bar D= \bar Y^2\, (1 + \bar Y_v^2)+|\nabla_{S^{k-1}}\bar Y|^2,
\eea and
\begin{equation}
    \bar{B}=(\bar{Y}^2 + |\nabla_{S^{k-1}}\bar{Y}|^2)\nabla^2_{vv}\bar{Y}  \!-\! 2 \nabla^2_{\varphi_m v}\bar{Y}\nabla_{\varphi_m}\bar{Y}\nabla_v \bar{Y} {-\bar{Y}^{-2}\nabla^2_{\varphi_m\varphi_{\ell}}\bar{Y}\nabla_{\varphi_\ell}\bar{Y}\nabla_{\varphi_m}\bar{Y}}\!.
\end{equation}
\end{proposition}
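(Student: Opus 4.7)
The plan is to subtract the two evolution equations for $Y$ and $\bar Y$ given by Proposition \ref{evolution equation of inverse profile function}, and then use the product rule for differences
\begin{equation}
f_1 g_1 - f_2 g_2 = (f_1-f_2)g_1 + f_2(g_1-g_2)
\end{equation}
repeatedly to isolate a ``principal part'' that acts on $W$ as a linear second order operator (the top line of \eqref{eq-W}), collect all pure first order contributions into the drift vector fields $a\partial_v + b_m\partial_{\varphi_m}$, and absorb the remaining zeroth order contributions into the scalar $c$. Since the principal part in the statement involves only second derivatives of $W$ with coefficients built from $Y$ (not $\bar Y$), I will consistently expand $Y = \bar Y + W$ on one side of each difference so that the second derivative of $\bar Y$ is paired with a smooth difference of $Y$--$\bar Y$ coefficients. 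The formulas for $a$, $b_m$, $c$ in \eqref{eq-anew}--\eqref{eq-cnew} correspond exactly to the order in which one chooses to split these differences.

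First, the affine terms $(\tfrac{n-k}{v} - \tfrac{v}{2})\nabla_v Y + \tfrac{Y}{2} - \tfrac{k-1}{Y}$ are handled immediately: the first contributes $(\tfrac{n-k}{v}-\tfrac{v}{2})W_v$, hence the first two summands of $a$; the linear term gives $\tfrac{W}{2}$; and $(k-1)(\bar Y^{-1} - Y^{-1}) = (k-1)\,W/(Y\bar Y)$, producing the $(k-1)/(Y\bar Y)$ piece of $c$. Second, I will treat the rational denominator term $-|\nabla_{S^{k-1}}Y|^2/\{Y[Y^2(1+Y_v^2)+|\nabla_{S^{k-1}}Y|^2]\}$. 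Writing the difference as
\begin{equation}
-\frac{|\nabla_{S^{k-1}}Y|^2}{YD} + \frac{|\nabla_{S^{k-1}}\bar Y|^2}{\bar Y \bar D}
\end{equation}
and expanding via product rule in the three factors $|\nabla_{S^{k-1}}\cdot|^2$, $1/Y$, and $1/D$, one obtains first order spherical contributions (landing in $b_m$ through $\nabla_{\varphi_m}(Y+\bar Y)/(YD)$ and its symmetrizations) and zeroth order pieces (landing in $c$ via $|\nabla_{S^{k-1}}\bar Y|^2/(DY\bar Y)$).

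The main obstacle is the long principal second order expression. Here I will take the difference
\begin{equation}
\frac{B}{D} - \frac{\bar B}{\bar D},\qquad B := (Y^2+|\nabla_{S^{k-1}}Y|^2)Y_{vv} - 2Y_{\varphi_m}Y_v Y_{\varphi_m v} - Y^{-2}Y_{\varphi_\ell}Y_{\varphi_m}Y_{\varphi_\ell\varphi_m},
\end{equation}
plus the analogous term $(1+|\nabla_\mathbb{R} Y|^2)(\Delta_{S^{k-1}}Y)/D$ minus its barred version, and apply
\begin{equation}
\frac{B}{D} - \frac{\bar B}{\bar D} \;=\; \frac{B-\bar B}{D} + \bar B\left(\frac{1}{D}-\frac{1}{\bar D}\right) \;=\; \frac{B-\bar B}{D} - \frac{\bar B (D-\bar D)}{D\bar D}.
\end{equation}
Inside $B-\bar B$, every occurrence of a second derivative of $Y$ becomes a second derivative of $W$ (with coefficient in terms of $Y$, matching the stated principal part), while each leftover ``first order$\times$second derivative of $\bar Y$'' term is rearranged using the product rule so that the second derivative stays on $\bar Y$ and a first derivative of $W$ is produced, landing in $a$ or $b_m$. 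The factor $D-\bar D$ in the remainder is itself linear in first derivatives of $W$ after one more application of the product rule, producing the expressions $(\bar Y^2(Y_v+\bar Y_v)/(D\bar D))[\,|\nabla_{S^{k-1}}\bar Y|^2/\bar Y - \bar B\,]$ in $a$, the analogous spherical piece in $b_m$, and $(Y+\bar Y)(1+Y_v^2)/(D\bar D)[\,|\nabla_{S^{k-1}}\bar Y|^2/\bar Y - \bar B\,]$ in $c$.

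Finally, the cross-term $\tfrac{1}{Y^2}[|\nabla_{S^{k-1}}Y|^2\Delta_{S^{k-1}}Y - \nabla^2 Y(\nabla_{S^{k-1}}Y,\nabla_{S^{k-1}}Y)]/D$ is handled the same way and contributes the second line of the principal part in \eqref{eq-W} together with the new terms in $b_m$ and $c$ that involve $\nabla^2_{\varphi_m\varphi_\ell}\bar Y$ (these are the terms absent when $k=2$ and constitute the genuine novelty relative to \cite{CDDHS}). Once each of these bookkeeping steps is carried out and reassembled, matching coefficients to the claimed expressions \eqref{eq-anew}--\eqref{eq-cnew} is a direct verification. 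The hard part is not any single estimate but the combinatorial task of organizing the expansion so that the principal part depends only on $Y$ and so that every residual lands cleanly in one of $a$, $b_m$, $c$; choosing to expand $D-\bar D$ and $B-\bar B$ in the order above (namely, splitting $|\nabla_{S^{k-1}}Y|^2 - |\nabla_{S^{k-1}}\bar Y|^2 = \nabla_{\varphi_m}(Y+\bar Y)\nabla_{\varphi_m}W$ etc.) is what yields the symmetric form of $b_m$ and $c$.
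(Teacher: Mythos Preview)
Your proposal is correct and follows essentially the same approach as the paper: subtract the evolution equations of $Y$ and $\bar Y$, apply the product rule for differences $f_1g_1-f_2g_2=(f_1-f_2)g_1+f_2(g_1-g_2)$ repeatedly to isolate the principal part acting on $W$, and use the expansion $D-\bar D=\bar Y^2(Y_v+\bar Y_v)W_v+(Y_{\varphi_m}+\bar Y_{\varphi_m})W_{\varphi_m}+(Y+\bar Y)(1+Y_v^2)W$ to distribute the remainder among $a$, $b_m$, $c$. The only cosmetic difference is that the paper first regroups the evolution of $Y$ as $Y_\tau=Y_1^\tau+Y_2^\tau+Y_3^\tau$ (second-order quotient, $Y^{-2}\Delta_{S^{k-1}}Y$, and lower-order terms) before taking differences, which amounts to the same bookkeeping you describe.
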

\begin{proof}
 Let us rearrange terms in the first and the second lines of the right hand side of \eqref{eq-Ytensor} (or \eqref{eq-evolYcoord}) and group the terms as 
\[\partial_\tau Y= Y_1^\tau +Y_2^\tau + Y_3^\tau, \]
where
\[Y^\tau_1:= \frac{(Y^2 + Y_{\varphi_m}^2)Y_{vv}  - 2Y_{\varphi_m}Y_v Y_{\varphi_m v} - Y^{-2} Y_{\varphi_m}Y_{\varphi_\ell} Y_{\varphi_m \varphi_\ell}}{ Y^2\, (1 + Y_v^2)+Y_{\varphi_m}^2} ,\]
\[Y^{\tau}_2:=\frac{1}{Y^2} Y_{\varphi_m \varphi_m} , \]
\[Y^{\tau}_3:= \Big   (\frac{n-k}{v}-\frac{v}{2}\Big   )Y_{v}-\frac{Y_{\varphi_m}^2}{Y[Y^2(1+Y_{v}^2)+Y_{\varphi_m}^2]}+\frac{Y}{2}-\frac{k-1}{Y}.\]
 Also, we similarly define $\bar Y^{\tau}_i$ by replacing $Y$ with $\bar Y$. 
With this notation, $W_\tau = \sum_{i=1}^3 (Y^\tau_i - \bar Y ^\tau _i)$, and we expand each $Y^\tau_i- \bar Y^\tau_i$. 
First, using the product rule for differences, 
\bea Y^{\tau}_2- \bar Y^{\tau}_2= \frac{1}{Y^2}\Delta_{{S}^{k-1}} W - \frac{Y+\bar Y}{Y^2 \bar Y^2 } W\Delta_{{S}^{k-1}} \bar Y.\eea 
Similarly,
\bea Y^{\tau}_3- \bar Y^{\tau}_3&= \Big  ( \frac{n-k} {v} - \frac v 2\Big ) W_v+ \Big  (\frac12 +\frac{k-1}{Y \bar Y} \Big  )W \\
&\, -  \frac{Y_{\varphi_m} +\bar Y_{\varphi_m}}{YD}W_{\varphi_m} +\bar Y_{\varphi_m}^2  \Big ( \frac{W}{DY\bar Y} +  \frac{D-\bar D}{\bar Y D \bar D}\Big  ) ,\eea 
where $D$ and $\bar D$ are as defined in the statement. 
\bea &\quad Y^\tau_1- \bar Y^{\tau}_1\\
&=\frac{(Y^2 + Y_{\varphi_m}^2)W_{vv}  - 2Y_{\varphi_m}Y_v W_{\varphi_m v} -Y^{-2}Y_{\varphi_\ell}Y_{\varphi_m}W_{\varphi_\ell \varphi_m}}{ D}  \\
&\, +\frac{\bar Y_{vv}[(Y+\bar Y)W + (Y_{\varphi_{m}}+\bar Y_{\varphi_{m}})W_{\varphi_m}]}{D} - 2\frac{ \bar Y_{\varphi_m v}( Y_{\varphi_m} W_v +\bar Y_{v} W_{\varphi_m}) }{D}\\
&\, - \frac{Y_\ell  +\bar Y_\ell  }{DY^2} \bar Y_{\varphi_m \varphi_\ell}W_{\varphi_m}  +\frac{Y+\bar Y}{DY^2 \bar Y^2} \bar Y_{\varphi_\ell} \bar Y_{\varphi_m} \bar Y _{\varphi_m\varphi_\ell} W - \frac{D-\bar D}{D\bar D} \bar B ,\eea 
where $\bar B$ is as defined in the statement. Finally, plugging in
 \bea
D-\bar D = \bar Y^2 (Y_v+\bar Y_v)W_v+(Y_{\varphi_m} +\bar Y_{\varphi_m})W_{\varphi_m} + (Y+\bar Y)(1+Y_v^2) W 
\eea
and collecting the coefficients of $W_v$, $W_\varphi$ and $W$ from the computations of $Y^{\tau}_i-\bar Y^\tau_i$ above, we finish the computation of evolution equation of $W$.
\end{proof}
 \section{A graphical estimate lemma}\label{graphical}

\begin{lemma}[graphical estimate]\label{graphical_estimates claim}
 There exist constants $\kappa>0$ small enough and $\tau_{*}>-\infty$ negative enough with the following significance: If two $k$-ovals $\mathcal{M}^1, \mathcal{M}^2 \in \bar{\mathcal{A}}_{\kappa}(\tau_0)$, namely they are $k$-ovals $\kappa$-quadratic at $\tau_0\leq \tau_{*}$ and satisfy orthogonality conditions \eqref{condition_positive}--\eqref{OC21}, then  for each fixed $m\in \mathbb{N}$, fixed $0<l<+\infty$, fixed $\theta>0$ and $\tau_0\le \tau_*$, there are $C(\tau_0, l, m, \theta)<\infty$ and $\delta(\tau_0, l, \theta)>0$ such that if  the condition $|\mathcal{E}(\tau_0)(\mathcal{M}^1)-\mathcal{E}(\tau_0)(\mathcal{M}^2)|\leq \delta(\tau_0, l, \theta)$ holds, then for $\tau\in [\tau_0-l, \tau_0]$ one can write renormalized flow $\bar{M}^1_{\tau}$ as a graph over $\bar{M}^2_{\tau}$  of a function $\zeta(\tau)$ satisfying
\begin{align}\label{graph-w-W}
&\quad||\zeta(\cdot,\tau)||_{C^{m}} \\
&\leq C(\tau_0, l, m, \theta) (||w_{\cC}(\cdot,\tau)||_{C^{m}(B^1)}  +  ||W_{\cT}(\cdot,\tau)||_{C^{m}(B_{17\theta/18})}),
\end{align}
\end{lemma}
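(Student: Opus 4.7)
The plan is to construct $\zeta$ separately in the cylindrical region and in the tip region of $\bar{M}^2_\tau$, control each piece by the corresponding quantity ($w_\cC$ or $W_\cT$) using the appropriate implicit function theorem / Jacobian computation, and then glue via a partition of unity adapted to $\{v_2\geq \tfrac{17\theta}{18}\}$ and $\{v_2\leq \tfrac{19\theta}{20}\}$. Throughout, we use that under the smallness hypothesis $|\mathcal{E}(\tau_0)(\mathcal{M}^1)-\mathcal{E}(\tau_0)(\mathcal{M}^2)|\le \delta(\tau_0,l,\theta)$, Proposition \ref{Lipshcitz continuity} already yields $\|w_\cC(\tau)\|_{C^m(B^1)}+\|W_\cT(\tau)\|_{C^m(B_{17\theta/18})}$ uniformly small on $[\tau_0-l,\tau_0]$, so $\bar M^1_\tau$ lies in a fixed tubular neighborhood of $\bar M^2_\tau$ and the signed normal distance $\zeta$ is well-defined.

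\textbf{Cylindrical piece.} On $\{v_2\ge \tfrac{17\theta}{18}\}$ both hypersurfaces are written as radial graphs over the cylinder by $v_i(\mathbf{y},\tau)\vartheta$. The signed normal distance $\zeta$ from $\bar M^2_\tau$ to $\bar M^1_\tau$ satisfies the implicit relation
\begin{equation}
\bigl(\mathbf{y}+\zeta\,\nu_\parallel,\,(v_2+\zeta\,\nu_\perp)\vartheta\bigr)\in \bar M^1_\tau,
\end{equation}
where $\nu=(\nu_\parallel,\nu_\perp\vartheta)$ is the outward unit normal to $\bar M^2_\tau$. Writing $w=v_1-v_2$ and inverting this relation via the implicit function theorem, one finds
\begin{equation}
\zeta = \frac{w}{\nu_\perp}+O(|Dv_2|\,|w|+|w|^2),
\end{equation}
and the derivatives of $\zeta$ can be bounded by $C\bigl(\|v_2\|_{C^{m+1}(B^1)}\bigr)\,\|w\|_{C^m(B^1)}$. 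Since $\|v_2\|_{C^{m+1}(B^1)}\le C(\tau_0,l,m,\theta)$ by Theorem \ref{strong_uniform0} together with Lemma \ref{std_pde_der_est} and parabolic regularity, and since $w=w_\cC$ on $\{v_2\ge \tfrac{17\theta}{18}\}\supset B^1$, we get
\begin{equation}
\|\zeta\|_{C^m(\{v_2\ge 17\theta/18\})}\le C(\tau_0,l,m,\theta)\,\|w_\cC\|_{C^m(B^1)}.
\end{equation}

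\textbf{Tip piece.} On $\{v_2\le \tfrac{19\theta}{20}\}$ we reparametrize by $(v,\omega)$ and use the inverse profile functions $Y_i$. A point of $\bar M^2_\tau$ in this region is $(Y_2(v,\omega,\tau)\omega,v\vartheta)$, and $\bar M^1_\tau$ is the corresponding graph with $Y_1$. The outward normal to $\bar M^2_\tau$ in these coordinates has the form $\nu=\tilde\eta^{-1}(\omega,-(Y_2)_v^{-1}\vartheta)+(\text{spherical terms})$, so the signed normal distance satisfies
\begin{equation}
\zeta=\frac{W}{\tilde\eta}\,\bigl(1+O(|\nabla_{S^{k-1}}Y_2|/Y_2)\bigr),\qquad \tilde\eta=\sqrt{1+(Y_2)_v^{-2}+|\nabla_{S^{k-1}}Y_2|^2/Y_2^2},
\end{equation}
and by Proposition \ref{lemma-mainY} the factor $\tilde\eta$ and all of its derivatives are controlled on $[\tau_0-l,\tau_0]$. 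Because we are in the tip region $W=W_\cT$, parabolic interior estimates together with the evolution equation of $W$ (Proposition \ref{lemma-ev-W-appendix}) bound $\|\zeta\|_{C^m}$ by $C(\tau_0,l,m,\theta)\,\|W_\cT\|_{C^m(B_{17\theta/18})}$.

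\textbf{Gluing and the main obstacle.} Choose a smooth cut-off $\psi$ on $\bar M^2_\tau$ with $\psi=1$ on $\{v_2\ge \tfrac{19\theta}{20}\}$ and $\psi=0$ on $\{v_2\le \tfrac{17\theta}{18}\}$. The two constructions produce the \emph{same} geometric object (the signed normal distance), so no averaging is needed; we only have to verify that the two analytic expressions agree on the overlap. This is exactly where the subtlety lies: in the transition strip $\{\tfrac{17\theta}{18}\le v_2\le \tfrac{19\theta}{20}\}$ the cylindrical estimate gives control in terms of $w_\cC$ while the tip estimate gives control in terms of $W_\cT$, and the ratio $w/W$ is governed by $\partial_y v_2\sim |\tau|^{-1/2}$ (cf.\ the computation in the proof of Theorem \ref{prop-cor-main} leading to \eqref{eq_equiv_of_norms}), a factor that is absorbed into $C(\tau_0,l,m,\theta)$. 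The main effort is therefore bookkeeping: tracking the Jacobian factors and their derivatives in both coordinate systems, and verifying compatibility on the overlap; after that the stated $C^m$ bound follows by taking a maximum over the two pieces. The hypothesis $|\mathcal{E}(\mathcal{M}^1)-\mathcal{E}(\mathcal{M}^2)|\le \delta(\tau_0,l,\theta)$ is used only to ensure that $\bar M^1_\tau$ stays inside the tubular neighborhood of $\bar M^2_\tau$ on which the normal graph and both reparametrizations are simultaneously well-defined.
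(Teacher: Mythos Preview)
Your approach is essentially the same as the paper's: split $\bar M^2_\tau$ into a cylindrical piece and a tip piece, represent $\bar M^1_\tau$ as a normal graph in each coordinate system via an implicit function argument, observe that both constructions yield the \emph{same} signed normal distance (so no genuine gluing is needed), and bound the result by $\|w_\cC\|_{C^m}$ and $\|W_\cT\|_{C^m}$ respectively. The paper uses the overlapping regions $\{v_2\ge \tfrac{39\theta}{40}\}$ and $\{v_2\le\tfrac{99\theta}{100}\}$ rather than your $\tfrac{17\theta}{18}$ and $\tfrac{19\theta}{20}$, but this is cosmetic.

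Two points are worth flagging. First, your tip-region normal vector is miscomputed: the outward normal to $\{(Y_2\omega,v\vartheta)\}$ is
\[
\eta^{-1}\Bigl(\omega-\tfrac{\nabla_{S^{k-1}}Y_2}{Y_2},\,-(Y_2)_v\,\vartheta\Bigr),\qquad \eta=\sqrt{1+(Y_2)_v^2+\tfrac{|\nabla_{S^{k-1}}Y_2|^2}{Y_2^2}},
\]
not $-(Y_2)_v^{-1}\vartheta$ with your $\tilde\eta=\sqrt{1+(Y_2)_v^{-2}+\dots}$; with the correct $\eta$ one indeed gets $\zeta\approx W/\eta$ at leading order. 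Second, the paper does not simply assert the leading-order formula and invoke the implicit function theorem. Because moving along the normal in the tip region changes both the radial coordinate $\bar{\mathbf v}$ and the angular coordinate $\bar\omega$, the paper first solves for $\bar\omega=\bar\omega(\zeta,\mathbf v,\omega)$ explicitly, substitutes back to obtain a \emph{scalar} equation $\mathcal{G}_{(\mathbf v,\omega)}(\zeta)=0$, and then establishes existence via the intermediate value theorem on $[-2|W|,2|W|]$, uniqueness via the monotonicity bound $\mathcal{G}'\ge (5\eta)^{-1}>0$, and only then the $C^m$ bound via the implicit function theorem with quantitative control of $D^\alpha\mathcal{G}$. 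Your sketch elides this two-step reduction; once the normal is corrected and these steps are filled in, it matches the paper's argument.
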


\begin{proof}[Proof]

 
By definition one easily sees that  
    \begin{align}\label{p0w-Ediff}
       \|\mathfrak{p}_0 w_{\mathcal{C}}(\tau_0)\|_{\mathcal{H}} \leq |\mathcal{E}(\tau_0)(\mathcal{M}^1)-\mathcal{E}(\tau_0)(\mathcal{M}^2)| .
    \end{align}
Let $0<C(\tau_0, l, m, \theta)<+\infty$ be the constant from regularity estimate \eqref{C20-L2} and $0<C<+\infty$ be the constant from \eqref{part1}. 
We define
\begin{align}
    \delta(\tau_0, l, \theta) = \frac{1}{(1+C)(1+C(\tau_0, l, 2, \theta))(|\tau_0|+l)^{10}}.
\end{align}
Note that we fixed $m=2$ in definition of $\delta(\tau_0, l, \theta) $, that is because we only need smallness of low derivatives of $w$ and $W$ to ensure the existence of the graph function. The higher derivatives estimate of the graph function is irrelevant to $\delta(\tau_0, l, \theta)$.

In the following, we also use $C_1, C_2$ and so on  to denote constants that may change from line or line. 

Then our spectral assumption $|\mathcal{E}(\tau_0)(\mathcal{M}^1) -\mathcal{E}(\tau_0)(\mathcal{M}^2)| \leq \delta(\tau_0, l, 2, \theta) $ together with the following estimates from \eqref{C20-L2} 
\begin{align}
	&\quad\sup_{\tau\in [\tau_0-l, \tau_0]}[||w_{\cC}(\cdot,\tau)||_{C^{m}(B^1)}  +  ||W_{\cT}(\cdot,\tau)||_{C^{m}(B_{17\theta/18})}]\\
    &\leq
	C(\tau_0, l, m, \theta) \|\mathfrak{p}_0 w_{\mathcal{C}}(\tau_0)\|_{\mathcal{H}}.
\end{align}
and \eqref{p0w-Ediff} imply that
\begin{align}
    ||w_{\cC}(\cdot,\tau)||_{C^{2}(B^1)}  +  ||W_{\cT}(\cdot,\tau)||_{C^{2}(B_{17\theta/18})} \leq
	\frac{1}{|\tau|^{10}}.
\end{align}
whenever $\tau \in [\tau_0 - l, \tau_0]$.

We now show that $\bar{M}_{\tau}^1$ is a graph $\zeta$ over $\bar{M}_{\tau}^2$ with the desired inequality. The idea is based on implicit function theorem argument together with using the information from unique uniform sharp asymptotics of $k$-ovals in Proposition \ref{strong_sharp_asymptotics} and the gradient estimates from Lemma \ref{std_pde_der_est}, Proposition  \ref{lemma-mainY}.  In the detailed proof below, we will fix time $\tau \in [\tau_0-l, \tau_0]$ and for ease of notation, we omit the $\tau$ in the profile functions, and the constant would depend on $\tau_0$, $l$ and the number of derivatives $m$.

By convexity, it suffices to show that there is a function $\zeta$ on $\bar{M}_{\tau}^2$ such that   $p + \zeta(p)\nu_{ \bar{M}_{\tau}^2}(p) \in  \bar{M}_{\tau}^1$ for every point $p\in \bar{M}_{\tau}^2$.

We will find $\zeta$ in two different regions: $\{v_2\geq \frac{39\theta}{40}\}$ and $\{v_2\leq \frac{99}{100}\theta\}$.

\noindent\textbf{Case1:} Let us first consider the region $\{v_2\geq \frac{39\theta}{40}\}$.  
Recall that
\begin{align*}
	B^1 =& B(0,\sqrt{|\tau|(2(n-k)-(19\theta/20)^2)}) \subset\mathbb{R}^k,\\
    \ B^2 =& B(0,\sqrt{|\tau|(2(n-k)-(9\theta/10)^2)}) \subset\mathbb{R}^k
\end{align*}
and 
\begin{align*}
	\{v\geq \theta\} \subset B^1 \subset  B^2 \subset \{v\geq 7\theta/8\}.  
\end{align*} 
holds  when $\tau \leq \cT(\theta)$.

Now in  the region $\{v_2\geq \frac{39\theta}{40}\}$, we can represent $\bar{M}_{\tau}^i$ as $(\mathbf{y},v_i(\mathbf{y})\vartheta)$, where $\mathbf{y}\in B^1\subset \R^k$ and $\vartheta \in S^{n-k}$. 

{We first compute the outward unit normal vectors:
\begin{align}
	\nu_{\bar{M}_{\tau}^i}(\mathbf{y}, \vartheta) = \frac{(-Dv_i(\mathbf{y}),\vartheta)}{(1+|Dv_i(\mathbf{y})|^2)^{1/2}}.
\end{align} 
The graph over $\bar{M}_{\tau}^2$ with graph function $\zeta = \zeta(\mathbf{y})$ is represented by
\begin{align}
	\quad&(\mathbf{y},v_2(\mathbf{y})\vartheta) + \zeta(\mathbf{y})\nu_{\bar{M}_{\tau}^2}(\mathbf{y},\vartheta)\\
    =& \Big(\mathbf{y} - \frac{\zeta(\mathbf{y})Dv_2(\mathbf{y})}{(1+|Dv_2(\mathbf{y})|^2)^{1/2}}\ , \ \big(v_2(\mathbf{y}) + \frac{\zeta(\mathbf{y})}{(1+|Dv_2(\mathbf{y})|^2)^{1/2}}\big)\vartheta\Big).
\end{align} 
To ensure that the graph over $\bar{M}_{\tau}^2 \cap \{v_2\geq \frac{39\theta}{40}\}$ with graph function $\zeta $ is contained in $\bar{M}_{\tau}^1$, it suffices to require that
\begin{align}\label{graph-region1-require1}
v_1(\mathbf{y}+\mathbf{h}(\mathbf{y})) = v_2(\mathbf{y}) + \frac{\zeta(\mathbf{y})}{(1+|Dv_2(\mathbf{y})|^2)^{1/2}}, 
\end{align}
where 
\begin{align}
    \mathbf{h}(\mathbf{y}) = -\frac{\zeta(\mathbf{y})Dv_2(\mathbf{y})}{(1+|Dv_2(\mathbf{y})|^2)^{1/2}}.
\end{align}

We will show that for each $\mathbf{y}$, there exists $\zeta\in \mathbb{R}$  such that $|\zeta|\leq 2|w(\mathbf{y})|$ and \eqref{graph-region1-require1} holds. 
}

To this end, we consider the following function:
\begin{align}
	\mathbf{G}_{\mathbf{y}}(\zeta)  := w(\mathbf{y}) + v_1\Big(\mathbf{y} - \frac{\zeta Dv_2(\mathbf{y})}{(1+|Dv_2(\mathbf{y})|^2)^{1/2}}\Big) - v_1(\mathbf{y})  - \frac{\zeta}{(1+|Dv_2(\mathbf{y})|^2)^{1/2}}. 
\end{align}
where $\zeta \in \mathbb{R}$ and $\mathbf{y}\in B^1$.
By the unique sharp asymptotic of $k$-oval in Proposition \ref{strong_sharp_asymptotics} and convexity (see also Lemma \ref{std_pde_der_est}), the following gradient bound holds in $B^2$:
\begin{align}
	|D v_i|\leq \frac{C_1(\theta)}{|\tau|} < \frac{1}{4}.
\end{align}  
Since $|w(\mathbf{y})|\leq \frac{1}{|\tau|} \leq \frac{\theta}{1000(n-k)}< \frac{1}{2}\dist(B^1, \partial B^2)$, the point $\mathbf{y} - \frac{\zeta Dv_2(\mathbf{y})}{(1+|Dv_2(\mathbf{y})|^2)^{1/2}} $
stays in $B^2$ whenever $\mathbf{y}\in B^1$ and $|\zeta|\leq 2|w(\mathbf{y})|$.

Combining the gradient bound, we deduce that for all $\mathbf{y}\in B^1$ and $|\zeta|\leq 2|w(\mathbf{y})|$, the following inequality holds:
\begin{align}
	\Big|v_1\Big(\mathbf{y} - \frac{\zeta Dv_2(\mathbf{y})}{(1+|Dv_2(\mathbf{y})|^2)^{1/2}}\Big) - v_1(\mathbf{y})\Big|\leq \frac{|\zeta|}{4} \leq \frac{|w(\mathbf{y})|}{2}.
\end{align}
It follows that
\begin{align}
	\mathbf{G}_{\mathbf{y}}(2|w(\mathbf{y})|) \leq& w(\mathbf{y}) + \frac{1}{2}|w(\mathbf{y})| - \frac{2|w(\mathbf{y})|}{\sqrt{1+16^{-1}}} \leq 0, \\
	\mathbf{G}_{\mathbf{y}}(-2|w(\mathbf{y})|) \geq& w(\mathbf{y}) - \frac{1}{2}|w(\mathbf{y})| + \frac{2|w(\mathbf{y})|}{\sqrt{1+16^{-1}}} \geq 0. 
\end{align}
By the intermediate value theorem, there exists $\zeta \in [-2|w(\mathbf{y})|, 2|w(\mathbf{y})|]$ such that $\mathbf{G}_{\mathbf{y}}(\zeta) = 0$.

Note that for all $ \zeta \in [-2|w(\mathbf{y})|, 2|w(\mathbf{y})|]$, we can estimate
\begin{align}
	 \mathbf{G}_{\mathbf{y}}'(\zeta ) =& \frac{1}{(1+|Dv_2(\mathbf{y})|^2)^{1/2}}\Big(\Big\langle Dv_1\Big(\mathbf{y} - \frac{\zeta Dv_2(\mathbf{y})}{(1+|Dv_2(\mathbf{y})|^2)^{1/2}}\Big), {Dv_2(\mathbf{y})}\Big\rangle - 1\Big)\\
    \leq& -\frac{1}{2}.
\end{align}
Therefore for each $\mathbf{y}\in B^1$, the solution to $\mathbf{G}_{\mathbf{y}}(\zeta) = 0$ in $[-2|w(\mathbf{y})|, 2|w(\mathbf{y})|]$ is unique, which now we can denote by $\zeta(\mathbf{y})$. 

To estimate higher derivatives, we define $\mathbf{G}(\zeta, \mathbf{y}) := \mathbf{G}_{\mathbf{y}}(\zeta)$. By definition, it is clear that $\mathbf{G}(0,\mathbf{y}) =  w(\mathbf{y})$ for all $\mathbf{y}\in B^2$. 
By the derivative estimate in cylindrical region (see Corollary \ref{lemma-cylindrical}), for all multi-index $\alpha$ with $|{\alpha}|\leq m$ and ${\tau\in[\tau_0-l, \tau_0]}$, the following estimate holds in $[0,\frac{1}{|\tau|}]\times B^1$:
\begin{align}
	|D^{(\alpha)}\mathbf{G}| \leq ||w||_{C^m(B^1)} + C_1(\tau_0, l, m, \theta) |\zeta(\mathbf{y})| \leq  C_1(\tau_0, l, m, \theta)||w||_{C^m(B^1)}.
\end{align}
We have used the fact that $|\zeta| < 1$ here.
By implicit function theorem, the higher derivatives of $\zeta$ consists of polynomials of derivatives of $\mathbf{G}$ with order at most $m$, divided by powers of $\mathbf{G}_{\mathbf{y}}'$ with power at most $2m$. This implies that for all $|\alpha|\leq m$, ${\tau\in[\tau_0-l, \tau_0]}$
\begin{align}
	 \|\nabla^{\alpha}\zeta\| \leq C_1(\tau_0, l, m, \theta)||w||_{C^{m}(B^1)}
\end{align}
in $B^1$. 

To conclude, we have found $\zeta$ that satisfies \eqref{graph-w-W} and \eqref{graph-region1-require1} over $\bar{M}_{\tau}^2 \cap\{v_2 \geq\frac{39\theta}{40}\}$. Consequently $\bar{M}_{\tau}^1$ is a graph of $\zeta$ over $\bar{M}_{\tau}^2 \cap\{v_2 \geq\frac{39\theta}{40}\}$.  


\noindent\textbf{Case2:} We then consider the region $\{v_2\leq \frac{99\theta}{100}\}$. 
In this region, we can represent $\bar{M}_{\tau}^i$ as 
\begin{align}
	(Y_i(\mathbf{v,\omega})\omega, \mathbf{v}\vartheta),
\end{align}
where $\mathbf{v}\in [0,\theta]$, $\omega\in S^{k-1}$ and $\vartheta\in S^{n-k}$ and $i = 1, 2$.

Then again we compute the outward normal vector for $\bar{M}_{\tau}^2$:
\begin{align}
	\nu_{\bar{M}_{\tau}^2}(\mathbf{v},\omega) = \Big(1+(Y_2)_v^2 + \frac{|\nabla_{S^{k-1}}Y_2|^2}{Y_2^2}\Big)^{-\frac{1}{2}}\Big(\omega-\frac{\nabla_{S^{k-1}}Y_2}{Y_2}, -(Y_2)_{{v}}\vartheta\Big).
\end{align}
Therefore, the graph over $\bar{M}_{\tau}^2$ with graph function $\zeta$ is represented by
\begin{align}
	&(Y_2 \cdot \omega, \mathbf{v}\vartheta) + \zeta\nu_{\bar{M}_{\tau}^2}\\
	=& \Big(Y_2\cdot\omega+\frac{\zeta}{\Big(1+(Y_2)_v^2 + \frac{|\nabla_{S^{k-1}}Y_2|^2}{Y_2^2}\Big)^{\frac{1}{2}}}\omega - \frac{\zeta\nabla_{S^{k-1}}Y_2}{\Big(1+(Y_2)_v^2 + \frac{|\nabla_{S^{k-1}}Y_2|^2}{Y_2^2}\Big)^{\frac{1}{2}}Y_2}\\
	&, \mathbf{v}\vartheta - \frac{\zeta(Y_2)_{\mathbf{v}}}{\Big(1+(Y_2)_v^2 + \frac{|\nabla_{S^{k-1}}Y_2|^2}{Y_2^2}\Big)^{\frac{1}{2}}}\vartheta\Big),
\end{align}
To ensure that the graph over $\bar{M}_{\tau}^2 \cap \{v_2\leq \frac{99\theta}{100}\}$ with graph function $\zeta $ is contained in $\bar{M}_{\tau}^1$, it suffices to require that we require that:
 \begin{align}
 	(Y_2 \cdot \omega, \mathbf{v}\vartheta) + \zeta\nu_{\bar{M}_{\tau}^2}\big|_{\mathbf{(\mathbf{v},\omega})} = (Y_1 \cdot \bar{\omega}, \bar{\mathbf{v}}\vartheta)\big|_{\mathbf{(\bar{\mathbf{v}},\bar{\omega}})}.
 \end{align}
This is amount to solving the equations:
\begin{align}\label{graph-region2-require2}
&\quad\, Y_1\cdot\bar{\omega}\Big|_{(\bar{\mathbf{v}},\bar{\omega})}\\
&=Y_2\cdot\omega+\frac{\zeta}{\Big(1+(Y_2)_v^2 + \frac{|\nabla_{S^{k-1}}Y_2|^2}{Y_2^2}\Big)^{\frac{1}{2}}}\omega 
    - \frac{\zeta\nabla_{S^{k-1}}Y_2}{\Big(1+(Y_2)_v^2 + \frac{|\nabla_{S^{k-1}}Y_2|^2}{Y_2^2}\Big)^{\frac{1}{2}}Y_2}\Big|_{(\mathbf{v},\omega)} 
\end{align} 
and
\begin{align}\label{graph-region2-require3}
    \mathbf{v} - \frac{(Y_2)_{{v}}\zeta}{\Big(1+(Y_2)_v^2 + \frac{|\nabla_{S^{k-1}}Y_2|^2}{Y_2^2}\Big)^{\frac{1}{2}}}\Big|_{(\mathbf{v},\omega)} = \bar{\mathbf{v}}.
\end{align}
Denote $\eta = \eta(\mathbf{v},\omega) = \Big(1+(Y_2)_v^2 + \frac{|\nabla_{S^{k-1}}Y_2|^2}{Y_2^2}\Big)^{\frac{1}{2}}\Big|_{(\mathbf{v},\omega)}$. 

From equation \eqref{graph-region2-require2}, we can directly solve that
\begin{align}\label{graph-region2-require5}
    \bar{\omega} = \bar{\omega}(\zeta, \mathbf{y},\omega) =  \frac{Y_2\cdot\omega+\eta^{-1}\zeta\omega - \frac{\eta^{-1}\zeta\nabla_{S^{k-1}}Y_2}{ Y_2}}{|Y_2\cdot\omega+\eta^{-1}\zeta\omega - \frac{\eta^{-1}\zeta\nabla_{S^{k-1}}Y_2}{ Y_2}|}\ \Bigg|_{(\mathbf{v},\omega)}.
\end{align}

By the tip derivative estimate (Proposition \ref{lemma-mainY}) and unique sharp derivative (Proposition \ref{strong_sharp_asymptotics}), there is a constant $C_2 = C_2(n,k)$ such that
\begin{align}\label{eta-upper-bound}
	\eta = \Big(1+(Y_2)_v^2 + \frac{|\nabla_{S^{k-1}}Y_2|^2}{Y_2^2}\Big)^{\frac{1}{2}} \leq C_2|\tau|^{\frac{1}{2}}.
\end{align}
holds in $\bar{M}_{\tau}^2\cap \{v_2 \leq \theta\}$.

Using above notations, \eqref{graph-region2-require2} becomes 
\begin{align}\label{graph-region2-require2-2}
	Y_2\cdot\omega+\eta^{-1}\zeta\omega - \frac{\eta^{-1}\zeta\nabla_{S^{k-1}}Y_2}{ Y_2}\Big|_{(\mathbf{v},\omega)} = Y_1(\mathbf{v} - (Y_2)_{{v}}\eta^{-1}\zeta\ , \ \bar{\omega})\cdot\bar{\omega}.
\end{align}
Since we already solved $\bar{\omega}$ in \eqref{graph-region2-require5} so that the direction of both sides of \eqref{graph-region2-require2-2} matches, it suffices to make sure that the length of both sides matches. This is equivalent to solving:
\begin{align}\label{graph-region2-require4}
	\quad&\mathcal{G}_{(\mathbf{v},\omega)}(\zeta)\\
    =& \sqrt{(Y_2 + \eta^{-1}\zeta)^2 + \frac{\eta^{-2}\zeta^2|\nabla_{S^{k-1}}Y_2|^2}{Y_2^2}}\Big|_{(\mathbf{v},\omega)} -  Y_1(\mathbf{v} - (Y_2)_{{v}}\eta^{-1}\zeta\ , \ \bar{\omega})\\
    =& 0.
\end{align}

By Proposition \ref{strong_sharp_asymptotics} and \ref{lemma-mainY}, we find that $\eta\geq 1$, $Y_i\geq {|\tau|}^{\frac{1}{2}}$,\  $|DY_i|\leq |\tau|^{\frac{1}{2}}$  and $|W|\leq \frac{1}{|\tau|^{10}}$. Moreover, Proposition \ref{lemma-mainY} implies that 
$\frac{|\nabla_{S^{k-1}}Y_i|}{Y_i} \leq 1$. All these estimates hold in the region 
$\bar{M}_{\tau}^i \cap \{v_i \leq \theta\}$ for $i=1, 2$
(after possibly decreasing $\tau_{*}$).  Therefore the following  estimates holds when $|\zeta|\leq 2|W|$:
\begin{align}
	 | \bar{\omega}-\omega|\leq& \frac{\zeta}{20|\tau|^{\frac{1}{2}}\eta}.\\
\end{align}
Moreover, when $\mathbf{v}\in [0, \frac{99\theta}{100}]$ and $|\zeta|\leq 2|W|$ we have
\begin{align}
        |\mathbf{v} - (Y_2)_{{v}}\eta^{-1}\zeta| \leq& \frac{99\theta}{100} + |\tau|^{-8} <\theta.
\end{align}
This will ensure that $\bar{\mathbf{v}} \in [0, \theta]$.
Using again that $|DY_i|\leq |\tau|^{\frac{1}{2}}$ we have
\begin{align}
	|Y_1(\mathbf{v} - (Y_2)_{{v}}\eta^{-1}\zeta\ , \ \bar{\omega}) - Y_1(\mathbf{v} - (Y_2)_{{v}}\eta^{-1}\zeta\ , \ {\omega}) | \leq 10^{-1}\eta^{-1}\zeta.
\end{align}
We need further estimate about deviation of derivatives. For any $|s|\leq 1$ and $|\zeta|\leq 2|W|$, we estimate:
\begin{align}\label{Y1-Y2-diff-1}
	 &\Big|(Y_1)_v(\mathbf{v} - s(Y_2)_{{v}}\eta^{-1}\zeta\ , \ {\omega}) -  (Y_1)_v(\mathbf{v}, \ {\omega})\Big|\\ \leq& \max |(Y_1)_{vv}|\max|(Y_2)_v|\eta^{-1}\zeta
     \leq |\tau|^{-8},
\end{align}
here we used the second derivative estimate in Proposition \ref{lemma-mainY}. We also note that 
\begin{align}\label{Y1-Y2-diff-2}
    |(Y_1)_v - (Y_2)_v| = |W_v|\leq \frac{1}{|\tau|^{10}}.
\end{align} 
Adding \eqref{Y1-Y2-diff-1} and \eqref{Y1-Y2-diff-2} together gives:
\begin{align}
	\Big|(Y_1)_v(\mathbf{v} - s(Y_2)_{{v}}\eta^{-1}\zeta\ , \ {\omega}) -  (Y_2)_v(\mathbf{v}, \ {\omega})\Big| \leq  |\tau|^{-8}.
\end{align}
Now we can proceed to estimate the following
\begin{align}
	&\big|Y_1(\mathbf{v} - (Y_2)_{{v}}\eta^{-1}\zeta\ , \ \bar{\omega}) - Y_1(\mathbf{v}  , \ {\omega}) + (Y_2)_v^2\eta^{-1}\zeta\big| \\
	 \leq& 10^{-1}\eta^{-1}\zeta +\big|Y_1(\mathbf{v} - (Y_2)_{{v}}\eta^{-1}\zeta\ , \ {\omega}) - Y_1(\mathbf{v}  , \ {\omega}) +(Y_2)_v^2\eta^{-1}\zeta \big| \\
	\leq&  10^{-1}\eta^{-1}\zeta + \Big|\int_0^1 \frac{d}{ds}\Big( Y_1\big(\mathbf{v} - s(Y_2)_{{v}}\eta^{-1}\zeta\ , \ {\omega}) \Big)+ (Y_2)_v^2\eta^{-1}\zeta  ds\Big|\\
	\leq& 10^{-1}\eta^{-1}\zeta + \int_0^1  \Big|(Y_1)_v(\mathbf{v} - s(Y_2)_{{v}}\eta^{-1}\zeta\ , \ {\omega}) -  (Y_2)_v(\mathbf{v}, \ {\omega})\Big|\cdot (Y_2)_v\eta^{-1}\zeta ds\\
	\leq& \frac{1}{5}\eta^{-1}\zeta.  
\end{align}
Next, we use the fact that $\frac{\eta^{-2}|\nabla_{S^{k-1}}Y_2|^2}{Y_2^2} < 1$ and $|Y_2|\geq |\tau|^{\frac{1}{2}}$, $|W|\leq \frac{1}{|\tau|^{10}}$, the following elementary estimate holds when $|\zeta|\leq 2|W|$:
\begin{align}
	Y_2 + \eta^{-1}\zeta  - 10\zeta^2Y_2^{-1}<& \sqrt{(Y_2 + \eta^{-1}\zeta)^2 + \frac{\eta^{-2}\zeta^2|\nabla_{S^{k-1}}Y_2|^2}{Y_2^2}} \\
    <& Y_2 + \eta^{-1}\zeta  + 10\zeta^2Y_2^{-1}.
\end{align}
Then we can estimate
\begin{align}
	\quad&\mathcal{G}_{(\mathbf{v},\omega)}(2|W(\mathbf{v},\omega)|) \\
    \geq& \sqrt{(Y_2 + \eta^{-1}\zeta)^2 + \frac{\eta^{-2}\zeta^2|\nabla_{S^{k-1}}Y_2|^2}{Y_2^2}}  - Y_1 + (Y_2)_v^2\eta^{-1}\zeta\Bigg|_{(\mathbf{v},\omega),\ \zeta = 2|W(\mathbf{v},\omega)|}\\
	-& \big|Y_1(\mathbf{v} - (Y_2)_{{v}}\eta^{-1}\zeta\ , \ \bar{\omega}) - Y_1(\mathbf{v},\omega) + (Y_2)_v^2\eta^{-1}\zeta\big|\Bigg|_{(\mathbf{v},\omega),\ \zeta = 2|W(\mathbf{v},\omega)|}\\
	\geq& Y_2 + \eta^{-1}\zeta - 10\zeta^2Y_2^{-1}  - Y_1 + (Y_2)_v^2\eta^{-1}\zeta - \frac{1}{5}\eta^{-1}\zeta \Bigg|_{(\mathbf{v},\omega),\ \zeta = 2|W(\mathbf{v},\omega)|}\\
	\geq& W + \frac{4(1 + (Y_2)_v^2)}{5\eta} \zeta - 200^{-1}\zeta \Bigg|_{(\mathbf{v},\omega),\ \zeta = 2|W(\mathbf{v},\omega)|}.
 \end{align}
 Note that Proposition \ref{lemma-mainY} implies  $\frac{|\nabla_{S^{k-1}}Y_2|^2}{Y_2^2}\leq \frac{1}{100}$. Therefore
 \begin{align*}
 	\frac{4(1 + (Y_2)_v^2)}{5\eta} = \frac{4(1 + (Y_2)_v^2)}{5(1 + (Y_2)_v^2 + \frac{|\nabla_{S^{k-1}}Y_2|^2}{Y_2^2})^{\frac{1}{2}}} \geq \frac{4(1 + (Y_2)_v^2)}{5(1 + (Y_2)_v^2 + 10^{-2})^{\frac{1}{2}}}  >\frac{2}{3}.
 \end{align*}
 We deduce that
 \begin{align}
	\mathcal{G}_{(\mathbf{v},\omega)}(2|W(\mathbf{v},\omega)|)  
	\geq& -|W| + \frac{2}{3}\cdot 2|W| - 100^{-1}|W|\geq 0.
 \end{align}
In the same way we get:
 \begin{align}
	\quad &\mathcal{G}_{(\mathbf{v},\omega)}(-2|W(\mathbf{v},\omega)|) \\
    \leq& \sqrt{(Y_2 + \eta^{-1}\zeta)^2 + \frac{\eta^{-2}\zeta^2|\nabla_{S^{k-1}}Y_2|^2}{Y_2^2}}  - Y_1 + (Y_2)_v^2\eta^{-1}\zeta\Bigg|_{(\mathbf{v},\omega),\ \zeta = -2|W(\mathbf{v},\omega)|}\\
	-& \big|Y_1(\mathbf{v} - (Y_2)_{{v}}\eta^{-1}\zeta\ , \ \bar{\omega}) - Y_1(\mathbf{v},\omega) + (Y_2)_v^2\eta^{-1}\zeta\big|\Bigg|_{(\mathbf{v},\omega),\ \zeta = -2|W(\mathbf{v},\omega)|}\\
	\leq& Y_2 + \eta^{-1}\zeta + 10\zeta^2Y_2^{-1}  - Y_1 + (Y_2)_v^2\eta^{-1}\zeta + \frac{1}{5}\eta^{-1}\zeta \Bigg|_{(\mathbf{v},\omega),\ \zeta = -2|W(\mathbf{v},\omega)|}\\
	\leq& W + \frac{(1 + (Y_2)_v^2)}{\eta} \zeta + 200^{-1}\zeta \Bigg|_{(\mathbf{v},\omega),\ \zeta = -2|W(\mathbf{v},\omega)|} \\
    \leq& -\frac{1}{2}	|W| \leq 0.
 \end{align}
By the intermediate value theorem, there is a $\zeta \in [-2|W(\mathbf{v},\omega)|, 2|W(\mathbf{v},\omega)|]$ such that 
\begin{align}
	\mathcal{G}_{(\mathbf{v},\omega)}(\zeta) = 0.
\end{align}
We next compute the derivative so as to establish uniqueness. For simplicity, we denote
\begin{align}
	 SQ = \sqrt{(Y_2 + \eta^{-1}\zeta)^2 + \frac{\eta^{-2}\zeta^2|\nabla_{S^{k-1}}Y_2|^2}{Y_2^2}}\Big|_{(\mathbf{v},\omega)}.
\end{align}
Next we assume that $|\zeta| \leq 2|W(\mathbf{v},\omega)|$. First we have
\begin{align}
	Y_2 \leq SQ\leq 2Y_2, \quad 
\end{align}
Then we can estimate the derivative:
\begin{align}
	\quad&\mathcal{G}_{(\mathbf{v},\omega)}'(\zeta)\\
    =& \frac{\eta^{-1}(Y_2 + \eta^{-1}\zeta) + \frac{\eta^{-2}|\nabla_{S^{k-1}}Y_2|^2}{Y_2^2}\zeta}{SQ} + (Y_1)_v\Big|_{(\mathbf{v} - (Y_2)_{{v}}\eta^{-1}\zeta\ , \ \bar{\omega})}\cdot (Y_2)_v\big|_{(\mathbf{v},\omega)}\\
	\geq& \frac{\eta^{-1}(Y_2-2|\zeta|)}{SQ} - \Big|(Y_1)_v(\mathbf{v} - s(Y_2)_{{v}}\eta^{-1}\zeta\ , \ {\omega}) -  (Y_2)_v(\mathbf{v}, \ {\omega})\Big| |(Y_2)_v| + (Y_2)_v^2\\
	\geq& \frac{1}{4\eta} - |\tau|^{-8}.
\end{align} 
By \eqref{eta-upper-bound} we have $|\tau|^{-8}< \frac{1}{20\eta}$. Hence
\begin{align}
	\quad&\mathcal{G}_{(\mathbf{v},\omega)}'(\zeta)	\geq \frac{1}{4\eta} - |\tau|^{-8} \geq \frac{1}{5\eta} >0.
\end{align} 
This means that there is a unique solution in $ [-2|W(\mathbf{v},\omega)|, 2|W(\mathbf{v},\omega)|]$, which we now denote by $\zeta(\mathbf{v},\omega)$, such that 
\begin{align}
	\mathcal{G}_{(\mathbf{v},\omega)}(\zeta(\mathbf{v},\omega)) = 0.
\end{align}
Next, we explain how we get higher derivative estimates for $Y_i$. That is, for all $|\alpha|\leq m$:
\begin{align}
	|D^{\alpha} Y_i| \leq C_3(\tau_0, l, m).
\end{align}
Indeed, by convexity and the sharp asymptotics Proposition \ref{strong_sharp_asymptotics}, we have the curvature bound $\sup\limits_{\bar{M}_{\tau}}|A|\leq 10|\tau|^{\frac{1}{2}}$ for all $\tau\leq \tau_{*}$. By the curvature estimate, c.f. \cite[Theorem 1.12]{HaslhoferKleiner_meanconvex}, we deduce that $\sup\limits_{\bar{M}_{\tau}}|\nabla^{\alpha} A|\leq C_3(\tau_0, l, m)$ for all $|\alpha|\leq m$. Then we can express $|D^{\alpha} Y_i|$ using $|\nabla^{\alpha} A|$ and $|\nabla Y|$. Since  Proposition \ref{lemma-mainY} gives $|\nabla Y|\leq |\tau|^{\frac{1}{2}}\leq (|\tau_0|+ l)^{\frac{1}{2}}$, we can get the higher derivative estimate for $Y_i$.

Finally, we define $\mathcal{G}(\zeta, \mathbf{v},\omega)  = \mathcal{G}_{(\mathbf{v},\omega)}(\zeta)$ and observe that $\mathcal{G}(0, \mathbf{v},\omega) = W(\mathbf{v},\omega)$. Then for all multi-index $\alpha$ with $|{\alpha}|\leq m$ and ${\tau\in[\tau_0\!-\!l, \tau_0]}$, the following estimate holds when $|\zeta|  \leq \frac{1}{|\tau|^8}$ and $\mathbf{v}\leq \frac{99\theta}{100}$:
\begin{align*}
    |D^{(\alpha)}\mathcal{G} | \leq ||W||_{C^m(B_{\theta})} + C_3(\tau_0, l, m)|\zeta|\leq C_3(\tau_0, l, m)||W||_{C^{m}(B_{\theta})}.
\end{align*}
Applying implicit function theorem for $\mathcal{G}(\zeta, \mathbf{v},\omega) \!=\! 0$ as before,   we get that for all multi-index $\alpha$ with $|{\alpha}|\leq m$ and ${\tau\in[\tau_0\!-\!l, \tau_0]}$,
\begin{equation}
    \|\nabla^{\alpha} \zeta\| \leq C_3(\tau_0, l, m)||W||_{C^{m}(B_{\theta})}.
\end{equation}
holds when $v_2 \leq \frac{99\theta}{100}$.
We have thus found $\zeta$ that satisfies \eqref{graph-region2-require2} and \eqref{graph-region2-require3}. Consequently $\bar{M}_{\tau}^1$ is a graph of $\zeta$ over $\bar{M}_{\tau}^2 \cap\{v_2 \leq\frac{99\theta}{100}\}$.


Putting the above two cases together completes the proof of the Lemma.
\end{proof}

\end{appendix}
\bibliographystyle{alpha}
\bibliography{rigidity_of_ancient_ovals_CDZ}

\newcommand{\etalchar}[1]{$^{#1}$}
\newcommand{\noopsort}[1]{} \newcommand{\singleletter}[1]{#1}
\begin{thebibliography}{CHHW22}

\bibitem[ABDS22]{ABDS}
S.~Angenent, S.~Brendle, P.~Daskalopoulos, and N.~Sesum.
\newblock Unique asymptotics of compact ancient solutions to three-dimensional
  {R}icci flow.
\newblock {\em Comm. Pure Appl. Math}, 75(5):1032--1073, 2022.

\bibitem[ADS19]{ADS1}
S.~Angenent, P.~Daskalopoulos, and N.~Sesum.
\newblock Unique asymptotics of ancient convex mean curvature flow solutions.
\newblock {\em J. Differential Geom.}, 111(3):381--455, 2019.

\bibitem[ADS20]{ADS2}
S.~Angenent, P.~Daskalopoulos, and N.~Sesum.
\newblock Uniqueness of two-convex closed ancient solutions to the mean
  curvature flow.
\newblock {\em Ann. of Math. (2)}, 192(2):353--436, 2020.

\bibitem[ADS23]{ADS_dynamics}
S.~Angenent, P.~Daskalopoulos, and N.~Sesum.
\newblock Dynamics mean curvature flow.
\newblock {\em arXiv:2305.17272}, 2023.

\bibitem[And12]{Andrews_noncollapsing}
B.~Andrews.
\newblock Noncollapsing in mean-convex mean curvature flow.
\newblock {\em Geom. Topol.}, 16(3):1413--1418, 2012.

\bibitem[Ang91]{angenent1991formation}
Sigurd Angenent.
\newblock On the formation of singularities in the curve shortening flow.
\newblock {\em Journal of Differential Geometry}, 33(3):601--633, 1991.

\bibitem[AV97]{AV_degenerate_neckpinch}
S.~Angenent and J.~Velazquez.
\newblock Degenerate neckpinches in mean curvature flow.
\newblock {\em J. Reine Angew. Math.}, 482:15--66, 1997.

\bibitem[BC19]{BC1}
S.~Brendle and K.~Choi.
\newblock Uniqueness of convex ancient solutions to mean curvature flow in
  {$\mathbb{R}^3$}.
\newblock {\em Invent. Math.}, 217(1):35--76, 2019.

\bibitem[BC21]{BC2}
S.~Brendle and K.~Choi.
\newblock Uniqueness of convex ancient solutions to mean curvature flow in
  higher dimensions.
\newblock {\em Geom. Topol.}, 25(5):2195--2234, 2021.

\bibitem[BH16]{BH_surgery}
S.~Brendle and G.~Huisken.
\newblock Mean curvature flow with surgery of mean convex surfaces in
  {$\mathbb{R}^3$}.
\newblock {\em Invent. Math.}, 203(2):615--654, 2016.

\bibitem[BLT21]{BLT1}
T.~Bourni, M.~Langford, and G.~Tinaglia.
\newblock Collapsing ancient solutions of mean curvature flow.
\newblock {\em J. Differential Geom.}, 119(2):187--219, 2021.

\bibitem[BLT22]{BLT2}
T.~Bourni, M.~Langford, and G.~Tinaglia.
\newblock Ancient mean curvature flows out of polytopes.
\newblock {\em Geom. Topol.}, 26(4):1849--1905, 2022.

\bibitem[BN24]{brendle2024local}
S.~Brendle and K.~Naff.
\newblock A local noncollapsing estimate for mean curvature flow.
\newblock {\em American Journal of Mathematics}, 146(5):1463--1468, 2024.

\bibitem[Bre15]{Brendle_inscribed}
S.~Brendle.
\newblock A sharp bound for the inscribed radius under mean curvature flow.
\newblock {\em Invent. Math.}, 202(1):217--237, 2015.

\bibitem[Bre20]{brendle2020ancient}
S.~Brendle.
\newblock Ancient solutions to the ricci flow in dimension $3$.
\newblock {\em Acta Mathematica}, 225(1), 2020.

\bibitem[CDD{\etalchar{+}}]{CDDHS}
B.~Choi, P.~Daskalopoulos, W.~Du, R.~Haslhofer, and N.~Sesum.
\newblock Classification of bubble-sheet ovals in $\mathbb{R}^{4}$.
\newblock {\em Geom. Topol. (to appear)}.

\bibitem[CH24]{CH-classification-r4}
K.~Choi and R.~Haslhofer.
\newblock Classification of ancient noncollapsed flows in $\mathbb{R}^4$.
\newblock {\em arXiv preprint arXiv:2412.10581}, 2024.

\bibitem[CHH21]{CHH_ovals}
B.~Choi, R.~Haslhofer, and O.~Hershkovits.
\newblock A note on the self-similarity of limit flows.
\newblock {\em Proc. Amer. Math. Soc.}, 143(10):4433--4437, 2021.

\bibitem[CHH22]{CHH}
K.~Choi, R.~Haslhofer, and O.~Hershkovits.
\newblock Ancient low entropy flows, mean convex neighborhoods, and uniqueness.
\newblock {\em Acta Math.}, 228(2):217--301, 2022.

\bibitem[CHH23]{CHH_translator}
K.~Choi, R.~Haslhofer, and O.~Hershkovits.
\newblock Classification of noncollapsed translators in $\mathbb{R}^4$.
\newblock {\em Camb. J. Math.}, 11(3), 2023.

\bibitem[CHHW22]{CHHW}
K.~Choi, R.~Haslhofer, O.~Hershkovits, and B.~White.
\newblock Ancient asymptotically cylindrical flows and applications.
\newblock {\em Invent. Math.}, 229:139--241, 2022.

\bibitem[CM16]{CM_arrival}
T.~Colding and W.~Minicozzi.
\newblock Differentiability of the arrival time.
\newblock {\em Comm. Pure Appl. Math.}, 69(12):2349--2363, 2016.

\bibitem[DH]{DH_ovals}
W.~Du and R.~Haslhofer.
\newblock On uniqueness and nonuniqueness of ancient ovals.
\newblock {\em Amer. J. Math. (to appear)}.

\bibitem[DH21]{DH_blowdown}
W.~Du and R.~Haslhofer.
\newblock The blowdown of ancient noncollapsed mean curvature flows.
\newblock {\em arXiv: 2106.04042}, 2021.

\bibitem[DH23]{DH_no_rotation}
W.~Du and R.~Haslhofer.
\newblock A nonexistence result for rotating mean curvature flows in
  $\mathbb{R}^4$.
\newblock {\em Journal f{\"u}r die reine und angewandte Mathematik (Crelles
  Journal)}, 2023(802):275--285, 2023.

\bibitem[DH24]{DH_hearing_shape}
W.~Du and R.~Haslhofer.
\newblock Hearing the shape of ancient noncollapsed flows in {$\mathbb{R}^4$}.
\newblock {\em Comm. Pure Appl. Math.}, 77(1):543--582, 2024.

\bibitem[DZ22]{DZ_spectral_quantization}
W.~Du and J.~Zhu.
\newblock Spectral quantization for ancient asymptotically cylindrical flows.
\newblock {\em arXiv:2211.02595}, 2022.

\bibitem[Fed14]{federer2014geometric}
H.~Federer.
\newblock {\em Geometric measure theory}.
\newblock Springer, 2014.

\bibitem[Ham86]{Ham_pco}
R.~Hamilton.
\newblock Four-manifolds with positive curvature operator.
\newblock {\em J. Differential Geom}, 24(2):153--179, 1986.

\bibitem[Ham95]{Hamilton_Harnack}
R.~Hamilton.
\newblock Harnack estimate for the mean curvature flow.
\newblock {\em J. Differential Geom.}, 41(1):215--226, 1995.

\bibitem[Has24]{Haslhofer_4dRicci}
R.~Haslhofer.
\newblock On $\kappa$-solutions and canonical neighborhoods in 4d ricci flow.
\newblock {\em Journal f{\"u}r die reine und angewandte Mathematik (Crelles
  Journal)}, 2024(811):257--265, 2024.

\bibitem[HH16]{HaslhoferHershkovits_ancient}
R.~Haslhofer and O.~Hershkovits.
\newblock Ancient solutions of the mean curvature flow.
\newblock {\em Comm. Anal. Geom.}, 24(3):593--604, 2016.

\bibitem[HIMW19]{HIMW}
D.~Hoffman, T.~Ilmanen, F.~Mart\'{\i}n, and B.~White.
\newblock Graphical translators for mean curvature flow.
\newblock {\em Calc. Var. PDE}, 58(4):Paper No. 117, 29, 2019.

\bibitem[HK15]{HK_inscribed}
R.~Haslhofer and B.~Kleiner.
\newblock On {B}rendle's estimate for the inscribed radius under mean curvature
  flow.
\newblock {\em Int. Math. Res. Not.}, pages 6558--6561, 2015.

\bibitem[HK17a]{HaslhoferKleiner_meanconvex}
R.~Haslhofer and B.~Kleiner.
\newblock Mean curvature flow of mean convex hypersurfaces.
\newblock {\em Comm. Pure Appl. Math.}, 70(3):511--546, 2017.

\bibitem[HK17b]{HaslhoferKleiner_surgery}
R.~Haslhofer and B.~Kleiner.
\newblock Mean curvature flow with surgery.
\newblock {\em Duke Math. J.}, 166(9):1591--1626, 2017.

\bibitem[HS09]{HuiskenSinestrari_surgery}
G.~Huisken and C.~Sinestrari.
\newblock Mean curvature flow with surgeries of two-convex hypersurfaces.
\newblock {\em Invent. Math.}, 175(1):137--221, 2009.

\bibitem[Ilm03]{Ilmanen_problems}
T.~Ilmanen.
\newblock Problems in mean curvature flow.
\newblock {\em preprint}, 2003.

\bibitem[Jac20]{magic}
Jules Jacobs.
\newblock A magic determinant formula for symmetric polynomials of eigenvalues.
\newblock {\em arXiv preprint arXiv:2009.01345}, 2020.

\bibitem[Kon14]{ODE-book}
Qingkai Kong.
\newblock {\em A short course in ordinary differential equations}.
\newblock Springer, 2014.

\bibitem[Lai24]{Lai}
Y.~Lai.
\newblock A family of $3d$ steady gradient solitons that are flying wings.
\newblock {\em Journal of Differential Geometry}, 126(1):297--328, 2024.

\bibitem[Lie96]{Lieberman}
G.~Lieberman.
\newblock {\em Second order parabolic differential equations}.
\newblock World Scientific Publishing Co., Inc., River Edge, NJ, 1996.

\bibitem[Mat23]{Rectifiability}
P.~Mattila.
\newblock {\em Rectifiability: a survey}, volume 483.
\newblock Cambridge University Press, 2023.

\bibitem[SW09]{ShengWang}
W.~Sheng and X.~Wang.
\newblock Singularity profile in the mean curvature flow.
\newblock {\em Methods Appl. Anal.}, 16(2):139--155, 2009.

\bibitem[SWX25]{sun2025passing}
A.~Sun, Z.~Wang, and J.~Xue.
\newblock Passing through nondegenerate singularities in mean curvature flows.
\newblock {\em arXiv preprint arXiv:2501.16678}, 2025.

\bibitem[SX22]{sun2022generic}
A.~Sun and J.~Xue.
\newblock Generic mean curvature flows with cylindrical singularities.
\newblock {\em arXiv preprint arXiv:2210.00419}, 2022.

\bibitem[Whi03]{White_nature}
B.~White.
\newblock The nature of singularities in mean curvature flow of mean-convex
  sets.
\newblock {\em J. Amer. Math. Soc.}, 16(1):123--138, 2003.

\end{thebibliography}

\vspace{5mm}

{\sc Beomjun Choi, Department of Mathematics, POSTECH, Gyeongbuk, Korea

{\sc Wenkui Du, Department of Mathematics, Massachusetts Institute of Technology,  Massachusetts, 02139, USA}

{\sc Jingze Zhu, Department of Mathematics, Massachusetts Institute of Technology,  Massachusetts, 02139, USA}

\emph{E-mail:} bchoi@postech.ac.kr, 
duwenkui@mit.edu, zhujz@mit.edu.

\end{document}